\newtheorem*{thma}{Theorem~A}
\newtheorem*{thmb}{Theorem~B}
\newtheorem*{thmc}{Theorem~C}
\newtheorem*{thmd}{Theorem~D}
\newtheorem{theorem}{Theorem}[section]
\newtheorem{lemma}[theorem]{Lemma}
\newtheorem{proposition}[theorem]{Proposition}
\newtheorem{corollary}[theorem]{Corollary}
\newtheorem{claim}[theorem]{Claim}
\newtheorem{fact}[theorem]{Fact}
\theoremstyle{definition}
\newtheorem{definition}[theorem]{Definition}
\newtheorem{remark}[theorem]{Remark}
\newcommand{\dom}[1]{\ensuremath{\mathrm{dom}}(#1)}
\newcommand{\power}{\ensuremath{\mathscr{P}}}
\newcommand{\set}[2]{\ensuremath{\{#1 \,|\, #2 \}}}
\newcommand{\seq}[2]{\ensuremath{\langle #1 \,|\, #2 \rangle}}
\newcommand{\rest}[0]{\!\restriction\!}
\newcommand{\el}{\prec}
\newcommand{\iso}{\cong}
\newcommand{\sub}{\subseteq}
\newcommand{\mc}{\mathcal}
\newcommand{\then}{\rightarrow}
\newcommand{\bb}{\mathbb}
\newcommand{\beq}{\begin{equation}}
\newcommand{\eeq}{\end{equation}}
\newcommand{\brm}{\begin{remark}\begin{rm}}
\newcommand{\erm}{\end{rm}\end{remark}}
\newcommand{\bce}{\begin{compactenum}}
\newcommand{\ece}{\end{compactenum}}
\newcommand{\Ult}{\mathrm{Ult}}
\newcommand{\cf}{\mathrm{cf}}
\newcommand{\lev}{\mathrm{lev}}
\newcommand{\acc}{\mathrm{acc}}
\newcommand{\nacc}{\mathrm{nacc}}
\newcommand{\width}{\mathrm{width}}
\newcommand{\Add}{\mathrm{Add}}
\newcommand{\R}{\bb{R}}
\newcommand{\Q}{\bb{Q}}
\renewcommand{\P}{\bb{P}}
\newcommand{\M}{\bb{M}}
\newcommand{\K}{\bb{K}}
\newcommand{\x}{\times}
\newcommand{\TP}{{\sf TP}}
\newcommand{\ITP}{{\sf ITP}}
\newcommand{\SP}{{\sf SP}}
\newcommand{\ISP}{{\sf ISP}}
\newcommand{\ZFC}{\sf ZFC}
\newcommand{\GCH}{\sf GCH}
\newcommand{\PFA}{\sf PFA}
\newcommand{\MA}{\sf MA}
\newcommand{\wKH}{\sf wKH}
\newcommand{\AGP}{\mathsf{AGP}}
\newcommand{\CF}{\mathsf{CF}}
\newcommand{\SCF}{\mathsf{SCF}}
\newcommand{\NS}{\mathsf{NS}}
\newcommand{\SNS}{\mathsf{SNS}}
\newcommand{\wSP}{\mathsf{wSP}}
\newcommand{\wAGP}{\mathsf{wAGP}}
\newcommand{\GMP}{\mathsf{GMP}}
\begin{document}

\title{Strong tree properties, Kurepa trees, and 
guessing models}

\author{Chris Lambie-Hanson}
\address[Lambie-Hanson]{
Institute of Mathematics, 
Czech Academy of Sciences, 
{\v Z}itn{\'a} 25, Prague 1, 
115 67, Czech Republic
}
\email{lambiehanson@math.cas.cz}
\urladdr{https://users.math.cas.cz/~lambiehanson/}

\author{{\v S}{\'a}rka Stejskalov{\'a}}
\address[Stejskalov{\'a}]{
Charles University, Department of Logic,
Celetn{\' a} 20, Prague~1, 
116 42, Czech Republic
}
\email{sarka.stejskalova@ff.cuni.cz}
\urladdr{logika.ff.cuni.cz/sarka}

\address{Institute of Mathematics, Czech Academy of Sciences, {\v Z}itn{\'a} 25, Prague 1, 115 67, Czech Republic}

\thanks{Both authors were supported by the institutional support RVO:67985840. The first author was supported by GA\v{C}R grant \emph{Compactness in set theory and its applications in algebra and graph theory} (23-04683S). The second author was supported by FWF/GA\v{C}R grant \emph{Compactness principles and combinatorics} (19-29633L).
We would like to thank Rahman Mohammadpour for discussions about an early version of this work and for pointing out the connections between our work and the question of Viale regarding $\omega_2$-guessing models that are not $\omega_1$-guessing models.}

\begin{abstract}
  We investigate the generalized tree properties and guessing model properties introduced by Wei\ss\ and 
  Viale, as well as natural weakenings thereof, studying the relationships among these properties and 
  between these properties and other prominent combinatorial principles. We introduce a weakening of 
  Viale and Wei\ss's Guessing Model Property, which we call the Almost Guessing Property, 
  and prove that it provides an alternate formulation of 
  the slender tree property in the same way that the Guessing Model Property provides and alternate 
  formulation of the ineffable slender tree property. We show that instances of the Almost Guessing 
  Property have sufficient strength to imply, for example, failures of square or the nonexistence of 
  weak Kurepa trees. We show that these instances of the Almost Guessing Property hold in the Mitchell model 
  starting from a strongly compact cardinal and prove a number of other consistency results showing 
  that certain implications between the principles under consideration are in general not reversible. 
  In the process, we provide a new answer to a question of Viale by constructing a model in which, for 
  all regular $\theta \geq \omega_2$, there are stationarily many $\omega_2$-guessing models 
  $M \in \power_{\omega_2} H(\theta)$ that are not $\omega_1$-guessing models.
\end{abstract}

\keywords{generalized tree properties, guessing models, two-cardinal combinatorics, Kurepa trees, square principles, Mitchell forcing}
	\subjclass[2020]{03E35, 03E55, 03E05}
	\maketitle

\tableofcontents

\section{Introduction}

A central line of research in modern combinatorial set theory concerns the study of the extent 
to which various properties of large cardinals can consistently hold at small cardinals, and the 
exploration of the web of implications and non-implications existing among these principles. 
These large cardinal properties can typically be interpreted as assertions of various forms of 
\emph{compactness}, i.e., statements asserting that the global behavior of certain combinatorial 
structures necessarily reflects their local behavior. 

Among the more prominent of these principles in recent years have been various two-cardinal tree 
properties. Their study dates back at least to the 1970s, when they were used to provide 
combinatorial characterizations of strongly compact and supercompact cardinals. In modern 
terminology, Jech \cite{jech_combinatorial_problems} proved that an uncountable cardinal 
$\kappa$ is strongly compact if and only if, for all $\lambda \geq \kappa$, every $(\kappa, 
\lambda)$-list has a cofinal branch. Shortly thereafter, Magidor 
\cite{magidor_combinatorial_characterization} proved that $\kappa$ is supercompact if and only 
if, for all $\lambda \geq \kappa$, every $(\kappa, \lambda)$-list has an ineffable branch 
(see Section \ref{strong_tree_section} for precise definitions of these terms).

The modern study of these two-cardinal tree properties at small cardinals was initiated in the 
2000s by Wei\ss\ \cite{weiss}, who realized that, although it is impossible to have a small cardinal 
$\kappa$ 
for which every $(\kappa, \lambda)$-list has a cofinal branch, if one restricts one's attention to 
certain interesting subclasses of $(\kappa, \lambda)$-lists, one obtains consistent yet nontrivial 
principles. In particular, Wei\ss\ introduced the notions of \emph{thin} and \emph{slender} 
$(\kappa, \lambda)$-lists (with slenderness being a weakening of thinness). These in turn gave rise to 
four compactness principles asserting that, for a given uncountable regular cardinal $\kappa$, for 
every $\lambda \geq \kappa$, every thin (or slender) $(\kappa, \lambda)$-list has a cofinal (or ineffable) 
branch. The strongest of these principles, denoted $\ISP_\kappa$, is the assertion that every 
slender $(\kappa, \lambda)$-list has an ineffable branch. 
Wei\ss\ proved that these principles can consistently hold at small cardinals; for example, 
$\ISP_{\omega_2}$ holds in the supercompact Mitchell model.

In \cite{viale_weiss}, Viale and Wei\ss\ proved that $\ISP_{\omega_2}$ also follows from the 
Proper Forcing Axiom ($\PFA$), in the process giving a useful reformulation of $\ISP_\kappa$ 
in terms of the existence of powerful elementary substructures known as \emph{guessing models} 
(again, see Section \ref{strong_tree_section} for precise definitions). The guessing model 
formulation of $\ISP_\kappa$ has proven quite useful in efficiently establishing that a wide variety 
of other compactness principles are in fact consequences of $\ISP_\kappa$. For example, the following 
is a partial but representative list of some of the consequences of $\ISP_{\omega_2}$:
\begin{itemize}
  \item (Wei\ss\ \cite{weiss}) the failure of $\square(\lambda, \omega_1)$ for every 
  regular $\lambda \geq \omega_2$ (this only requires the weaker $\ITP_{\omega_2}$);
  \item (Viale \cite{viale_guessing_models}, Krueger \cite{krueger_sch}) the Singular Cardinals 
  Hypothesis;
  \item (Lambie-Hanson \cite{narrow_systems} and L\"{u}cke \cite{lucke}, independently) 
  the narrow system property 
  $\mathsf{NSP}(\omega_1, \lambda)$ for every regular $\lambda \geq \omega_2$;
  \item (Cox--Krueger \cite{cox_krueger_indestructible}) the negation of the weak Kurepa Hypothesis 
  ($\neg \wKH$).
\end{itemize}

This paper is the first in a projected series of works further exploring the world of 
these two-cardinal tree properties and their relatives. A future installment 
\cite{arithmetic_paper} will deal primarily with questions connected to cardinal 
arithmetic and cardinal characteristics; the present entry can be seen as an investigation 
into natural weakenings of $\ISP_\kappa$ and the extent to which these weakenings still 
imply various combinatorial consequences of $\ISP_\kappa$.

This investigation inevitably entails a more fine-grained analysis of the notions of 
\emph{slenderness} and \emph{guessing model}. Still leaving the formal definitions for 
Section \ref{strong_tree_section}, let us give a broad overview here. The standard notion 
of ``slender" can naturally be seen as ``$\omega_1$-slender", and the standard notion of 
``guessing model" can naturally be seen as ``$\omega_1$-guessing model". One can obtain 
natural variations of these notions by replacing $\omega_1$ by any other uncountable 
cardinal $\mu$. Following \cite{hachtman_sinapova_super_tree_property}, given 
cardinals $\mu \leq \kappa \leq \lambda$ with $\kappa$ regular, let 
$\SP(\mu, \kappa, \lambda)$ (resp.\ $\ISP(\mu, \kappa, \lambda)$) be the assertion that 
every $\mu$-slender $(\kappa, \lambda)$-list has a cofinal (resp.\ ineffable) branch. 
It will follow immediately from the definitions that these principles become stronger 
as $\mu$ decreases. Arguments of Viale and Weiss \cite{viale_weiss} show that, for 
uncountable cardinals $\mu \leq \kappa$, the following two statements are equivalent:
\begin{itemize}
  \item $\ISP(\mu, \kappa, \lambda)$ holds for every $\lambda \geq \kappa$;
  \item the set of $\mu$-guessing models is stationary in $\power_\kappa H(\theta)$ 
  for every regular $\theta \geq \kappa$.
\end{itemize}
The statement in the second bullet point above is abbreviated here as $\GMP(\mu, \kappa, 
\theta)$ ($\GMP$ stands for ``guessing model property"). In this paper, we isolate a 
weakening of $\GMP$, which we denote by $\AGP$ (for ``almost guessing property"), 
and we also introduce further weakenings of $\SP$ and $\AGP$, $\wSP$ and 
$\wAGP$ respectively, by replacing all instances 
of ``club" and ``stationary" in the definitions by ``strong club" and ``weakly 
stationary", respectively. We then prove the following theorem.

\begin{thma}
  For all uncountable cardinals $\mu \leq \kappa$, with $\kappa$ regular, the following 
  are equivalent:
  \begin{enumerate}
    \item $\SP(\mu, \kappa, \lambda)$ (resp.\ $\wSP(\mu, \kappa, \lambda)$) 
    holds for all $\lambda \geq \kappa$;
    \item $\AGP(\mu, \kappa, \theta)$ (resp.\ $\wAGP(\mu, \kappa, \theta)$) 
    holds for all regular $\theta \geq \kappa$.
  \end{enumerate}
\end{thma}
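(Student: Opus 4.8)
The plan is to adapt the Viale–Weiss equivalence between $\ISP(\mu,\kappa,\lambda)$ and $\GMP(\mu,\kappa,\theta)$ to the ``cofinal branch'' setting, handling the standard and the weak (``strong club''/``weakly stationary'') versions in parallel. Since the definitions of $\AGP$ and $\wAGP$ will presumably be cooked up precisely so that the argument goes through, I expect the proof to be a careful bookkeeping exercise matching slender lists to guessing-type models, rather than requiring a genuinely new idea. I would prove the equivalence in the contrapositive form at each arrow: a failure of the tree-property side produces a slender list with no cofinal branch, which I use to ``kill'' guessing for a club (or strong club) of models; conversely a failure of the model side gives a non-guessing model from which I read off a slender list with no branch.

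First I would fix notation: given regular $\theta \geq \kappa$ and a $\mu$-slender $(\kappa,\lambda)$-list $\vec d = \langle d_a \mid a \in \power_\kappa \lambda\rangle$, consider a continuous $\in$-chain $\langle M_\xi \mid \xi < \kappa\rangle$ of elementary submodels of $H(\theta)$ each containing $\vec d$ and of size ${<}\kappa$, with $M_\xi \cap \kappa \in \kappa$. The key local computation is: if $M = M_\xi$ and $d = \bigcup\{d_{a} \mid a \in M \cap \power_\kappa\lambda\}$ (equivalently $d_{M \cap \lambda}$ in the thin case; in the slender case one works with $d_{M\cap\lambda}$ directly, using $M$-slenderness to see $d_{M\cap\lambda}\cap z \in M$ for $z \in M \cap \power_\mu \lambda$), then ``$d$ is $M$-approximated'' in the sense relevant to $\mu$-guessing, and $d$ is $M$-guessed by some $e \in M$ precisely when $e$ witnesses a cofinal branch of $\vec d$ through $M$. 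Thus: $\vec d$ has a cofinal branch iff club-many (indeed, all sufficiently closed) $M$ guess the associated $d_M$; and $\vec d$ has \emph{no} cofinal branch iff the set of $M$ for which $d_M$ is $\mu$-approximated but not $\mu$-guessed is ``large.'' The precise meaning of ``large'' — club vs.\ strong club, stationary vs.\ weakly stationary — is exactly where $\AGP$ versus $\wAGP$ enters, and where the definitions of these principles from Section \ref{strong_tree_section} must be plugged in verbatim.

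For the direction $(1)\Rightarrow(2)$: assume $\AGP(\mu,\kappa,\theta)$ fails, so there is a regular $\theta$ and a stationary (in the standard case) set $S \subseteq \power_\kappa H(\theta)$ of models that are \emph{not} $\mu$-guessing, each equipped with a witnessing $\mu$-approximated but non-guessed set; by a standard Fodor/pressing-down or diagonal argument, assemble these witnesses into a single $(\kappa,\lambda)$-list (for suitable $\lambda$, e.g.\ $\lambda = \theta$ or $|H(\theta)|$) that is $\mu$-slender by the approximation property, and show a cofinal branch of it would yield a guessing set for stationarily many of the $M$'s, contradiction — so $\SP(\mu,\kappa,\lambda)$ fails. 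For $(2)\Rightarrow(1)$: assume $\SP(\mu,\kappa,\lambda)$ fails, fix a $\mu$-slender $(\kappa,\lambda)$-list $\vec d$ with no cofinal branch, let $\theta$ be large enough that $\vec d \in H(\theta)$, and show that for a club (strong club) of $M \in \power_\kappa H(\theta)$ with $\vec d \in M$ and $M\cap\kappa\in\kappa$, the set $d_{M\cap\lambda}$ is $\mu$-approximated by $M$ but not guessed — the non-guessing being exactly the failure of a cofinal branch through $M$ — so $M$ is not a $\mu$-guessing model; hence $\GMP$, and a fortiori $\AGP$, fails at $\theta$. The weak versions run identically with ``club/stationary'' replaced throughout by ``strong club/weakly stationary,'' which is harmless because the correspondence between models and list-levels is insensitive to which notion of largeness one fixes.

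The main obstacle, and the step I would spend the most care on, is the slenderness bookkeeping in the local computation: verifying that the list assembled from non-guessing witnesses in the $(1)\Rightarrow(2)$ direction is genuinely $\mu$-slender, i.e.\ that for every $a \in \power_\kappa\lambda$ and every $z \in \power_\mu\lambda$ (or $z \in M\cap\power_\mu a$ as in the relevant definition) one has $d_a \cap z$ lying in the right model — this is where the $\mu$-approximation property of each witness must be transmuted into slenderness of the global list, and it is easy to be off by a model in the chain or to mishandle the case where $z$ is not covered by a single $M_\xi$. A secondary technical point is ensuring the parameters line up so that $\mu$-guessing for the \emph{specific} $\lambda$ used suffices to conclude $\SP(\mu,\kappa,\lambda)$ for \emph{all} $\lambda \geq \kappa$ (and symmetrically), which follows from the standard fact that a cofinal branch for one sufficiently large $\lambda$ (or equivalently for $H(\theta)$) transfers down, but should be stated explicitly.
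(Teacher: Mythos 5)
Your plan has a genuine gap: you have not correctly guessed what $\AGP$ is, and the argument you describe would prove a false statement. You write that a failure of $\AGP(\mu,\kappa,\theta)$ hands you ``a stationary set $S \subseteq \power_\kappa H(\theta)$ of models that are \emph{not} $\mu$-guessing,'' and in the $(2)\Rightarrow(1)$ direction you deduce that the models produced from a branchless slender list ``are not $\mu$-guessing models; hence $\GMP$, and a fortiori $\AGP$, fails.'' This treats $\AGP$ as if it were $\GMP$, and the ``a fortiori'' is backward: $\AGP$ is a \emph{weakening} of $\GMP$, so non-stationarity of guessing models (i.e.\ $\neg\GMP$) does not give $\neg\AGP$. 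More seriously, the statement $\SP(\mu,\kappa,\geq\kappa)\Leftrightarrow\GMP(\mu,\kappa,\geq\kappa)$ is false --- $\GMP$ is equivalent to the strictly stronger $\ISP$ --- so no amount of bookkeeping makes your ported $\ISP$/$\GMP$ argument go through with $\SP$ in place of $\ISP$.

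The missing idea is exactly the one the new principle is built around. Where the $\ISP\Rightarrow\GMP$ proof works is that an \emph{ineffable} branch $d$ satisfies $d\cap M=d_M$ on a stationary set, so one can press down and land a single bad $M$ with $d\in M$, making $M$ itself the guesser. A mere \emph{cofinal} branch only gives, for each small $N$, some $M\supseteq N$ with $d\cap N=d_M\cap N$; you never get $d$ inside a bad $M$, so you cannot conclude $M$ guesses. The principle $\AGP$ is designed precisely to decouple the two roles your sketch conflates: it fixes a cofinal family $S$ in advance and asks, for each $(\mu,M)$-approximated $d\subseteq x$, for some \emph{submodel} $N\in S$ with $x\in N\subseteq M$ such that $d$ is $N$-guessed --- the model that approximates ($M$) and the model that guesses ($N$) are allowed to differ. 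With this definition, the paper's $\SP\Rightarrow\AGP$ argument runs as you would hope (build a list from bad pairs $(M,d_M)$, get a cofinal branch $d$, find $N\in S$ with $d\in N$, and then $M\supseteq N$ from the branch property, contradicting that no such $N$ exists for $M$), and the converse $\AGP\Rightarrow\SP$ is a \emph{direct} argument: take $S$ to consist of small elementary submodels containing the list, use $\AGP$ to extract an $N\in S$ guessing $d_{M\cap\lambda}$ via some $e\in N$, and then read off by elementarity \emph{of $N$} (not of $M$) that $e$ is a cofinal branch. Your contrapositive framing of $(2)\Rightarrow(1)$ would force you to exhibit, for an \emph{arbitrary} cofinal $S$, a club of $M$ failing the almost-guessing condition, which is both harder and not what you sketch; the direct argument is what the paper uses, and the elementarity-of-$N$ step is where the proof genuinely lives.
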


One reason for introducing these weakenings is the fact that, unlike $\ISP$, they hold 
at cardinals that are strongly compact but not supercompact, or in the model obtained 
by Mitchell forcing starting with a cardinal that is strongly compact but not supercompact. 
For example, we obtain the following theorem, where, for a set $\mc Y \subseteq 
\power_\kappa \lambda$, the principle $\wSP_{\mc Y}(\mu, \kappa, \lambda)$ denotes the 
strengthening of $\wSP(\mu, \kappa, \lambda)$ in which expands the set of $(\kappa, 
\lambda)$-lists under consideration to include those indexed only by elements of 
$\mc Y$ rather than the full $\power_\kappa \lambda$. 

\begin{thmb}
  In the strongly compact Mitchell model, for every cardinal $\lambda \geq \omega_2$, 
  $\wSP_{\mc Y}(\omega_1, \omega_2, \lambda)$ holds, where $\mc Y := \{x \in \power_\kappa 
  \lambda \mid \cf(M \cap \lambda) = \omega_1\}$.
\end{thmb}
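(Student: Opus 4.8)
The plan is to lift a strong‑compactness embedding through the Mitchell forcing and extract the branch from an index that simultaneously covers $j''\lambda$, is the $j(\lambda)$‑trace of an elementary submodel of the target, and has supremum of cofinality $\omega_1$; this index plays the role that $j''\lambda$ itself plays in Wei\ss's proof of $\ISP$ in the supercompact case, and the third property is exactly what the $\mc Y$‑restriction buys us and what lets the argument go through from a merely strongly compact cardinal. In detail: let $\kappa$ be strongly compact in $V$, let $\M$ be the Mitchell forcing making $\kappa=\omega_2$, and let $W=V[G]$ be the extension. Fix $\lambda\geq\omega_2$ (automatically a cardinal of $V$, since $\M$ is $\kappa^+$‑c.c.\ of size $\kappa$), a sufficiently large regular $\theta$, and a strongly $\omega_1$‑slender $(\omega_2,\lambda)$‑list $\vec d=\{d_x\mid x\in\mc Y\}$ in $W$, where $\mc Y=\{x\in\power_{\omega_2}\lambda\mid \cf(\sup(x\cap\lambda))=\omega_1\}$, together with a strong club $C\subseteq\power_{\omega_2}H(\theta)^W$ witnessing slenderness. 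We must produce $b\subseteq\lambda$ in $W$ such that $S:=\{x\in\mc Y\mid b\cap x=d_x\}$ is weakly stationary.

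\emph{Embedding, lift, and master model.} Working in $V$, use the $|H(\theta)^W|$‑strong compactness of $\kappa$ to fix $j:V\to M$ with $\mathrm{crit}(j)=\kappa$, $j(\kappa)>|H(\theta)^W|$, $M$ closed under $\kappa$‑sequences, and $\M\in M$. Lift $j$ through $\M$ by the now‑standard analysis: factor $j(\M)\cong\M\ast\dot{\mathbb S}$ and, passing through the auxiliary $<\!\kappa$‑support product presentation of Mitchell forcing — whose ``collapse'' factor is $\sigma$‑closed, so that $M$, being closed under $\omega_1$‑sequences from $V$, carries the requisite master conditions — build inside $W$ an $M[G]$‑generic $H$ for $\mathbb S$ with $j''G$ generating $G\ast H$, obtaining a lift $j:W\to W^{*}:=M[G][H]$ with $W^{*}\subseteq W$. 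By elementarity $j(\vec d)$ is a strongly $\omega_1$‑slender $j(\mc Y)$‑list with slenderness witness $j(C)$. Now, inside $W^{*}$, carry out a simultaneous construction of length $\omega_1$ producing an increasing continuous chain $\langle N_i\mid i<\omega_1\rangle$ with union $N^{*}$ such that $j''H(\theta)^W\cup\{j(\vec d),j(C)\}\subseteq N_0$, $\langle N_k\mid k\leq i\rangle\in N_{i+1}$, each $N_i\prec H(j(\theta))^{W^{*}}$ lies in $j(C)$, $|N^{*}|<j(\kappa)$, $\sup(N_i\cap j(\lambda))$ and $\sup(N_i\cap j(\kappa))$ are strictly increasing, and $N^{*}$ is closed in the sense that $j(g)(a)\subseteq N^{*}$ for every finite $a\subseteq N^{*}$ and every monotone $g\in H(\theta)^W$ generating a strong club on $\power_{\omega_2}\lambda$ (there are fewer than $j(\kappa)$ such $g$, each with outputs of size $<\kappa$, so this is compatible with $|N^{*}|<j(\kappa)$ by regularity of $j(\kappa)$). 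Put $A:=N^{*}\cap j(\lambda)$. Then $j''\lambda\subseteq A$, $|A|<j(\kappa)$, and $\cf^{W^{*}}(\sup A)=\omega_1$, so $A\in j(\mc Y)$; and $N^{*}\in j(C)$, so $\omega_1$‑slenderness of $j(\vec d)$ applies at the index $A$ via $N^{*}$.

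\emph{The branch and the reduction.} Set $b:=\{\alpha<\lambda\mid j(\alpha)\in(j(\vec d))_A\}\in W^{*}\subseteq W$. To see that $S$ is weakly stationary, let $D\in W$ be an arbitrary strong club on $\power_{\omega_2}\lambda$; replacing $D$ by a smaller strong club we may assume $D$ equals the set of closure points of a monotone $g:[\lambda]^{<\omega}\to\power_{\kappa}\lambda$ with $g\in H(\theta)^W$. Then $j(D)$ is the set of closure points of $j(g)$, and $A=N^{*}\cap j(\lambda)$ is $j(g)$‑closed by construction, so $A\in j(D)$; combined with $A\in j(\mc Y)$ and the identity $j(S)=\{x\in j(\mc Y)\mid j(b)\cap x=(j(\vec d))_x\}$, everything reduces to the single equality
\[
j(b)\cap A=(j(\vec d))_A .
\]
Granting it, $A\in j(S)\cap j(D)$, whence by elementarity $S\cap D\neq\emptyset$; as $D$ was arbitrary, $S$ is weakly stationary, which is what $\wSP_{\mc Y}(\omega_1,\omega_2,\lambda)$ asserts (that $\vec d$ is $\mc Y$‑indexed rather than $\power_{\omega_2}\lambda$‑indexed changes nothing, since only indices from $\mc Y$ and its $j$‑image are ever evaluated).

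\emph{The main obstacle.} Verifying $j(b)\cap A=(j(\vec d))_A$ is the heart of the matter. On $j''\lambda$ it is immediate from the definition of $b$, since there $j(b)\cap j''\lambda=j''b=(j(\vec d))_A\cap j''\lambda$; the whole difficulty is to control the ``gap'' $A\setminus j''\lambda$, and this is precisely where the cofinality‑$\omega_1$ of $\sup A$ and the $\omega_1$‑slenderness of $j(\vec d)$ become indispensable. I expect the argument to proceed along the chain $A=\bigcup_{i<\omega_1}A_i$ with $A_i=N_i\cap j(\lambda)\in N^{*}$: using that $j(b)\in N^{*}$ (as $b\in H(\theta)^W$), that $j(b)$ and $(j(\vec d))_A$ already agree on $j''\lambda$, that every countable trace of $(j(\vec d))_A$ lying in $N^{*}$ belongs to $N^{*}$, and that $N^{*}$ is elementary, one synchronizes $j(b)$ with $(j(\vec d))_A$ on successively larger $A_i$ and passes to the limit. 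The point — and the reason one obtains only the ``almost''/weak variants rather than $\GMP$ or $\ISP$ outright — is that this $\omega_1$‑approximation has something to work with only because $\sup A$ has cofinality $\omega_1$; for an index of arbitrary cofinality the synchronization simply breaks down. The one remaining point requiring care, the lifting of Mitchell forcing from a strongly compact (rather than supercompact) cardinal, is routine once the product presentation above is in hand.
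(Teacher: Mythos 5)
Your proposal has two genuine gaps that together make it fall short of a proof.

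\textbf{First, you prove the wrong target and then get stuck on it.} You state that $\wSP_{\mc Y}(\omega_1,\omega_2,\lambda)$ asserts that $S=\{x\in\mc Y\mid b\cap x=d_x\}$ is weakly stationary, and you correctly identify the key equality $j(b)\cap A=(j(\vec d))_A$ as ``the heart of the matter''; but then you never verify it, offering instead a sketch of a ``synchronization'' you ``expect'' to go through. But $\wSP$ asserts only the existence of a \emph{cofinal branch}, not a weakly stationary agreement set (the latter is the $\wISP$-type conclusion). The paper sidesteps your whole obstacle: after defining $b:=\{\alpha<\lambda\mid j(\alpha)\in d'_w\}$, the only identity used is $d'_w\cap j(x)=j``(b\cap x)=j(b\cap x)$ for $x\in\power_\kappa\lambda$, which is immediate from $j(x)=j``x\subseteq j``\lambda$ and requires no control over $d'_w$ on $w\setminus j``\lambda$. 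Elementarity of $j$ then pulls back, for each $x$, the statement ``there is $z\supseteq x$ with $d_z\cap x=b\cap x$,'' which is precisely the cofinal-branch property. Once one realizes only the agreement on $j``\lambda$ is needed, the gap between $j``\lambda$ and the full index disappears; insisting on the full agreement is what makes the strongly compact case appear intractable, and indeed that stronger claim is what one cannot in general obtain from mere strong compactness.

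\textbf{Second, the claim that the lift can be done inside $W$ is not justified for a strongly compact $\kappa$.} You assert that the requisite master conditions for the quotient $j(\M)/\M$ can be constructed inside $W=V[G]$ because $M$ is ``closed under $\omega_1$-sequences from $V$,'' yielding $W^*=M[G][H]\subseteq W$. But $j(\M)/\M$ is not $\omega_1$-closed, and for a merely strongly compact embedding $M$ will not in general be closed enough to carry out a master-condition construction of this generic inside $W$; this is exactly what supercompactness buys and strong compactness does not. The paper instead forces with $j(\M)/\M$ over $V[G]$ to obtain $g$, lifts $j$ in $V[G][g]$, and then shows $b\in V[G]$ using the $\mu^+$-approximation property of the quotient: the key step (Claim~\ref{u_approximation_claim}) is that $b\cap u\in V[G]$ for every $u\in(\power_{\mu^+}\lambda)^{V[G]}$, which is verified via the slenderness witness and the fact that $N\cap j(\kappa)=\kappa$ is inaccessible in $V$. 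To make the last point work, the paper is also careful to choose the strong club $E$ \emph{in the ground model $V$} (by a $\kappa$-c.c.\ reflection argument through a fixed $\bar N\prec H(\theta)^V$), rather than constructing the model $N$ post hoc inside $W^*$ as you do; your construction does not obviously secure $N\cap j(\kappa)=\kappa$ nor $d'_A\cap u\in V[G]$. Finally, a more minor point: the paper obtains $\cf(\sup(w\cap j(\lambda)))=\omega_1$ for free, because the Mitchell forcing $j(\M)$ collapses every $V$-regular $\chi\in[\kappa,j(\kappa))$ to cofinality $\mu^+$ and $\sup(w\cap j(\lambda))=\sup j``\lambda$; your $\omega_1$-length chain construction in $W^*$ is unnecessary and is also where the cofinality-of-$A$ structure leads you to try the (unavailable) full agreement argument.
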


While these principles are substantial weakenings of $\ISP$, they are still sufficient 
to obtain some of the combinatorial consequences of $\ISP$. For example, we obtain the 
following results.

\begin{thmc}
  Let $\mu$ be a regular uncountable cardinal.
  \begin{enumerate}
    \item If $\wAGP(\mu, \mu^+, \mu^+)$ holds, then there are no weak 
    $\mu$-Kurepa trees.
    \item Suppose that $\chi < \chi^+ < \kappa \leq \lambda$ are infinite cardinals, 
    with $\kappa$ regular and $\cf(\lambda) \geq \kappa$, and let 
    $\mc Y := \{M \in \power_\kappa H(\lambda^+) \mid 
    \cf(\sup(M \cap \lambda)) > \chi\}$. If $\wAGP_{\mc Y}(\kappa, \kappa, \lambda^+)$ 
    holds, then there are no subadditive, strongly unbounded functions $c:[\lambda]^2 
    \rightarrow \chi$. In particular, $\square(\lambda)$ fails.
  \end{enumerate}
\end{thmc}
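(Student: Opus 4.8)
The plan is to argue by contradiction in each case: from the putative object we build a $\mu$-approximated set that an appropriate (weak) almost-guessing model must (almost-)guess, and we extract a contradiction from the guess. I will work directly with the almost-guessing models supplied by $\wAGP$ (resp.\ $\wAGP_{\mc Y}$) rather than routing through Theorem A.

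For (1), suppose $T$ is a weak $\mu$-Kurepa tree; without loss of generality its underlying set is $\mu$, so each level $L_\alpha$ ($\alpha<\mu$) has size $\le\mu$, and $T$ has a set $\mc B$ of at least $\mu^+$ cofinal branches, each viewed as a function $b$ on $\mu$ with $b(\alpha)\in L_\alpha$. Using $\wAGP(\mu,\mu^+,\mu^+)$, fix an almost-$\mu$-guessing model $M\in\power_{\mu^+} H(\mu^+)$ with $T\in M$, $\mu\subseteq M$, and $M\cap\mu^+$ a limit ordinal; then $|M|=\mu$ and, since $\mu\subseteq M$ and $|T|=\mu$, in fact $T\subseteq M$. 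The key observation is that \emph{every} $b\in\mc B$ is $\mu$-approximated by $M$: for each $\alpha<\mu$ we have $b(\alpha)\in L_\alpha\subseteq M$, and $b\restriction\alpha=\{t\in T : t<_T b(\alpha)\}$ is then definable in $M$ from $T$ and $b(\alpha)$, so $b\restriction\alpha\in M$; since $\mu$ is regular, any $z\in M$ with $z\subseteq T$ and $|z|<\mu$ is contained in some $T\restriction\alpha$ with $\alpha\in M$, whence $b\cap z=(b\restriction\alpha)\cap z\in M$. Crucially this uses nothing about $\mc B$, which (having size $>\mu$) is not even an element of $H(\mu^+)$ and is invisible to $M$. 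Now choose $b\in\mc B\setminus M$, which exists since $|\mc B|\ge\mu^+>\mu=|M|$. Applying the almost-guessing property to $b$ (and noting that $b$, being a cofinal branch, is not "small" in the relevant sense), we obtain $e\in M$, which we may take to be a subset of $T$, with $e\cap M=e$ agreeing with $b\cap M=b$ up to a set of size $<\mu$; by regularity of $\mu$ this exceptional set lies inside some $T\restriction\alpha_0$ with $\alpha_0<\mu$. Then $e\setminus T\restriction\alpha_0=b\setminus T\restriction\alpha_0=\{b(\alpha) : \alpha\ge\alpha_0\}\in M$ (as $e,T\restriction\alpha_0\in M$), $b(\alpha_0)\in L_{\alpha_0}\subseteq M$, and hence $b\restriction\alpha_0=\{t\in T : t<_T b(\alpha_0)\}\in M$; so $b\in M$, a contradiction.

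For (2), suppose toward a contradiction that $c:[\lambda]^2\to\chi$ is subadditive and strongly unbounded. For $i<\chi$ and $\beta<\lambda$ put $A^i_\beta=\{\gamma<\beta : c(\gamma,\beta)\le i\}$; subadditivity yields the coherence $A^i_\beta\cap\alpha=A^i_\alpha$ whenever $\alpha<\beta$ and $c(\alpha,\beta)\le i$, and $c(\gamma,\gamma')\le i$ for all $\gamma,\gamma'\in A^i_\beta$. From this data one builds a $\kappa$-slender $(\kappa,\lambda^+)$-list $\vec d=\langle d_M : M\in\mc Y\rangle$: for $M\in\mc Y$ with $\delta_M:=\sup(M\cap\lambda)$, the hypothesis $\cf(\delta_M)>\chi$ forces a threshold $i_M<\chi$ with $A^{i_M}_{\delta_M}$ cofinal in $\delta_M$, and one lets $d_M$ code, inside $M$, the pair $(i_M,\, A^{i_M}_{\delta_M}\cap M)$. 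Verifying $\kappa$-slenderness is where subadditivity and the restriction to $\mc Y$ are used together: given a small $z$ in a suitable submodel $N$, pick $\beta\in N$ with $\sup(z)<\beta<\delta_N$ and $c(\beta,\delta_N)\le i_N$ — possible because $\cf(\delta_N)>\chi$ makes $A^{i_N}_{\delta_N}$ cofinal in $\delta_N$ and, being captured along $N$, meet $N$ cofinally in $\delta_N$ — so that by coherence "$\gamma\in A^{i_N}_{\delta_N}$" is equivalent to "$\gamma\in A^{i_N}_\beta$" for $\gamma\in z$, the latter being computable in $N$. Now apply $\wAGP_{\mc Y}(\kappa,\kappa,\lambda^+)$: for an almost-$\kappa$-guessing model $M^*\in\mc Y$ with $c\in M^*$, the set $d_{M^*}$ is $\kappa$-approximated, hence almost-guessed by some $e\in M^*$; unwinding, $e$ produces a single $j<\chi$ and $B\in M^*$ with $B\subseteq\lambda$, $c\restriction[B\cap M^*]^2\le j$, and $B\cap M^*$ cofinal in $\delta_{M^*}$. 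By elementarity of $M^*$, together with a further use of subadditivity to push the bound $j$ past $\delta_{M^*}$ along a cofinal-in-$\lambda$ subset of $B$ recoverable inside $M^*$, one gets an unbounded $B'\subseteq\lambda$ with $c\restriction[B']^2$ bounded below $\chi$, contradicting strong unboundedness. The last clause follows since $\square(\lambda)$ yields, by standard walks-on-ordinals arguments, a subadditive, strongly unbounded colouring $[\lambda]^2\to\omega$, which is in particular $\chi$-valued (note $\kappa>\omega_1$ is forced by $\chi^+<\kappa$).

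The main obstacle is different in the two parts. In (1) essentially all the content is in the observation that a cofinal branch of $T$ is $\mu$-approximated by $M$ even though $\mc B\notin H(\mu^+)$, plus the bookkeeping turning "almost-guessed" into "guessed" via regularity of $\mu$; after that the contradiction is immediate. In (2) the hard points are symmetric: the verification that $\vec d$ is $\kappa$-slender, and the step spreading the bound on $c\restriction[B\cap M^*]^2$ to an unbounded subset of $\lambda$; both hinge on a careful interplay between subadditivity and the cofinality constraint built into $\mc Y$, which is precisely why the $\mc Y$-relativization (rather than the plain principle) appears in the hypothesis. One must also keep track of the distinction between "strong club"/"weakly stationary" and their unweakened counterparts, but the standard closure demands used above — containing a fixed set, containing $\mu$ resp.\ $\kappa$, having $M\cap\mu^+$ resp.\ $M\cap\lambda$ of the right form — still cut out strong clubs, so the weakenings cause no trouble here.
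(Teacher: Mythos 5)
Your approach to both parts is broadly in the right spirit — in each case you want to exhibit a $(\mu,M)$- (resp.\ $(\kappa,M)$-) approximated set and use the almost-guessing property to produce a guess that leads to a contradiction — but there is a concrete definitional error that runs through the whole proposal and causes both arguments, as written, to not follow from the hypothesis.

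\textbf{The misreading of ``almost guessed.''} In the paper, $(M,x)$ being almost $\mu$-guessed by $S$ means: for every $(\mu,M)$-approximated $d\subseteq x$ there is $N\in S$ with $x\in N\subseteq M$ and $d$ \emph{exactly} $N$-guessed (i.e.\ some $e\in N$ has $e\cap N=d\cap N$). You are instead reading it as asserting the existence of $e\in M$ whose intersection with $M$ agrees with $d\cap M$ ``up to a set of size $<\mu$.'' This is a different, unrelated statement. In part (1) this mistake produces the extraneous bookkeeping about the exceptional set and $\alpha_0$; in fact the actual definition gives you $N\in S$ with $\mu\subseteq N\subseteq M$ and $e\in N$ with $e\cap N=b\cap N$, and since $b,e\subseteq\mu\subseteq N$ this forces $e=b$ outright, so $b\in N\subseteq M$ with no further argument. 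Your proposal, as written, is proving a consequence of a principle that is not the paper's $\wAGP$. The paper's proof of (1) is otherwise very close to yours: it sets $S$ to be the $N\prec H(\mu^+)$ with $|N|<\mu^+$, $T\in N$, $\mu\subseteq N$; it further asks (via Proposition 7.2) that $M$ be a union of elements of $S$, which is what lets you find, for each small $z\in M$, some $N\in S$ with $z\in N\subseteq M$; and it shows $b\cap z$ is computed inside $N$ by picking $\gamma\in b$ above everything in $b\cap z$ (you instead bound the levels of $z$ — that also works). So (1) can be repaired with a local fix once the definition is corrected.

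\textbf{Part (2) has more substantial gaps.} First, the same definitional issue recurs. Second, the paper's argument hinges on a careful choice of the cofinal set $S$: $S$ consists of the $N\prec H(\lambda^+)$ with $|N|<\kappa$, $\chi\subseteq N$, $c\in N$, and crucially $\cf(\sup(N\cap\lambda))>\chi$. Almost guessing hands you an $N\in S$; the cofinality restriction on $N$ (not on $M$) is what lets you find the threshold $i_1$ below which $e$ is $\leq i_1$ on a set unbounded in $\sup(N\cap\lambda)$, and then elementarity of $N$ (again, not of $M$) is what upgrades this to an unbounded $A\subseteq\lambda$ and lets you verify $c\restriction[A]^2\leq i_1$ by checking only on $N\cap A$. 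Your sketch never specifies $S$, attributes both the cofinality restriction and the elementarity to $M^*$ (whose members $\mc Y$ need not be elementary and whose cofinality constraint comes from $\mc Y$, not from $S$), and hand-waves the final ``push the bound past $\delta_{M^*}$'' step. The paper also works with a single fixed approximated set $c_{\gamma,i_0}\subseteq\lambda\times\chi$ rather than a slender list of coded pairs $(i_M, A^{i_M}_{\delta_M}\cap M)$ indexed by $M\in\mc Y$; your route via slender lists is an unnecessary detour and would need to be tied back to the almost-guessing definition anyway. Finally, your derivation of the ``in particular $\square(\lambda)$ fails'' clause is not correct as stated: a subadditive, strongly unbounded $c:[\lambda]^2\to\omega$ viewed as $\chi$-valued is \emph{not} strongly unbounded into $\chi$ when $\chi>\omega$ (its range is bounded by $\omega$). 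The correct reduction is monotonicity in $\mc Y$: since $\mc Y_\chi\subseteq\mc Y_\omega$, one has $\wAGP_{\mc Y_\chi}\Rightarrow\wAGP_{\mc Y_\omega}$, and one then applies the theorem with $\chi=\omega$.
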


We remark that the proofs of Theorems A and B will show that the hypothesis of 
clause (2) of Theorem C holds in the strongly compact Mitchell model with $\chi = \omega$, 
$\mu = \omega_1$, $\kappa = \omega_2$, and any value of $\lambda \geq \omega_2$.

Finally, we prove a consistency result indicating that increasing the first parameter 
in the principles under consideration does lead to strictly weaker statements. In particular, 
we show that, unlike $\ISP_{\omega_2}$ (or even $\wAGP(\omega_1, \omega_2, H(\omega_2))$, 
by clause (1) of Theorem C), the existence of a Kurepa tree is compatible with 
$\ISP(\omega_2, \omega_2, \lambda)$ for all $\lambda \geq \omega_2$.

\begin{thmd}
  Let $\kappa$ be a supercompact cardinal. Then there is a generic extension in which 
  there is an $\omega_1$-Kurepa tree and 
  $\ISP(\omega_2, \omega_2, \lambda)$ holds for every $\lambda \geq \omega_2$. In particular, 
  $\SP(\omega_1, \omega_2, \vert H(\omega_{2})\vert)$ fails.
\end{thmd}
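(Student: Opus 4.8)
The plan is to start from a supercompact cardinal $\kappa$ and force with a two-step iteration $\mathbb{P} = \mathbb{M} \ast \dot{\mathbb{K}}$. For $\mathbb{M}$ I would use a $\CH$-preserving Mitchell-style forcing of length $\kappa$: one collapsing $\kappa$ to become $\omega_2$, preserving $\omega_1$ and all cardinals $\geq \kappa$, forcing $\CH$, and---this is the essential feature, inherited from the usual supercompact Mitchell forcing (Wei\ss; Viale--Wei\ss)---such that in $V^{\mathbb{M}}$ the $\omega_2$-guessing models are stationary in $\power_{\omega_2} H(\theta)$ for every regular $\theta \geq \omega_2$; equivalently, by Theorem~A and the Viale--Wei\ss\ equivalence quoted in the introduction, $V^{\mathbb{M}} \models \ISP(\omega_2, \omega_2, \lambda)$ for all $\lambda \geq \omega_2$. (One expects $\mathbb{M}$ can even be taken to force $\ISP_{\omega_2}$; $\CH$-preserving variants of Mitchell forcing carrying strong tree properties at $\omega_2$ are available.) For $\dot{\mathbb{K}}$ I would take a name for the standard Kurepa-tree forcing whose conditions are pairs $(t, b)$ with $t$ a countable normal tree and $b$ a countable partial function from $\omega_2$ into the top level of $t$, ordered by end-extension: over a model of $\CH$ this is $\sigma$-closed and $\omega_2$-c.c., and it generically adds an $\omega_1$-Kurepa tree $T$ with $\omega_2$ cofinal branches.

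The easy half of the conclusion is then immediate: in $V^{\mathbb{P}}$ we have $\omega_2 = \kappa$; $\CH$ holds, since $\mathbb{M}$ forces it and $\mathbb{K}$ is $\sigma$-closed; and $T$ is an $\omega_1$-Kurepa tree. In particular $V^{\mathbb{P}}$ contains a weak $\omega_1$-Kurepa tree, so by the proof of clause~(1) of Theorem~C---the Cox--Krueger argument, which extracts from a weak Kurepa tree a slender $(\omega_2, |H(\omega_2)|)$-list with no cofinal branch---$\SP(\omega_1, \omega_2, |H(\omega_2)|)$ fails in $V^{\mathbb{P}}$. It remains to verify $\ISP(\omega_2, \omega_2, \lambda)$ for every $\lambda \geq \omega_2$, which I would obtain by showing that $\mathbb{K}$ preserves the stationarity of $\omega_2$-guessing models over $V^{\mathbb{M}}$.

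The key input is a forcing-theoretic fact about $\mathbb{K}$: it has both the $\omega_2$-covering property and the $\omega_2$-\emph{approximation property} over $V^{\mathbb{M}}$, i.e.\ every set of ordinals $A \in V^{\mathbb{P}}$ with $A \cap a \in V^{\mathbb{M}}$ for all $a \in ([\Ord]^{\leq\omega_1})^{V^{\mathbb{M}}}$ already lies in $V^{\mathbb{M}}$. The covering property is immediate from $\omega_2$-c.c. The approximation property does \emph{not} follow formally from ``$\sigma$-closed $+$ $\omega_2$-c.c.''---the forcing to add a $\square_{\omega_1}$-sequence is a $\sigma$-closed, $\omega_2$-c.c.\ (under $\CH$) counterexample---but it can be checked directly for $\mathbb{K}$: by $\sigma$-closure every level $T_\alpha$ and every proper initial part $T \restriction \alpha$ ($\alpha < \omega_1$) of the generic tree lies in $V^{\mathbb{M}}$, so the genuinely new sets added by $\mathbb{K}$ are reducible to the generic branches $b_i$ of $T$, and each $b_i$ already violates the approximation hypothesis at $a = T$ (equivalently, at an $\omega_1$-sized $V^{\mathbb{M}}$-cover of $T$, which exists by the covering property). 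Granting this, $\ISP(\omega_2, \omega_2, \lambda)$ holds in $V^{\mathbb{P}}$ for all $\lambda$ by the general preservation fact---proved in the standard way, via a continuous chain of internally approachable, size-$\omega_1$ elementary submodels---that a forcing possessing the $\omega_2$-approximation and $\omega_2$-covering properties preserves stationarity of $\omega_2$-guessing models.

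The main obstacle is precisely this preservation step, and within it the interplay of the two parameters is essential: one must work with $\mu = \omega_2$, not $\mu = \omega_1$. The forcing $\mathbb{K}$ fails the $\omega_1$-approximation property---each new branch $b_i$ satisfies $b_i \cap z \in V^{\mathbb{M}}$ for every countable $z \in V^{\mathbb{M}}$ while $b_i \notin V^{\mathbb{M}}$---so $\mathbb{K}$ does not preserve stationarity of $\omega_1$-guessing models, and indeed it cannot, since $V^{\mathbb{P}}$ has a Kurepa tree whereas $\ISP_{\omega_2}$ (equivalently, stationarily many $\omega_1$-guessing models) forbids even weak Kurepa trees. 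The delicate points in carrying out the argument are: verifying the $\omega_2$-approximation property of $\mathbb{K}$ in full (the reduction of arbitrary new sets to the branches $b_i$ must be handled carefully, using the combinatorics of the conditions $(t,b)$); choosing the chain of internally approachable models with enough closure; and matching ``$\omega_2$-approximation over the guessing model'' with ``$\omega_2$-approximation over $V^{\mathbb{M}}$'' when transferring guesses across the extension. A secondary, routine matter is confirming that the $\CH$-preserving Mitchell-type $\mathbb{M}$ with the stated guessing-model property exists over a supercompact, which one obtains by adapting the standard arguments.
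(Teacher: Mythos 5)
Your proposal takes a genuinely different route from the paper, and while the high-level architecture is sensible, two of the steps you flag as ``delicate'' are in fact substantial unfilled gaps, and one step you call ``routine'' is not.

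\emph{Comparison of approaches.} The paper does not iterate $\M\ast\dot\K$ with $\dot\K$ computed in $V^{\M}$; it forces with the \emph{product} $\M(\omega,\kappa)\times\K(\omega_1,\kappa)$, where $\K$ is defined in the ground model $V$. There, under $\gamma^\omega<\kappa$ for $\gamma<\kappa$, $\K$ is $\kappa$-Knaster, and this survives to $V^\M$ because $\M$ is $\kappa$-c.c.\ --- entirely sidestepping the question of the chain condition of $\K$ \emph{over} $V^\M$. Moreover, the paper does not invoke a general preservation theorem for $\omega_2$-guessing models; instead it lifts the supercompact embedding through $\M$ and then through $\K$, defines a candidate branch $b$ for a given $\kappa$-slender list from $j(D)$ at $j``\lambda$, uses $\kappa$-slenderness to check $b\restriction x\in M[G][H]$ for $|x|<\kappa$, and then pulls $b$ into $M[G][H]$ by analyzing the two quotient forcings $Q_\K$ and $Q_\M$ directly with the branch lemmas of Section~\ref{preservation_section} (Lemma~\ref{closed_preservation_lemma} for the $\omega_1$-closed piece, and Knasterness for the others). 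This low-tech quotient analysis is exactly what your proposal tries to replace by a black-box preservation fact.

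\emph{Gap 1: the approximation property of $\K$.} Your claim that ``the genuinely new sets added by $\K$ are reducible to the generic branches $b_i$'' is not an argument, and it is not clear how to make it one: arbitrary $\K$-names for subsets of ordinals need not decompose into branch-shaped pieces. You correctly note that $\sigma$-closed + $\omega_2$-c.c.\ does not imply the $\omega_2$-approximation property, which means this step genuinely requires a direct combinatorial proof (e.g., a fusion/tree-of-conditions argument exploiting the specific structure of conditions $(t,f)$ and the $\omega_2$-Knasterness available under $\CH$). As written, the step is missing.

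\emph{Gap 2: the preservation theorem for $\omega_2$-guessing models.} You assert that a forcing with the $\omega_2$-approximation and $\omega_2$-covering properties preserves stationarity of $\omega_2$-guessing models, ``proved in the standard way.'' The $\omega_1$ version of such a preservation result (Cox--Krueger) is in the literature, but the $\mu=\omega_2$ version is not a formal corollary, and the obstruction is precisely the one you name in your last sentence without resolving: if $M\in V[G]$ is your candidate guessing model and $d$ is $(\omega_2,M)$-approximated, the witnessing sets $e\in M$ with $d\cap z=e\cap z$ may themselves be \emph{new} sets added by $\K$, so one cannot conclude $d\cap z\in V^\M$ and hence cannot feed the approximation property of $\K$ over $V^\M$. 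Bridging ``approximation by $M$'' to ``approximation over $V^\M$'' is the entire content of such a preservation theorem, and it needs to be proved, not cited. The paper avoids this by never formulating a preservation lemma for guessing models and instead working with a single concrete branch $b$ through a $\power_\kappa\lambda$-tree, for which the branch-preservation lemmas of Section~\ref{preservation_section} apply directly.

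\emph{Gap 3: the CH-preserving Mitchell.} You call the existence of a $\CH$-preserving Mitchell-style $\M$ with $\GMP(\omega_2,\omega_2,\geq\omega_2)$ ``routine,'' but the standard supercompact Mitchell model (which is what the guessing-model literature analyses) satisfies $2^\omega=\omega_2$, not $\CH$. A $\CH$-preserving variant that still yields stationarily many $\omega_2$-guessing models would need its own argument; it is not an off-the-shelf fact. Note that the paper avoids needing $\CH$ precisely because its $\K$ lives in $V$, where it is $\kappa$-Knaster from $\gamma^\omega<\kappa$.

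In short: the ``easy half'' of your argument (the Kurepa tree exists and hence $\SP(\omega_1,\omega_2,|H(\omega_2)|)$ fails by Theorem~\ref{th:SP}) is fine and matches the paper, but the hard half rests on three unproved or problematic steps. The paper's product-plus-embedding-lift route is doing real work that your outline has not replaced.
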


Theorem D can be seen as a variation on a related result of Cummings \cite{C:trees}, 
who proved, using a similar argument, that the tree property at $\omega_2$ is compatible 
with the existence of an $\omega_1$-Kurepa tree. This also addresses a question of 
Viale from \cite{viale_guessing_models} by providing a model in which, for all 
$\theta \geq \omega_2$, there are $\omega_2$-guessing models in $\power_{\omega_2} H(\theta)$ 
that are not $\omega_1$-guessing models.

The structure of the paper is as follows. In Section \ref{generalized_tree_section}, we introduce the 
notion of $\Lambda$-tree for an arbitrary directed partial order $\Lambda$, providing a very general 
setting in which many of our results naturally sit. In Section \ref{two_cardinal_section}, 
we review some basic facts about $\power_\kappa \lambda$ combinatorics. In Section \ref{strong_tree_section}, 
we recall the strong tree properties and guessing model properties introduced in \cite{weiss} and 
\cite{viale_weiss} and establish some basic facts thereon. We also answer a question of 
Fontanella and Matet from \cite{fontanella_matet} by showing that two tree properties considered there 
are equivalent. At the end of the section, we prove a separation result between two strong tree properties 
by showing that one of them entails a failure of approachability while the other does not.

In Section \ref{almost_guessing_section}, we introduce our ``almost guessing properties 
$\mathsf{(w)AGP}(\ldots)$ and prove Theorem A. We also consider a variation of $\GMP(\ldots)$ in which 
only subsets of ``small" sets are guessed and show that this variation does not require the full 
strength of $\ISP(\ldots)$ but already follows from the relevant instance of $\SP(\ldots)$. In 
Section 6, we prove that an instance of $\wAGP(\ldots)$ implies the nonexistence of certain strongly 
unbounded subadditive functions and hence the failure of square, thus establishing clause (2) of Theorem C.
In Section \ref{preservation_section}, we present a variety of preservation lemmas that will be useful 
in our consistency results in the remainder of the paper. In Section \ref{mitchell_section}, we analyze 
various principles in the Mitchell extension obtained from a strongly compact cardinal,
proving Theorem B, among other results. As a corollary, for instance, we obtain a model of 
$\MA + \TP(\omega_2, \geq \omega_2)$ starting just with a strongly compact cardinal. Finally, in 
Section \ref{kurepa_section}, we investigate the effect of various principles on the existence of 
(weak) Kurepa trees, proving clause (1) of Theorem C and Theorem D.

Unless specifically noted, our notation and terminology is standard. We use \cite{jech} as our 
standard background reference for set theory.

\section{Generalized trees} \label{generalized_tree_section}

Although the primary setting for the results of this paper is that of $(\kappa, \lambda)$-
lists, a number of our results hold in a more general setting. We therefore take 
the time here to establish this setting and introduce the notion of generalized trees 
indexed by an arbitrary partial order $\Lambda$. 

Throughout the paper, unless otherwise specified, $\kappa$ denotes an arbitrary regular uncountable 
cardinal. Also, for this section, fix a
partial order $(\Lambda, <_{\Lambda})$. In most cases of interest $\Lambda$ will be $\kappa$-directed, 
but for now let it be arbitrary. We will sometimes abuse notation and simply use $\Lambda$ to 
refer to the order $(\Lambda, <_{\Lambda})$.

\begin{definition}
  A \emph{$\Lambda$-tree} is a pair $T = (\langle T_u \mid u \in \Lambda \rangle, <_T)$ such that the 
  following conditions all hold.
  \bce[(i)]
    \item $\langle T_u \mid u \in \Lambda \rangle$ is a sequence of nonempty, pairwise disjoint sets.
    \item $<_T$ is a transitive partial ordering on $\bigcup_{u \in \Lambda} T_u$.
    \item For all $u,v \in \Lambda$, all $s \in T_u$, and all $t \in T_v$, if 
    $s <_T t$, then $u <_\Lambda v$.
    \item $<_T$ is \emph{tree-like}, i.e., for all $u <_\Lambda v <_\Lambda w$, all 
    $r \in T_u$, all $s \in T_v$ and all $t \in T_w$, if $r, s <_T t$, then $r <_T s$.
    \item For all $u \leq_\Lambda v$ in $\Lambda$ and all $t \in T_v$, there is a unique $s \in T_u$, 
    denoted $t \restriction u$, such that $s \leq_T t$. 
  \ece 
  If $T = (\langle T_u \mid u \in \Lambda \rangle, <_T)$ is a $\Lambda$-tree, we let 
  $\width(T)$ denote the least cardinal $\mu$ such that $\vert T_u\vert  < \mu$ for all $u \in \Lambda$.
  For a cardinal $\kappa$, we say that $T$ is a $\kappa$-$\Lambda$-tree if, in addition to the above requirements, we have 
  $\width(T) \leq \kappa$.
\end{definition}

Given a $\Lambda$-tree $T = (\langle T_u \mid u \in \Lambda \rangle, <_T)$, we will sometimes abuse 
notation and use $T$ to refer to the underlying set of the tree, $\bigcup_{u \in \Lambda} T_u$. 
The set $T_u$ is called the \emph{$u^{\mathrm{th}}$ level} of $T$. If a $\Lambda$-tree $T$ is given, 
it should be understood unless otherwise specified that its $u^{\mathrm{th}}$ level is denoted by 
$T_u$ for each $u \in \Lambda$, and its ordering is denoted by $<_T$. Given $t \in T$, we let 
$\lev_T(t)$ be the unique $u \in \Lambda$ such that $t \in T_u$. A \emph{subtree} of $T$ is a 
$\Lambda$-tree $T' = (\langle T'_u \mid u \in \Lambda \rangle, <_{T'})$ such that 
$T'_u \subseteq T_u$ for all $u \in \Lambda$ and $<_{T'}$ is the restriction of $<_T$ to 
$\bigcup_{u \in \Lambda} T'_u$.

\begin{remark}
   Note that, if $\Lambda = \kappa$ (with the ordinal ordering), then a 
   $\kappa$-$\Lambda$-tree is simply a classical $\kappa$-tree. Another important 
   special case, which will be the setting for many of the results of 
  this paper, is $\Lambda = \power_\kappa \lambda$ for some cardinal $\lambda \geq 
  \kappa$, ordered by $\subseteq$.
\end{remark}

\begin{definition}
  Suppose that $T$ is a $\Lambda$-tree. A \emph{cofinal branch} through $T$ is a function 
  $b \in \prod_{u \in \Lambda} T_u$ such that, for all $u <_\Lambda v$ in $\Lambda$, we have 
  $b(u) <_T b(v)$. The \emph{$(\kappa, \Lambda)$-tree property}, denoted $\TP_\kappa(\Lambda)$, 
  is the assertion that every $\kappa$-$\Lambda$-tree has a cofinal branch.
\end{definition}

\begin{remark}
  Again note that, if $\Lambda = \kappa$ (with the ordinal ordering), then 
  $\TP_\kappa(\Lambda)$ is simply the classical tree property 
  $\TP(\kappa)$.
\end{remark}

Many of the standard facts about $\kappa$-trees remain true about $\kappa$-$\Lambda$-trees when 
$\Lambda$ is $\kappa$-directed. We establish some of these facts now, after some preliminary definitions.

\begin{definition}
  Suppose that $T$ is a $\Lambda$-tree.
  \bce[(i)]
    \item We say that $T$ is \emph{well-pruned} if, for all $u <_\Lambda v$ in $\Lambda$ and all 
    $s \in T_u$, there is $t \in T_v$ such that $s <_T t$.
    \item For a fixed $\mc Y \subseteq \Lambda$, we say that $T$ is 
    \emph{$\kappa$-$\mc Y$-thin} if $\vert T_u\vert  < \kappa$ for all $u \in \mc Y$.
    \item We say that $T$ is \emph{very thin} if $\Lambda$ is $\width(T)^+$-directed.
  \ece
\end{definition}

\begin{proposition} \label{generalized_well_pruned_prop}
  Suppose that $\Lambda$ is $\kappa$-directed, $\mc Y \subseteq \Lambda$ is $<_\Lambda$-cofinal, and 
  $T$ is a $\kappa$-$\mc Y$-thin $\Lambda$-tree. Then $T$ has a well-pruned subtree.
\end{proposition}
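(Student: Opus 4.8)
The plan is to pass to the subtree consisting of the \emph{good} nodes. Write $X$ for the cofinal set in the hypothesis, and read the thinness assumption as $|T_u| < \kappa$ for all $u \in X$ (which is what it amounts to). Call $t \in T$ \emph{good} if for every $v \in X$ with $\lev_T(t) \leq_\Lambda v$ there is $s \in T_v$ with $t \leq_T s$. One first observes, using cofinality of $X$ and the tree axioms, that a good node in fact has an extension at \emph{every} $v \in \Lambda$ above $\lev_T(t)$: given such a $v$, pick $v' \in X$ above $v$, take an extension $s'$ of $t$ at $v'$, and note that $s' \restriction v$ still lies above $t$. I would then set $T' := (\langle T'_u \mid u \in \Lambda \rangle, <_T{\restriction}T')$ with $T'_u := \{t \in T_u : t \text{ is good}\}$, and show that $T'$ is a well-pruned subtree of $T$.

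The verification breaks into three claims. (1) \emph{Goodness is preserved under restriction}: if $t$ is good and $u \leq_\Lambda \lev_T(t)$, then $t \restriction u$ is good. Given $v \in X$ with $u \leq_\Lambda v$, use $\kappa$-directedness of $\Lambda$ to bound $\{v, \lev_T(t)\}$ and then cofinality of $X$ to land above that bound inside $X$, at some $v'$; goodness of $t$ yields an extension of $t$ at $v'$, whose restriction to $v$ lies above $t \restriction u$ by the coherence in clause~(v). (2) \emph{Every level of $T'$ is nonempty}: for $u \in X$, if $T_u$ contained no good node, then each $t \in T_u$ would fail to reach some $v_t \in X$ above $u$; since $|T_u| < \kappa$ and $\Lambda$ is $\kappa$-directed there is $v^* \in X$ above all the $v_t$, and then for any $s^* \in T_{v^*}$ the node $t_0 := s^* \restriction u$ reaches $v^*$, hence reaches $v_{t_0} \leq_\Lambda v^*$, contradicting the choice of $v_{t_0}$; for $u \notin X$ one picks $w \in X$ above $u$, takes a good node of $T_w$, and restricts it to $u$ using claim~(1). (3) \emph{Good nodes have good extensions at every higher level}: given good $t$ with $\lev_T(t) = u_0$ and $v >_\Lambda u_0$, if $v \in X$ then the pigeonhole argument of claim~(2), run inside $\{s \in T_v : t <_T s\}$ (nonempty since $t$ is good, and of size $< \kappa$ since $v \in X$), produces a good extension of $t$ at $v$; if $v \notin X$ one passes up to some $w \in X$ above $v$, takes a good extension of $t$ at $w$ by the previous case, and restricts it back to $v$, which remains good by claim~(1) and still lies above $t$ by tree-likeness.

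Granting these, it is routine that $T'$ is a subtree of $T$ in the sense of Section~\ref{generalized_tree_section}: clauses~(i)--(iv) are inherited from $T$ (nonemptiness of the levels being claim~(2)), and clause~(v) holds because claim~(1) gives $t \restriction u \in T'_u$ whenever $t \in T'_v$ and $u \leq_\Lambda v$; well-prunedness is precisely claim~(3). I expect the main obstacle to be well-prunedness: goodness is \emph{not} automatically inherited by the immediate successors of a good node (a good node may split into both good and non-good successors), so one cannot simply read a well-pruned subtree off the good nodes without further work. The resolution, and the reason the hypotheses take the form they do, is the combination exploited in claims~(1)--(3): thinness lets one run the $\kappa$-directedness pigeonhole to locate good nodes, but only at the levels indexed by $X$; cofinality of $X$ lets one reflect an arbitrary level into $X$; and $\kappa$-directedness again, through the restriction-closure claim~(1), transports goodness from the $X$-levels down to the intermediate levels. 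Note that the levels $T_u$ for $u \notin X$ are never required to be small, which is why the ``$\kappa$-$\mc Y$-thin'' hypothesis --- small levels only on a cofinal set --- already suffices.
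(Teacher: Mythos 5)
Your proof is correct and follows essentially the same approach as the paper's: define the subtree of ``good'' nodes (those extending to all, equivalently cofinally many, higher levels), check that goodness is preserved under restriction, and run the $\kappa$-directedness pigeonhole at the thin $\mc Y$-levels to establish nonemptiness of levels and well-prunedness. The only cosmetic difference is that you handle $u \notin X$ and $v \notin X$ by passing up to an $X$-level and restricting a good node back down, whereas the paper handles them by ``increasing $u$ (resp.\ $v$) to land in $\mc Y$'' before running the pigeonhole --- these are contrapositive phrasings of the same reduction via restriction-preservation.
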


\begin{proof}
  For each $u \in \Lambda$, simply let $T'_u$ be the set of all $s \in T_u$ such that, for 
  all $v \in \Lambda$ with $u <_\Lambda v$, there is $t \in T_v$ such that $s <_T t$. Let 
  $<_{T'}$ be the restriction of $<_T$ to $\bigcup_{u \in \Lambda} T'_u$. We claim that 
  $T' = (\langle T'_u \mid u \in \Lambda \rangle, <_{T'})$ is the desired well-pruned 
  subtree.
  
  First note that, for all $u <_\Lambda v$ and all $t \in T'_v$, the definition of $T'$ ensures 
  that $t \restriction u \in T'_u$. We next argue that $T'_u \neq \emptyset$ for all $u \in \Lambda$. 
  Suppose for sake of contradiction that there is $u \in \Lambda$ such that $T'_u = \emptyset$. 
  Note that, if $v \in \Lambda$ and $u <_\lambda v$, then the observation at the beginning of 
  the paragraph implies that we must have $T'_v = \emptyset$ as well. Therefore, by increasing 
  $u$ if necessary, we may assume that $u \in \mc Y$. Since $T'_u = \emptyset$, we know that, for 
  each $s \in T_u$, there is $v_s \in \Lambda$ such that $u <_\Lambda v_s$ and, for all 
  $t \in T_{v_s}$, we have $s \not<_T t$. Since $\Lambda$ is $\kappa$-directed and $\vert T_u\vert  < \kappa$, 
  we can find $v \in \Lambda$ such that $v_s \leq_{\Lambda} v$ for all $s \in T_u$. Choose 
  an arbitrary $t \in T_v$, and let $s := t \restriction u$. Then $t \restriction v_s \in T_{v_s}$ 
  and $s <_T t \restriction v_s$, contradicting our choice of $v_s$. Therefore, $T'_u \neq \emptyset$ 
  for all $u \in \Lambda$. It follows that $T'$ is indeed a subtree of $T$.
  
  It remains to show that $T'$ is well-pruned. Suppose for sake of contradiction that $u <_\Lambda v$ and 
  $s \in T'_u$, but there is no $t \in T'_v$ such that $s <_{T'} t$. Note that, if $v <_\Lambda w$, then 
  there is still no $t \in T'_w$ such that $s <_{T'} t$, as otherwise $t \restriction v$ would 
  contradict the previous sentence. Therefore, by increasing $v$ if necessary, we may assume that 
  $v \in \mc Y$. Let $S:= \{t \in T_v \mid s <_T t\}$. Since $s \in T'_u$, we know that $S$ is nonempty. 
  For each $t \in S$, since $t \notin T'_v$, we can find $w_t \in \Lambda$ such that $v <_\Lambda w_t$ 
  and there is no $r \in T_{w_t}$ such that $t <_T r$. Since $\Lambda$ is $\kappa$-directed and 
  $\vert S\vert  \leq \vert T_v\vert  < \kappa$, we can find $w \in \Lambda$ such that $w_t <_\Lambda w$ for all 
  $t \in S$. Since $s \in T'_u$, we can find $r \in T_w$ such that $s <_T r$. Let $t := r \restriction v$. 
  Then $t \in S$, $r \restriction w_t \in T_{w_t}$, and $t <_T (r \restriction w_t)$, contradicting our 
  choice of $w_t$. Therefore, $T'$ is indeed well-pruned. 
\end{proof}

The following definition will be useful.

\begin{definition}
  Suppose that $T$ is a $\Lambda$-tree and $u <_\Lambda w$. We say that $T$ \emph{splits between 
  $u$ and $v$} if there are distinct $t_0, t_1 \in T_v$ such that $t_0 \restriction u = t_1 \restriction 
  u$.
\end{definition}

Note that, if $T$ is a well-pruned $\Lambda$-tree, $u <_\Lambda v <_\Lambda w$, and 
$T$ splits between $u$ and $v$, then $T$ also splits between $u$ and $w$. The following lemma 
generalizes a result of Kurepa \cite{Kurepa} from the setting of $\kappa$-trees to our general setting.

\begin{lemma}\label{very_thin_branch_lemma}
  Suppose that $T$ is a very thin $\Lambda$-tree. Then $T$ has a cofinal branch.
\end{lemma}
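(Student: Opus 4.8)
The plan is to build a cofinal branch by a transfinite construction that exploits the key hypothesis: $T$ is a very thin $\Lambda$-tree, meaning $\Lambda$ is $\width(T)^+$-directed, so any collection of fewer than $\width(T)^+$-many, i.e.\ at most $\width(T)$-many, elements of $\Lambda$ has a common upper bound. Let $\mu = \width(T)$, so every level $T_u$ has size at most $\mu$. The first reduction is to pass to a well-pruned subtree: by Proposition \ref{generalized_well_pruned_prop} (applied with $\mc Y = \Lambda$, noting that a very thin $\Lambda$-tree is in particular $\mu^+$-$\Lambda$-thin, and that $\Lambda$ is $\mu^+$-directed hence certainly $\mu^+$-directed as required), $T$ has a well-pruned subtree, and a cofinal branch through the subtree is a cofinal branch through $T$. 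So from now on assume $T$ is well-pruned.

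Next I would observe that because $T$ is well-pruned and very thin, $T$ \emph{cannot split} between any $u <_\Lambda v$. Indeed, suppose $t_0, t_1 \in T_v$ are distinct with $t_0 \restriction u = t_1 \restriction u =: s$. Using well-prunedness and $\mu^+$-directedness, one argues that above $s$ the tree keeps branching: at cofinally many $w \in \Lambda$ the set $\{t \in T_w \mid s <_T t\}$ must grow unboundedly in size, but each such set has size at most $\mu$, so one can find, along a chain of length $\mu^+$ in $\Lambda$ (obtainable since $\Lambda$ is $\mu^+$-directed, hence has chains of that length, or more carefully by iterating the directedness), strictly increasing sizes, which is impossible since all these sizes are bounded by $\mu$. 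More precisely: because there is no maximal split configuration, one fixes, for each pair of distinct nodes extending $s$, a level in $\Lambda$ where they already split from each other; taking a common upper bound $w$ of these $\mu$-many (or fewer) levels yields a level $T_w$ that must contain at least $|T_v| + 1$ many extensions of $s$ — contradiction once we start from a level realizing the maximum possible number of extensions of $s$ (which exists since the possible sizes are a subset of $\mu + 1$). Hence $T$ does not split at all.

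Once $T$ is well-pruned and non-splitting, for every $u \leq_\Lambda v$ the restriction map $t \mapsto t \restriction u$ is a bijection from $T_v$ onto $T_u$, so in particular all levels have the same (finite or infinite) cardinality, and more importantly the system $(\langle T_u \mid u \in \Lambda\rangle, <_T)$ behaves like a single strand: the plan is to fix any $u_0 \in \Lambda$, pick any $t_0 \in T_{u_0}$, and define $b(v)$ for $v \in \Lambda$ by $b(v) := $ the unique extension of $t_0$ in $T_{v'}$ restricted appropriately, where $v'$ is a common upper bound of $u_0$ and $v$ supplied by directedness; tree-likeness and non-splitting guarantee this is well-defined (independent of the choice of $v'$) and coherent, i.e.\ $b \in \prod_{u \in \Lambda} T_u$ with $b(u) <_T b(v)$ whenever $u <_\Lambda v$. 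The main obstacle is the non-splitting argument in the previous paragraph — verifying carefully that well-prunedness plus $\mu^+$-directedness forces the number of extensions of a fixed node $s$ to be eventually constant along $\Lambda$ and that any split propagates upward without bound — and making sure the cardinality bookkeeping ($|T_w| \le \mu$, common upper bounds of $\le \mu$-many elements) is airtight; the rest is a routine coherence verification using tree-likeness (condition (iv)) and the uniqueness clause (condition (v)) of the definition of $\Lambda$-tree.
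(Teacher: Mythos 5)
Your reduction to the well-pruned case is correct and matches the paper, but the heart of your argument — the claim that a well-pruned very thin $\Lambda$-tree cannot split between any $u <_\Lambda v$ — is false, and the attempted proof of it does not go through. Consider $\Lambda = \omega_1$ with its usual order, $T_0 = \{\ast\}$, and $T_\alpha = \{a_\alpha, b_\alpha\}$ for $1 \le \alpha < \omega_1$, ordered so that $\ast <_T a_\alpha <_T a_\beta$ and $\ast <_T b_\alpha <_T b_\beta$ for $\alpha < \beta$. This is a well-pruned $\Lambda$-tree of width $3$, and $\Lambda$ is (trivially) $4$-directed, so $T$ is very thin; yet $T$ splits between levels $0$ and $1$. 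The step where you assert ``there is no maximal split configuration'' is exactly what fails: a split can happen once and never propagate, so there is no reason the number of extensions of $s$ grows past its first stable value. Taking an upper bound $w$ of the levels where pairs in $T_v$ have already separated gives no new splits and no new nodes above $s$, so you never reach $|T_v| + 1$ extensions. Consequently the restriction map $T_v \to T_u$ need not be injective, and your branch-construction step (``the unique extension of $t_0$'') has no justification.

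The paper's proof is quite a bit more delicate precisely because splitting can occur. After passing to a well-pruned subtree, it first shows $\width(T) = \nu^+$ is a successor (using directedness and well-prunedness), then works with elementary submodels $M \prec H(\theta)$ of size $\mu = \nu^+$ that are unions of internally approachable $\subseteq$-chains $\langle M_\eta \mid \eta < \mu\rangle$ with $M_\eta \in M$. For each such $M$ one chooses $v_M$ above $M \cap \Lambda$ and uses $|T_{v_M}| \le \nu < \mu$ together with the regularity of $\mu$ to find $\eta_M < \mu$ capturing all the levels in $M$ at which pairs from $T_{v_M}$ separate; taking $u_M \in M$ above $M_{\eta_M} \cap \Lambda$, one shows $T$ does not split between $u_M$ and any $v \in M$ with $u_M <_\Lambda v$. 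This is a \emph{local} non-splitting statement, not a global one. Fodor's lemma then stabilizes $u_M = u^*$ on a stationary set, and only then can one fix $s^* \in T_{u^*}$ and coherently define $b(u)$ via well-prunedness, with the final verification that $b$ is a branch using the fact that any alleged incoherence would manifest as a split above $u^*$ inside some $M$ from the stationary set. The ingredients you are missing are the successor-cardinal observation, the internally approachable models, and the Fodor argument, which together replace the false global non-splitting claim.
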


\begin{proof}
  Since $T$ is very thin, by Proposition \ref{generalized_well_pruned_prop} we can assume that it is 
  well-pruned. Let $\mu := \width(T)$; then $\Lambda$ is $\mu^+$-directed and $\vert T_u\vert < \mu$ for 
  all $u \in \Lambda$. We first show that $\mu$ must be a successor cardinal.
  
  \begin{claim}
    There is $\nu < \mu$ such that $\vert T_u\vert  \leq \nu$ for all $u \in \Lambda$.
  \end{claim}  
  
  \begin{proof}
    Let $X := \{\nu \mid \exists u \in \Lambda ~ (\vert T_u\vert  = \nu)\}$. Then $X$ is a set of cardinals less 
    than $\mu$. For each $\nu \in X$, choose $u_\nu \in \Lambda$ such that $\vert T_{u_{\nu}}\vert  = \nu$. 
    Since $\Lambda$ is $\mu^+$-directed, we can find $v \in \Lambda$ such that $u_\nu <_\Lambda v$ 
    for all $\nu \in X$. Since $T$ is well-pruned, it follows $\vert T_v\vert  \geq \vert T_{u_{\nu}}\vert $ for all 
    $\nu \in X$. In particular, $X$ has a maximal element, and $\max(X)$ is as desired in the statement 
    of the claim.
  \end{proof}
  By minimality of $\mu$, it must be the case that $\mu = \nu^+$, where $\nu$ is given by the 
  preceding claim.
  
  Let $\theta$ be a sufficiently large regular cardinal (in particular, we want $T \in H(\theta)$), 
  and let $\mc Z$ be the set of all $M \prec (H(\theta), \in, T)$ such that 
  \begin{itemize}
    \item $\vert M\vert  = \mu$;
    \item there is a strictly $\subseteq$-increasing sequence $\langle M_\eta \mid \eta < \mu \rangle$ 
    such that
    \begin{itemize}
      \item $M = \bigcup_{\eta < \mu} M_\eta$;
      \item for all $\eta < \mu$, $M_\eta \in M$.
    \end{itemize}
  \end{itemize}
  Then $\mc Z$ is stationary in $\power_{\mu^+}H(\theta)$. For each $M \in \power_{\mu^+}H(\theta)$, 
  use the fact that $\Lambda$ is $\mu^+$-directed and $\vert M\vert  = \mu$ to find $v_M \in \Lambda$ such that 
  $u <_\Lambda v_M$ for all $u \in M \cap \Lambda$.
  
  Temporarily fix an $M \in \mc Z$. Since $\vert T_{v_M}\vert  < \mu$ and $\mu$ is regular, we can find an 
  $\eta_M < \mu$ such that, for all distinct $t_0, t_1 \in T_{v_M}$, if there is $u \in M \cap \Lambda$ 
  such that $t_0 \restriction u \neq t_1 \restriction u$, then there is such a $u$ in 
  $M_{\eta_M} \cap \Lambda$. Since $M_{\eta_M} \in M$ and $\vert M_{\eta_M}\vert  \leq \mu$, we can 
  find $u_M \in M \cap \Lambda$ such that $u <_\Lambda u_M$ for all $u \in M_{\eta_M} \cap \Lambda$. 
  
  \begin{claim} \label{nonsplitting_claim}
    For all $v \in M \cap \Lambda$ such that $u_M <_\Lambda v$, $T$ does not split between $u_M$ and 
    $v$.
  \end{claim} 
  
  \begin{proof}
    Suppose for sake of contradiction that $v \in M$, $s_0$ and $s_1$ are distinct elements of 
    $T_v$, and $s_0 \restriction u_M = s_1 \restriction u_M$. Since $T$ is well-pruned, we 
    can find $t_0, t_1 \in T_{v_M}$ such that $t_0 \restriction v = s_0$ and $t_1 \restriction v = 
    s_1$. Then, by our choice of $\eta_M$, there is $u \in M_{\eta_M} \cap \Lambda$ such that 
    $t_0 \restriction u \neq t_1 \restriction u$. But $u < u_M$ and $t_0 \restriction u_M = 
    s_0 \restriction u_M = s_1 \restriction u_M = t_1 \restriction u_M$, so $t_0 \restriction u 
    = (t_0 \restriction u_M) \restriction u = (t_1 \restriction u_M) \restriction u = t_1 \restriction u$, 
    which is a contradiction.
  \end{proof}
  
  The function that takes each $M \in \mc Z$ to $u_M$ is a regressive map, so, since $\mc Z$ is stationary 
  in $\power_{\mu^+}H(\theta)$, we can find a stationary $\mc Z^* \subseteq \mc Z$ and a single 
  $u^* \in \Lambda$ such that $u_M = u^*$ for all $M \in \mc Z^*$. Choose an arbitrary $s^* \in 
  T_{u^*}$, and define a function $b \in \prod_{u \in \Lambda} T_u$ as follows. For each $u \in \Lambda$, 
  find $v \in \Lambda$ such that $u, u^* <_\Lambda v$, use the fact that $T$ is well-pruned to find 
  $t \in T_v$ such that $s^* <_T t$, and let $b(u) = t \restriction u$.
  
  We claim that $b$ is a cofinal branch through $T$, which will complete the proof of the lemma. Suppose 
  for sake of contradiction that there are $u_0 <_\Lambda u_1$ such that $b(u_0) \not<_T b(u_1)$. 
  Recalling the definition of $b$, for each $i < 2$ find $v_i \in \Lambda$ and $t_i \in T_{v_i}$ such 
  that $u_i, u^* <_\Lambda v_i$, $s^* <_T t_i$, and $b(u_i) = t_i \restriction u_i$. Then 
  find $v \in \Lambda$ such that $v_0, v_1 <_\Lambda v$ and $t_0^*, t_1^* \in T_v$ such that 
  $t_0 <_T t_0^*$ and $t_1 <_T t_1^*$. Then $b(u_0) <_T t_0^*$ and $b(u_1) <_T t_1^*$, so, 
  since $b(u_0) \not<_T b(u_1)$, it follows that $t_0^* \neq t_1^*$. Moreover, we know that 
  $t_0^* \restriction u^* = s^* = t_1^* \restriction u^*$. Since $u^* <_T v$, it follows that $T$ 
  splits between $u^*$ and $v$. Now fix $M \in \mc Z^*$ such that $v \in M$. Since $u_M = u^*$, 
  Claim \ref{nonsplitting_claim} implies that $T$ does not split between $u^*$ and $v$, which 
  is the desired contradiction.
\end{proof}

\begin{proposition}
  Suppose that $\kappa$ is strongly compact and $\Lambda$ is $\kappa$-directed. Then $\TP_\kappa(\Lambda)$ 
  holds.
\end{proposition}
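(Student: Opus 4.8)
The plan is to exploit strong compactness of $\kappa$ in its ``filter-extension'' form: every $\kappa$-complete filter extends to a $\kappa$-complete ultrafilter. There is a completely natural $\kappa$-complete filter floating around here, coming from the $\kappa$-directedness of $\Lambda$, and once it is extended to an ultrafilter a cofinal branch can be read off of it directly, with no ultrapower needed (although one could phrase things that way too).

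Concretely, let $T = (\langle T_u \mid u \in \Lambda \rangle, <_T)$ be an arbitrary $\kappa$-$\Lambda$-tree, so that $|T_u| < \kappa$ for every $u \in \Lambda$ since $\width(T) \leq \kappa$. Working on the underlying set $\bigcup_{u \in \Lambda} T_u$, for each $u \in \Lambda$ I would set $E_u := \{t \in T \mid u \leq_\Lambda \lev_T(t)\}$. The first step is to check that $\{E_u \mid u \in \Lambda\}$ generates a proper $\kappa$-complete filter on $T$: each $E_u \supseteq T_u$ is nonempty, and given fewer than $\kappa$ many indices $u_i$, $\kappa$-directedness of $\Lambda$ yields some $v$ with $u_i \leq_\Lambda v$ for all $i$, whence $\emptyset \neq T_v \subseteq E_v \subseteq \bigcap_i E_{u_i}$. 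Since $\kappa$ is strongly compact, this filter extends to a $\kappa$-complete ultrafilter $U$ on $T$, and in particular $E_u \in U$ for every $u \in \Lambda$.

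Next I would extract the branch. Fix $u \in \Lambda$; the map $t \mapsto t \restriction u$ partitions $E_u$ into the $|T_u| < \kappa$ pieces $\{t \in E_u \mid t \restriction u = s\}$, $s \in T_u$, so by $\kappa$-completeness of $U$ exactly one of these lies in $U$, and I let $b(u)$ be the corresponding $s \in T_u$. (This is the one place the width bound $|T_u| < \kappa$ is genuinely used, and it is the only step requiring real care.) It then remains to verify that $b \in \prod_{u \in \Lambda} T_u$ is cofinal. Given $u <_\Lambda v$, the sets $\{t \in E_u \mid t \restriction u = b(u)\}$ and $\{t \in E_v \mid t \restriction v = b(v)\}$ both belong to $U$, so their intersection is nonempty; choosing $t$ in it, we have $u <_\Lambda v \leq_\Lambda \lev_T(t)$, so both $t \restriction u$ and $t \restriction v$ are defined, and transitivity of $<_T$ together with the uniqueness clause in the definition of restriction gives $b(v) \restriction u = (t \restriction v) \restriction u = t \restriction u = b(u)$. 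Hence $b(u) \leq_T b(v)$, and since $T_u$ and $T_v$ are disjoint this inequality is strict. Thus $b$ is a cofinal branch of $T$, and $\TP_\kappa(\Lambda)$ follows.

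I do not anticipate any serious obstacle: this is a routine adaptation of the classical argument that a strongly compact cardinal has the tree property, and the only points to keep straight are which equivalent of strong compactness is being used and the role of $\kappa$-completeness in pinning down each value $b(u)$. For a more conceptual write-up one can instead take a $\kappa$-complete ultrafilter on $\Lambda$ extending the tail filter, pass to the ultrapower $j \colon V \to \Ult(V, U)$, note that $d := [\mathrm{id}]_U$ lies $\leq$-above every $j(u)$ in $j(\Lambda)$, pick any $t^{\ast}$ in the $d^{\mathrm{th}}$ level of $j(T)$, and let $b(u)$ be the unique element of $T_u$ with $j(b(u)) = t^{\ast} \restriction j(u)$ (using that $j \restriction T_u$ is a bijection onto $j(T_u)$ by $\kappa$-completeness and $|T_u| < \kappa$); coherence of $b$ is then immediate from elementarity.
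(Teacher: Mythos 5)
Your argument is correct, and your main proof takes a genuinely different route from the paper's. The paper works with an elementary embedding $j \colon V \to M$ witnessing strong compactness: it fixes a set $X \in M$ of size $<j(\kappa)$ covering $j``\Lambda$, uses $j(\kappa)$-directedness of $j(\Lambda)$ in $M$ to find an upper bound $w$ of $X$, picks an arbitrary node $t$ on level $w$ of $j(T)$, and reads off $b(u)$ as the $j$-preimage of $t \restriction j(u)$ (using $|T_u| < \kappa = \mathrm{crit}(j)$). Your concluding ``conceptual'' aside is essentially this argument, with $w$ replaced by $[\mathrm{id}]_U$. Your primary argument, though, dispenses with embeddings and ultrapowers entirely: you place a $\kappa$-complete ultrafilter $U$ directly on the underlying set of $T$, extending the filter generated by the upward cones $E_u$, and you use $\kappa$-completeness together with $|T_u| < \kappa$ to pin down $b(u)$ as the unique measure-one value of the projection $t \mapsto t \restriction u$ on $E_u$; coherence of $b$ then follows by picking a witness in the intersection of two measure-one sets. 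This is more elementary (it needs only the filter-extension characterization of strong compactness, no ultrapower machinery or covers), and it keeps the $\kappa$-completeness bookkeeping transparent. What the paper's embedding formulation buys is thematic uniformity: the same ``find an upper bound in the target model, pull back a branch, use an approximation property'' template is the engine of the Mitchell-forcing arguments in Section \ref{mitchell_section}, so stating this proposition in that idiom foreshadows those later proofs.
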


\begin{proof}
  Let $T$ be a $\kappa$-$\Lambda$-tree, and let $j:V \rightarrow M$ be an elementary embedding with 
  critical point $\kappa$ such that $j(\kappa) > \vert \Lambda\vert $ and there is $X \subseteq j(\Lambda)$ in 
  $M$ such that $M \models$``$\vert X\vert  < j(\kappa)$" and $j''\Lambda \subseteq X$. In $M$, let
  $j(T) = (\langle \bar{T}_w \mid w \in j(\Lambda) \rangle, <_{j(T)})$. Note that, for all 
  $u \in \Lambda$, since $\vert T_u\vert  < \kappa = \mathrm{crit}(j)$, we have 
  $\bar{T}_{j(u)} = j''T_u$. In $M$, $j(\Lambda)$ is $j(\kappa)$-directed, so we can find 
  $w \in j(\Lambda)$ such that $v <_{j(\Lambda)} w$ for all $v \in X$. Fix an arbitrary 
  $t \in \bar{T}_w$. Now define a function $b \in \prod_{u \in \Lambda} T_u$ by letting $b(u)$
  be the unique $s \in T_u$ such that $j(s) = t \restriction j(u)$ for all $u \in \Lambda$. 
  Then whenever $u_0 <_\Lambda u_1$, we know that, in $M$, we have $j(b(u_0)) = t \restriction j(u_0)$ 
  and $j(b(u_1)) = t \restriction j(u_1)$. Therefore, $j(b(u_0)) <_{j(T)} j(b(u_1))$, so, by 
  elementarity, $b(u_0) <_T b(u_1)$. Therefore, $b$ is a cofinal branch through $T$.
\end{proof}

\section{Background on two-cardinal combinatorics} \label{two_cardinal_section}

As noted in the previous section, many of our results in this paper are in the specific context of 
the partial order $(\power_\kappa \lambda, \subseteq)$. We therefore briefly review some of the relevant 
combinatorial definitions and facts about $\power_\kappa \lambda$.
For this section, let $X$ denote an arbitrary set with $\kappa \leq \vert X\vert $.

\begin{definition}
  Suppose that $\mc C \subseteq \power_\kappa X$.
  \begin{enumerate}
    \item $C$ is \emph{closed} if whenever $D \subseteq C$ is such that $\vert D\vert  < \kappa$ and 
    $D$ is linearly ordered by $\subseteq$, we have $\bigcup D \in C$;
    \item $C$ is \emph{strongly closed} if whenever $D \subseteq C$ and $\vert D\vert  < \kappa$, we have 
    $\bigcup D \in C$;
    \item $C$ is \emph{cofinal} if for all $x \in \power_\kappa \lambda$, there is $y \in C$ such 
    that $x \subseteq y$;
    \item $C$ is a \emph{club} in $\power_\kappa X$ if it is closed and cofinal;
    \item $C$ is a \emph{strong club} in $\power_\kappa X$ if it is strongly closed and cofinal.
  \end{enumerate}
\end{definition}

\begin{definition}
  The \emph{club filter} on $\power_\kappa X$, denoted $\CF_{\kappa, X}$, is the set of all 
  $B \subseteq \power_\kappa X$ for which there is a club $C \subseteq \power_\kappa X$ 
  such that $C \subseteq B$. Similarly, the \emph{strong club filter} on $\power_\kappa X$, 
  denoted $\SCF_{\kappa, X}$, is the set of all $B \subseteq \power_\kappa X$ for which 
  there is a strong club $C \subseteq \power_\kappa X$ such that $C \subseteq B$. The dual ideals 
  to the club filter and the strong club filter are denoted $\NS_{\kappa, X}$ and 
  $\SNS_{\kappa, X}$, respectively. Elements of $\NS_{\kappa, X}^+$ and 
  $\SNS_{\kappa, X}^+$ are called \emph{stationary} and \emph{weakly stationary} subsets of 
  $\power_\kappa X$, respectively.
\end{definition}

Given a set $x \subseteq \power_\kappa X$ and a function $f: X \rightarrow 
\power_\kappa X$, we say that $x$ is \emph{closed under $f$} if $f(a) \subseteq x$ for 
all $a \in x$. Similarly, if $g:[X]^2 \rightarrow \power_\kappa X$, then $x$ is 
closed under $g$ if $g(a) \subseteq x$ for all $a \in [x]^2$.
The following proposition is immediate.

\begin{proposition}
  Suppose that $f:X \rightarrow \power_\kappa X$ is a function. Then the set 
  $\{x \in \power_\kappa X \mid x \text{ is closed under } f\}$ is a strong club in 
  $\power_\kappa X$. In particular, if $\mc Y \subseteq \power_\kappa X$ is 
  weakly stationary, then there is $x \in \mc Y$ such that $x$ is closed under $f$.
\end{proposition}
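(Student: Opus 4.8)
The plan is to let $C := \{x \in \power_\kappa X \mid f(a) \subseteq x \text{ for all } a \in x\}$ and verify directly that $C$ is both strongly closed and cofinal, hence a strong club; the ``in particular'' clause is then immediate, since a weakly stationary set, being an element of $\SNS_{\kappa, X}^+$, meets every strong club, in particular $C$.

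For strong closure, I would take $D \subseteq C$ with $|D| < \kappa$ and set $y := \bigcup D$. Since $\kappa$ is regular and each $x \in D$ satisfies $|x| < \kappa$, we get $|y| < \kappa$, so $y \in \power_\kappa X$. If $a \in y$, then $a \in x$ for some $x \in D$, and since $x \in C$ we have $f(a) \subseteq x \subseteq y$; hence $y \in C$. (Note closure under arbitrary $<\kappa$-sized subsets, not merely chains, so $C$ is genuinely a \emph{strong} club.)

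For cofinality, given $x \in \power_\kappa X$ I would recursively build $\langle x_n \mid n < \omega \rangle$ by setting $x_0 := x$ and $x_{n+1} := x_n \cup \bigcup_{a \in x_n} f(a)$. An easy induction using the regularity of $\kappa$ and the fact that $|f(a)| < \kappa$ for each $a$ shows $|x_n| < \kappa$ for all $n$; and since $\kappa$ is uncountable and regular, $y := \bigcup_{n < \omega} x_n$ again has size $< \kappa$, so $y \in \power_\kappa X$, with $x \subseteq y$. Finally $y$ is closed under $f$: if $a \in y$, then $a \in x_n$ for some $n$, whence $f(a) \subseteq x_{n+1} \subseteq y$. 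Thus $y \in C$ witnesses cofinality of $C$ above $x$.

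There is no substantive obstacle here; the only points requiring care are the cardinality estimates, all of which follow from the standing assumption that $\kappa$ is regular and uncountable. Once $C$ is known to be a strong club, the second assertion holds because a weakly stationary $\mc Y$ is by definition not in $\SNS_{\kappa, X}$, and therefore $\mc Y \cap C \neq \emptyset$; any $x \in \mc Y \cap C$ is as desired.
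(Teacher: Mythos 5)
Your proof is correct, and since the paper labels this proposition as immediate and gives no proof, your argument simply supplies the standard verification (strong closure via a union argument, cofinality via an $\omega$-length closure iteration, and the weakly stationary clause from meeting every strong club) that the authors left to the reader. Nothing to compare against; the proposal is complete and sound.
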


The following characterization of $\CF_{\kappa, X}$ is due to Menas \cite{menas}.

\begin{proposition} \label{menas_prop}
  If $g:[X]^2 \rightarrow \power_\kappa X$ is a function, then the set 
  \[
  C_g := \{x \in \power_\kappa X \mid x \text{ is infinite and closed under } g\}
  \]
  is a club in $\power_\kappa X$. Moreover, for any club $C$ in $\power_\kappa X$, 
  there is $g:[X]^2 \rightarrow \power_\kappa X$ such that $C_g \subseteq C$.
\end{proposition}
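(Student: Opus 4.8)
The plan is to prove Menas's characterization of the club filter on $\power_\kappa X$ in two halves, corresponding to the two sentences of the proposition.

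For the first half, I would show that $C_g$ is a club. Cofinality is routine: given any $x \in \power_\kappa X$, build an increasing chain $x = x_0 \subseteq x_1 \subseteq \cdots$ of elements of $\power_\kappa X$, where at stage $n+1$ we throw in, for every pair $a \in [x_n]^2$, the set $g(a)$; since $|x_n| < \kappa$, $\kappa$ is regular uncountable, and each $g(a) \in \power_\kappa X$, the union $x_{n+1}$ still lies in $\power_\kappa X$. Then $y := \bigcup_{n < \omega} x_n$ is infinite, contains $x$, lies in $\power_\kappa X$ (again by regularity of $\kappa$), and is closed under $g$: any pair $a \in [y]^2$ already appears in some $[x_n]^2$, so $g(a) \subseteq x_{n+1} \subseteq y$. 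For closedness, suppose $D \subseteq C_g$ is linearly ordered by $\subseteq$ with $|D| < \kappa$, and let $z := \bigcup D$. Since $\kappa$ is regular, $z \in \power_\kappa X$, and $z$ is clearly infinite. Given a pair $a = \{a_0, a_1\} \in [z]^2$, by linearity of $D$ there is a single $w \in D$ with $a_0, a_1 \in w$, hence $a \in [w]^2$ and $g(a) \subseteq w \subseteq z$; so $z \in C_g$.

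For the second half, let $C$ be an arbitrary club in $\power_\kappa X$; I would produce $g:[X]^2 \to \power_\kappa X$ with $C_g \subseteq C$. The idea is to use the cofinality of $C$ to code, for each $x \in \power_\kappa X$, a witness $c_x \in C$ with $x \subseteq c_x$ into the values of a two-place function, so that any $x$ closed under $g$ becomes an increasing union of elements of $C$ and hence lies in $C$ by closedness. Concretely, since $|X| \geq \kappa$ is infinite, fix a bijection between $X$ and $[X]^2$ (or at least a surjection $p:[X]^2 \to [X]^{<\omega}$, or more simply use that $|[X]^{<\omega}| = |X|$); this lets us encode finite subsets $s \in [X]^{<\omega}$ by single pairs. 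For $x \in \power_\kappa X$, choose $c_x \in C$ with $x \subseteq c_x$. Now define $g$ so that for a pair $e$ coding the finite set $s \in [X]^{<\omega}$, we set $g(e) := c_s$ (the chosen witness for the finite set $s$), and $g(e) := \emptyset$ otherwise. Then if $x \in \power_\kappa X$ is infinite and closed under $g$, the family $\{c_s \mid s \in [x]^{<\omega}\}$ is a subset of $C$ of size $< \kappa$, directed under $\subseteq$ (given $s, t$, the witness $c_{s \cup t}$ extends both $c_s$ and $c_t$, and it too lies in the family since $s \cup t \in [x]^{<\omega}$ by closedness), and its union equals $x$ (each element of $x$ is a singleton $\{a\} \subseteq c_{\{a\}} \subseteq x$, and conversely each $c_s \subseteq x$). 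A directed union of size $<\kappa$ can be rewritten as an increasing chain of size $<\kappa$, so closedness of $C$ gives $x = \bigcup_s c_s \in C$. Hence $C_g \subseteq C$.

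The main obstacle is the coding in the second half: making sure that a genuinely \emph{two-place} function $g$ can recover, from a pair of elements of $x$, a cofinal witness in $C$ above an arbitrary finite subset of $x$, and that the resulting family of witnesses is directed so that closedness (which is only stated for linearly ordered, equivalently directed-of-small-size, subfamilies) actually applies. The key realization is that one only needs to guess witnesses above \emph{finite} sets, not above all of $x$ at once, and that $|[X]^{<\omega}| = |X|$ lets these finite sets be encoded by single points of $X$ and hence read off a pair; directedness then comes for free because $s \mapsto c_s$ is monotone-compatible in the sense that $c_{s \cup t}$ dominates $c_s$ and $c_t$. Everything else — the regularity-of-$\kappa$ closure arguments and rewriting a small directed union as an increasing chain — is standard.
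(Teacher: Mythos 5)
The first half of your argument (that $C_g$ is a club) is essentially fine; the only small gap is that the closure $y = \bigcup_n x_n$ need not be infinite if $x$ is finite and $g$ happens to return only finite sets, but this is repaired trivially by throwing a fixed countably infinite subset of $X$ into $x_0$.

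The second half has a genuine gap, and it is exactly at the point you flag as the ``main obstacle.'' After fixing a surjection $p:[X]^2 \to [X]^{<\omega}$ and setting $g(e) := c_{p(e)}$, closure of $x$ under $g$ gives you $c_{p(e)} \subseteq x$ only for $e \in [x]^2$, i.e., for the finite sets in the image $p\bigl[[x]^2\bigr]$. You then assert that ``each $c_s \subseteq x$'' for every $s \in [x]^{<\omega}$, but this requires $[x]^{<\omega} \subseteq p\bigl[[x]^2\bigr]$ for the arbitrary infinite $x$ under consideration, and a fixed surjection $p$ has no reason to restrict to a surjection on $[x]^2$. The point is that closure of $x$ under $g$ does \emph{not} make $x$ closed under the coding, so the codes of finite subsets of $x$ need not lie in $x$ and the pairs you would need to read them off need not be available. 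The standard repair is to build the coding into $g$ itself: fix a bijection $\pi: X \to [X]^{<\omega}$ and have $g(\{a,b\})$ include not just the witnesses $c_{\pi(a)}, c_{\pi(b)}$ but also the decoded sets $\pi(a), \pi(b)$, the codes $\pi^{-1}(\{a\}), \pi^{-1}(\{a,b\})$, and $\pi^{-1}(\pi(a) \cup \pi(b))$; then closure of $x$ under $g$ forces, by induction on $|s|$, that $\pi^{-1}(s) \in x$ for every $s \in [x]^{<\omega}$, and only then do you get $c_s \subseteq x$. Two further points you treat as ``free'' also need work: the family $\{c_s \mid s \in [x]^{<\omega}\}$ is not automatically $\subseteq$-directed for an arbitrary choice of witnesses $c_s \supseteq s$ (you need to choose $c_s$ by recursion on $|s|$ so that $c_{s'} \subseteq c_s$ whenever $s' \subseteq s$), and the parenthetical claim that closedness under chains of size $<\kappa$ is ``equivalently'' closedness under directed families of size $<\kappa$ is not immediate (a directed family like $[x]^{<\omega}$ for uncountable $x$ has no cofinal chain), so the final step from a directed union to membership in $C$ deserves an explicit argument rather than an appeal to rewriting directed unions as chains.
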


Given sets $X \subseteq X'$ and a subset $S \subseteq \power_\kappa X$, let 
$S \uparrow \power_\kappa X' := \{y \in \power_\kappa X' \mid y \cap X \in S\}$. 
Dually, if $S' \subseteq \power_\kappa X'$, let $S \downarrow \power_\kappa X := 
\{y' \cap X \mid y' \in S'\}$. The following facts are standard.

\begin{proposition} \label{club_transfer_prop}
  Suppose that $X \subseteq X'$ are sets such that $\kappa \leq \vert X\vert $, $S \subseteq \power_\kappa X$, and 
  $S' \subseteq \power_\kappa X'$.
  \begin{enumerate}
    \item If $S$ is a (strong) club in $\power_\kappa X$, then $S \uparrow \power_\kappa X'$ is a 
    (strong) club in $\power_\kappa X'$.
    \item If $S'$ is a strong club in $\power_\kappa X'$, then $S \downarrow \power_\kappa X$ is a 
    strong club in $\power_\kappa X$.
    \item If $S$ is a club in $\power_\kappa X'$, then $S \downarrow \power_\kappa X$ contains a 
    club in $\power_\kappa X$.
    \item If $S$ is (weakly) stationary in $\power_\kappa X$, then $S \uparrow \power_\kappa X'$ is 
    (weakly) stationary in $\power_\kappa X'$.
    \item If $S'$ is (weakly) stationary in $\power_\kappa X'$, then $S' \downarrow \power_\kappa X$ 
    is (weakly) stationary in $\power_\kappa X$.
  \end{enumerate}
\end{proposition}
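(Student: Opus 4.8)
My plan is to handle the five clauses in the order (1), (2), (3), then (4) and (5): clauses (1) and (2) follow directly from the definitions, clause (3) is the technical heart of the proposition and will rely on the Menas characterization of the club filter (Proposition~\ref{menas_prop}), and clauses (4) and (5) are then formal consequences of (1)--(3) together with the definition of (weak) stationarity. Throughout I will use that $\kappa$ is regular and uncountable, so that $\power_\kappa X$ and $\power_\kappa X'$ are closed under unions of countably many of their members and, more generally, that a union of fewer than $\kappa$ sets each of size less than $\kappa$ again has size less than $\kappa$.

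For clause (1), given a (strong) club $S$ in $\power_\kappa X$, I would verify cofinality of $S \uparrow \power_\kappa X'$ by sending $x' \in \power_\kappa X'$ to $x' \cup y$ where $y \in S$ lies above $x' \cap X$; since $y \subseteq X$ and $x' \cap X \subseteq y$, we get $(x' \cup y) \cap X = y \in S$. For (strong) closure, the point is that $y' \mapsto y' \cap X$ is $\subseteq$-monotone and commutes with unions, so the union of a $\subseteq$-chain (resp.\ an arbitrary subfamily) of size $<\kappa$ drawn from $S \uparrow \power_\kappa X'$ meets $X$ in the union of the corresponding chain (resp.\ subfamily) from $S$, which lies in $S$ by (strong) closure of $S$. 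For clause (2), given a strong club $S'$ in $\power_\kappa X'$, cofinality of $S' \downarrow \power_\kappa X$ is immediate because every $x \in \power_\kappa X$ already lies in $\power_\kappa X'$; for strong closure, given $D \subseteq S' \downarrow \power_\kappa X$ with $|D| < \kappa$, choose for each $d \in D$ some $e_d \in S'$ with $e_d \cap X = d$, and note that $e := \bigcup_{d \in D} e_d$ lies in $S'$ by strong closure while $e \cap X = \bigcup D$. This last argument fails for ordinary clubs---the $e_d$ need not form a chain, even when $D$ does---which is exactly why clause (3) only claims that $S' \downarrow \power_\kappa X$ \emph{contains} a club.

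For clause (3), given a club $S'$ in $\power_\kappa X'$, I would first apply Proposition~\ref{menas_prop} to fix $g' : [X']^2 \to \power_\kappa X'$ with $C_{g'} \subseteq S'$, and for $A \subseteq X'$ write $\mathrm{cl}_{g'}(A)$ for the closure of $A$ under $g'$, i.e.\ the result of iterating $A \mapsto A \cup \bigcup\{g'(\{c,d\}) \mid c,d \in A\}$ for $\omega$ steps. Since $\kappa$ is regular and uncountable, $|\mathrm{cl}_{g'}(A)| < \kappa$ whenever $|A| < \kappa$, and $\mathrm{cl}_{g'}(A)$ is closed under $g'$, so $\mathrm{cl}_{g'}(A) \in C_{g'} \subseteq S'$ as soon as it is infinite. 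The key observation is that, because $g'$ is \emph{binary}, $\mathrm{cl}_{g'}$ commutes with unions of $\subseteq$-chains of length $< \kappa$: any two elements of such a union of closures already lie in a common member of the chain, whose closure is closed under $g'$. I would then set
\[
  C := \{x \in \power_\kappa X \mid x \text{ is infinite and } \mathrm{cl}_{g'}(x) \cap X = x\},
\]
so that for $x \in C$ we have $\mathrm{cl}_{g'}(x) \in S'$ with trace $x$, hence $C \subseteq S' \downarrow \power_\kappa X$. It remains to see that $C$ is a club in $\power_\kappa X$. For cofinality, given $x_0 \in \power_\kappa X$ (which we may assume infinite), build a $\subseteq$-increasing chain by $x_{n+1} := x_n \cup (\mathrm{cl}_{g'}(x_n) \cap X)$ and put $x := \bigcup_{n < \omega} x_n$; then $|x| < \kappa$, $x$ is infinite, and the key observation yields $\mathrm{cl}_{g'}(x) = \bigcup_n \mathrm{cl}_{g'}(x_n)$, so $\mathrm{cl}_{g'}(x) \cap X = \bigcup_n(\mathrm{cl}_{g'}(x_n) \cap X) \subseteq x$, i.e.\ $x \in C$ and $x \supseteq x_0$. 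For closedness, given a $\subseteq$-chain $\langle x_i \mid i < \delta\rangle$ from $C$ with $\delta < \kappa$ and $x := \bigcup_{i<\delta} x_i$, the key observation gives $\mathrm{cl}_{g'}(x) \cap X = \bigcup_i (\mathrm{cl}_{g'}(x_i) \cap X) = \bigcup_i x_i = x$, so $x \in C$.

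Finally, clauses (4) and (5) follow from the usual duality. For clause (5): if $C$ is a (strong) club in $\power_\kappa X$, then $C \uparrow \power_\kappa X'$ is a (strong) club in $\power_\kappa X'$ by clause (1), so a (weakly) stationary $S'$ meets it in some $y'$, and then $y' \cap X \in (S' \downarrow \power_\kappa X) \cap C$, witnessing that $S' \downarrow \power_\kappa X$ is (weakly) stationary. For clause (4), in the stationary case, if $C'$ is a club in $\power_\kappa X'$ then by clause (3) there is a club $C \subseteq C' \downarrow \power_\kappa X$ in $\power_\kappa X$; a stationary $S$ meets $C$ in some $x$, and choosing $y' \in C'$ with $y' \cap X = x$ shows $y' \in (S \uparrow \power_\kappa X') \cap C'$. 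In the weakly stationary case one argues identically but uses clause (2) to see directly that $C' \downarrow \power_\kappa X$ is a strong club. The main obstacle is clause (3): one needs an honest club of sets $x \subseteq X$ that are genuinely traces of members of $S'$, and since the naive preimage-and-union argument breaks for ordinary clubs, the work goes into the $g'$-closure operator and the fact---special to binary operations---that it commutes with unions of chains.
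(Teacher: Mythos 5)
The paper states this proposition as standard and provides no proof, so there is no in-text argument to compare against. Your proof is correct and follows the standard route: clauses (1), (2), (4), and (5) are direct unwindings of the definitions once one has clause (3), and you correctly identify that clause (3) is where the content lies, use Menas's characterization (Proposition~\ref{menas_prop}) to produce the club of $g'$-closure traces, and correctly observe that the naive pointwise-preimage argument fails for ordinary clubs (the chosen $e_d$ need not form a chain), which is precisely why clause (3) asserts only containment of a club rather than equality, and why clauses (4) and (5) must split into stationary and weakly-stationary cases using different earlier clauses.
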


\section{Strong tree properties and guessing models} \label{strong_tree_section}

In this section, we review the basic definitions and facts about two-cardinal tree properties and 
guessing models. These definitions and facts are largely generalizations of definitions and results 
from \cite{weiss} and \cite{viale_weiss}. The end of the section contains some new results, separating 
certain two-cardinal tree properties and answering a question of Fontanella and Matet.

Let $\kappa$ be a regular uncountable cardinal, and let $X$ be a set with $\vert X\vert  \geq \kappa$. 
We say that a sequence $\seq{d_x}{x\in\power_\kappa X}$ is a $(\kappa,X)$-list if $d_x\sub x$ for all $x\in \power_\kappa X$.

\begin{definition} \label{thin_slender_def}
Assume that $D=\seq{d_x}{x\in\power_\kappa X}$ is a $(\kappa,X)$-list and $\mc Y \subseteq \power_\kappa X$.
\bce[(i)] 
\item We say that $D$ is $\mc Y$-\emph{thin} if there is a closed unbounded set $C\sub \power_\kappa X$ such that $\vert \set{d_x\cap y}{y\sub x\in \power_\kappa X}\vert <\kappa$ for every $y\in C \cap \mc Y$.

\item  \label{slender_item} 
Let $\mu\le\kappa$ be an uncountable cardinal. We say that $D$ is $\mu$-$\mc Y$-\emph{slender} if for all sufficiently large $\theta$ there is a club $C\sub\power_\kappa H(\theta)$ such that for all $M\in C$ and all $y\in M\cap \power_\mu X$, if $M \cap X \in \mc Y$, then $d_{M\cap X}\cap y\in M$.
\ece
Here and in all similar later situations, if $\mc Y = \power_\kappa X$, then we will typically 
omit mention of $\mc Y$ and simply refer to $D$ as being \emph{thin} or \emph{$\mu$-slender}.
\end{definition}

The following fact is easily established (cf.\ \cite[Proposition 2.2]{weiss}).

\begin{fact} \label{thin_slender_fact}
  Assume that $D$ is a $(\kappa,X)$-list and $\mc Y \subseteq \power_\kappa X$. 
  If $D$ is $\mc Y$-thin, then it is $\kappa$-$\mc Y$-slender.
\end{fact}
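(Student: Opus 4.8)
The plan is to establish $\kappa$-$\mc Y$-slenderness of $D$ directly from $\mc Y$-thinness by a standard reflection argument. First I would fix a club $C_0 \subseteq \power_\kappa X$ witnessing that $D$ is $\mc Y$-thin, so that $|\{d_x \cap y \mid y \subseteq x \in \power_\kappa X\}| < \kappa$ for every $y \in C_0 \cap \mc Y$. Then, for a sufficiently large $\theta$ (certainly large enough that $D, C_0, \mc Y, X \in H(\theta)$), I would let $C$ be the collection of $M \in \power_\kappa H(\theta)$ with $M \prec (H(\theta), \in, D, C_0, \mc Y, X)$, with $M \cap \kappa \in \kappa$, and with $M \cap X \in C_0$; by standard facts about the club filter on $\power_\kappa H(\theta)$, this $C$ contains a club, so it suffices to check the slenderness requirement at members of $C$.

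Now fix $M \in C$ with $M \cap X \in \mc Y$ (if $M \cap X \notin \mc Y$ there is nothing to verify) and fix $y \in M \cap \power_\kappa X$. The elementarity of $M$ together with $M \cap \kappa \in \kappa$ gives $y \subseteq M$, hence $y \subseteq M \cap X$, so $d_{M \cap X} \cap y \subseteq y$; we must show it lies in $M$. The key step is to replace the external object $M \cap X$ by an internal one. Since $M \cap X$ itself witnesses, in $H(\theta)$, the statement ``there is $a \in \power_\kappa X$ with $y \subseteq a$ and $a \in C_0 \cap \mc Y$'', elementarity produces such an $a$ in $M$. For this $a$, $\mc Y$-thinness yields $|S| < \kappa$ where $S := \{d_x \cap a \mid a \subseteq x \in \power_\kappa X\}$; as $S$ is definable in $H(\theta)$ from $D$ and $a$, both members of $M$, we get $S \in M$, and then $S \subseteq M$ since $M \cap \kappa \in \kappa$. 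Finally, $a \in M$ and $M \cap \kappa \in \kappa$ give $a \subseteq M \cap X$, so $M \cap X$ is one of the sets $x \in \power_\kappa X$ with $a \subseteq x$; hence $d_{M \cap X} \cap a \in S \subseteq M$, and since $y \subseteq a$ we conclude $d_{M \cap X} \cap y = (d_{M \cap X} \cap a) \cap y \in M$.

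The only point that genuinely needs care is the elementarity step that manufactures the internal witness $a$: it works precisely because thinness is witnessed on a club and $M$ was arranged so that $M \cap X \in C_0$, which is what makes the displayed existential statement true in $H(\theta)$; absent the thinness hypothesis there would be no reason for $C_0 \cap \mc Y$ to have a member above $y$ lying in $M$. The remaining ingredients — that $M$ correctly computes $\power_\kappa X$ and the set $S$, that a size-$<\kappa$ member of $M$ is contained in $M$ when $M \cap \kappa \in \kappa$, and that $C$ contains a club — are routine, which is why the fact is ``easily established''.
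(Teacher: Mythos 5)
Your proof is correct. The paper itself does not supply an argument for this fact; it merely remarks that it is ``easily established'' and cites Wei\ss's Proposition~2.2, so there is no in-text proof to compare against. Your argument is the expected one: intersect the club of elementary $M \prec H(\theta)$ with $M \cap \kappa \in \kappa$ against $C_0 \uparrow \power_\kappa H(\theta)$, and then, for a given $y \in M$, replace the external set $M \cap X$ by an internal witness $a \in M \cap C_0 \cap \mc Y$ above $y$ (which exists in $M$ by elementarity, since $M \cap X$ itself witnesses the relevant existential in $H(\theta)$). The set $S_a = \{d_x \cap a \mid a \subseteq x\}$ is then a member of $M$ of size $< \kappa$, hence a subset of $M$, and $d_{M \cap X} \cap y = (d_{M \cap X} \cap a) \cap y \in M$ follows since $a \subseteq M \cap X$ and $y \subseteq a$. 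All the supporting steps (that the intersection of those three sets is a club in $\power_\kappa H(\theta)$, that small-in-$\kappa$ members of $M$ are subsets of $M$ when $M \cap \kappa \in \kappa$, and that $S_a$ is definable in $M$ from $D$ and $a$) are routine and correctly invoked. You identified the right crux — manufacturing the internal witness $a$ — and explained why the thinness club is exactly what makes that elementarity step go through.
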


The following proposition follows almost immediately from the definitions but is often useful, 
showing that the value of $\theta$ in Clause (\ref{slender_item}) of Definition 
\ref{thin_slender_def} can always be taken to be any cardinal $\theta$ for which 
$\power_\mu X \subseteq H(\theta)$.

\begin{proposition}
  Suppose that $\mu \leq \kappa$ is an infinite cardinal,
  $D = \langle d_x \mid x \in \power_\kappa X \rangle$ is a $(\kappa, X)$-list, 
  $\mc Y \subseteq \power_\kappa X$, and $\theta$ is a cardinal such that 
  $\power_\mu X \subseteq H(\theta)$. Then the following are equivalent.
  \begin{enumerate}
    \item $D$ is $\mu$-$\mc Y$-slender.
    \item There is a club $C \subseteq \power_\kappa H(\theta)$ such that for all 
    $M \in C$ and all $y \in M \cap \power_\mu X$, if $M \cap X \in \mc Y$, then 
    $d_{M \cap X} \cap y \in M$.
  \end{enumerate}
\end{proposition}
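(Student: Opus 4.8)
The plan is to prove the two implications separately, with the substantive direction being $(1) \Rightarrow (2)$.

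For $(2) \Rightarrow (1)$: this is essentially immediate. Suppose $D$ is not $\mu$-$\mc Y$-slender. By definition, this means there is some cardinal $\theta'$ (which we may take as large as we like, in particular $\theta' \geq \theta$ with $\power_\mu X, H(\theta) \in H(\theta')$) for which no club $C' \subseteq \power_\kappa H(\theta')$ witnesses slenderness at $\theta'$. I would then take any club $C \subseteq \power_\kappa H(\theta)$ as in clause (2), pull it back via the map $M' \mapsto M' \cap H(\theta)$ (using Proposition \ref{club_transfer_prop}(3), viewing $H(\theta) \subseteq H(\theta')$), and restrict to those $M' \prec H(\theta')$ that are sufficiently elementary and closed so that $M' \cap H(\theta) \prec H(\theta)$, that $y \in M' \cap \power_\mu X$ implies $y \in M' \cap H(\theta)$ (automatic since $\power_\mu X \subseteq H(\theta)$), that $M' \cap X = (M' \cap H(\theta)) \cap X$, and that membership in $M' \cap H(\theta)$ agrees with membership in $M'$ for the relevant sets $d_{M' \cap X} \cap y$ (automatic, as these lie in $H(\theta)$). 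The set of such $M'$ is club in $\power_\kappa H(\theta')$, and it would witness slenderness at $\theta'$, a contradiction. Hence (2) implies (1).

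For $(1) \Rightarrow (2)$: fix $\theta$ with $\power_\mu X \subseteq H(\theta)$, and by (1) applied to some sufficiently large $\theta' > \theta$ (with $H(\theta) \in H(\theta')$), fix a club $C' \subseteq \power_\kappa H(\theta')$ witnessing $\mu$-$\mc Y$-slenderness at $\theta'$. I would then set
\[
C := \{ M' \cap H(\theta) \mid M' \in C', \ M' \prec (H(\theta'), \in, H(\theta), X, D), \ H(\theta) \in M' \}.
\]
Actually, to be careful about the form of a club in $\power_\kappa H(\theta)$, I would instead define $C$ to be the set of $M \in \power_\kappa H(\theta)$ such that $M \prec (H(\theta), \in, X, D)$ and there exists $M' \in C'$ with $M' \prec (H(\theta'), \in, H(\theta), X, D)$, $H(\theta) \in M'$, and $M' \cap H(\theta) = M$. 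The key closure fact is that the collection of $M'$ with $M' \prec (H(\theta'), \in, H(\theta), \dots)$, $H(\theta) \in M'$, and $M'$ internally rich enough is a club in $\power_\kappa H(\theta')$; intersecting with $C'$ and projecting down via Proposition \ref{club_transfer_prop}(3) shows $C$ contains a club in $\power_\kappa H(\theta)$. Shrinking $C$ to an actual subclub is harmless. Now take $M \in C$ witnessed by $M'$, and $y \in M \cap \power_\mu X$. Since $\power_\mu X \subseteq H(\theta) \in M'$ and $y \in M = M' \cap H(\theta) \subseteq M'$, we get $y \in M' \cap \power_\mu X$. Also $M \cap X = M' \cap H(\theta) \cap X = M' \cap X$ (as $X \subseteq H(\theta)$), so if $M \cap X \in \mc Y$ then $M' \cap X \in \mc Y$, and by the choice of $C'$ we conclude $d_{M' \cap X} \cap y \in M'$. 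But $d_{M' \cap X} \cap y = d_{M \cap X} \cap y$ is a subset of $y \in \power_\mu X \subseteq H(\theta)$, hence an element of $H(\theta)$, so $d_{M \cap X} \cap y \in M' \cap H(\theta) = M$. This gives clause (2).

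The main obstacle — really the only place requiring genuine care rather than bookkeeping — is verifying that the projected set $C$ genuinely contains a club in $\power_\kappa H(\theta)$, i.e., that enough of the $M' \in C'$ have the property that $M' \cap H(\theta)$ is a suitable elementary submodel and that these projections form (or contain) a club. This is a standard reflection/absoluteness argument: the map $M' \mapsto M' \cap H(\theta)$ sends a club in $\power_\kappa H(\theta')$ to a set containing a club in $\power_\kappa H(\theta)$ (Proposition \ref{club_transfer_prop}(3)), and one must additionally note that requiring $H(\theta) \in M'$ and requiring $M'$ (hence $M' \cap H(\theta)$) to be closed under the relevant Skolem functions for $(H(\theta), \in, X, D)$ only shrinks $C'$ by a club. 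Everything else — the identifications $M' \cap X = M \cap X$, $M' \cap \power_\mu X \supseteq M \cap \power_\mu X$, and $d_{M \cap X} \cap y \in H(\theta)$ — follows mechanically from $\power_\mu X \subseteq H(\theta)$ and $d_{M \cap X} \cap y \subseteq y$.
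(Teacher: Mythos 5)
Your proof is correct and follows essentially the same route as the paper's (one-line) argument: transfer a witnessing club up or down between $\power_\kappa H(\theta)$ and $\power_\kappa H(\theta')$ using Proposition~\ref{club_transfer_prop}, then check the relevant identifications $M'\cap X = M\cap X$ and $d_{M\cap X}\cap y \in H(\theta)$ mechanically from $\power_\mu X\subseteq H(\theta)$. One small slip: in the $(2)\Rightarrow(1)$ direction, the pullback $\{M' : M'\cap H(\theta)\in C\}$ is $C\uparrow\power_\kappa H(\theta')$, so the relevant clause of Proposition~\ref{club_transfer_prop} is~(1), not~(3).
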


\begin{proof}
  Suppose that $\theta_0 \leq \theta_1 \leq \theta_2$ are cardinals for which 
  $\power_\mu X \subseteq H(\theta_0)$, and suppose that $C \subseteq \power_\kappa 
  H(\theta_1)$ is a club witnessing the $\mu$-$\mc Y$-slenderness of $D$ (i.e., 
  $C$ is as in Clause (\ref{slender_item}) of Definition \ref{thin_slender_def}). Then it is 
  easily verified that $C \downarrow H(\theta_0)$ and $C \uparrow H(\theta_2)$ also witness 
  the $\mu$-$\mc Y$-slenderness of $D$ (recall Proposition \ref{club_transfer_prop}). The 
  proposition is then immediate.
\end{proof}

\begin{definition}
Assume that $D=\seq{d_x}{x\in\power_\kappa X}$ is a $(\kappa,X)$-list, $\mc Y \subseteq 
\power_\kappa X$ is stationary, and $d\sub X$.
\bce[(i)]
\item We say that $d$ is a \emph{cofinal branch} of $D$ if for all $x\in\power_\kappa X$ there is $z_x\supseteq x$ such that $d\cap x=d_{z_x}\cap x$.
\item We say that $d$ is a \emph{$\mc Y$-ineffable branch} of $D$ if the set $\set{x\in \mc Y}{d\cap x=d_x}$ is stationary. (Again, we will omit mention of $\mc Y$ if $\mc Y = \power_\kappa X$.)
\ece
\end{definition}

\begin{remark} \label{list_to_tree_remark}
  Given a $(\kappa, X)$-list, there is a canonical way of generating a $\power_\kappa X$-tree 
  (in the sense of Section \ref{generalized_tree_section}) from it, where 
  $\power_\kappa X$ is seen as a poset ordered by $\subseteq$. Namely, fix a 
  $(\kappa, X)$-list $D = \langle d_x \mid x \in \power_\kappa X \rangle$, and define a 
  $\power_\kappa X$-tree $T = (\langle T_x \mid x \in \power_\kappa X \rangle, <_T)$ as 
  follows. First, for each $x \in \power_\kappa X$, let $T_x := \{d_y \cap x \mid y \in 
  \power_\kappa X \text{ and } x \subseteq y\}$. Given $x \subsetneq y$ in $\power_\kappa 
  X$, $s \in T_x$, and $t \in T_y$, we set $s <_T t$ if and only if $s = t \cap x$. The 
  following are then immediate:
  \begin{itemize}
    \item $T$ is a $\power_\kappa X$-tree;
    \item for every $\mc Y \subseteq \power_\kappa X$, $D$ is $\mc Y$-thin if and only if 
    $T$ is $\mc Y$-$\kappa$-thin;
    \item $T$ has a cofinal branch if and only if $D$ has a cofinal branch.
  \end{itemize}
\end{remark}

\begin{definition}
  If $D = \langle d_x \mid x \in \power_\kappa X \rangle$ is a $(\kappa, X)$-list, then we let 
  $\width(D)$ denote $\width(T)$, where $T$ is the $\power_\kappa X$-tree generated from $D$ as in 
  Remark \ref{list_to_tree_remark}.
\end{definition}


\begin{definition} \label{tp_def}
Assume that $\mu\le\kappa$ is regular and $\mc Y \subseteq \power_\kappa X$ is stationary. We say that 
\bce[(i)]
\item the $(\kappa,X)$-tree property holds on $\mc Y$, denoted $\TP_{\mc Y}(\kappa,X)$, if every 
$\mc Y$-thin $(\kappa,X)$-list has a cofinal branch.

\item the $(\kappa,X)$-ineffable tree property holds on $\mc Y$, denoted $\ITP_{\mc Y}(\kappa,X)$, if every $\mc Y$-thin $(\kappa,X)$-list has a $\mc Y$-ineffable branch.
\item the $(\mu,\kappa,X)$-slender tree property holds on $\mc Y$, denoted $\SP_{\mc Y}(\mu,\kappa,X)$, if every $\mu$-$\mc Y$-slender $(\kappa,X)$-list has a cofinal branch.
\item the $(\mu,\kappa,X)$-ineffable slender tree property holds on $\mc Y$, denoted $\ISP_{\mc Y}(\mu,\kappa,X)$, if every $\mu$-$\mc Y$-slender $(\kappa,X)$-list has a $\mc Y$-ineffable branch.
\ece
\end{definition}

\begin{remark} \label{tp_convention_remark}
  In order to ease notation, we introduce a couple of conventions. As before, if 
  mention of $\mc Y$ is omitted in any of the principles from Definition \ref{tp_def}, 
  then it should be understood that $\mc Y = \power_\kappa X$. We will use notations 
  such as $\mathsf{(I)TP}(\kappa, \geq \kappa)$ (resp.\ $\mathsf{(I)SP}(\mu, 
  \kappa, \geq \kappa)$) to assert that $\mathsf{(I)TP}(\kappa, \lambda)$ (resp.\ 
  $\mathsf{(I)SP}(\mu, \kappa, \lambda)$) holds for all $\lambda \geq \kappa$. 
  Finally, in the principles $\mathsf{(I)SP}_{\mc Y}(\mu, \kappa, \lambda)$, the value 
  of $\mu$ that has most often been considered in the literature is $\omega_1$; we will 
  therefore use $\mathsf{(I)SP}_\kappa$ to denote $\mathsf{(I)SP}(\omega_1, \kappa, \geq 
  \kappa)$.
\end{remark}

If $f:X_0 \rightarrow X_1$ is a bijection, $\mc Y_0 \subseteq \power_\kappa X_0$ is 
stationary, and $\mc Y_1 := \{f[x] \mid x \in \mc Y_0\}$, then $\TP_{\mc Y_0}(\kappa, X_0)$ is equivalent 
to $\TP_{\mc Y_1}(\kappa, X_1)$, $\ITP_{\mc Y_0}(\mu, \kappa, X_0)$ is equivalent to 
$\ITP_{\mc Y_1}(\mu, \kappa, X_1)$, and similarly for $\SP$ and $\ISP$. We will therefore 
not lose any generality by stating our results in the context in which $X$ is an infinite 
cardinal, which is what is typically seen in the literature. There are instances, though, 
in which it is more convenient to work with, e.g., $(\kappa, H(\theta))$-lists, so 
we have opted for the more general terminology and notation. Also, if $C \subseteq 
\power_\kappa X_0$ is a club, then $\TP_{\mc Y_0}(\kappa, X_0)$ is equivalent to 
$\TP_{\mc Y_0 \cap C}(\kappa, X_0)$, and similarly for the other principles.

Note that $\mathsf{(I)SP}_{\mc Y}(\mu,\kappa,\lambda)$ implies $\mathsf{(I)SP}_{\mc Y}(\nu,\kappa,\lambda)$ for $\kappa \ge \nu\ge\mu$. The converse does not hold: In Theorem \ref{th:SP} we show that $\SP(\omega_1,\omega_2,\omega_2)$ implies that there are no weak $\omega_1$-Kurepa trees, while in Theorem \ref{th:SPconverse} we show that $\ISP(\omega_2,\omega_2,\lambda)$, and hence also the weaker $\SP(\omega_2,\omega_2,\lambda)$, is consistent with the existence of a (thin) $\omega_1$-Kurepa tree.\footnote{For concreteness, we formulate the result of $\omega_2$, but it can be easily generalized to an arbitrary double successor of a regular cardinal.} Also, by Fact \ref{thin_slender_fact}, $\mathsf{(I)SP}_{\mc Y}(\kappa, 
\kappa, \lambda)$ implies $\mathsf{(I)TP}(\kappa, \lambda)$. We will see in Subsection 
\ref{approachability_subsection} below that these implications are also in general not reversible. 

Note additionally that there is monotonicity in the last coordinate of these principles: if $\lambda' \geq \lambda \geq \kappa$ 
and $\mc Y' = \mc Y \uparrow \power_\kappa \lambda'$, 
then $\mathsf{(I)SP}_{\mc Y'}(\mu, \kappa, \lambda')$ implies $\mathsf{(I)SP}_{\mc Y}(\mu, \kappa, 
\lambda)$, and $\mathsf{(I)TP}_{\mc Y'}(\kappa, \lambda')$ implies $\mathsf{(I)TP}_{\mc Y}(\kappa, \lambda)$.

We now recall a useful reformulation of instances of $\ISP(\ldots)$ in terms of \emph{guessing models}. 
We first introduce some basic definitions. Terminology regarding guessing models is slightly inconsistent 
across sources; we will primarily be following the terminology and notation from 
\cite{viale_guessing_models}. We note, however, that our definition of a \emph{$(\mu, M)$-approximated 
set} is formally weaker than the standard definition. It is easily seen to be equivalent if $M$ is closed 
under pairwise intersections of its elements (in particular, if $M \prec H(\theta)$), but since we 
will sometimes want to apply the definition to sets $M$ that are not elementary submodels of $H(\theta)$, 
our weaker definition seems more appropriate.

\begin{definition} \label{guessing_model_def} 
  Suppose that $\theta$ is a sufficiently large regular cardinal and $M \subseteq H(\theta)$.
  \begin{enumerate}
    \item Given a set $x \in M$, a subset $d \subseteq x$, and an uncountable cardinal $\mu$, we say that 
    \begin{enumerate}
      \item $d$ is \emph{$(\mu, M)$-approximated} if, for every $z \in M \cap \power_\mu(x)$, 
      there is $e \in M$ such that $d \cap z = e \cap z$;
      \item $d$ is \emph{$M$-guessed} if there is $e \in M$ such that $d \cap M = e \cap M$.
    \end{enumerate}
    \item $M$ is a \emph{$\mu$-guessing model for $x$} if $M \prec H(\theta)$ and
    every $(\mu, M)$-approximated subset of $x$ is $M$-guessed.
    \item $M$ is a \emph{$\mu$-guessing model} if, for every $x \in M$, it is a $\mu$-guessing 
    model for $x$.
    \item Given uncountable cardinals $\mu \leq \kappa \leq \theta$ with $\kappa$ and 
    $\theta$ regular, and given $x \in H(\theta)$, let $\mc G^x_{\mu, \kappa} H(\theta)$ 
    denote the set of $M \in \power_\kappa H(\theta)$ such that $M$ is a $\mu$-guessing model 
    for $x$. Let $\mc G_{\mu,\kappa} H(\theta)$ denote the set of $M \in \power_\kappa 
    H(\theta)$ such that $M$ is a $\mu$-guessing model.
    \item Suppose that $\mu \leq \kappa \leq \theta$ are uncountable cardinals with $\kappa$ 
    and $\theta$ regular, and that 
    $\mc Y \subseteq \power_\kappa H(\theta)$ is stationary. Then $\GMP_{\mc Y}(\mu, \kappa, \theta)$ 
    is the assertion that $\mc G_{\mu,\kappa} H(\theta) \cap \mc Y$ is 
    stationary in $\power_\kappa H(\theta)$, i.e., $\mc Y$ contains stationarily many 
    $\mu$-guessing models.
  \end{enumerate}
\end{definition}

\begin{remark} \label{gmp_convention_remark}
  As was the case with Definition \ref{tp_def}, we introduce some notational conveniences. 
  If $\mc Y$ is omitted from $\GMP_{\mc Y}(\mu, \kappa, \theta)$, then it should be 
  understood to be $\power_\kappa H(\theta)$. We let $\GMP(\mu, \kappa, \geq \kappa)$ 
  denote the assertion that $\GMP(\mu, \kappa, \theta)$ holds for every regular 
  $\theta \geq \kappa$. Again, the case $\mu = \omega_1$ is the most prominent in the 
  literature; we will simply say that a model $M$ is a \emph{guessing model} to mean 
  that it is an $\omega_1$-guessing model, and we will write 
  $\mc G^x_\kappa H(\theta)$ and $\mc G_\kappa H(\theta)$ instead of 
  $\mc G^x_{\omega_1,\kappa} H(\theta)$ or $\mc G_{\omega_1, \kappa} H(\theta)$.
\end{remark}

The following proposition is immediate from the definitions.

\begin{proposition} \label{transfer_prop}
  Suppose that $\mu \leq \kappa \leq \theta \leq \theta'$ are uncountable cardinals, 
  with $\kappa$, $\theta$, and $\theta'$ regular. 
  \begin{enumerate}
    \item \label{transfer_clause_1} Suppose that $x \in M \prec H(\theta)$, $M' \prec H(\theta')$, 
    and $M' \cap H(\theta) = M$. Then $M$ is a $\mu$-guessing model for $x$ if and only if 
    $M'$ is a $\mu$-guessing model for $x$. In particular, $\mc G^x_{\mu, \kappa} H(\theta)$ 
    is stationary in $\power_\kappa H(\theta)$ if and only if $\mc G^x_{\mu, \kappa} 
    H(\theta')$ is stationary in $\power_\kappa H(\theta')$.
    \item Suppose that $M \prec H(\theta)$, $M' \prec H(\theta')$, and $M' \cap H(\theta) = 
    M$. If $M'$ is a $\mu$-guessing model, then $M$ is also a $\mu$-guessing model. 
    In particular, $\GMP(\mu, \kappa, \theta')$ implies $\GMP(\mu, \kappa, \theta)$.
  \end{enumerate}
\end{proposition}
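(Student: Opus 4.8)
The plan is to establish clause (1) by comparing, essentially line by line, the definitions of ``$(\mu,M)$-approximated'' and ``$M$-guessed'' as computed in $M$ versus in $M'$, and then to read off clause (2) and the two ``in particular'' statements. The whole argument rests on three elementary observations, all immediate from $\theta \leq \theta'$ and $M = M' \cap H(\theta)$: (i) $M \subseteq M'$; (ii) since $x \in M \subseteq H(\theta)$ we have $|\mathrm{trcl}(x)| < \theta$, so every subset of $x$ again lies in $H(\theta)$ --- in particular $\power_\mu(x) \subseteq H(\theta)$, whence $M \cap \power_\mu(x) = M' \cap \power_\mu(x)$, i.e.\ the two models ``see'' the same family of test sets; (iii) for every $d \subseteq x$, $d \cap M' = d \cap (M' \cap H(\theta)) = d \cap M$, since $d \subseteq H(\theta)$.

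Next I would prove two claims, each by a routine witness-trimming argument. \emph{Claim (a):} a set $d \subseteq x$ is $(\mu,M)$-approximated if and only if it is $(\mu,M')$-approximated. The forward implication is immediate from (i) and (ii); for the converse, given a test set $z \in M \cap \power_\mu(x)$ and a witness $e \in M'$ with $d \cap z = e \cap z$, the trimmed set $e \cap z$ is a subset of $x$, hence lies in $M' \cap H(\theta) = M$, and still works. \emph{Claim (b):} a set $d \subseteq x$ is $M$-guessed if and only if it is $M'$-guessed. Using (iii), a witness $e \in M$ for $M$-guessing already works for $M'$-guessing (as $e \cap M' = e \cap M$ because $e \subseteq H(\theta)$); conversely, a witness $e \in M'$ for $M'$-guessing can be trimmed to $e \cap x \in M' \cap H(\theta) = M$, and this still guesses $d \cap M$ (here one uses $x \in M$ crucially, together with (iii)). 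Combining (a) and (b): the families of approximated subsets of $x$ coincide, and on that common family ``$M$-guessed'' and ``$M'$-guessed'' coincide; since $M \prec H(\theta)$ and $M' \prec H(\theta')$ hold by hypothesis, this is exactly the assertion that $M$ is a $\mu$-guessing model for $x$ iff $M'$ is, i.e.\ clause (1). Clause (2) then follows formally: if $M'$ is a $\mu$-guessing model and $x \in M$, then $x \in M'$, so $M'$ is a $\mu$-guessing model for $x$, hence by clause (1) so is $M$; as $x \in M$ was arbitrary, $M$ is a $\mu$-guessing model.

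For the ``in particular'' clauses I would combine the above with Proposition \ref{club_transfer_prop}(4)--(5). Work with the club $C := \{N \in \power_\kappa H(\theta') \mid N \prec H(\theta'),\ H(\theta) \in N,\ x \in N\}$; for $N \in C$ one has $N \cap H(\theta) \prec H(\theta)$ (Tarski--Vaught, since $H(\theta) \in N \prec H(\theta')$) and $x \in N \cap H(\theta)$, so clause (1) applies to the pair $(N \cap H(\theta),\, N)$. If $\mc G^x_{\mu,\kappa}H(\theta')$ is stationary, then by Proposition \ref{club_transfer_prop}(5) the downward projection $(\mc G^x_{\mu,\kappa}H(\theta') \cap C)\downarrow\power_\kappa H(\theta)$ is stationary in $\power_\kappa H(\theta)$, and by clause (1) it is contained in $\mc G^x_{\mu,\kappa}H(\theta)$. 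Conversely, if $\mc G^x_{\mu,\kappa}H(\theta)$ is stationary, then by Proposition \ref{club_transfer_prop}(4) the set $(\mc G^x_{\mu,\kappa}H(\theta)\uparrow\power_\kappa H(\theta'))\cap C$ is stationary in $\power_\kappa H(\theta')$, and by clause (1) it is contained in $\mc G^x_{\mu,\kappa}H(\theta')$. The implication $\GMP(\mu,\kappa,\theta')\Rightarrow\GMP(\mu,\kappa,\theta)$ is the same downward-projection argument, now using only the (one-directional) clause (2) and Proposition \ref{club_transfer_prop}(5).

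I do not expect a genuine obstacle: the proposition really is ``immediate from the definitions''. The one point deserving a moment's care is the backward half of Claim (b) --- the only place where the hypothesis $x \in M$ is essential --- since there the natural guessing witness $e$ lies in $M'$ but not necessarily in $M$, and one must check that cutting it down to $e \cap x$ (which forces it into $H(\theta)$, hence into $M$) does not disturb the equation $d \cap M = e \cap M$. Everything else is bookkeeping with $M = M' \cap H(\theta)$ and the closure of $H(\theta)$ under subsets of its elements.
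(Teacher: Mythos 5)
The paper itself offers no argument---it declares the proposition ``immediate from the definitions''---so there is nothing to compare against. Your proof of the core biconditional in clause (1) and of clause (2) is correct and well organized: observations (i)--(iii) are exactly the right reductions, the witness-trimming in Claims (a) and (b) works, and you correctly identify the backward half of (b) as the one place where $x\in M$ and the closure of $H(\theta)$ under subsets of its elements do real work (one checks $e\cap M'\subseteq d\subseteq x$, then $e\cap M'=(e\cap x)\cap M'=(e\cap x)\cap M$).

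There is, however, one genuine gap in the ``in particular'' arguments. You define the club
$C := \{N \in \power_\kappa H(\theta') \mid N \prec H(\theta'),\ H(\theta) \in N,\ x \in N\}$
and invoke Tarski--Vaught to get $N\cap H(\theta)\prec H(\theta)$. But requiring $H(\theta)\in N$ presupposes $H(\theta)\in H(\theta')$, i.e.\ $2^{<\theta}<\theta'$, and this is \emph{not} part of the hypotheses of the proposition (which only assume $\theta\le\theta'$). If $2^{<\theta}\ge\theta'$, your $C$ is empty and the argument collapses. The repair is small: take instead $C := \{N \in \power_\kappa H(\theta') \mid N \prec H(\theta'),\ \theta \in N,\ x \in N\}$, which is always a club when $\theta<\theta'$ (the case $\theta=\theta'$ forces $M'=M$ and is vacuous). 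One must then check that $\theta\in N\prec H(\theta')$ already yields $N\cap H(\theta)\prec H(\theta)$; this is not a bare Tarski--Vaught step, since $H(\theta)$ need not be an element of $H(\theta')$, but it follows from the standard relativization argument: membership in $H(\theta)$ is definable in $H(\theta')$ from the parameter $\theta$ and is absolute (as all relevant witnesses lie in $H(\theta)\subseteq H(\theta')$), so for any formula $\varphi$ and $\bar a\in H(\theta)$ one has $H(\theta)\models\varphi(\bar a)$ iff $H(\theta')\models\varphi^{H(\theta)}(\bar a,\theta)$, and the usual Tarski--Vaught computation then runs inside $H(\theta')$ with parameters in $N$. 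With that substitution your use of Proposition~\ref{club_transfer_prop}(4)--(5) goes through exactly as you describe, for both ``in particular'' statements.
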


The proofs of the following propositions are essentially the same as those of 
\cite[Propositions 3.2 and 3.3]{viale_weiss}; we include them for completeness.

\begin{proposition} \label{guessing_isp_prop_1}
  Let $\mu \leq \kappa \leq \theta$ be regular uncountable cardinals, and let 
  $\mc Y \subseteq \power_\kappa H(\theta)$ be stationary.
   If $\ISP_{\mc Y}(\mu, \kappa, 
  H(\theta))$ holds, then $\GMP_{\mc Y}(\mu, \kappa, \theta)$ holds.
\end{proposition}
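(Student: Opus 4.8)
The plan is to prove the contrapositive, following the strategy of \cite[Propositions 3.2 and 3.3]{viale_weiss}: assuming $\GMP_{\mc Y}(\mu,\kappa,\theta)$ fails, I will build a $\mu$-$\mc Y$-slender $(\kappa,H(\theta))$-list with no $\mc Y$-ineffable branch, contradicting $\ISP_{\mc Y}(\mu,\kappa,H(\theta))$. So suppose $\mc G_{\mu,\kappa}H(\theta)\cap\mc Y$ is nonstationary and fix a club $\mc C\subseteq\power_\kappa H(\theta)$ disjoint from it. Intersecting with a further club, I may assume that each $M\in\mc C$ is an elementary submodel of $(H(\theta),\in,<^*)$ for a fixed well-order $<^*$ and is ``$\mu$-closed'', i.e.\ $y\subseteq M$ whenever $y\in M\cap\power_\mu H(\theta)$ (this holds for club-many $M$: it suffices that $\mu\subseteq M$ when $\mu<\kappa$, or that $M\cap\kappa\in\kappa$ when $\mu=\kappa$). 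Then no $M\in\mc C\cap\mc Y$ is a $\mu$-guessing model, so I may use $<^*$ to choose, for each such $M$, a set $x_M\in M$ together with a subset $d_M\subseteq x_M$ that is $(\mu,M)$-approximated but not $M$-guessed.

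Next I would define a list $D=\langle e_y\mid y\in\power_\kappa H(\theta)\rangle$ by setting $e_M:=\{\langle 0,x_M\rangle\}\cup\{\langle 1,a\rangle\mid a\in d_M\cap M\}$ for $y=M\in\mc C\cap\mc Y$, and $e_y:=\emptyset$ otherwise; since $M$ is closed under pairing and $0,x_M\in M$ and $d_M\cap M\subseteq M$, we get $e_y\subseteq y$, so $D$ is a genuine $(\kappa,H(\theta))$-list. To see that $D$ is $\mu$-$\mc Y$-slender, given a large enough $\theta'$ I would take the club of all $N\prec(H(\theta'),\in)$ containing $<^*$ and the sequence $\langle(x_M,d_M)\mid M\in\mc C\cap\mc Y\rangle$; for such $N$, put $M:=N\cap H(\theta)\in\mc C$, assume $M\in\mc Y$, and fix $y\in N\cap\power_\mu H(\theta)$, so that $y\subseteq M$. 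Writing $y':=\{a\mid\langle 1,a\rangle\in y\}\in M\cap\power_\mu H(\theta)$, a direct computation reduces $e_M\cap y\in M$ to the statement $d_M\cap y'\in M$, and the latter follows by applying the $(\mu,M)$-approximation of $d_M$ to the set $y'\cap x_M\in M\cap\power_\mu(x_M)$.

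Now $\ISP_{\mc Y}(\mu,\kappa,H(\theta))$ supplies a $\mc Y$-ineffable branch $d$ of $D$; let $S:=\{y\in\mc Y\mid d\cap y=e_y\}$, which is stationary, and write $d^*:=\{a\mid\langle 1,a\rangle\in d\}$. For $M\in S\cap\mc C$ we have $d\cap M=e_M$, hence $\langle 0,x_M\rangle\in d$ and $d^*\cap M=d_M\cap M$. The crucial observation is that $d$ contains at most one element of the form $\langle 0,\cdot\rangle$: being stationary, $S$ meets the club $\{z\mid x\subseteq z\}$ for every $x\in\power_\kappa H(\theta)$, so $d$ is a cofinal branch, and since each $e_y$ has at most one element of the form $\langle 0,\cdot\rangle$, applying this to a two-element set $\{\langle 0,w_0\rangle,\langle 0,w_1\rangle\}\subseteq d$ forces $w_0=w_1$. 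Therefore there is a single $x^*\in H(\theta)$ with $x_M=x^*$ for all $M\in S\cap\mc C$, whence $d^*\cap M=d_M\cap M\subseteq d_M\subseteq x^*$, so $d^{**}:=d^*\cap x^*$ satisfies $d^{**}\cap M=d_M\cap M$ for every $M\in S\cap\mc C$. Since $d^{**}\subseteq x^*\in H(\theta)$ we have $d^{**}\in H(\theta)$, so I may pick $M\in S\cap\mc C$ with $d^{**}\in M$; then $d^{**}$ witnesses that $d_M$ is $M$-guessed, contradicting the choice of $d_M$. This yields the desired contradiction.

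I expect the step requiring the most care to be this final extraction: the coding must be arranged so that both components of the non-guessing witness — the ambient set $x_M\in M$ and the approximated subset $d_M\subseteq x_M$, which in general is not contained in $M$ — can be recovered from an ineffable branch, and then one must exploit the cofinal-branch property to force the sets $x_M$ to stabilize to a single $x^*\in H(\theta)$. Only after this stabilization is the candidate ``guess'' $d^{**}$ a bona fide element of $H(\theta)$, which is what allows it to be absorbed into a stationary set of models. By contrast, the slenderness verification and the reduction to $\mu$-closed elementary submodels are routine bookkeeping.
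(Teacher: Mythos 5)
Your proof is correct, and it reaches the same conclusion as the paper but by a genuinely different route in the key stabilization step. The paper's proof replaces each $d_M$ by $d_M \cap M$ so that the entry of the list is literally $d_M \subseteq M$, applies $\ISP_{\mc Y}$ to obtain an ineffable branch $d$ and a stationary set $S_0$ of agreement models, and then invokes \emph{Fodor's lemma} on the regressive map $M \mapsto z_M$ restricted to $S_0$ to stabilize the ambient set to a single $z$. Once $z$ is fixed, $d \subseteq z$ forces $d \in H(\theta)$, and $d$ itself serves as the guess when absorbed into some $M \in S_1$, yielding the contradiction. Your proposal instead \emph{encodes} the ambient set $x_M$ into the list entry $e_M$ via tagged pairs $\langle 0, x_M\rangle$ and $\langle 1, a\rangle$, and extracts the stabilization of $x_M$ from the cofinal-branch property of the ineffable branch (any two tags $\langle 0, w_0\rangle, \langle 0, w_1\rangle$ in $d$ must land in some single $e_z$, forcing $w_0 = w_1$); the guess must then be decoded as $d^{**} = d^* \cap x^*$ before being absorbed. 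Your approach trades Fodor for a coding argument plus the observation that an ineffable branch is automatically a cofinal branch; the paper's is more streamlined (and is essentially the Viale--Wei\ss\ original), but both are sound. One small stylistic note: the paper's reduction $d_M \mapsto d_M \cap M$ silently uses the same $\mu$-closure of the models in the club $C$ that you make explicit, so your additional bookkeeping there matches a step the paper glosses over.
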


\begin{proof}
  Suppose for sake of contradiction that there is a club $C \subseteq \power_\kappa H(\theta)$ such 
  that every element of $C \cap \mc Y$ is not a $\mu$-guessing model.
  Then for every $M \in C \cap \mc Y$, we can fix a 
  set $z_M \in M$ and $d_M \subseteq z_M$ such that $d_M$ is $(\mu, M)$-approximated but not 
  $M$-guessed. The same is easily seen to also be true for $d_M \cap M$, so we can assume that 
  $d_M \subseteq M$. For $M \in \power_\kappa H(\theta) \setminus C \cap \mc Y$, 
  let $d_M$ be an arbitrary subset of
  $M$. This defines a $(\kappa, H(\theta))$-list $D := \langle d_M \mid M \in 
  \power_\kappa H(\theta) \rangle$.
  
  We claim that $D$ is $\mu$-$\mc Y$-slender. Let $\theta' > \vert H(\theta)\vert $ be a regular cardinal, and
  let $C' := \{M' \in C \uparrow \power_\kappa H(\theta') \mid M' \prec H(\theta') 
  \}$. Then $C'$ is a club in $\power_\kappa H(\theta')$, and the fact that $C'$ 
  witnesses the $\mu$-$\mc Y$-slenderness of $D$ follows immediately from the fact 
  that $d_M$ is $(\mu, M)$-approximated for all $M \in C \cap \mc Y$.
  
  We can therefore apply $\ISP_{\mc Y}(\mu, \kappa, H(\theta))$ to find a $\mc Y$-ineffable branch 
  $d$ of $D$. Let $S_0:= \{M \in C \cap \mc Y \mid d \cap M = d_M\}$. Then $S_0$ is stationary in 
  $\power_\kappa H(\theta)$, so, by an application of Fodor's lemma, we can fix a stationary $S_1 \subseteq S_0$ and a fixed set $z$ 
  such that $z_M = z$ for all $M \in S_1$. Since $S_1$ is $\subseteq$-cofinal in
  $\power_\kappa H(\theta)$, it follows that $d \subseteq z$, and hence $d \in H(\theta)$.
  We can therefore find $M \in S_1$ such that $d \in M$. But then $d_M \cap M = d \cap M$, 
  contradicting the fact that $d_M$ is not $M$-guessed.
\end{proof}

\begin{proposition} \label{guessing_isp_prop_2}
  Let $\mu \leq \kappa \leq \lambda$ be regular uncountable cardinals, and let 
  $\mc Y \subseteq \power_\kappa \lambda$ be stationary. Suppose that there is a 
  regular cardinal $\theta > \lambda^{<\kappa}$ such that 
  $\GMP_{\mc Y'}(\mu, \kappa, \theta)$ holds, 
  where $\mc Y' := \mc Y \uparrow \power_\kappa H(\theta)$.
  Then $\ISP_{\mc Y}(\mu, \kappa, \lambda)$ holds.
\end{proposition}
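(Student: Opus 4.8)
The plan is to adapt the proof of \cite[Proposition 3.3]{viale_weiss}, extracting a $\mc Y$-ineffable branch from a stationary set of $\mu$-guessing models. First I would fix an arbitrary $\mu$-$\mc Y$-slender $(\kappa,\lambda)$-list $D = \langle d_x \mid x\in\power_\kappa\lambda\rangle$ and aim to produce a $\mc Y$-ineffable branch for it. Since $\theta > \lambda^{<\kappa}$, we have $D\in H(\theta)$ and $\power_\mu\lambda\subseteq H(\theta)$, so by the proposition showing that the witnessing club for $\mu$-slenderness may be taken inside $\power_\kappa H(\theta)$ whenever $\power_\mu\lambda\subseteq H(\theta)$, we may fix a single club $C\subseteq\power_\kappa H(\theta)$ such that, for all $M\in C$ and all $y\in M\cap\power_\mu\lambda$, if $M\cap\lambda\in\mc Y$ then $d_{M\cap\lambda}\cap y\in M$. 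Let $C'$ be the club of those $M\in C$ with $M\prec H(\theta)$ and $D,\lambda\in M$. Since $\GMP_{\mc Y'}(\mu,\kappa,\theta)$ holds, the set $\mc S := \mc G_{\mu,\kappa}H(\theta)\cap\mc Y'\cap C'$ is stationary in $\power_\kappa H(\theta)$.

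The next step is to guess $d_{M\cap\lambda}$ inside each $M\in\mc S$. Fix such an $M$. As $M\in\mc Y'$, we have $M\cap\lambda\in\mc Y$, so $d_{M\cap\lambda}$ is defined, and $d_{M\cap\lambda}\subseteq M\cap\lambda\subseteq M$. For every $z\in M\cap\power_\mu\lambda$, the choice of $C$ yields $d_{M\cap\lambda}\cap z\in M$; taking $e:=d_{M\cap\lambda}\cap z$ witnesses that $d_{M\cap\lambda}$ is $(\mu,M)$-approximated as a subset of $\lambda\in M$. Since $M$ is a $\mu$-guessing model, $d_{M\cap\lambda}$ is therefore $M$-guessed: there is $e_M\in M$ with $d_{M\cap\lambda}\cap M = e_M\cap M$, i.e.\ $d_{M\cap\lambda} = e_M\cap M$ (using $d_{M\cap\lambda}\subseteq M$). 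Replacing $e_M$ by $e_M\cap\lambda\in M$, we may assume $e_M\subseteq\lambda$.

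Finally I would amalgamate these local guesses. The map $M\mapsto e_M$ is regressive on the stationary set $\mc S$, since $e_M\in M$, so by the pressing-down lemma on $\power_\kappa H(\theta)$ there are a stationary $\mc S^*\subseteq\mc S$ and a fixed set $e\subseteq\lambda$ with $e_M = e$ for all $M\in\mc S^*$. Set $d := e$. For each $M\in\mc S^*$, writing $x:=M\cap\lambda$, we get $d\cap x = e\cap(M\cap\lambda) = e\cap M = e_M\cap M = d_{M\cap\lambda} = d_x$. By the projection facts of Section \ref{two_cardinal_section} (Proposition \ref{club_transfer_prop}), the set $\{M\cap\lambda \mid M\in\mc S^*\}$ is stationary in $\power_\kappa\lambda$; it is contained in $\mc Y$ and, by the previous sentence, in $\{x\in\mc Y\mid d\cap x = d_x\}$. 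Hence $d$ is a $\mc Y$-ineffable branch of $D$, and since $D$ was arbitrary, $\ISP_{\mc Y}(\mu,\kappa,\lambda)$ holds.

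I expect the one point requiring care to be the verification in the second step that $d_{M\cap\lambda}$ is $(\mu,M)$-approximated — this is precisely where $\mu$-slenderness is used and where the hypothesis $\theta>\lambda^{<\kappa}$ is needed to ensure $\theta$ is an admissible value for the slenderness club — together with confirming that the pressing-down lemma applies to the regressive assignment $M\mapsto e_M$. The remaining steps are routine bookkeeping with clubs and the stationary-set projection facts of Section \ref{two_cardinal_section}.
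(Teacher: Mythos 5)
Your proof is correct, but it takes a genuinely different route from the paper's. The paper works with a \emph{single} $\mu$-guessing model $M$ satisfying $D,\mc Y\in M$, $M\cap H(\lambda^+)\in C$, and $M\cap\lambda\in\mc Y$, produces the guess $e\in M$, and then argues by \emph{elementarity} that $S:=\{x\in\mc Y\mid e\cap x=d_x\}$ is stationary: everything needed to define $S$ lies in $M$, so if $S$ were nonstationary a disjoint club $E$ could be found inside $M$, but $M\cap\lambda\in E$ (since $E\in M$ is club) and $M\cap\lambda\in S$ (since $e\cap M=d_{M\cap\lambda}$), a contradiction. You instead take the whole stationary set $\mc S$ of guessing models, observe that $M\mapsto e_M$ is regressive, apply Fodor's lemma on $\power_\kappa H(\theta)$ to stabilize $e_M=e$ on a stationary $\mc S^*$, and then project $\{M\cap\lambda\mid M\in\mc S^*\}$ down to $\power_\kappa\lambda$ via Proposition~\ref{club_transfer_prop}. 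Both arguments are sound. Your route has the pleasant feature of mirroring the paper's own proof of Proposition~\ref{guessing_isp_prop_1} (which is also a Fodor argument), making the two directions symmetric; the paper's route is more economical, extracting the ineffable branch from a single model and making the use of elementarity (rather than Fodor) do the work, which is thematically closer to the ``guessing model as a self-contained witness'' viewpoint that recurs throughout the paper.
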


\begin{proof}
	Let $D = \langle d_x \mid x \in \power_\kappa \lambda \rangle$ be a $\mu$-$\mc Y$-slender 
  $(\kappa, \lambda)$-list. We will find a $\mc Y$-ineffable branch for $D$. Let $C \subseteq \power_\kappa 
  H(\lambda^+)$ be a club witnessing that $D$ is $\mu$-$\mc Y$-slender. By our hypothesis, we can 
  find an $M \in \power_\kappa H(\theta)$ such that $D, \mc Y \in M$, $M \cap H(\lambda^+) \in C$,
  $M \cap \lambda \in \mc Y$, and $M$ is a $\mu$-guessing model for $\lambda$.
  By the fact that $M \cap H(\lambda^+) \in C$, we have $d_{M \cap \lambda} \cap y \in M$ for all $y \in M \cap \power_\mu \lambda$, which directly implies that $d_{M \cap \lambda}$ is $(\mu, M)$-approximated. 
  Since $M$ is a $\mu$-guessing model, there is $e \in M$ such that $e \cap M = d_{M \cap \lambda} 
  \cap M = d_{M \cap \lambda}$, where the second equality follows from the fact 
  that $D$ is a $(\kappa,\lambda)$-list and hence $d_{M \cap \lambda} 
  \subseteq M \cap \lambda$. Note that, since $d_{M \cap \lambda} \subseteq \lambda$ and 
  $\lambda \in M$, it follows that $e \subseteq \lambda$.
  
  We claim that $e$ is a $\mc Y$-ineffable branch for $D$. Let $S := \{x \in \mc Y
  \mid e \cap x = d_x\}$, and note that $M \cap \lambda \in S$. If $S$ were not stationary, then 
  there would be a club $E \subseteq \power_\kappa \lambda$ such that $E \cap S = \emptyset$. 
  Since everything needed to define $S$ is in $M$, we can assume by elementarity that $E \in M$. 
  But then, since $E$ is a club, we have $M \cap \lambda \in E$, and we already saw that $M \cap \lambda 
  \in S$, contradicting the assumption that $E \cap S = \emptyset$. Therefore, $e$ is indeed a 
  $\mc Y$-ineffable branch for $D$. 
\end{proof}

\begin{corollary}\label{isp_guessing_cor}
  Suppose that $\mu \leq \kappa$ are regular uncountable cardinals. Then the following are equivalent:
  \begin{enumerate}
    \item $\ISP(\mu, \kappa, \geq \kappa)$;
    \item $\GMP(\mu, \kappa, \geq \kappa)$.
  \end{enumerate}
\end{corollary}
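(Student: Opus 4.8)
The plan is to deduce both implications directly from Propositions~\ref{guessing_isp_prop_1} and \ref{guessing_isp_prop_2}, using the bijection-invariance of these tree properties noted after Remark~\ref{tp_convention_remark} to reconcile the various index sets that appear. In both directions I will instantiate the relativizing sets $\mc Y$ of those propositions as full power sets, so that the relativized principles collapse to the unrelativized ones.

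For the implication $(1) \Rightarrow (2)$, I would fix an arbitrary regular $\theta \geq \kappa$ and aim to establish $\GMP(\mu,\kappa,\theta)$. Since $|H(\theta)| \geq \theta \geq \kappa$, clause~(1) supplies $\ISP(\mu,\kappa,|H(\theta)|)$, and transporting this across a bijection $|H(\theta)| \to H(\theta)$ gives $\ISP(\mu,\kappa,H(\theta))$, i.e.\ $\ISP_{\mc Y}(\mu,\kappa,H(\theta))$ for $\mc Y = \power_\kappa H(\theta)$. Proposition~\ref{guessing_isp_prop_1} then yields $\GMP(\mu,\kappa,\theta)$. As $\theta$ was an arbitrary regular cardinal at least $\kappa$, this is precisely $\GMP(\mu,\kappa,\geq\kappa)$.

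For $(2) \Rightarrow (1)$, I would first handle \emph{regular} $\lambda \geq \kappa$ (which is what Proposition~\ref{guessing_isp_prop_2} is stated for) and then recover the general case via the monotonicity of $\ISP$ in its last coordinate, since every cardinal $\lambda \geq \kappa$ is dominated by the regular cardinal $\lambda^+$. So fix regular $\lambda \geq \kappa$, choose any regular $\theta > \lambda^{<\kappa}$, and invoke clause~(2) to get $\GMP(\mu,\kappa,\theta)$. Taking $\mc Y := \power_\kappa\lambda$, one has $\mc Y \uparrow \power_\kappa H(\theta) = \power_\kappa H(\theta)$, because $|M \cap \lambda| \leq |M| < \kappa$ for every $M \in \power_\kappa H(\theta)$; hence the hypothesis of Proposition~\ref{guessing_isp_prop_2} is met and it returns $\ISP(\mu,\kappa,\lambda)$.

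I do not expect any genuine obstacle: the corollary is bookkeeping that assembles the two preceding propositions. The only points needing a moment's attention are (a) the passage between an index cardinal $\lambda$ and the index set $H(\theta)$ via bijection-invariance, and between $\lambda$ and $\lambda^+$ via monotonicity, and (b) confirming in the forward direction that the hypothesis $\ISP(\mu,\kappa,H(\theta))$ demanded by Proposition~\ref{guessing_isp_prop_1} is actually available from clause~(1) — which it is, simply because $|H(\theta)| \geq \kappa$.
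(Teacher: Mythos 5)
Your proposal is correct and takes essentially the same approach as the paper, which simply declares the corollary immediate from Propositions~\ref{guessing_isp_prop_1} and \ref{guessing_isp_prop_2}. You merely make explicit the bookkeeping (bijection-invariance in the last coordinate, the observation that $\power_\kappa\lambda\uparrow\power_\kappa H(\theta)=\power_\kappa H(\theta)$, and monotonicity to extend from regular to arbitrary $\lambda\geq\kappa$) that the authors leave implicit.
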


\begin{proof}
  This is immediate from Propositions \ref{guessing_isp_prop_1} and \ref{guessing_isp_prop_2}.
\end{proof}

Note that there is a local asymmetry between Propositions \ref{guessing_isp_prop_1} and 
\ref{guessing_isp_prop_2}: by Proposition \ref{guessing_isp_prop_1}, 
$\ISP(\mu, \kappa, \vert H(\theta)\vert )$ implies $\GMP(\mu, \kappa, H(\theta))$, but 
Proposition \ref{guessing_isp_prop_2} does not provide an exact converse to this fact. 
Instead, we must assume that $\GMP(\mu, \kappa, H(\theta'))$ holds for some $\theta' > \vert H(\theta)\vert $ 
to conclude that $\ISP(\mu, \kappa, \vert H(\theta)\vert )$ holds. We now show that this is 
necessary; in fact, $\GMP(\mu, \kappa, H(\theta))$ does not in general imply 
even $\ITP(\kappa, \vert H(\theta)\vert )$. We first need the following standard proposition.

\begin{proposition} \label{stat_pres_prop}
  Suppose that $\kappa \leq \theta$ are regular uncountable cardinals, 
  $S \subseteq \power_\kappa H(\theta)$ is stationary, and $\P$ is a 
  $\vert H(\theta)\vert $-strategically closed forcing notion. Then $S$ remains a stationary 
  subset of $\power_\kappa H(\theta)$ in $V^{\P}$.
\end{proposition}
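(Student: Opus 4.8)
The plan is to show that stationarity of $S \subseteq \power_\kappa H(\theta)$ is preserved by appealing to a strategic analogue of the classical fact that $<\!\kappa$-closed forcing preserves stationary subsets of $\power_\kappa H(\theta)$. Since $\P$ is $|H(\theta)|$-strategically closed and $|H(\theta)| \geq \kappa$, in particular $\P$ is $\kappa$-strategically closed, which is what we will actually use. The structure of the argument is a standard ``catch your tail'' interleaving: given a $\P$-name $\dot{C}$ forced to be a club in $\power_\kappa H(\theta)$, we build, below an arbitrary condition $p$, a descending sequence of conditions together with an increasing $\subseteq$-chain of small subsets of $H(\theta)$ whose union lands in both $S$ (by stationarity of $S$ in $V$) and $\dot{C}$ (by genericity of the construction), witnessing that $\dot C \cap S$ is forced nonempty.

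**In more detail, first** fix in $V$ a condition $p \in \P$ and a name $\dot C$ with $p \Vdash ``\dot C \text{ is a club in } \power_\kappa H(\theta)"$. Let $\sigma$ be a winning strategy for the ``closure'' player (Player II) in the strategic closure game on $\P$ of length $\kappa$. Work in a sufficiently large $H(\chi)$ and take an elementary submodel $N \prec H(\chi)$ with $\P, \dot C, p, \sigma, H(\theta) \in N$, $|N| < \kappa$, $N \cap \kappa \in \kappa$, and $N \cap H(\theta) \in S$ — such an $N$ exists because $S$ is stationary (and the standard closure requirements on $N$ intersected with $H(\theta)$ can be arranged to land in $S$). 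Enumerate a cofinal-in-$N\cap\kappa$ increasing sequence, or simply recurse: build a descending sequence $\langle p_i \mid i < N \cap \kappa \rangle$ of conditions in $N$ and a $\subseteq$-increasing sequence $\langle x_i \mid i < N \cap \kappa \rangle$ of elements of $N \cap \power_\kappa H(\theta)$, following $\sigma$ on the even/limit stages so that the sequence of conditions is legal and has lower bounds at limits below $\kappa$; at stage $i+1$, use elementarity of $N$ to pick $p_{i+1} \leq p_i$ in $N$ and $x_{i+1} \in N \cap \power_\kappa H(\theta)$ with $x_i \subseteq x_{i+1}$ and $p_{i+1} \Vdash x_{i+1} \in \dot C$ (possible since $p_i$ forces $\dot C$ unbounded). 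Also interleave bookkeeping ensuring $\bigcup_i x_i = N \cap H(\theta)$, e.g.\ by enumerating $N \cap H(\theta)$ (which has size $< \kappa$) and putting the $j$-th element into $x_{i}$ for some $i$.

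**At the end of the construction**, since $N \cap \kappa$ has uncountable cofinality or at least the strategy guarantees a legal move at that limit stage, let $q \in \P$ be a move for Player II extending all the $p_i$ (here is where we use that $\sigma$ is winning: the play of length $\mathrm{otp}(N\cap\kappa) \leq \kappa$ following $\sigma$ does not end with Player II stuck). Set $x := \bigcup_{i} x_i = N \cap H(\theta) \in S$. Because $\dot C$ is forced to be closed and the $x_i$ form a $\subseteq$-increasing chain of length of cofinality $\leq \kappa$ — more carefully, because each tail $\{x_j \mid j \geq i\}$ is forced into $\dot C$ by $q$ and $\dot C$ is closed under unions of $\subseteq$-chains of size $<\kappa$ — we get $q \Vdash x \in \dot C$. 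Hence $q \Vdash x \in \dot C \cap S$, so $q \Vdash \dot C \cap S \neq \emptyset$. As $p$ and $\dot C$ were arbitrary, $S$ remains stationary in $V^\P$.

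**The main obstacle** is making the strategic-closure bookkeeping precise: ordinary $<\!\kappa$-closure would let us take lower bounds freely at every limit, but with only strategic closure we must ensure that the entire descending sequence is a single play of the closure game consistent with the fixed winning strategy $\sigma$, which constrains how the moves at limit stages interact with the choices at successor stages (the strategy must be consulted at every stage, and our ``elementarity'' choices happen only at the stages where Player I moves). The clean way to handle this, which I would adopt, is to note $N \cap \kappa$ is an ordinal $\leq \kappa$, treat the construction as a run of the game of length $N\cap\kappa + 1$ in which at ``Player I'' stages we exploit $N \prec H(\chi)$ and the density of conditions forcing a given unbounded set to grow, and at ``Player II'' stages we play $\sigma$; since $\sigma$ is winning for games of length $\kappa$, and $N\cap\kappa \leq \kappa$, Player II survives to the end and supplies the condition $q$. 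Everything else — that $N \cap H(\theta) \in S$, that the chain's union is $N\cap H(\theta)$, that a club is closed under such unions — is routine and uses only standard $\power_\kappa H(\theta)$ combinatorics as recalled in Section \ref{two_cardinal_section}.
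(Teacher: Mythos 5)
Your proof takes a genuinely different route from the paper's, and unfortunately it has a real gap. The paper does not run a catch-your-tail argument with a small elementary submodel at all; instead it takes a $\P$-name $\dot g$ for a Menas function $[H(\theta)]^2 \to \power_\kappa H(\theta)$ (Proposition~\ref{menas_prop}), builds a descending sequence $\langle p_\beta \mid \beta < |H(\theta)| \rangle$ that decides $\dot g$ on longer and longer initial segments, takes the ground-model union $g^*$ of the decided pieces, and then applies the stationarity of $S$ in $V$ to find $M \in S$ closed under $g^*$ and hence forced closed under $\dot g$. The full length-$|H(\theta)|$ strategic closure is used there in an essential way, both to build a sequence of length $|H(\theta)|$ and (as the paper notes explicitly at the start) to ensure $H(\theta)^V = H(\theta)^{V[G]}$ so that $\power_\kappa H(\theta)$ does not change.

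Your opening claim that ``$\P$ is $\kappa$-strategically closed, which is what we will actually use'' is therefore not a harmless simplification: you have discarded precisely the hypothesis the paper relies on, and you have not replaced it with an argument that works from the weaker assumption. The specific breaking point is the one you yourself flag as ``the main obstacle.'' To apply elementarity at a successor stage you need $p_i \in N$, but to produce $p_i \in N$ at a limit stage $i$ via the strategy $\sigma$ you need the play-so-far $\langle p_j \mid j < i \rangle$ to lie in $N$, and for limit $i$ there is no reason this sequence belongs to $N$: it is an $i$-sequence of elements of $N$, $N$ has size $< \kappa$, and $N$ is not closed under $<\kappa$-sequences. Moreover, the bookkeeping enumeration of $N \cap H(\theta)$ that your recursion consults is itself not an element of $N$, so you cannot rescue the construction by making the play canonically definable from parameters inside $N$. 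Your proposed fix (``treat it as a run of the game of length $N \cap \kappa + 1$'') simply restates the plan without addressing this: Player~II surviving the game guarantees a lower bound exists in $\P$, not that it lies in $N$, and without that you lose elementarity at the next successor stage. This is not merely a presentational wrinkle; the preservation of stationary subsets of $\power_\kappa X$ for $|X| > \kappa$ under closure hypotheses calibrated to $\kappa$ rather than to $|X|$ is a delicate matter, which is exactly why the paper's argument builds a sequence whose length matches $|H(\theta)|$ and sidesteps elementary submodels entirely.
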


\begin{proof}
  Note first that, since $\P$ is $\vert H(\theta)\vert $-strategically closed and thus 
  certainly ${<}\theta$-distributive, we have $H(\theta)^V = H(\theta)^{V[G]}$ 
  and $(\power_\kappa H(\theta))^V = (\power_\kappa H(\theta))^{V[G]}$. 
  By Proposition \ref{menas_prop}, it suffices to show that, for every $p \in \P$ and 
  every $\P$-name $\dot{g}$ for a function from $[H(\theta)]^2$ to 
  $\power_\kappa H(\theta)$, there is $q \leq_{\P} p$ and $M \in S$ such that 
  $q \Vdash_{\P}$``$M \text{ is closed under } \dot{g}$''. To this end, fix such a $p$ and 
  $\dot{g}$, and enumerate $H(\theta)$ as $\langle x_\alpha \mid \alpha < 
  \vert H(\theta)\vert  \rangle$. For all $\beta < \vert H(\theta)\vert $, let 
  $X_\beta := \{x_\alpha \mid \alpha < \beta\}$. Using the fact that $\P$ is 
  $\vert H(\theta)\vert $-strategically closed, 
  recursively construct a $\leq_{\P}$-decreasing sequence $\langle p_\beta \mid 
  \beta < \vert H(\theta)\vert  \rangle$ such that $p_0 = p$ and, for all $\beta < 
  \vert H(\theta)\vert $, $p_{\beta + 1}$ decides the value of $\dot{g} \restriction 
  [X_\beta]^2$, say as $g_\beta : [X_\beta]^2 \rightarrow \power_\kappa H(\theta)$.
  Let $g^* = \bigcup_{\beta < \vert H(\theta)\vert } g_\beta$. Then $g^*:[H(\theta)]^2 \rightarrow 
  \power_\kappa H(\theta)$, so we can find $M \in S$ such that $M$ is closed under 
  $g^*$. Note that $\vert H(\theta) \vert = 2^{<\theta}$ and, since $\theta$ 
  is regular, $\cf(2^{<\theta}) \geq \theta \geq \kappa$. Therefore, we 
  can find $\beta < \vert H(\theta)\vert $ large enough so that $M \subseteq X_\beta$. 
  Then $p_\beta \Vdash_{\P}$``$\dot{g} \restriction [M]^2 = g_\beta \restriction [M]^2$'', 
  so $p_\beta$ forces that $M$ is closed under $\dot{g}$.
\end{proof}

\begin{theorem}
  Suppose that $\mu \leq \kappa \leq \theta$ are regular uncountable cardinals, 
  $\vert H(\theta)\vert $ is a regular cardinal, and 
  $\GMP(\mu, \kappa, \theta)$ holds. Then there is a forcing extension, preserving 
  all cofinalities $\leq \vert H(\theta)\vert $, in which $\GMP(\mu, \kappa, \theta) + 
  \square(\vert H(\theta)\vert )$ 
  holds; in particular, $\ITP(\kappa, \vert H(\theta)\vert )$ (and hence $\ISP(\mu, \kappa, \vert H(\theta)\vert )$) fails 
  in this model.
\end{theorem}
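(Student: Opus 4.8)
The plan is to force $\square(|H(\theta)|)$ with the standard forcing while preserving $\GMP(\mu,\kappa,\theta)$. Write $\lambda := |H(\theta)|$, which is regular by hypothesis. Let $\P := \square(\lambda)$ be the usual forcing whose conditions are closed, coherent sequences $\langle C_\alpha \mid \alpha \leq \gamma \rangle$ for some $\gamma < \lambda$ (with no thread), ordered by end-extension. This forcing is $\lambda$-strategically closed (in fact $<\lambda$-closed and $\lambda$-strategically closed via the standard strategy of taking, at limit stages of the run of the game, the union of the conditions played so far together with a fresh top point), and the generic object is a $\square(\lambda)$-sequence. Since $\P$ is $<\lambda$-distributive and $\lambda$ is regular, it preserves all cofinalities $\leq \lambda$ and adds no new subsets of ordinals of size $<\lambda$; in particular $H(\theta)^V = H(\theta)^{V[G]}$ and $\power_\kappa H(\theta)$ is unchanged.

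The main point is preservation of $\GMP(\mu,\kappa,\theta)$. First observe that being a $\mu$-guessing model is absolute enough here: if $M \prec H(\theta)$ in $V[G]$ with $M \in V$ (equivalently $M \in (\power_\kappa H(\theta))^V$), then since $\P$ adds no new subsets of $x$ of size $<\mu$ for any $x \in M$ — indeed no new subsets of size $<\lambda$ at all — the notions of $(\mu,M)$-approximated and $M$-guessed subset of $x$ are computed the same in $V$ and $V[G]$; hence $M$ is a $\mu$-guessing model in $V[G]$ iff it is one in $V$. (One must also check $M \prec H(\theta)^{V[G]} = H(\theta)^V$, which follows from $M \prec H(\theta)^V$ and $H(\theta)$ being unchanged.) So it suffices to show that $\mc G_{\mu,\kappa} H(\theta)^V$ remains stationary in $(\power_\kappa H(\theta))^{V[G]}$. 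But $\mc G_{\mu,\kappa}H(\theta)^V$ is a stationary subset of $\power_\kappa H(\theta)$ in $V$ by $\GMP(\mu,\kappa,\theta)$, and $\P$ is $|H(\theta)|$-strategically closed, so Proposition \ref{stat_pres_prop} applies verbatim and gives that this set is still stationary in $V[G]$. Therefore $\GMP(\mu,\kappa,\theta)$ holds in $V[G]$.

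For the last clauses: in $V[G]$ we have $\square(\lambda)$ where $\lambda = |H(\theta)|$, and $\square(\lambda)$ implies the failure of $\ITP(\kappa,\lambda)$ for every regular uncountable $\kappa \leq \lambda$. This is Wei\ss's result that $\ITP_{\omega_2}$ (more generally the relevant instance of $\ITP$) fails in the presence of $\square(\lambda,\omega_1)$, and $\square(\lambda)$ is the strongest instance $\square(\lambda,1)$; the argument goes through for general $\kappa$ by building, from a $\square(\lambda)$-sequence, a thin $(\kappa,\lambda)$-list with no ineffable (indeed no cofinal) branch along the relevant stationary set. Since $\ISP(\mu,\kappa,\lambda) \Rightarrow \ITP(\kappa,\lambda)$ by Fact \ref{thin_slender_fact} (a $\mu$-slender list refines to a thin one — more precisely thin implies $\kappa$-slender, and a thin list's tree is $\kappa$-thin), the failure of $\ITP(\kappa,|H(\theta)|)$ yields the failure of $\ISP(\mu,\kappa,|H(\theta)|)$ as well. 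One mild subtlety to address is whether the standard $\square(\lambda)$ forcing genuinely adds a non-threadable sequence, i.e.\ that $\square(\lambda)$ truly holds in $V[G]$ and is not accidentally threaded by the $<\lambda$-closed tail; this is routine since any thread would live in $V[G]$ and a density argument shows no condition can be extended to kill all potential threads is \emph{not} needed — rather one shows genericity prevents a thread, which is the classical computation. The main obstacle, then, is not any one of these steps individually but assembling the absoluteness observation for guessing models cleanly so that Proposition \ref{stat_pres_prop} can be invoked as a black box; once that is in place, everything else is either cited or standard.
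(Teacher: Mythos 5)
Your argument is correct and follows the same route as the paper: force with the standard poset to add a $\square(\lambda)$-sequence by closed initial segments, observe that its $\lambda$-strategic closure preserves $\power_\kappa H(\theta)$ and the $\mu$-guessing property of individual models, invoke Proposition \ref{stat_pres_prop} for stationarity, and then cite Wei\ss's theorem that $\square(\lambda)$ kills $\ITP(\kappa,\lambda)$. One small slip: the $\square(\lambda)$-poset is $\lambda$-strategically closed and $\sigma$-closed, but it is \emph{not} $<\lambda$-closed in general (coherence obstructs lower bounds for arbitrary descending chains of uncountable length); fortunately only the strategic closure is used, so the proof is unaffected.
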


\begin{proof}
  Let $\lambda := \vert H(\theta)\vert $.
  Let $\P$ be the standard forcing poset to add a $\square(\lambda)$-sequence by closed 
  initial segments (cf.\ \cite[\textsection 3]{clh_covering} for a precise definition of $\P$ and 
  proofs of relevant facts thereon). $\P$ is $\lambda$-strategically closed, so 
  $(\power_\kappa H(\theta))^V = (\power_\kappa H(\theta))^{V[G]}$. It follows that 
  every $\mu$-guessing model $M \in \power_\kappa H(\theta)$ in $V$ remains a $\mu$-guessing 
  model in $V[G]$, since no new subsets of $M$ are added by forcing with $\P$. In addition, 
  Proposition \ref{stat_pres_prop} implies that the set of such $\mu$-guessing models remains 
  stationary in $V^{\P}$. Therefore, $V^{\P}$ satisfies 
  $\GMP(\mu, \kappa, \theta) + \square(\lambda)$.
  
  To see that $\ITP(\kappa, \lambda)$ fails in $V^{\P}$, simply appeal to 
  \cite[Theorem 4.2]{weiss}, which states that, if $\cf(\lambda) \geq \kappa$ 
  and $\square(\lambda)$ (or even a substantial weakening of $\square(\lambda)$) holds, 
  then $\ITP(\kappa, \lambda)$ fails.
\end{proof}

\subsection{A question of Fontanella and Matet} \label{fontanella_matet_subsection}

Let us take a brief detour to show that the results of Section \ref{generalized_tree_section} 
provide an answer to a question of Fontanella and Matet from 
\cite{fontanella_matet}. In that paper, the authors consider the tree property 
$\TP(\kappa, \lambda)$ as well as an apparent weakening, which they denote 
$\TP^-(\kappa, \lambda)$.

\begin{definition}
  $\TP^-(\kappa, \lambda)$ is the following assertion: If $D = \langle d_x \mid 
  x \in \power_\kappa \lambda \rangle$ is a $(\kappa, \lambda)$-list for which 
  there is a strongly closed, cofinal $C \subseteq \power_\kappa \lambda$ such 
  that $\vert \set{d_x\cap y}{y\sub x\in \power_\kappa\lambda}\vert <\kappa$ for every $y\in C$, 
  then $D$ has a cofinal branch.
\end{definition}

The authors isolate a certain partition relation, denoted $\mathrm{PS}(\kappa, \lambda)$, 
and prove that $\mathrm{PS}(\kappa, \lambda)$ implies $\TP^-(\kappa, \lambda)$. 
They then ask whether $\mathrm{PS}(\kappa, \lambda)$ implies $\TP(\kappa, \lambda)$ 
and, in particular, whether $\TP^-(\kappa, \lambda)$ and $\TP(\kappa, \lambda)$ 
are equivalent. The following corollary gives a positive answer to both questions.

\begin{corollary}
  Suppose that $\TP(\kappa, \lambda)$ holds, and suppose that $D = \langle d_x 
  \mid x \in \power_\kappa \lambda \rangle$ is a $(\kappa, \lambda)$-list for 
  which there is a cofinal $\mc Y \subseteq \power_\kappa \lambda$ such that 
  $\vert \set{d_x\cap y}{y\sub x\in \power_\kappa\lambda}\vert <\kappa$ for every $y\in \mc Y$. 
  Then $D$ has a cofinal branch.
\end{corollary}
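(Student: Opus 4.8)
The plan is to reduce the given statement to the tree property $\TP_\kappa(\power_\kappa\lambda)$ for generalized trees from Section \ref{generalized_tree_section}, which by assumption $\TP(\kappa,\lambda)$ we have in hand (recall that $\TP(\kappa,\lambda)$ is exactly $\TP_{\power_\kappa\lambda}(\kappa,\lambda)$, i.e., every $\power_\kappa\lambda$-$\kappa$-thin list has a cofinal branch). The subtlety is that the list $D$ in the hypothesis is \emph{not} assumed to be thin on all of $\power_\kappa\lambda$; rather, the width bound $|\{d_x\cap y\mid y\subseteq x\in\power_\kappa\lambda\}|<\kappa$ is only asserted for $y$ ranging over a cofinal set $\mc Y$, not a club. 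So $D$ is merely $\mc Y$-thin in the sense of Definition \ref{thin_slender_def}, and we cannot directly apply $\TP(\kappa,\lambda)$.

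First I would pass to the $\power_\kappa\lambda$-tree $T$ generated from $D$ as in Remark \ref{list_to_tree_remark}; by that remark $T$ is a $\power_\kappa\lambda$-tree whose $x$th level is $T_x=\{d_y\cap x\mid x\subseteq y\in\power_\kappa\lambda\}$, and $D$ has a cofinal branch iff $T$ does. The hypothesis says precisely that $|T_x|<\kappa$ for every $x\in\mc Y$, i.e., $T$ is $\kappa$-$\mc Y$-thin in the sense of the Definition preceding Proposition \ref{generalized_well_pruned_prop}. Since $\power_\kappa\lambda$ ordered by $\subseteq$ is $\kappa$-directed and $\mc Y$ is $\subseteq$-cofinal (hence $<_\Lambda$-cofinal), Proposition \ref{generalized_well_pruned_prop} applies and yields a well-pruned subtree $T'\subseteq T$. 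For a well-pruned subtree, $|T'_x|\leq|T'_{x'}|$ whenever $x\subseteq x'$, and since every $x\in\power_\kappa\lambda$ is below some element of the cofinal set $\mc Y$, we get $|T'_x|\leq|T'_{y}|\leq|T_y|<\kappa$ for a suitable $y\in\mc Y$ with $x\subseteq y$. Hence $T'$ is genuinely a $\kappa$-$\power_\kappa\lambda$-tree (width $\leq\kappa$ on \emph{all} levels), so $\TP_\kappa(\power_\kappa\lambda)=\TP(\kappa,\lambda)$ gives a cofinal branch through $T'$, which is also a cofinal branch through $T$, and therefore $D$ has a cofinal branch.

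The one point requiring a little care — and the place I'd expect the only real friction — is the step that the well-pruned subtree $T'$ produced by Proposition \ref{generalized_well_pruned_prop} has \emph{all} levels of size $<\kappa$, not just the levels indexed by $\mc Y$. This rests on two observations: that in a well-pruned $\Lambda$-tree the restriction map $t\mapsto t\restriction x$ is surjective from $T'_{x'}$ onto $T'_x$ when $x\subseteq x'$ (so level sizes are monotone along $\subseteq$), and that $\mc Y$ being cofinal means each $x$ lies below some $y\in\mc Y$ with $|T'_y|\leq|T_y|<\kappa$. Combining these bounds $|T'_x|<\kappa$ everywhere. Once this is observed, the rest is a routine invocation of Remark \ref{list_to_tree_remark} and the hypothesis $\TP(\kappa,\lambda)$. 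Finally, this corollary answers the Fontanella--Matet question: $\TP^-(\kappa,\lambda)$ is the instance in which $\mc Y$ is additionally required to be a \emph{strong club}, which is in particular cofinal, so $\TP(\kappa,\lambda)$ implies $\TP^-(\kappa,\lambda)$; the reverse implication is trivial since every $(\kappa,\lambda)$-list witnessing the hypothesis of $\TP(\kappa,\lambda)$ (thin on a club, hence on a cofinal set) a fortiori witnesses the hypothesis of $\TP^-(\kappa,\lambda)$ once one notes a thin list is thin on a strong club — or more directly, the corollary subsumes both principles. Consequently $\mathrm{PS}(\kappa,\lambda)\Rightarrow\TP^-(\kappa,\lambda)\Rightarrow\TP(\kappa,\lambda)$.
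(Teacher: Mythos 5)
Your proof is essentially the paper's proof through its first four steps: passing to the tree $T$ of Remark~\ref{list_to_tree_remark}, noting $T$ is $\kappa$-$\mc Y$-thin, invoking Proposition~\ref{generalized_well_pruned_prop} to get a well-pruned subtree $T'$, and then using well-prunedness plus cofinality of $\mc Y$ to conclude $|T'_x|<\kappa$ for \emph{all} $x$. That much is exactly right, and your careful isolation of the monotonicity-of-level-sizes observation is the same point the paper flags.

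The gap is at the very last step, where you write ``$\TP_\kappa(\power_\kappa\lambda)=\TP(\kappa,\lambda)$ gives a cofinal branch through $T'$.'' This conflates the generalized tree property for $\power_\kappa\lambda$-trees from Section~\ref{generalized_tree_section} with the list property $\TP(\kappa,\lambda)$ of Definition~\ref{tp_def}, and that identification is neither stated in the paper nor obvious: $\TP(\kappa,\lambda)$ is a statement about $(\kappa,\lambda)$-\emph{lists}, and Remark~\ref{list_to_tree_remark} only goes one way (list to tree). An arbitrary $\kappa$-$\power_\kappa\lambda$-tree has abstract elements on its levels, and a level $T_x$ can easily satisfy $|T_x|<\kappa$ while $|T_x|>|x|$, so there is no general reason it should be encodable as a $(\kappa,\lambda)$-list; the implication $\TP(\kappa,\lambda)\Rightarrow\TP_\kappa(\power_\kappa\lambda)$ for arbitrary trees would need a real argument. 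What rescues the proof is that \emph{this} $T'$ is a subtree of the tree generated from $D$, so each element of $T'_x$ is literally a subset of $x$. The paper therefore extracts a $(\kappa,\lambda)$-list $D'=\langle d'_x\rangle$ by choosing $d'_x\in T'_x$, observes $D'$ is thin (on all of $\power_\kappa\lambda$, by your step~4), applies $\TP(\kappa,\lambda)$ to $D'$ to get a cofinal branch $d$, and then checks directly that $d$ is a cofinal branch of the original $D$ by chasing definitions: for each $x$ there is $z\supseteq x$ with $d\cap x=d'_z\cap x$, and $d'_z\in T'_z\subseteq T_z$ means $d'_z=d_w\cap z$ for some $w\supseteq z$, so $d\cap x=d_w\cap x$. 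You should replace your final invocation by this explicit extraction of $D'$ and the two-line chase; as written, the appeal to an unproved equivalence between the generalized and list tree properties is a genuine hole.
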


\begin{proof}
  Define a $\power_\kappa \lambda$-tree $T = \langle T_x \mid x \in \power_\kappa \lambda 
  \rangle$ from $D$ as in Remark \ref{list_to_tree_remark}. In particular,
  $T_x := \set{d_x\cap y}{y\sub x\in \power_\kappa\lambda}$ for 
  all $x \in \power_\kappa \lambda$. Then $T$ is $\kappa$-$\mc Y$-thin, so, by Proposition 
  \ref{generalized_well_pruned_prop}, we can find a well-pruned subtree $T' = \langle T'_x \mid 
  x \in \power_\kappa \lambda \rangle$ of $T$. The fact that $T'$ is well-pruned implies 
  that $\vert T'_x\vert  \leq \vert T'_y\vert $ for all $x \subseteq y \in \power_\kappa \lambda$. 
  In particular, since $T$ is $\kappa$-$\mc Y$-thin and $\mc Y$ is cofinal in 
  $\power_\kappa \lambda$, we have $\vert T'_x\vert  < \kappa$ 
  for all $x \in \power_\kappa \lambda$. Now let $D' = \langle d'_x 
  \mid x \in \power_\kappa \lambda \rangle$ be a $(\kappa, \lambda)$-list such that 
  $d'_x \in T'_x$ for all $x \in \power_\kappa \lambda$. $D'$ is thin, so, by 
  $\TP(\kappa, \lambda)$, it has a cofinal branch, $d$. By construction, for all 
  $x \in \power_\kappa \lambda$, there is $y \in \power_\kappa \lambda$ such that 
  $y \supseteq x$ and $d'_x = d_y \cap x$; it follows that $d$ is a cofinal branch 
  through $D$, as well.
\end{proof}

\begin{corollary}
  If $\mc Y \subseteq \power_\kappa \lambda$ is cofinal, then $\TP(\kappa, \lambda)$ is 
  equivalent to $\TP_{\mc Y}(\kappa, \lambda)$.
\end{corollary}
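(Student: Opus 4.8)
The plan is to prove the two implications separately; the substantive one is an almost immediate application of the preceding corollary.

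\emph{The implication $\TP_{\mc Y}(\kappa,\lambda) \Rightarrow \TP(\kappa,\lambda)$ is trivial.} If $D = \langle d_x \mid x \in \power_\kappa\lambda\rangle$ is a thin $(\kappa,\lambda)$-list, witnessed by a club $C \subseteq \power_\kappa\lambda$, then, since $C \cap \mc Y \subseteq C$, the same club $C$ witnesses that $D$ is $\mc Y$-thin. Hence $D$ has a cofinal branch by $\TP_{\mc Y}(\kappa,\lambda)$. (In other words, this is just the evident monotonicity of $\TP_{\mc Y}$ in the parameter $\mc Y$, applied to $\mc Y \subseteq \power_\kappa\lambda$.)

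\emph{For the converse, assume $\TP(\kappa,\lambda)$ holds and let $D = \langle d_x \mid x \in \power_\kappa\lambda\rangle$ be an arbitrary $\mc Y$-thin list.} Fix a club $C \subseteq \power_\kappa\lambda$ witnessing $\mc Y$-thinness, so that $|\{d_x \cap y \mid y \subseteq x \in \power_\kappa\lambda\}| < \kappa$ for every $y \in C \cap \mc Y$. Since $\mc Y$ is stationary and $C$ is a club, $C \cap \mc Y$ is stationary, and in particular cofinal, in $\power_\kappa\lambda$. We may therefore invoke the preceding corollary with the cofinal set $C \cap \mc Y$ playing the role of ``$\mc Y$'' there, and conclude that $D$ has a cofinal branch. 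As $D$ was an arbitrary $\mc Y$-thin list, $\TP_{\mc Y}(\kappa,\lambda)$ holds.

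\emph{On the main point.} There is essentially no obstacle beyond bookkeeping: the only thing requiring a moment's attention is that the ``club'' in the definition of $\mc Y$-thinness can be absorbed into $\mc Y$, which is handled by the observation that $C \cap \mc Y$ is again cofinal. Everything else is a direct citation of the preceding corollary, whose proof runs through the tree $T$ generated by $D$ as in Remark~\ref{list_to_tree_remark}, uses Proposition~\ref{generalized_well_pruned_prop} to pass to a well-pruned subtree all of whose levels have size $< \kappa$, selects from it a genuinely thin sub-list, and applies $\TP(\kappa,\lambda)$ to that sub-list.
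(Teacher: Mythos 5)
Your proof is correct and makes explicit the argument the paper leaves implicit: the nontrivial direction reduces to the preceding corollary once one notes that $C \cap \mc Y$ is cofinal, and the other direction is the evident monotonicity of $\TP_{\mc Y}$ in $\mc Y$. One point worth flagging: your verification of $\TP(\kappa,\lambda) \Rightarrow \TP_{\mc Y}(\kappa,\lambda)$ invokes the \emph{stationarity} of $\mc Y$ (so that $C \cap \mc Y$ is nonempty and cofinal), and this is genuinely needed --- the corollary's hypothesis reads ``cofinal,'' but stationarity is already built into the definition of $\TP_{\mc Y}$, and if $\mc Y$ were cofinal but non-stationary then every list would be vacuously $\mc Y$-thin, so $\TP_{\mc Y}(\kappa,\lambda)$ would be the strictly stronger statement that every $(\kappa,\lambda)$-list has a cofinal branch and the claimed equivalence would fail. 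Your implicit reading of the hypothesis as ``stationary'' is the correct one.
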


\subsection{Approachability and separating $\ISP$ from $\ITP$} \label{approachability_subsection}

It follows immediately from the definitions that, for $\lambda \geq \omega_2$, we have 
\[
\ISP(\omega_1, \omega_2, \lambda) \Rightarrow \ISP(\omega_2, \omega_2, \lambda) 
\Rightarrow \ITP(\omega_2, \lambda),
\] 
and, as mentioned above, we will see in Theorem \ref{th:SPconverse} that $\ISP(\omega_2, \omega_2, \lambda)$ 
does not imply $\ISP(\omega_1, \omega_2, \lambda)$. This raises the natural question of 
whether $\ITP(\omega_2, \lambda)$ implies $\ISP(\omega_2, \omega_2, \lambda)$. In this subsection, 
we answer this question negatively.\footnote{For concreteness, we focus here on $\omega_2$, but 
analogous arguments work at other double successors of regular cardinals.}

We first note that $\ISP(\omega_2, \omega_2, \omega_2)$ implies a failure of approachability.

\begin{proposition}
  Suppose that $\ISP(\omega_2, \omega_2, \omega_2)$ holds. Then $\neg \mathsf{AP}_{\omega_1}$ holds.
\end{proposition}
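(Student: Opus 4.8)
The plan is to derive a contradiction from the simultaneous assumption of $\ISP(\omega_2,\omega_2,\omega_2)$ and $\mathsf{AP}_{\omega_1}$. Recall that $\mathsf{AP}_{\omega_1}$ asserts the existence of an approachability sequence: a sequence $\bar a = \langle a_\alpha \mid \alpha < \omega_2 \rangle$ enumerating bounded subsets of $\omega_2$ such that the set of $\alpha < \omega_2$ for which $\alpha$ is approachable with respect to $\bar a$ (i.e., there is a cofinal $A \subseteq \alpha$ of order type $\cf(\alpha)$ such that every proper initial segment of $A$ appears as some $a_\xi$ with $\xi < \alpha$) contains a club. The strategy is to use such a sequence to manufacture an $\omega_2$-slender $(\omega_2,\omega_2)$-list that has no ineffable branch, contradicting $\ISP(\omega_2,\omega_2,\omega_2)$.

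First I would set up the list. For each $x \in \power_{\omega_2}\omega_2$, let $\delta_x := \sup(x \cap \omega_2)$ when this is an ordinal of cofinality $\omega_1$ that is approachable via $\bar a$, and use the approachability witness to choose a cofinal subset $A_{\delta_x} \subseteq \delta_x$ of order type $\omega_1$ whose proper initial segments are all enumerated below $\delta_x$; then set $d_x := A_{\delta_x} \cap x$ (and $d_x := \emptyset$ otherwise). The key point is slenderness: if $M \prec H(\theta)$ with $|M| = \omega_1$, $M \cap \omega_2 \in \power_{\omega_2}\omega_2$, and $y \in M \cap \power_{\omega_2}\omega_2$ has size ${<}\,\omega_2$ — which for $\mu = \omega_2$ is the \emph{full} power set, so $y$ is any element of $M \cap \power_{\omega_2}\omega_2$ — we need $d_{M \cap \omega_2} \cap y \in M$. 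Here is where approachability does the work: since $\delta := \sup(M \cap \omega_2)$ has cofinality $\omega_1 = |M|$ and $M$ is (after intersecting with a club) internally approachable, $d_{M \cap \omega_2} \cap y$ is an initial segment of $A_\delta$, which by the approachability property equals some $a_\xi$ with $\xi < \delta$, and $\bar a \in M$ together with $\xi \in M$ forces $a_\xi \in M$; one checks $A_\delta \cap y$ is such an initial segment because $y \in M$ is bounded below $\delta$. Thus the list is $\omega_2$-slender.

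The final step is to rule out an ineffable branch. Suppose $d \subseteq \omega_2$ is a branch with $\{x \mid d \cap x = d_x\}$ stationary. On a club of $x$ with $\delta_x$ approachable of cofinality $\omega_1$, we would have $d \cap \delta_x = A_{\delta_x}$, so $d$ restricted cofinally often agrees with these chosen cofinal subsets; but these cofinal sets are "diagonally" incompatible in the way the classical argument (Baumgartner's, deriving $\neg\ISP$ from the existence of a special kind of guessing-incompatible sequence, or the square-style argument in Weiss) exploits — pinning $d$ down to a single $\omega_1$-sized cofinal thread at stationarily many $\delta$ of cofinality $\omega_1$ contradicts a counting/Fodor argument, since $d \cap \delta$ has order type at most $\otp(\delta) \cdot$ something while $A_\delta$ has order type exactly $\omega_1$ and these fit together into an increasing $\omega_2$-sequence that $d$ cannot uniformly realize. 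The cleanest route is probably to argue that such a $d$ would itself be an approachability-type object witnessing $\mathsf{AP}_{\omega_1}$ fails to be consistent with the slenderness we arranged, i.e., directly derive $\square(\omega_2)$-like incompactness; alternatively, reduce to the known fact (Weiss, Theorem 4.2 of the excerpt) that $\square(\omega_2)$ kills $\ITP(\omega_2,\omega_2)$, by first showing $\mathsf{AP}_{\omega_1} + \ISP(\omega_2,\omega_2,\omega_2)$ implies enough square-like structure.

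The main obstacle I anticipate is the no-ineffable-branch step: making precise exactly why the stationarily-many coherent choices $\langle A_{\delta_x} \rangle$ cannot be uniformized by a single $d$. The slenderness parameter being $\omega_2$ (rather than $\omega_1$) makes the list construction subtle — the approximation condition only requires guessing on \emph{all} of $M\cap\power_{\omega_2}\omega_2$, which is easy to satisfy but correspondingly means an ineffable branch is a strong object — so the tension must be located correctly: it is precisely that cofinality-$\omega_1$ points of $\omega_2$ behave "too rigidly" under $\mathsf{AP}_{\omega_1}$ to admit a coherent thread, and I expect the argument to mirror the standard proof that $\mathsf{AP}_{\omega_1}$ plus the tree property at $\omega_2$ can coexist (Cummings--Foreman--Magidor) only because the relevant lists are \emph{thin}, not slender — the extra slenderness here is exactly what $\mathsf{AP}_{\omega_1}$ cannot tolerate.
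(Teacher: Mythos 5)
The paper's proof of this proposition is a citation: it observes that the existing proofs in Viale--Wei\ss\ (Corollary 4.9) and Cox--Krueger (Proposition 2.6) that $\ISP(\omega_1,\omega_2,\omega_2)\Rightarrow\neg\mathsf{AP}_{\omega_1}$ only ever use $\omega_2$-slenderness of the constructed list, so the same argument yields the strengthened statement. You instead try to reconstruct the argument from scratch, which is a reasonable thing to want to do, but as written it has a genuine gap.

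The gap is the one you yourself flag in your final paragraph: the no-ineffable-branch step is not carried out. You write that a uniformizing $d$ would contradict ``a counting/Fodor argument, since $d\cap\delta$ has order type at most $\otp(\delta)\cdot$ something while $A_\delta$ has order type exactly $\omega_1$,'' and offer the alternatives of ``directly deriv[ing] $\square(\omega_2)$-like incompactness'' or ``showing $\mathsf{AP}_{\omega_1}+\ISP(\omega_2,\omega_2,\omega_2)$ implies enough square-like structure.'' None of these is an argument; they are candidate strategies, and you explicitly identify this as ``the main obstacle.'' This step is the entire content of the proposition, so without it the proof does not exist. It also reads as if the mechanism is unclear to you: the Cox--Krueger argument does not run through $\square$ at all (it is a direct guessing-model argument, showing that a guessing model $M$ with $\bar a\in M$ guesses the approachability witness $A_\delta$ for $\delta=\sup(M\cap\omega_2)$, which would place a cofinal $\omega_1$-sequence in $\delta$ \emph{inside} $M$ and force $\delta\in M$, a contradiction), and the relevant point is that the approximation of $A_\delta$ only ever needs to be checked against sets of size ${<}\,\omega_2$, which is exactly what $\omega_2$-slenderness (equivalently, $\omega_2$-guessing) provides.

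There is also an imprecision in your slenderness verification. You claim ``$d_{M\cap\omega_2}\cap y$ is an initial segment of $A_\delta$,'' but by your own definition $d_{M\cap\omega_2}\cap y = A_\delta\cap(M\cap\omega_2)\cap y$. This equals $A_\delta\cap y$ only if $A_\delta\subseteq M$, a fact that needs an argument (this is where the interaction between internal approachability of $M$ and approachability of $\delta$ via $\bar a$ must be exploited, and where one has to be careful that the indices $\xi<\delta$ enumerating the initial segments of $A_\delta$ actually land in $M$). Even granting $A_\delta\subseteq M$, the set $A_\delta\cap y$ is not an initial segment of $A_\delta$ in general; it is a subset of the initial segment $A_\delta\cap\sup(y)$, and you additionally need $y\in M$ to pull the intersection into $M$. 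All of this can be fixed, but you have not done so.
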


\begin{proof}
  In \cite[Corollary 4.9]{viale_weiss}, Viale and Weiss show that $\ISP(\omega_1, \omega_2, \omega_2)$ 
  implies $\neg \mathsf{AP}_{\omega_1}$ (for a more detailed proof, using guessing models, see 
  \cite[Proposition 2.6]{cox_krueger_indestructible}). An examination of the proofs in 
  \cite{viale_weiss} and \cite{cox_krueger_indestructible}, though, 
  shows that they only really need $\ISP(\omega_2, \omega_2, \omega_2)$.
\end{proof}

We next show that $\ITP(\omega_2, \lambda)$ is consistent with $\mathsf{AP}_{\omega_1}$.

\begin{proposition}
  Suppose that $\kappa$ is a supercompact cardinal. Then there is a forcing extension in which 
  $\kappa = \omega_2$, $\mathsf{AP}_{\omega_1}$ holds, and $\ITP(\omega_2, \lambda)$ holds for all $\lambda \geq \omega_2$.
\end{proposition}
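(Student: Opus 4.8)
The final statement asserts that, starting from a supercompact $\kappa$, there is a forcing extension in which $\kappa = \omega_2$ and $\ITP(\omega_2, \lambda)$ holds for all $\lambda \geq \omega_2$. This is the well-known theorem of Wei\ss\ \cite{weiss} that $\ITP_{\omega_2}$ (in the notation of Remark \ref{tp_convention_remark}, this is $\ISP(\omega_1, \omega_2, \geq \omega_2)$, but we only need the thin version $\ITP$) holds in the supercompact Mitchell model. The plan is to force with the Mitchell poset $\M = \M(\omega, \kappa)$ that collapses every cardinal in the interval $(\omega_1, \kappa)$ to $\omega_1$ while making $\kappa = \omega_2$, and to verify $\ITP(\omega_2, \lambda)$ for each $\lambda \geq \omega_2$ in the extension by a standard supercompactness-reflection argument. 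Since the poset has a quotient decomposition $\M \cong \Add(\omega, \kappa) \ast \dot{\R}$ with $\dot{\R}$ forced to be $\kappa$-closed (more precisely, projecting onto $\Add(\omega,\kappa)$ with a suitably closed quotient at every coordinate), $\kappa$ becomes $\omega_2$, $2^\omega = \omega_2$, and all cofinalities $\leq \omega_1$ and $\geq \kappa$ are preserved.

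\textbf{Key steps.} Fix $\lambda \geq \omega_2$ in the extension $V[G]$; it suffices to treat $\lambda$ a cardinal, and by the monotonicity observed after Remark \ref{tp_convention_remark} we may assume $\lambda^{<\kappa} = \lambda$ by passing to a larger cardinal if necessary. In $V$, fix a supercompactness embedding $j : V \to \bar{M}$ with critical point $\kappa$, ${}^{\lambda}\bar{M} \subseteq \bar{M}$, and $j(\kappa) > \lambda$; by Laver-style preparation or simply by choosing $j$ appropriately we arrange that $\M$ factors inside $j(\M)$ as $j(\M) \cong \M \ast \dot{\M}_{\mathrm{tail}}$, where the tail forcing $\dot{\M}_{\mathrm{tail}}$ is (forced by $\M$ to be) $\kappa$-closed and of size $< j(\kappa)$ in $\bar{M}[G]$. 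Now let $\dot{D} = \langle \dot{d}_x \mid x \in \power_\kappa \lambda \rangle$ be an $\M$-name for a thin $(\omega_2, \lambda)$-list in $V[G]$, with a witnessing club $C$; by thinness we may build, inside $V[G]$, for each $x$ a set $S_x$ of size $< \omega_2 = \kappa$ listing the possible values of $d_x \cap y$ as $y$ ranges below. The embedding $j$ lifts to $j : V[G] \to \bar{M}[G \ast H]$ for a generic $H \subseteq \dot{\M}_{\mathrm{tail}}$, and in $\bar{M}[G \ast H]$ we consider $j(\dot{D})$ evaluated at the node $x^* := j``\lambda \in \power_{j(\kappa)} j(\lambda)$ (here we use $j``\lambda \in \bar{M}$, which follows from ${}^{\lambda}\bar{M} \subseteq \bar{M}$). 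Set $d := \{ \xi < \lambda \mid j(\xi) \in d^*_{x^*} \}$ where $d^*_{x^*} := j(D)_{x^*}$. A branch argument as in the strongly compact case (the proof of the last Proposition of Section \ref{generalized_tree_section}) shows that, because $\width$ of each level is below $\mathrm{crit}(j) = \kappa$, we have $j``(d_x \cap x) \subseteq d^*_{j``x}$ and hence $d \cap x = d_x$ for a set of $x$ that will turn out to be stationary; in particular $d$ is a cofinal branch of $D$, and with a little more care it is $\power_\kappa\lambda$-ineffable. Finally, since the tail forcing $\dot{\M}_{\mathrm{tail}}$ is $\kappa$-closed and $\M$ has the $\kappa$-cc, the branch $d$ and its ineffability can be pulled back into $V[G]$ by a standard master-condition-and-closure argument (the tail forcing adds no new subsets of $\lambda$ of size $< \kappa$, and the names for the relevant stationary sets can be chosen in $V[G]$).

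\textbf{The main obstacle.} The delicate point is the verification that the guessed branch $d$ is not merely cofinal but $\power_\kappa \lambda$-\emph{ineffable} in $V[G]$, i.e., that $\{x \in \power_\kappa \lambda \mid d \cap x = d_x\}$ is stationary. Cofinality is essentially the content of the strongly compact argument already presented, but ineffability requires arguing that for any club $E \in V[G]$ on $\power_\kappa \lambda$, we have $j``\lambda \cap (\text{something}) \in j(E)$ forcing a reflected witness; this is where one crucially uses $j``\lambda \in \bar{M}$ and elementarity, together with the fact that the tail forcing, being $\kappa$-closed, does not destroy stationarity of subsets of $\power_\kappa\lambda$ (an analogue of Proposition \ref{stat_pres_prop}). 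One must also be careful that the name $\dot{D}$ lies in the domain of $j$, which is handled by the $\kappa$-cc of $\M$: every such name is (equivalent to) one of size $\leq \lambda$, hence is moved correctly by $j$. Modulo these standard but technical preservation facts — all of which are classical and appear in \cite{weiss} and \cite{viale_weiss} — the argument is routine, and we simply cite \cite{weiss} for the full details.
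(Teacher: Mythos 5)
Your proof is correct for the proposition \emph{as literally stated} — indeed it is essentially Wei\ss's result that $\ISP_{\omega_2}$ (and a fortiori $\ITP(\omega_2, \geq\omega_2)$) holds in the extension by the standard Mitchell forcing $\M(\omega,\kappa)$ over a supercompact. But you have missed the point of why the paper proves this proposition at all, and your choice of forcing is exactly the wrong one for that purpose. The proposition sits in a subsection on separating $\ISP(\omega_2,\omega_2,\omega_2)$ from $\ITP(\omega_2,\geq\omega_2)$: it is immediately followed by a corollary asserting the consistency of $\ITP(\omega_2,\geq\omega_2) + \neg\ISP(\omega_2,\omega_2,\omega_2)$, obtained by combining this proposition with the preceding one (that $\ISP(\omega_2,\omega_2,\omega_2) \Rightarrow \neg\AP_{\omega_1}$). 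For this to work, the proof of the proposition must produce a model in which $\AP_{\omega_1}$ holds. The paper therefore deliberately uses the variant $\M_0$ of Mitchell forcing from the ``eight-fold way'' paper, which is engineered precisely so that $\omega_2 \in I[\omega_2]$ (i.e., $\AP_{\omega_1}$) holds in the extension, while the Fontanella-style argument for $\ITP$ still goes through because $\M_0$ and its quotients over inaccessible initial segments still project from $\Add(\omega,\kappa)\times\Q$ with $\Q$ countably closed. Your model, $V^{\M(\omega,\kappa)}$, is a model of $\ISP_{\omega_2}$ and hence of $\neg\AP_{\omega_1}$; it gives no separation and cannot support the corollary. (A minor slip also: you identify $\ITP_{\omega_2}$ with $\ISP(\omega_1,\omega_2,\geq\omega_2)$; the former is the ineffable tree property for \emph{thin} lists and is strictly weaker.) So the route is genuinely different: you reprove Wei\ss's theorem, while the paper proves a finer statement — that $\ITP(\omega_2,\geq\omega_2)$ can hold alongside $\AP_{\omega_1}$ — and the choice of Mitchell variant is what carries that extra content.
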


\begin{proof}
  We use the Mitchell forcing variation $\M_0$ from \cite[\textsection 3.2]{8fold_way}, with the parameter 
  ``$\kappa$" from that paper set to be $\omega$. As shown in \cite[\textsection 3.5]{8fold_way}, 
  in $V^{\M_0}$ we have $\omega_2 \in I[\omega_2]$, i.e., $\mathsf{AP}_{\omega_1}$ holds.
  
  In \cite{fontanella}, Fontanella proves that, in the extension by a slightly different 
  variant of Mitchell forcing, $\ITP(\omega_2, \lambda)$ holds for all $\lambda \geq \omega_2$. 
  The same argument works for $\M_0$; the key point is that $\M_0$, as well as all of its quotients 
  over initial segments of inaccessible length, have the property that there are projections onto 
  them from products $\P \times \Q$, where $\P$ is the forcing to add $\kappa$-many Cohen reals 
  and $\Q$ is countably closed (see Section \ref{mitchell_section} below for more about Mitchell 
  forcing and this property in particular).
\end{proof}

The following is now immediate.

\begin{corollary}
  Suppose that $\kappa$ is a supercompact cardinal. Then there is a forcing extension in which 
  $\ISP(\omega_2, \omega_2, \omega_2)$ fails but $\ITP(\omega_2, \lambda)$ holds for all 
  $\lambda \geq \omega_2$.
\end{corollary}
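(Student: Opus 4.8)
The plan is to simply assemble the two preceding propositions into a single model. Let $\kappa$ be supercompact. By the second proposition above, there is a forcing extension $V[G]$ in which $\kappa = \omega_2$ and $\ITP(\omega_2, \lambda)$ holds for every $\lambda \geq \omega_2$. The key point to extract from the \emph{proof} of that proposition is that this extension is obtained via the Mitchell-style forcing $\M_0$ from \cite{8fold_way} (with the auxiliary parameter set to $\omega$), and that in $V^{\M_0}$ one has $\omega_2 \in I[\omega_2]$, i.e., $\mathsf{AP}_{\omega_1}$ holds. So $V[G]$ simultaneously satisfies $\ITP(\omega_2, \lambda)$ for all $\lambda \geq \omega_2$ and $\mathsf{AP}_{\omega_1}$.

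Now apply the contrapositive of the first proposition: since $\ISP(\omega_2, \omega_2, \omega_2)$ implies $\neg \mathsf{AP}_{\omega_1}$, and $\mathsf{AP}_{\omega_1}$ holds in $V[G]$, we conclude that $\ISP(\omega_2, \omega_2, \omega_2)$ fails in $V[G]$. Thus $V[G]$ is the desired forcing extension, witnessing that $\ITP(\omega_2, \lambda)$ for all $\lambda \geq \omega_2$ does not imply $\ISP(\omega_2, \omega_2, \omega_2)$.

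There is essentially no obstacle at this stage — all the substance resides in the two propositions and in the cited facts about $\M_0$. The only thing to be mildly careful about is that one really does need a \emph{single} model witnessing both $\ITP(\omega_2, \geq\omega_2)$ and $\mathsf{AP}_{\omega_1}$ at once, rather than two separate models; but this is precisely what the proof of the second proposition provides, since the approachability computation of \cite[\S 3.5]{8fold_way} and the Fontanella-style verification of $\ITP$ both take place in the same extension $V^{\M_0}$, so the two facts may be combined with no additional argument.
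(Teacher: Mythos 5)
Your proof is correct and is exactly the paper's own argument: the paper introduces the corollary with ``The following is now immediate,'' relying precisely on the combination of the $\ISP(\omega_2,\omega_2,\omega_2)\Rightarrow\neg\mathsf{AP}_{\omega_1}$ proposition with the observation that the Mitchell-style extension $V^{\M_0}$ from the second proposition simultaneously satisfies $\ITP(\omega_2,\geq\omega_2)$ and $\mathsf{AP}_{\omega_1}$. Your care in noting that both facts hold in a single model is appropriate but, as you say, already guaranteed by the proof of the second proposition.
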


\section{Slender trees and the almost guessing property} \label{almost_guessing_section}

In this section, we isolate a guessing principle in the style of $\GMP(\ldots)$ that provides 
an alternative formulation of $\SP(\ldots)$ in the same way that $\GMP(\ldots)$ provides 
an alternative formulation of $\ISP(\ldots)$. We begin with a rough heuristic that the reader may or may 
not find helpful. Note that the guessing models witnessing principles of the form $\GMP(\ldots)$ are in 
effect performing two roles. First, they are approximating a given set of interest. Second, they are guessing 
this set, and providing the setting for subsequent elementarity arguments. The fact that $\ISP(\ldots)$ 
produces \emph{ineffable} branches for slender lists, i.e., branches whose small pieces are precisely 
approximated by the elements of the list at stationarily many entries, is what allows us to find guessing 
models that fulfill these two roles simultaneously. When we only have $\SP(\ldots)$, though, and are not 
necessarily able to find ineffable branches, it is possible that these two roles must be pulled apart 
and fulfilled by two different sets, the second a possibly proper subset of the first. We now make 
this heuristic more precise.

\begin{definition}
  Let $\mu \leq \kappa \leq \theta$ be uncountable cardinals, with $\kappa$ and $\theta$ regular, 
  and suppose that $x \in H(\theta)$,
  $S \subseteq \power_\kappa H(\theta)$ is cofinal, and $M \subseteq H(\theta)$. We say that 
  $(M, x)$ is \emph{almost $\mu$-guessed by $S$} if $x \in M$ and, for every $(\mu, M)$-approximated 
  subset $d \subseteq x$, there is an $N \in S$ such that
  \begin{itemize}
    \item $x \in N \subseteq M$; and
    \item $d$ is $N$-guessed.
  \end{itemize}   
\end{definition}

\begin{definition}
  Let $\mu \leq \kappa \leq \theta$ be uncountable cardinals, with $\kappa$ and $\theta$ regular, and let 
  $\mc Y \subseteq \power_\kappa H(\theta)$ be stationary. We say that $\AGP_{\mc Y}(\mu, \kappa, 
  \theta)$ holds if for every cofinal $S \subseteq \power_\kappa H(\theta)$ and 
  every $x \in H(\theta)$, the set of $M \in \mc Y$ such that $(M,x)$ is almost 
  $\mu$-guessed by $S$ is stationary in $\power_\kappa H(\theta)$.
\end{definition}

All of the notational conventions regarding $\GMP_{\mc Y}(\ldots)$ from Remark~\ref{gmp_convention_remark} apply, 
\emph{mutatis mutandis}, to the setting of $\AGP_{\mc Y}(\ldots)$.

\begin{theorem} \label{slender_guessing_thm_1}
  Let $\mu \leq \kappa \leq \theta$ be uncountable cardinals, with $\kappa$ and $\theta$ regular, and let 
  $\mc Y \subseteq \power_\kappa H(\theta)$ be stationary. If $\SP_{\mc Y}(\mu, \kappa, H(\theta))$ 
  holds, then $\AGP_{\mc Y}(\mu, \kappa, \theta)$ holds.
\end{theorem}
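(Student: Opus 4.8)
The plan is to argue by contraposition: assuming $\AGP_{\mc Y}(\mu, \kappa, \theta)$ fails, we will build a $\mu$-$\mc Y$-slender $(\kappa, H(\theta))$-list with no cofinal branch, thus contradicting $\SP_{\mc Y}(\mu, \kappa, H(\theta))$. So suppose the failure of $\AGP_{\mc Y}$ is witnessed by a cofinal $S \subseteq \power_\kappa H(\theta)$, a set $x \in H(\theta)$, and a club $C \subseteq \power_\kappa H(\theta)$ such that for no $M \in C \cap \mc Y$ is $(M, x)$ almost $\mu$-guessed by $S$. By shrinking $C$, we may assume every $M \in C$ is an elementary submodel of $H(\theta)$ with $x \in M$. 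Then for each $M \in C \cap \mc Y$ we can fix a subset $d_M \subseteq x$ that is $(\mu, M)$-approximated but is \emph{not} $N$-guessed by any $N \in S$ with $x \in N \subseteq M$. As in the proof of Proposition~\ref{guessing_isp_prop_1}, we may arrange $d_M \subseteq M$ by passing to $d_M \cap M$ (which is still $(\mu,M)$-approximated and still not $N$-guessed by any relevant $N$, since $N \subseteq M$ implies $(d_M \cap M) \cap N = d_M \cap N$). For $M \notin C \cap \mc Y$, let $d_M$ be arbitrary; this defines a $(\kappa, H(\theta))$-list $D = \langle d_M \mid M \in \power_\kappa H(\theta)\rangle$.

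The next step is to check that $D$ is $\mu$-$\mc Y$-slender. This is the same computation as in Proposition~\ref{guessing_isp_prop_1}: pick a large regular $\theta' > |H(\theta)|$ and let $C' := \{M' \prec H(\theta') \mid M' \cap H(\theta) \in C\}$, a club in $\power_\kappa H(\theta')$. For $M' \in C'$ with $M' \cap H(\theta) \in \mc Y$, writing $M := M' \cap H(\theta)$, the fact that $d_M$ is $(\mu, M)$-approximated gives, for each $z \in M' \cap \power_\mu H(\theta)$, some $e \in M$ with $d_M \cap z = e \cap z$; one then checks $e \in M \subseteq M'$ witnesses $d_{M' \cap H(\theta)} \cap z \in M'$. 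Since we are applying $\SP$ with the index set $H(\theta)$ and the slenderness test is against subsets of size ${<}\mu$ of the domain $H(\theta)$, this is exactly the required club witness (modulo the harmless identification of $d_{M'}$ with $d_{M' \cap H(\theta)}$ on the relevant coordinates — one should state the list as indexed so that $d_y$ for $y \in \power_\kappa H(\theta)$ is $d_{y}$ if $y \in C$, matching the convention used in the earlier proof).

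Now apply $\SP_{\mc Y}(\mu, \kappa, H(\theta))$ to obtain a cofinal branch $d \subseteq H(\theta)$ of $D$: for every $y \in \power_\kappa H(\theta)$ there is $z_y \supseteq y$ with $d \cap y = d_{z_y} \cap y$. The key remaining move is to locate a single $M \in C \cap \mc Y$ witnessing the contradiction. Here is where the definition of ``almost guessed by $S$'' bites: we want to find $M \in C \cap \mc Y$ with $d \cap M = d_M$ (so that $d$ itself, or rather a suitable $N \in S$ capturing $d$, guesses $d_M$). Since $S$ is cofinal, fix $N_0 \in S$ with $d \cap x, x, d, C, S, \ldots \in N_0$ — but we need more care, because $d$ need not lie in $H(\theta)$ a priori. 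As in Proposition~\ref{guessing_isp_prop_1}, use that the branch equations only involve bounded pieces: by a Fodor argument on the (cofinal, hence we can intersect with a club to get stationary) set of $M \in C \cap \mc Y$, stabilize the value $z_{M} $-type data so that $d$ is pinned down as a subset of a \emph{fixed} small set, giving $d \in H(\theta)$. Then take $M \in C \cap \mc Y$ with $d \in M$ and $x \in M$; for this $M$ we get $d \cap M = d_{z_M'} \cap M$ for an appropriate choice, and unwinding the branch property plus elementarity ($d \in M$, so $d \cap x \in M$) we conclude $d \cap M = d_M$, i.e. $d_M$ is $M$-guessed. Finally, take $N \in S$ with $x, d \cap x \in N$ and $N \subseteq M$ (possible by cofinality of $S$ \emph{relativized below $M$} — one needs $S$ cofinal and can intersect with the strong club of subsets of $M$ closed under a Skolem function for $H(\theta)$; alternatively, absorb this into choosing $M$ via elementarity so that some $N \in S \cap M$ with the needed parameters exists). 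Then $d_M \cap N = d \cap N = (d \cap x) \cap N$ is guessed by $N \in S$ with $x \in N \subseteq M$, contradicting the defining property of $d_M$.

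\textbf{Main obstacle.} The delicate point is the very last paragraph: extracting from the mere cofinal branch $d$ an element $N \in S$ with $x \in N \subseteq M$ that guesses $d_M$. The list-to-branch machinery naturally produces $M$-guessing (as in Proposition~\ref{guessing_isp_prop_1}), but the $\AGP$ failure was chosen against members of $S$ sitting \emph{below} $M$, not against $M$ itself. Reconciling these requires either (i) feeding more bookkeeping into the list $D$ — e.g. indexing so that each coordinate also anticipates a ``would-be guesser'' — or (ii) a reflection/elementarity argument ensuring that once $d \in M$ and $M \prec H(\theta)$, the statement ``there is $N \in S$ with $x \in N$, $N \subseteq M$, and $d \cap N = (d\cap x)\cap N$'' is witnessed inside $M$ because $S$ is cofinal and $d \cap x \in M$. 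Option (ii) is cleaner and I expect it to work: the set $\{N \in S \mid x, d\cap x \in N\}$ is cofinal, hence nonempty below any sufficiently closed $M$; making $M$ closed under the relevant Skolem functions (which only costs intersecting $C$ with a club, harmless since we need stationarily many $M$) supplies such an $N \subseteq M$. Getting the quantifier order exactly right between ``$N$ guesses $d_M$'' and ``$N \subseteq M$'' — and making sure $d_M \subseteq M$ was arranged so that $d_M \cap N$ genuinely equals $d \cap N$ — is the part that needs the most care, but it is bookkeeping rather than a new idea.
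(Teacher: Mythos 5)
Your proof follows the same broad strategy as the paper (encode the failure of $\AGP$ as a slender list, then contradict $\SP$), but there are two related gaps, both centered around the assignment of $d_M$ for $M \notin C \cap \mc Y$.

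First, you set $d_M$ to be arbitrary off the bad set. The paper instead picks, for each such $M$, some $M^\ast \in C \cap \mc Y$ with $M \subseteq M^\ast$ and sets $d_M := d_{M^\ast} \cap M$. This small change does a lot of work: it guarantees $d_M \subseteq x$ for \emph{every} index $M$, so the cofinal branch $d$ satisfies $d \subseteq x$ immediately and hence $d \in H(\theta)$ — no Fodor argument is needed, and indeed your Fodor argument doesn't apply here. Fodor would require a stationary set of $M$ with $d \cap M = d_M$, which is what an \emph{ineffable} branch gives you (as in Proposition~\ref{guessing_isp_prop_1}), but a mere cofinal branch does not produce such a stationary set. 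With your arbitrary assignment, you actually cannot conclude $d \subseteq x$ at all, since for $y$ the witness $z_y$ of the branch property might land off $C \cap \mc Y$ where $d_{z_y}$ was arbitrary.

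Second, your concluding paragraph has the quantifier order inverted. You try to first locate $M \in C \cap \mc Y$ with $d \cap M = d_M$ and then hunt for $N \in S \cap \power(M)$ — but the cofinal branch property never gives $d \cap M = d_M$; it only gives $d \cap M = d_{z_M} \cap M$ for some $z_M \supseteq M$ (which need not equal $M$ and need not lie in $C \cap \mc Y$). The paper's order is the correct one: since $d \in H(\theta)$ and $S$ is cofinal, first fix $N \in S$ with $d, x \in N$; then the cofinal branch property supplies $M \supseteq N$ with $d \cap N = d_M \cap N$; finally, the careful definition of $d_M$ lets you replace $M$ by the associated $M^\ast \in C \cap \mc Y$ (this is exactly the ``lifting'' that your arbitrary assignment cannot support), and since $N \subseteq M \subseteq M^\ast$ you get $d_{M^\ast} \cap N = d_M \cap N = d \cap N$ with $d \in N$, so $d_{M^\ast}$ is $N$-guessed with $x \in N \subseteq M^\ast$ — a contradiction. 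You correctly sensed in your ``Main obstacle'' that feeding more bookkeeping into $D$ is the way out; the specific bookkeeping is the $d_{M^\ast} \cap M$ assignment, and once that is in place the final step is the ``$N$ first, then $M$'' argument rather than the reflection argument you propose.
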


\begin{proof}
  Suppose that $\SP_{\mc Y}(\mu, \kappa, H(\theta))$ holds.
  Fix a cofinal $S \subseteq \power_\kappa H(\theta)$ and a set $x \in H(\theta)$, and 
  assume for sake of contradiction that there is a club $C \subseteq \power_\kappa H(\theta)$ 
  such that, for all $M \in C \cap \mc Y$, $(M,x)$ is not almost $\mu$-guessed by $S$. By 
  thinning out $C$ if necessary, we can 
  assume that $x \in M$ for all $M \in C$. Therefore, for each $M \in C \cap \mc Y$, 
  we can fix a set $d_M \subseteq x$ such that $d_M$ is $(\mu, M)$-approximated but there is no 
  set $N \in S$ such that $x \in N$, $N \subseteq M$, and $d_M$ is $N$-guessed. As in the proof of 
  Proposition \ref{guessing_isp_prop_1}, by replacing $d_M$ by $d_M \cap M$ if necessary, we can assume that 
  $d_M \subseteq M$. For $M \in \power_\kappa H(\theta) \setminus C \cap \mc Y$, choose an arbitrary 
  $M^\ast \in C \cap \mc Y$ such that $M \subseteq M^\ast$, and let $d_M := d_{M^\ast} \cap M$.
  
  Let $D := \langle d_M \mid M \in \power_\kappa H(\theta) \rangle$. Again as in the proof of 
  Proposition \ref{guessing_isp_prop_1}, it follows that $D$ is a $\mu$-$\mc Y$-slender 
  $(\kappa, H(\theta))$-list. Therefore, by $\SP_{\mc Y}(\mu, \kappa, H(\theta))$, 
  we can find a cofinal branch $d \subseteq H(\theta)$ through $D$.
  Since $d_M \subseteq x$ for all $M \in \power_\kappa H(\theta)$, it follows that 
  $d \subseteq x$, and hence $d \in H(\theta)$. We can therefore fix an $N \in S$ such 
  that $d,x \in N$ and then fix an $M \in \power_\kappa H(\theta)$ such that $M \supseteq N$ 
  and $d \cap N = d_M \cap N$. We can assume that $M \in C \cap \mc Y$ (if not, we can 
  replace it with the $M^\ast$ used in the definition of $d_M$, since $d_M = d_{M^\ast} 
  \cap M$). But then $d_M$ is $N$-guessed, as witnessed by $d$, contradicting the fact that 
  there is no $N \in S$ such that $x \in N$, $N \subseteq M$, and $d_M$ is $N$-guessed.
\end{proof}

\begin{theorem} \label{slender_guessing_thm_2}
  Let $\mu \leq \kappa \leq \lambda$ be uncountable cardinals, with $\kappa$ and $\theta$ regular, and let 
  $\mc Y \subseteq \power_\kappa \lambda$ be stationary. Suppose that there is a 
  regular cardinal $\theta > \lambda^{<\kappa}$ such that $\AGP_{\mc Y'}(\mu, \kappa, 
  \theta)$ holds, where $\mc Y' := \mc Y \uparrow \power_\kappa H(\theta)$. 
  Then $\SP_{\mc Y}(\mu, \kappa, \lambda)$ holds.
\end{theorem}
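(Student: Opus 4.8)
The plan is to mirror the proof of Proposition \ref{guessing_isp_prop_2}, adapting it so that the weaker conclusion (a cofinal branch rather than an ineffable one) is extracted from the weaker hypothesis ($\AGP$ in place of $\GMP$). So suppose $D = \langle d_x \mid x \in \power_\kappa \lambda \rangle$ is a $\mu$-$\mc Y$-slender $(\kappa, \lambda)$-list; we must produce a cofinal branch. Fix a club $C \subseteq \power_\kappa H(\lambda^+)$ witnessing the $\mu$-$\mc Y$-slenderness of $D$. The key move is to feed the right cofinal set $S$ into $\AGP_{\mc Y'}(\mu, \kappa, \theta)$: we want $S$ to consist of models $N \prec H(\theta)$ with $D, \mc Y, \lambda \in N$, $N \cap H(\lambda^+) \in C$, and $N \cap \lambda \in \mc Y$. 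One checks that this $S$ is cofinal in $\power_\kappa H(\theta)$ (it contains a club, by elementarity together with $C$ being a club). We also fix the parameter $x := \lambda$.

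Now apply $\AGP_{\mc Y'}(\mu, \kappa, \theta)$ with this $S$ and this $x$: the set of $M \in \mc Y'$ such that $(M, \lambda)$ is almost $\mu$-guessed by $S$ is stationary, hence nonempty, so fix one such $M$ which we may also take to satisfy $D, \mc Y, C \in M$ and $M \cap H(\lambda^+) \in C$ and $M \cap \lambda \in \mc Y$ (intersecting with the appropriate club). Then, exactly as in Proposition \ref{guessing_isp_prop_2}, the fact that $M \cap H(\lambda^+) \in C$ gives $d_{M \cap \lambda} \cap y \in M$ for all $y \in M \cap \power_\mu \lambda$, so $d_{M \cap \lambda} \subseteq \lambda = x$ is $(\mu, M)$-approximated. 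By the choice of $M$ (almost $\mu$-guessed by $S$), there is $N \in S$ with $\lambda \in N \subseteq M$ and $d_{M \cap \lambda}$ is $N$-guessed: there is $e \in N$ with $e \cap N = d_{M \cap \lambda} \cap N$, and since $d_{M\cap\lambda} \subseteq \lambda \in N$ we get $e \subseteq \lambda$, i.e.\ $e$ is a candidate branch.

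It remains to verify that $e$ is a cofinal branch of $D$, i.e.\ that for every $x_0 \in \power_\kappa \lambda$ there is $z \supseteq x_0$ with $e \cap x_0 = d_z \cap x_0$. This is where I expect the main subtlety: unlike the ineffable case, we cannot appeal to a Fodor-style argument inside $M$; instead we must use elementarity of $N$. The idea is that $N \prec H(\theta)$ contains $D$, contains $e$, and contains $\lambda$, and $N \cap \lambda \in \mc Y$ (since $N \in S$) with $N \cap \lambda \subseteq M \cap \lambda$; moreover $d_{N \cap \lambda} \cap N = e \cap N$ needs to be established — this should follow from $e \cap N = d_{M\cap\lambda}\cap N$ together with the tree-coherence of $D$ restricted to the well-pruned tree it generates, or more directly by observing $N \cap H(\lambda^+) \in C$ and running the slenderness argument inside $N$. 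Given $x_0 \in \power_\kappa \lambda$, by cofinality of $N \cap \lambda$ inside $\power_\kappa(N\cap\lambda)$ we may assume $x_0 \subseteq N$; then $e \cap x_0 = d_{N\cap\lambda} \cap x_0$ and $N \cap \lambda \supseteq x_0$ witnesses the cofinal-branch property. Working out the precise bookkeeping to show $e\cap N$ agrees with $d_{N\cap\lambda}$ — in particular that $N$ being a guessing-witness for $d_{M\cap\lambda}$ transfers correctly to $d_{N\cap\lambda}$ — will be the heart of the argument, and I would carry it out by first arranging, in the proof of $\AGP \Rightarrow$, that the list $D$ and the club $C$ are all parameters available to every $N \in S$, so that $N$ "sees" its own slenderness diagram. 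Once that is set up, elementarity of $N$ does the rest.
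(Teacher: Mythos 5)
Your overall plan is right in spirit, but two concrete steps go wrong and one key ingredient is missing.

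First, your choice of $S$ does not satisfy the hypothesis of $\AGP$. You ask for $N \cap \lambda \in \mc Y$ (and $N \cap H(\lambda^+) \in C$) to be part of the definition of $S$, and claim $S$ is cofinal ``because it contains a club.'' Since $\mc Y$ is only assumed stationary, the set $\{N \in \power_\kappa H(\theta) : N \cap \lambda \in \mc Y\}$ is merely (weakly) stationary, not club-containing, so your $S$ need not be cofinal and cannot be fed into $\AGP_{\mc Y'}(\mu,\kappa,\theta)$. The paper avoids this by putting the $\mc Y$-requirement and the $\in C$-requirement only on the outer model $M$ (where $\AGP$ gives you stationarily many choices of $M$ in $\mc Y'$, and you intersect with a club), while defining $S$ to be simply the $N \prec H(\theta)$ of size $<\kappa$ with $D \in N$ and $N \cap \kappa \in \kappa$. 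This last condition, which you omit, is load-bearing: it is exactly what guarantees that every $x \in N \cap \power_\kappa\lambda$ satisfies $x \subseteq N$ (since $|x| \in N \cap \kappa \subseteq N$, and then elementarity gives $x \subseteq N$), which is the property you later need in order to read off $e \cap x = d_{M\cap\lambda} \cap x$ from $e \cap N = d_{M\cap\lambda} \cap N$.

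Second, the closing elementarity argument is off target. You try to establish $d_{N\cap\lambda} \cap N = e \cap N$ and then use $z = N\cap\lambda$ as the branch witness. There is no reason for $d_{N\cap\lambda}$ to agree with $d_{M\cap\lambda}$ (and hence with $e$) on $N$: slenderness gives approximation of $d_{M\cap\lambda}$ by elements of models, not any coherence between the entries $d_x$ for different $x$; the ``tree-coherence of $D$'' you invoke does not exist. Moreover $N\cap\lambda$ is typically not an element of $N$, so it cannot serve as an internal witness for $N$, and even if it could, your reduction of an arbitrary $x_0 \in \power_\kappa\lambda$ to $x_0 \subseteq N$ ``by cofinality of $N\cap\lambda$'' is not valid ($N\cap\lambda$ is a single small set). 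The correct argument, and the one the paper uses, is a two-stage reflection: for each $x \in N \cap \power_\kappa\lambda$ (hence $x \subseteq N$), the set $M\cap\lambda$ is an external witness in $H(\theta)$ to the $\Sigma_1$ statement ``$\exists z \in \power_\kappa\lambda \ (z \supseteq x \wedge e \cap x = d_z \cap x)$'' with parameters $x, e, D, \lambda \in N$; by downward elementarity $N$ satisfies this existential for each such $x$, hence $N$ satisfies the full $\forall x \exists z$ statement, and by upward elementarity so does $H(\theta)$, i.e.\ $e$ is a cofinal branch. Your proposal names elementarity as the engine but does not actually run it; the witness has to live in $H(\theta)$ (it is $M\cap\lambda$, not $N\cap\lambda$), and no coherence claim about $d_{N\cap\lambda}$ is needed or available.
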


\begin{proof}
  Let $D = \langle d_x \mid x \in \power_\kappa \lambda \rangle$ be a $\mu$-$\mc Y$-slender 
  $(\kappa, \lambda)$-list. We will find a cofinal branch for $D$. Let $C \subseteq 
  \power_\kappa H(\lambda^+)$ be a club witnessing that $D$ is $\mu$-$\mc Y$-slender. Let $S$ 
  be the set of 
  $N \prec H(\theta)$ such that $\vert N\vert  < \kappa$, $D \in N$, and $N \cap \kappa \in \kappa$.
  By hypothesis, we can find $M \in \power_\kappa H(\theta) \cap \mc Y'$ such that 
  $M \cap H(\lambda^+) \in C$
  and $(M, \lambda)$ is almost $\mu$-guessed by $S$. Since $M \cap H(\lambda^+) \in C$ 
  and $M \cap H(\lambda) \in \mc Y$, the fact that $C$ witnesses that $D$ is 
  $\mu$-$\mc Y$-slender implies that
  $d_{M \cap \lambda}$ is a $(\mu, M)$-approximated subset of $\lambda$. 
  Therefore, we can find $N \in S$ such that $N \subseteq M$ and $d_{M \cap \lambda}$ is 
  $N$-guessed, i.e., there is $e \in N$ such that $e \cap N = d_{M \cap \lambda} \cap N$.
  
  Since $d_{M \cap \lambda} \subseteq \lambda$, it follows from the elementarity of $N$ 
  that $e \subseteq \lambda$. We claim that $e$ is a cofinal branch of $D$, which will finish 
  the proof. To verify this claim, first fix an arbitrary $x \in \power_\kappa \lambda \cap N$. 
  Since $N \cap \kappa \in \kappa$, it follows that $\vert x\vert  \subseteq N$ and hence, by elementarity, 
  $x \subseteq N$. In particular, we have $e \cap x = d_{M \cap \lambda} \cap x$.
  Now 
  \[
  H(\theta) \models \mbox{``}\exists z \in \power_\kappa \lambda (z \supseteq x 
  \text{ and } e \cap x = d_z \cap x)\mbox{"},
  \] 
  as witnessed by $M \cap \lambda$. By elementarity, 
  $N$ satisfies this statement as well. Since this holds for all $x \in \power_\kappa \lambda \cap N$, 
  we have 
  \[
  N \models \mbox{``}\forall x \in \power_\kappa \lambda ~ \exists z \in \power_\kappa \lambda 
  (z \supseteq x \text{ and } e \cap x = d_z \cap x)\mbox{"}.
  \] 
  Again by elementarity, this statement holds 
  in $H(\theta)$. But this is precisely the assertion that $e$ is a cofinal branch of $D$, as 
  desired.
\end{proof}

We therefore obtain the following corollary, proving half of Theorem A.

\begin{corollary}
  Suppose that $\mu \leq \kappa$ are regular uncountable cardinals. Then the following are equivalent:
  \bce[(i)]
    \item $\SP(\mu, \kappa, \geq \kappa)$;
    \item $\AGP(\mu, \kappa, \geq \kappa)$.
  \ece
\end{corollary}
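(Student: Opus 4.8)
The plan is to derive this corollary directly from the two theorems just established, in exactly the same way that Corollary \ref{isp_guessing_cor} is obtained from Propositions \ref{guessing_isp_prop_1} and \ref{guessing_isp_prop_2}. For the direction (i) $\Rightarrow$ (ii), suppose $\SP(\mu, \kappa, \geq \kappa)$ holds and fix a regular $\theta \geq \kappa$; I would note that $H(\theta)$ is a set of size $\geq \kappa$, so $\SP(\mu, \kappa, H(\theta))$ is just the instance of $\SP(\mu, \kappa, \lambda)$ for $\lambda = |H(\theta)|$ (invoking the remark that the principles depend only on the cardinality of the index set), and then apply Theorem \ref{slender_guessing_thm_1} with $\mc Y = \power_\kappa H(\theta)$ to conclude $\AGP(\mu, \kappa, \theta)$. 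Since $\theta$ was arbitrary, $\AGP(\mu, \kappa, \geq \kappa)$ holds.

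For the converse (ii) $\Rightarrow$ (i), suppose $\AGP(\mu, \kappa, \geq \kappa)$ holds and fix a cardinal $\lambda \geq \kappa$; I want $\SP(\mu, \kappa, \lambda)$. Here I would apply Theorem \ref{slender_guessing_thm_2}: it suffices to find a regular $\theta > \lambda^{<\kappa}$ for which $\AGP(\mu, \kappa, \theta)$ holds, and this is immediate from the hypothesis $\AGP(\mu, \kappa, \geq \kappa)$ once I choose any regular $\theta$ above $\lambda^{<\kappa}$ (note $\mc Y' = \power_\kappa H(\theta)$ in this unrelativized case, so the relativized hypothesis of Theorem \ref{slender_guessing_thm_2} reduces to the plain instance). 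This yields $\SP(\mu, \kappa, \lambda)$, and since $\lambda$ was arbitrary we get $\SP(\mu, \kappa, \geq \kappa)$.

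There is essentially no obstacle here: the corollary is a formal consequence of the two preceding theorems together with the basic observation that $\SP$ and $\AGP$ for the full power set $\power_\kappa X$ depend only on $|X|$. The one small point worth being careful about is the mismatch in quantifier placement between the two theorems — Theorem \ref{slender_guessing_thm_1} converts the $H(\theta)$-instance of $\SP$ into the $\theta$-instance of $\AGP$ for the \emph{same} $\theta$, whereas Theorem \ref{slender_guessing_thm_2} needs $\AGP$ at some $\theta$ strictly above $\lambda^{<\kappa}$ — but because both ``$\geq \kappa$'' principles quantify over all (regular) $\theta$ or $\lambda$, this asymmetry washes out, just as it does in the proof of Corollary \ref{isp_guessing_cor}. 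So the proof is simply: ``This is immediate from Theorems \ref{slender_guessing_thm_1} and \ref{slender_guessing_thm_2},'' perhaps with a sentence spelling out the choice of $\theta$ in each direction.
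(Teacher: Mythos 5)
Your proof is correct and is exactly the argument the paper intends: the corollary is stated as an immediate consequence of Theorems \ref{slender_guessing_thm_1} and \ref{slender_guessing_thm_2}, in direct analogy with how Corollary \ref{isp_guessing_cor} follows from Propositions \ref{guessing_isp_prop_1} and \ref{guessing_isp_prop_2}. Your remark about the quantifier asymmetry between the two theorems washing out under the ``$\geq\kappa$'' quantification is precisely the right point to be careful about, and you handle it correctly.
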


In analogy with the principle $\TP^-(\kappa, \lambda)$ from subsection 
\ref{fontanella_matet_subsection}, we will also be interested in weakenings of the 
slender tree property and almost guessing property 
in which ``club" and ``stationary" are replaced by ``strong club" and ``weakly stationary".
Unlike the situation with $\TP^-$ and $\TP$ however, we do not know whether these 
weak versions are equivalent to their seemingly stronger relatives.

\begin{definition}
  Let $\mu \leq \kappa \leq \lambda$ be uncountable cardinals, with $\kappa$ regular, 
  and let $\mc Y \subseteq \power_\kappa \lambda$ be weakly stationary.
  \bce[(i)]
    \item We say that a $(\kappa, \lambda)$-list $D = \langle d_x \mid x \in \power_\kappa \lambda 
    \rangle$ is \emph{strongly $\mu$-$\mc Y$-slender} if for a sufficiently large regular cardinal $\theta$ 
    there is a strong club $C \subseteq \power_\kappa H(\theta)$ such that, for all $M \in C$ and 
    all $y \in M \cap \power_\mu \lambda$, if $M \cap \lambda \in \mc Y$, then there is $e \in M$ such that we have $d_{M \cap \lambda} \cap y = e \cap y$.
    \item The \emph{weak $(\mu, \kappa, \lambda)$-slender tree property on $\mc Y$}, denoted $\wSP_{\mc Y}(\mu, \kappa, 
    \lambda)$, is the assertion that every strongly $\mu$-$\mc Y$-slender $(\kappa, \lambda)$-list has a 
    cofinal branch.
    \item For a regular cardinal $\theta \geq \kappa$ and a weakly stationary $\mc Y' \subseteq \power_\kappa H(\theta)$, the principle $\wAGP_{\mc Y'}(\mu, \kappa, \theta)$ is 
    the assertion that for every cofinal $S \subseteq \power_\kappa H(\theta)$ and every 
    $x \in H(\theta)$, the set of $M \in \power_\kappa H(\theta) \cap \mc Y'$ such that 
    $(M,x)$ is almost $\mu$-guessed by $S$ is weakly stationary in $\power_\kappa H(\theta)$.
  \ece
\end{definition}

Again, the notational conventions from Remarks \ref{tp_convention_remark} and \ref{gmp_convention_remark} 
apply to the principles $\wSP_{\mc Y}(\ldots)$ and $\wAGP_{\mc Y}(\ldots)$, respectively.

\begin{remark} \label{strongly_slender_remark}
  Note that our formulation of strongly $\mu$-slender differs from the formulation of 
  $\mu$-slender in that the conclusion does not require $d_{M \cap \lambda} \cap y \in M$ 
  but rather the existence of $e \in M$ such that $d_{M \cap \lambda} \cap y = e \cap y$. 
  These are clearly equivalent if $M$ is closed under finite intersections, but since 
  we cannot assume that all elements of a strong club are closed under intersections (unlike 
  the situation with clubs), this seems like the more appropriate definition.
\end{remark}

\begin{remark} \label{wsp_tp_remark}
  Just as in Fact \ref{thin_slender_fact}, it can be shown that a thin $(\kappa, \lambda)$-list 
  is in fact \emph{strongly} $\kappa$-slender, so $\wSP(\kappa, \kappa, \lambda)$ is enough to 
  imply $\TP(\kappa, \lambda)$.
\end{remark}

The straightforward analogues of Theorems \ref{slender_guessing_thm_1} and \ref{slender_guessing_thm_2} 
hold for $\wSP$ and $\wAGP$. The proofs are essentially identical, so we omit them, noting 
only that, in the proof of the analogue of Theorem \ref{slender_guessing_thm_1}, 
when verifying that the constructed $(\kappa, H(\theta))$-list is 
strongly $\mu$-$\mc Y$-slender, it is important that the conclusion in the definition 
of strongly $\mu$-slender is weakened from the conclusion of $\mu$-slender, as 
discussed in Remark \ref{strongly_slender_remark}. We therefore 
obtain the following corollary, completing the proof of Theorem A.

\begin{corollary}
  Suppose that $\mu \leq \kappa$ are regular uncountable cardinals. Then the following are equivalent:
  \bce[(i)]
    \item $\wSP(\mu, \kappa, \geq \kappa)$;
    \item $\wAGP(\mu, \kappa, \geq \kappa)$.
  \ece
\end{corollary}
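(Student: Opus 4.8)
The plan is to establish the equivalence of $\wSP(\mu, \kappa, \geq \kappa)$ and $\wAGP(\mu, \kappa, \geq \kappa)$ by producing the straightforward analogues of Theorems \ref{slender_guessing_thm_1} and \ref{slender_guessing_thm_2} with ``$\SP$'' replaced by ``$\wSP$'', ``$\AGP$'' by ``$\wAGP$'', ``club'' by ``strong club'', ``stationary'' by ``weakly stationary'', and ``$\mu$-slender'' by ``strongly $\mu$-slender''. As the paper itself indicates just before the corollary, the two proofs are essentially identical to those already written out for $\SP$ and $\AGP$, so the task is really to check that each step survives the substitution. The forward direction (the analogue of Theorem \ref{slender_guessing_thm_1}): assume $\wSP_{\mc Y}(\mu, \kappa, H(\theta))$, fix a cofinal $S$ and a set $x$, and suppose toward a contradiction that there is a strong club $C \subseteq \power_\kappa H(\theta)$ consisting of $M$ for which $(M,x)$ is not almost $\mu$-guessed by $S$. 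For each $M \in C \cap \mc Y$ pick $d_M \subseteq x$ that is $(\mu, M)$-approximated but witnesses the failure, arrange $d_M \subseteq M$, extend to a $(\kappa, H(\theta))$-list $D$ off $C \cap \mc Y$ exactly as before, and invoke $\wSP$ to get a cofinal branch $d$; then locate $N \in S$ and $M \supseteq N$ with $d \cap N = d_M \cap N$ to contradict the choice of $d_M$.

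The one genuinely new point in that argument — and the step I expect to be the main obstacle, modest as it is — is verifying that the list $D$ constructed from $C$ is \emph{strongly} $\mu$-$\mc Y$-slender rather than merely $\mu$-slender. Here is where the weakening built into the definition of ``strongly $\mu$-slender'' earns its keep: we only need, for $M$ in a strong club and $y \in M \cap \power_\mu H(\theta)$, the existence of some $e \in M$ with $d_{M} \cap y = e \cap y$, not the membership $d_M \cap y \in M$. Since $d_M$ was chosen to be $(\mu, M)$-approximated, for every $z \in M \cap \power_\mu(x)$ there is $e \in M$ with $d_M \cap z = e \cap z$, which is exactly the required conclusion; one just needs to note that $M \cap \power_\mu H(\theta)$ and $M \cap \power_\mu(x)$ interact correctly, so that a strong club of sufficiently closed $M$ witnesses strong $\mu$-$\mc Y$-slenderness. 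Recall in this connection Remark \ref{strongly_slender_remark}: because we cannot assume elements of a strong club are closed under pairwise intersections, the unweakened conclusion ``$d_M \cap y \in M$'' would genuinely fail, so it is essential that the definition was set up as it was.

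The reverse direction (the analogue of Theorem \ref{slender_guessing_thm_2}): given a strongly $\mu$-$\mc Y$-slender $(\kappa, \lambda)$-list $D$, fix a regular $\theta > \lambda^{<\kappa}$ and a strong club $C \subseteq \power_\kappa H(\lambda^+)$ witnessing strong slenderness, let $S$ be the set of $N \prec H(\theta)$ with $|N| < \kappa$, $D \in N$, and $N \cap \kappa \in \kappa$, and apply $\wAGP_{\mc Y'}(\mu, \kappa, \theta)$ — using that the relevant set of good $M$ is weakly stationary and hence meets the strong club of $M$ with $M \cap H(\lambda^+) \in C$, by Proposition \ref{club_transfer_prop} — to obtain $M \in \mc Y'$ with $M \cap H(\lambda^+) \in C$ such that $(M, \lambda)$ is almost $\mu$-guessed by $S$. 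Since $M \cap H(\lambda^+) \in C$ and $M \cap \lambda \in \mc Y$, strong slenderness gives that $d_{M \cap \lambda}$ is $(\mu, M)$-approximated, so we get $N \in S$ with $N \subseteq M$ and $e \in N$ with $e \cap N = d_{M \cap \lambda} \cap N$; the elementarity argument that $e$ is a cofinal branch of $D$ then goes through verbatim, since $N \prec H(\theta)$ and $N \cap \kappa \in \kappa$ force $x \subseteq N$ for each $x \in \power_\kappa \lambda \cap N$. Combining the two directions with the conventions for ``$\geq \kappa$'' yields the stated equivalence, completing the proof of Theorem A. The only real subtlety throughout is the bookkeeping with strong clubs versus clubs and the deliberately weakened conclusion in the definition of strongly $\mu$-slender; everything else is a transcription of the already-given arguments.
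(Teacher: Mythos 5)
Your proposal takes exactly the same route as the paper: the paper dismisses the two directions as "the straightforward analogues of Theorems \ref{slender_guessing_thm_1} and \ref{slender_guessing_thm_2}," flagging only the verification that the constructed list is \emph{strongly} $\mu$-$\mc Y$-slender (Remark \ref{strongly_slender_remark}) as the genuine point of care, and you identify and explain precisely that same subtlety. Your account is correct and, if anything, slightly more explicit than the paper's one-sentence gloss.
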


\subsection{Guessing models for small sets}
In this subsection, we examine weakenings of $\GMP$ in which we only require models to 
be guessing for sets of some fixed small cardinality. 
We will see that such principles do not require the full power of $\ISP(\ldots)$ but 
in fact follow already from an appropriate instance of $\SP(\ldots)$. At the same 
time, these principles are strong enough to imply statements such as the failure of the 
weak Kurepa Hypothesis.
For concreteness, we focus on guessing models for sets of size $\omega_1$, but it will be 
evident how to adjust the results for other values of the relevant parameters.

Let us say that $M$ is a \emph{guessing model for sets of size} $\omega_1$ if for every $z\in M$ with $\vert z\vert =\omega_1$, if $d\sub z$ is $(\omega_1, M)$-approximated, then it is $M$-guessed.
We first show that if $M$ is a guessing model for $z$ for some set $z\in M$, 
then $M$ is a guessing model for $y$ for all sets $y$ in $M$ 
which have the same size as $z$. Therefore to show that $$\set{M\prec H(\theta)}{\vert M\vert <\omega_2 \mbox{ and } M \mbox{ is a guessing model for sets of size }\omega_1}$$ is stationary, it is enough to show that $\mathcal{G}^{z}_{\omega_2}H(\theta)$ is stationary for some $z\in H(\theta)$ of size $\omega_1$.

\begin{lemma} \label{bijection_lemma}
Let $M\prec H(\theta)$ and $z\in M$. If $M$ is a guessing model for $z$, then $M$ is a guessing model for $y$ for every $y\in M$ such that $\vert z\vert =\vert y\vert $.
\end{lemma}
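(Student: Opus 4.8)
The plan is to transfer the guessing property along a bijection between $z$ and $y$. Since $M \prec H(\theta)$, $z, y \in M$, and $|z| = |y|$, the statement ``there is a bijection from $z$ onto $y$'' holds in $H(\theta)$ and hence, by elementarity, there is such a bijection $f \in M$. I will use $f$ to translate a potential counterexample for $y$ into a counterexample for $z$. So suppose $d \subseteq y$ is $(\omega_1, M)$-approximated; set $d' := f^{-1}[d] \subseteq z$. The first key step is to check that $d'$ is $(\omega_1, M)$-approximated. Given $w \in M \cap \power_{\omega_1}(z)$, note $f[w] \in M$ (as $f, w \in M$) and $|f[w]| = |w| \leq \omega_1$, so $f[w] \in M \cap \power_{\omega_1}(y)$; by approximation of $d$ there is $e \in M$ with $d \cap f[w] = e \cap f[w]$, and then $e' := f^{-1}[e \cap y] \in M$ satisfies $d' \cap w = e' \cap w$. (Here one uses that $f$ is a bijection, so pointwise intersection commutes with $f^{-1}$.)

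Since $M$ is a guessing model for $z$, $d'$ is $M$-guessed: there is $e \in M$ with $d' \cap M = e \cap M$. The second key step is to push this back through $f$ to guess $d$. Since $f \in M$, we have $f[e \cap z] \in M$, and I claim $d \cap M = f[e \cap z] \cap M$. For this, observe $f \restriction (z \cap M)$ is a bijection between $z \cap M$ and $y \cap M$: indeed, $f \in M$ implies $f[z \cap M] \subseteq M$ and $f^{-1}[y \cap M] \subseteq M$, so $f$ maps $z \cap M$ onto $y \cap M$. Applying $f$ to the equation $d' \cap (z \cap M) = e \cap (z \cap M)$ (which follows from $d' \cap M = e \cap M$ and $d', e \subseteq z$ up to intersecting with $z$) and using that $f$ is injective, one gets $d \cap (y \cap M) = f[e \cap z] \cap (y \cap M)$, which, since $d \subseteq y$ and $f[e \cap z] \subseteq y$, is exactly $d \cap M = f[e \cap z] \cap M$. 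Thus $d$ is $M$-guessed, as witnessed by $f[e \cap z] \in M$.

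The routine points are the bookkeeping with intersections and with $f$ versus $f \restriction M$; the only place requiring a little care — and the main (mild) obstacle — is keeping track of the difference between $e$ (an arbitrary element of $M$, possibly not a subset of $z$) and $e \cap z$, so that the images and preimages under $f$ land where they should. Since $z, y \in M$ and $f \in M$ (so $z, y \in \dom(f) \cup \rng(f)$ appropriately), intersecting witnesses with $z$ or $y$ inside $M$ is harmless, and the argument goes through. This completes the proof that $M$ is a guessing model for $y$.
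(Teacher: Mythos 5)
Your proof is correct and takes essentially the same route as the paper's: fix a bijection between $z$ and $y$ in $M$ by elementarity, transfer a given $(\omega_1,M)$-approximated $d\subseteq y$ across the bijection to get an approximated subset of $z$, guess it there, and push the guess back. The only cosmetic difference is the direction of the bijection (the paper takes $f\colon y\to z$ and works with $f''d$); your more explicit handling of the intersections with $z$ and $y$ is fine.
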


\begin{proof}
Let $M\prec H(\theta)$ be a guessing model for $z$ and let $y\in M$ have the same size as $z$. This means that there is in $M$ a bijection $f: y \to z $. Let $d\sub y$ be $M$-approximated. Then $f'' d$ is a subset of $z$, which is also $M$-approximated. To see this, let $a\sub z$ be a countable and in $M$. Then $a\cap f'' d\in M$, since $f^{-1}{} ''( a\cap f''d)=f^{-1}{} ''a \cap d$ is in $M$. Therefore there is $e\in M$, $e\sub z$ such that $e\cap M= f'' d\cap M$ and hence $f^{-1} {}'' e \cap M=d \cap M$.
\end{proof}

\begin{lemma} \label{small_guessing_lemma}
Assume that $\SP(\omega_1,\omega_2, H(\theta))$ holds. Then for all $z\in H(\theta)$ with $\vert z\vert =\omega_1$, the set $\mathcal{G}^z_{\omega_2}H(\theta)$ is stationary in $\power_{\omega_2}H(\theta)$.
\end{lemma}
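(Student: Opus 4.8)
The plan is to derive the statement from Theorem \ref{slender_guessing_thm_1} (equivalently, from the $\AGP$ formulation of $\SP$), combined with Lemma \ref{bijection_lemma} to reduce from a fixed representative $z$ to all sets of size $\omega_1$. Fix $z \in H(\theta)$ with $|z| = \omega_1$; by Lemma \ref{bijection_lemma} it is harmless to work with this particular $z$, and indeed it suffices to find a single stationary set of guessing models for $z$, since guessing-model-hood for $z$ transfers to any $y \in M$ with $|y| = |z| = \omega_1$. Now apply Theorem \ref{slender_guessing_thm_1} with $\mu = \omega_1$, $\kappa = \omega_2$, and $\mc Y = \power_{\omega_2} H(\theta)$: from $\SP(\omega_1, \omega_2, H(\theta))$ we obtain $\AGP(\omega_1, \omega_2, \theta)$. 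The key observation is that when the target set $x$ has size $\omega_1 < \omega_2 = \kappa$, the distinction between ``almost $\mu$-guessed by $S$'' and ``is itself a $\mu$-guessing model for $x$'' should collapse.

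Concretely, I would instantiate $\AGP(\omega_1, \omega_2, \theta)$ with $x := z$ and with the cofinal set $S := \{N \prec H(\theta) \mid |N| < \omega_2 \text{ and } z \in N\}$ (which is indeed cofinal in $\power_{\omega_2} H(\theta)$). This yields a stationary set of $M \in \power_{\omega_2} H(\theta)$ such that $(M, z)$ is almost $\omega_1$-guessed by $S$. Unwinding: for every such $M$ and every $(\omega_1, M)$-approximated $d \subseteq z$, there is $N \in S$ with $z \in N \subseteq M$ and $e \in N$ such that $d \cap N = e \cap N$. The point is now that $d \subseteq z$ has size at most $\omega_1$, and I want to promote ``$d$ is $N$-guessed'' to ``$d$ is $M$-guessed.'' Since $z \in N \subseteq M$ and $|z| = \omega_1 < \omega_2$, and both $N$ and $M$ are elementary submodels of $H(\theta)$ of size $< \omega_2$, elementarity gives $z \subseteq N$ (from $|z| = \omega_1$ and $N \cap \omega_2 \in \omega_2$, after intersecting the stationary set with a club forcing $N \cap \omega_2 \in \omega_2$). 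Hence $d \subseteq z \subseteq N \subseteq M$, so $d \cap N = d = d \cap M$, and likewise $e \cap N = e \cap M$ once we note $e \in N \prec H(\theta)$ with $e \cap z$ the relevant part — more carefully, replacing $e$ by $e \cap z \in N$ we may assume $e \subseteq z \subseteq N$, whence $d = d \cap N = e \cap N = e$. Thus $e \in N \subseteq M$ witnesses that $d$ is $M$-guessed, so $M \in \mc G^z_{\omega_2} H(\theta)$.

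The main obstacle — really the only nontrivial point — is the bookkeeping around sizes and elementarity that forces $z \subseteq N$ and lets one discard the part of $e$ outside $z$; this is where one must be careful that the intersection of the $\AGP$-stationary set with the club $\{M : M \cap \omega_2 \in \omega_2\}$ is used, and that $N$ being a genuine elementary submodel (built into the definition of $S$) is what makes $e \cap z \in N$ and $e \cap z \subseteq N$. Once that is in hand, the transfer to arbitrary $y$ of size $\omega_1$ via Lemma \ref{bijection_lemma} is immediate, and intersecting with the club mentioned above preserves stationarity, completing the argument.
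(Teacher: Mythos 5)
Your route differs from the paper's: the paper proves the lemma directly, repeating the contradiction argument of Proposition~\ref{guessing_isp_prop_1} — building a $(\omega_2, H(\theta))$-list from the failure of stationarity, invoking $\SP$ to get a cofinal branch $d$, observing $d \subseteq z \in H(\theta)$, and reaching a contradiction with some $M$ containing $d$. You instead factor through Theorem~\ref{slender_guessing_thm_1}, using $\AGP(\omega_1,\omega_2,\theta)$ as a black box and then showing that for a target set $z$ of size $\omega_1$ (hence $z \subseteq N \subseteq M$), ``almost guessed by $S$'' collapses to ``$M$-guessed.'' This is a legitimate and arguably cleaner decomposition, since it reuses the work already done in Section~\ref{almost_guessing_section} instead of repeating the list construction.

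There is one bookkeeping slip. You write that $z \subseteq N$ is secured ``after intersecting the stationary set with a club forcing $N \cap \omega_2 \in \omega_2$,'' but the stationary set produced by $\AGP$ consists of $M$'s, not $N$'s; intersecting it with a club does nothing to the models $N$ drawn from $S$, and $N \cap \omega_2 \in \omega_2$ alone would not give $\omega_1 \subseteq N$ anyway. The correct and entirely routine fix is to build the requirement into $S$ itself: take $S := \{N \prec H(\theta) : |N| < \omega_2 \text{ and } z \cup \{z\} \subseteq N\}$, which is still cofinal in $\power_{\omega_2}H(\theta)$ since $|z| = \omega_1$. (You should similarly intersect the stationary set of $M$'s with the club of elementary submodels containing $z$ as a subset, which you implicitly do.) With that change, the chain $d \subseteq z \subseteq N \subseteq M$ and the replacement of $e$ by $e \cap z \in N$ go through exactly as you describe, giving $d = e \cap z \in M$ and hence $M \in \mc G^z_{\omega_2}H(\theta)$.
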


\begin{proof}
Assume for sake of contradiction that $z\in H(\theta)$, $\vert z\vert =\omega_1$, and $\mathcal{G}^z_{\omega_2}H(\theta)$ is nonstationary. This means that there is a club $C\sub \power_{\omega_2}H(\theta)$ such that for all $M\in C$ there is $d_M\sub z \in M$ such that $d_M$ is $(\omega_1, M)$-approximated but not $M$-guessed. Note that we can assume that $z$ is also a subset of $M$ for all $M\in C$ since $z$ has size $\omega_1$. For $N \in \power_{\omega_2}H(\theta) \setminus C$, choose $M \in C$ such that $N \subseteq M$ and 
let $d_N := d_M \cap N$. 

Let $D=\langle d_M \mid M \in \power_\kappa H(\theta) \rangle$. Again as in the proof of Proposition \ref{guessing_isp_prop_1}, it follows that $D$ is an $\omega_1$-slender $(\omega_2, H(\theta))$-list. Therefore, by  $\SP(\omega_1,\omega_2, H(\theta))$ there is $d\sub H(\theta)$ such that for all $M\in C$ there is $N\supseteq M$ such that
\beq\label{eq:ST1}
d\cap M=d_N\cap M.
\eeq

By our definition of $D$, we can always choose such an $N$ to be in $C$.
By (\ref{eq:ST1}), $d$ is a subset of $z$ and in particular it is in $H(\theta)$. Let $M$ in $C$ be such that $d\in M$. Then there is $N\supseteq M$ such that $N \in C$ and $d \cap M = d_N \cap M$. We then have

\beq\label{eq:ST2}
d=d\cap M=d_N\cap M=d_N=d_N\cap N.
\eeq

The first equality above holds because $d \subseteq z \subseteq M$, and the last two equalities hold because $d_N \subseteq z \subseteq M \subseteq N$. But then $d \in N$ and $d \cap N = d_N \cap N$, contradicting the assumption that $d_N$ is not $N$-guessed.
\end{proof}

Putting together a couple of results, we can actually show that 
$\SP(\omega_1, \omega_2, H(\omega_2))$ is sufficient to obtain the conclusion of the previous 
lemma.

\begin{corollary}
Assume that $\SP(\omega_1,\omega_2,H(\omega_2))$ holds. Then for every 
regular $\theta \geq \omega_2$, the set 
\[
\set{M\prec H(\theta)}{\vert M\vert <\omega_2 \mbox{ and } M \mbox{ is a guessing model for sets of size }\omega_1}
\] 
is stationary in $\power_{\omega_2}H(\theta)$.
\end{corollary}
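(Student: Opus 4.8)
The plan is to combine Lemma~\ref{bijection_lemma}, Lemma~\ref{small_guessing_lemma}, and Proposition~\ref{transfer_prop}. By Lemma~\ref{bijection_lemma}, once $M \prec H(\theta)$ is a guessing model for a single set $z \in M$ of size $\omega_1$, it is automatically a guessing model for \emph{every} $y \in M$ with $|y| = \omega_1$, i.e., it is a guessing model for sets of size $\omega_1$. So it suffices to fix one convenient set $z$ of size $\omega_1$ and show that $\mathcal G^z_{\omega_2} H(\theta)$ is stationary in $\power_{\omega_2} H(\theta)$ for every regular $\theta \ge \omega_2$. The natural choice is $z = \omega_1$ itself: since $\omega_1$ is definable in $H(\theta)$, it belongs to every $M \prec H(\theta)$, so the hypothesis ``$z \in M$'' of Lemma~\ref{bijection_lemma} holds automatically for every $M \in \mathcal G^{\omega_1}_{\omega_2} H(\theta)$. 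It will follow that $\mathcal G^{\omega_1}_{\omega_2} H(\theta)$ is contained in the set appearing in the statement, so stationarity of the former gives stationarity of the latter.

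To establish stationarity of $\mathcal G^{\omega_1}_{\omega_2} H(\theta)$ for every regular $\theta \ge \omega_2$, I would first handle the single value $\theta = \omega_2$. Here the hypothesis $\SP(\omega_1, \omega_2, H(\omega_2))$ is exactly the assumption of Lemma~\ref{small_guessing_lemma}, so applying that lemma with $z = \omega_1 \in H(\omega_2)$ yields that $\mathcal G^{\omega_1}_{\omega_2} H(\omega_2)$ is stationary in $\power_{\omega_2} H(\omega_2)$. I would then propagate this upward using clause~(\ref{transfer_clause_1}) of Proposition~\ref{transfer_prop}, applied with $x = \omega_1$: that clause says $\mathcal G^{\omega_1}_{\omega_2} H(\theta)$ is stationary in $\power_{\omega_2} H(\theta)$ if and only if $\mathcal G^{\omega_1}_{\omega_2} H(\omega_2)$ is stationary in $\power_{\omega_2} H(\omega_2)$, which we have just shown. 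This settles all regular $\theta \ge \omega_2$ simultaneously.

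The one point that requires care is the mismatch between what we are given --- only $\SP(\omega_1, \omega_2, H(\omega_2))$ --- and what Lemma~\ref{small_guessing_lemma} at a general $\theta$ would demand, namely $\SP(\omega_1, \omega_2, H(\theta))$, which is a priori a much stronger hypothesis for $\theta > \omega_2$. The resolution is precisely the division of labour sketched above: Lemma~\ref{small_guessing_lemma} is invoked only at $\theta = \omega_2$, where no strengthening is needed, and all the lifting to larger $\theta$ is carried out purely by the transfer in Proposition~\ref{transfer_prop}(\ref{transfer_clause_1}), which requires nothing about $\SP$. The remaining verifications --- that $|M| < \omega_2$ and $\omega_1 \in M$ for $M \in \mathcal G^{\omega_1}_{\omega_2} H(\theta)$, and that Lemma~\ref{bijection_lemma} then upgrades ``guessing for $\omega_1$'' to ``guessing for all sets of size $\omega_1$ in $M$'' --- are immediate from the definitions, so no further work should be needed.
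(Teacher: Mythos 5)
Your proposal is correct and follows exactly the same route as the paper: reduce via Lemma~\ref{bijection_lemma} to stationarity of $\mc G^{\omega_1}_{\omega_2} H(\theta)$, establish this at $\theta = \omega_2$ using Lemma~\ref{small_guessing_lemma}, and transfer to all regular $\theta \geq \omega_2$ via clause~(\ref{transfer_clause_1}) of Proposition~\ref{transfer_prop}. The paper's own proof is just a terser rendering of the same three-step argument.
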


\begin{proof}
  By Lemma \ref{bijection_lemma}, it is enough to show that, for every regular 
  $\theta \geq \omega_2$, $\mc G^{\omega_1}_{\omega_2} H(\theta)$ is stationary in 
 $\power_{\omega_2} H(\theta)$. By Clause \ref{transfer_clause_1} of Proposition 
 \ref{transfer_prop}, this is equivalent to the assertion that $\mc G^{\omega_1}_{\omega_2} 
 H(\omega_2)$ is stationary in $\power_{\omega_2} H(\omega_2)$. By Lemma 
 \ref{small_guessing_lemma}, this follows from the hypothesis of 
 $\SP(\omega_1, \omega_2,H(\omega_2))$.
\end{proof}

Since every element of $H(\omega_2)$ has cardinality at most $\omega_1$, it follows 
that the principle $\GMP(\omega_1, \omega_2, H(\omega_2))$ is equivalent to the conclusion of the previous 
corollary. In particular, as a special case of Theorem \ref{th:SP} below, we see 
that the existence of guessing models for sets of size $\omega_1$ is sufficient to imply 
the nonexistence of weak Kurepa trees.

\section{Subadditive functions and weak guessing properties} \label{subadditive_section}

\begin{definition}
  Suppose that $\chi$ and $\lambda$ are infinite cardinals and $c:[\lambda]^2 \rightarrow \chi$. 
  We say that $c$ is \emph{subadditive} if, for all $\alpha < \beta < \gamma < \lambda$, the following 
  two triangle inequalities hold:
  \bce[(i)]
    \item $c(\alpha, \gamma) \leq \max\{c(\alpha, \beta), c(\beta, \gamma)\}$;
    \item $c(\alpha, \beta) \leq \max\{c(\alpha, \gamma), c(\beta, \gamma)\}$.
  \ece
  We say that $c$ is \emph{strongly unbounded} if, for every unbounded $A \subseteq \lambda$, 
  $c'' [A]^2$ is unbounded in $\chi$.
\end{definition}

In \cite[Theorem 10.3 and Proposition 6.1]{narrow_systems}, it is shown that 
$\GMP(\omega_1, \omega_2, \geq \omega_2)$ implies that, for every regular $\lambda \geq \omega_2$, 
there are no subadditive, strongly unbounded functions $c:[\lambda]^2 \rightarrow \omega$. We now prove 
a generalization and strengthening of this result by proving that the nonexistence of subadditive, 
strongly unbounded functions follows from relevant instances of $\wAGP(\ldots)$. Together with 
Corollary \ref{square_failure_cor}, this yields clause (2) of Theorem C. In Section 
\ref{mitchell_section}, we will see that the hypotheses of the following theorem hold, for instance, 
after forcing with $\M(\mu, \kappa)$ when $\kappa$ is strongly compact, where $\mu$ is a regular infinite 
cardinal and $\M(\mu, \kappa)$ is the Mitchell forcing that collapses $\kappa$ to be $\mu^{++}$.

\begin{theorem} \label{subadditive_function_theorem}
  Suppose that the following hypotheses hold:
  \bce
    \item $\chi < \chi^+ < \kappa \leq \lambda$ are infinite cardinals, with 
    $\kappa$ regular and $\cf(\lambda) \geq \kappa$;
    \item $\mc Y := \{M \in \power_\kappa H(\lambda^+) \mid \cf(\sup(M \cap \lambda)) > \chi\}$;
    \item $\wAGP_{\mc Y}(\kappa, \kappa, \lambda^+)$ holds.
  \ece
  Then there are no subadditive, strongly unbounded functions $c:[\lambda]^2 \rightarrow \chi$.
\end{theorem}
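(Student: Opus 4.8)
The plan is to argue by contradiction: assume $c:[\lambda]^2 \to \chi$ is subadditive and strongly unbounded, and use it to build a strongly $\kappa$-$\mc Y$-slender (in fact, approximated-style) witness to the failure of $\wAGP_{\mc Y}(\kappa,\kappa,\lambda^+)$. The standard way such arguments go (as in \cite{narrow_systems}) is to associate to $c$ a $(\kappa,\lambda)$-list whose branch would yield a $c$-unbounded set on which $c$ takes only boundedly many values, contradicting strong unboundedness. Here, because we only have the weak almost guessing property, I expect we have to be more careful and phrase things in terms of the guessing-model formulation directly rather than first passing through $\wSP$, since we want to control the cofinality of $\sup(M\cap\lambda)$.

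Concretely, first I would fix, for each $\beta<\lambda$, the function $c_\beta:\beta\to\chi$ given by $c_\beta(\alpha)=c(\alpha,\beta)$, and observe that subadditivity makes the family $\langle c_\beta \mid \beta<\lambda\rangle$ \emph{coherent} in the sense that for $\beta<\gamma$, the functions $c_\beta$ and $c_\gamma\restriction\beta$ differ only on a set on which $c_\gamma\restriction\beta$ is bounded — more precisely, $\{\alpha<\beta \mid c_\beta(\alpha)\neq c_\gamma(\alpha)\}$ is contained in $\{\alpha<\beta \mid c(\alpha,\beta)\le c(\beta,\gamma) \text{ or } c(\alpha,\gamma)\le c(\beta,\gamma)\}$, hence is ``$c$-small''. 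The key point is that for a model $M$ with $\delta:=\sup(M\cap\lambda)$ of cofinality $>\chi$, if we pick $\gamma\in\lambda\setminus M$ above $\delta$ (possible since $\cf(\lambda)\ge\kappa>\chi^+$ and $|M|<\kappa$), then the function $d:=c_\gamma\restriction\delta$, restricted to $M\cap\lambda$, should be $(\kappa,M)$-approximated: given $z\in M\cap\power_\kappa\lambda$, elementarity lets $M$ see some $c_{\beta}$ with $\beta\in M$ above $\sup z$, and the disagreement between $c_\beta$ and $d$ on $z$ lives on a $c$-bounded set, which by the cofinality $>\chi$ of $\delta$ and elementarity can be captured inside $M$. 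Then I would take $S$ in the definition of $\wAGP$ to be the collection of countably/elementarily nice submodels $N\prec H(\lambda^+)$ with $c\in N$, and derive that on a weakly stationary set of $M\in\mc Y$ there is $N\in S$, $N\subseteq M$, $c_\gamma\restriction\delta\in N$ is $N$-guessed, say by $e\in N$ with $e\cap N = (c_\gamma\restriction\delta)\cap N$.

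The payoff step is then to extract from the guessed set $e$ a contradiction with strong unboundedness. Because $e\in N\prec H(\lambda^+)$ and $e$ agrees with $c_\gamma$ on $N$, for each $i<\chi$ the set $A_i:=\{\alpha \mid e(\alpha)=i\}$ is in $N$; by elementarity $N$ can search for the least $i$ such that $A_i$ is unbounded in $\lambda$ (if any such $i$ exists) — but coherence forces $e$ to locally agree with the genuine $c_\beta$'s, so on a tail below $\delta$ we get $c(\alpha,\gamma)$ bounded by some fixed $i_0<\chi$ for unboundedly many $\alpha$, i.e.\ an unbounded $A\subseteq\lambda$ with $c``[A\cup\{\gamma\}]^2$ bounded, contradicting strong unboundedness. (One has to be slightly careful: what we literally get is that $c_\gamma$ is ``decided'' modulo $c$-small error inside $N$, and we leverage $N\models\neg(\exists$ unbounded $A$ with $c``[A]^2$ bounded$)$ — which $N$ inherits by elementarity — to see that the guessed $e$ cannot in fact be a genuine branch, and \emph{that} is the contradiction with $\wAGP$ producing such an $e$.)

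The main obstacle, I expect, is the approximation step: verifying that $c_\gamma\restriction\delta$ really is $(\kappa,M)$-approximated for the relevant $M\in\mc Y$. This is exactly where the hypothesis $\cf(\sup(M\cap\lambda))>\chi$ is used — one needs that any $c$-bounded subset of $M\cap\lambda$ is bounded below $\delta$ hence (by elementarity and $|z|<\kappa$) that $M$ can correctly compute the disagreement set between $c_\gamma$ and the appropriate in-$M$ coherent function $c_\beta$ on any given $z\in M\cap\power_\kappa\lambda$. Getting the quantifiers right here, and confirming that the list/approximated-set constructed is genuinely an input to $\wAGP_{\mc Y}$ (i.e.\ that the relevant $M$ lie in a weakly stationary set, via Proposition on closure under $f:X\to\power_\kappa X$), is the technical heart; the rest is bookkeeping with subadditivity. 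Finally, the ``in particular'' clause follows since a $\square(\lambda)$-sequence yields, in the standard way, a subadditive strongly unbounded $c:[\lambda]^2\to\omega\le\chi$ (after trivially extending the range), contradicting what we just proved when $\cf(\lambda)\ge\kappa$.
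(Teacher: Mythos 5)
You correctly flag the $(\kappa,M)$-approximation step as the technical heart, and that is exactly where your argument has a genuine gap. With $c_\beta(\alpha) := c(\alpha,\beta)$ unmodified, subadditivity only gives that for $\beta < \gamma$ the functions $c_\beta$ and $c_\gamma \restriction \beta$ agree \emph{off} the fiber $\{\alpha<\beta : c(\alpha,\beta) \le c(\beta,\gamma)\}$ (and symmetrically for $c(\alpha,\gamma)$). That disagreement set is ``small in color'' but can be \emph{cofinal} in $\beta$, and the values of $c_\gamma$ on it depend on $\gamma \notin M$; so there is in general no $e\in M$ with $(c_\gamma\restriction\delta)\cap z = e\cap z$, and the approximation claim as you state it does not hold. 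The paper's fix is a truncation you did not introduce: set $c_{\beta,i}(\alpha) := \max(c(\alpha,\beta), i)$. Then if $c(\beta,\gamma)\le i$, subadditivity gives $c_{\gamma,i}\restriction\beta = c_{\beta,i}$ \emph{exactly}, because the truncation wipes out precisely the fiber on which disagreement could occur. Equally important is a change of target: the paper takes $\gamma := \sup(M\cap\lambda)$ itself (not a point of $\lambda\setminus M$ above it) and, \emph{before} the approximation claim, uses $\cf(\gamma)>\chi$ (this is exactly where $M\in\mc Y$ enters) to fix a single $i_0<\chi$ and an unbounded $B\subseteq M\cap\lambda$ with $c(\beta,\gamma)=i_0$ for all $\beta\in B$. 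Then for $z\in M\cap\power_\kappa(\lambda\times\chi)$, picking $\beta\in B$ above $z$ yields $c_{\gamma,i_0}\cap z = c_{\beta,i_0}\cap z$ with $c_{\beta,i_0}\in M$, so approximation holds on the nose.

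Your endgame should also be reorganized: the proof is not phrased as a ``contradiction with $\wAGP$ producing $e$'' but as a direct construction of an unbounded $A$ with $c``[A]^2$ bounded. From $\wAGP_{\mc Y}(\kappa,\kappa,\lambda^+)$ applied with $S$ the set of $N\prec H(\lambda^+)$ of size $<\kappa$ with $\chi\subseteq N$, $c\in N$, and $\cf(\sup(N\cap\lambda))>\chi$, one obtains $N\in S$, $N\subseteq M$, and $e\in N$ with $e\cap N = c_{\gamma,i_0}\cap N$. Using $\cf(\sup(N\cap\lambda))>\chi$ one finds $i_1\in[i_0,\chi)$ with $A := \{\alpha<\lambda : e(\alpha)\le i_1\}$ meeting $N$ cofinally in $\sup(N\cap\lambda)$; since $A\in N$ and $N$ thinks $A$ is unbounded, elementarity gives $A$ unbounded in $\lambda$. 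For $\alpha<\beta$ in $N\cap A$, the identity $e\cap N = c_{\gamma,i_0}\cap N$, the fact $N\subseteq M$, and $i_1\ge i_0$ give $c(\alpha,\gamma),c(\beta,\gamma)\le i_1$, whence $c(\alpha,\beta)\le i_1$ by subadditivity; elementarity of $N$ pushes this to all of $[A]^2$. Finally, the remark about $\square(\lambda)$ is not part of the statement you were asked to prove; it is a separate corollary in the paper.
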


\begin{proof}
  Fix a subadditive function $c:[\lambda]^2 \rightarrow \chi$. We will find an unbounded 
  $A \subseteq \lambda$ such that $c''[A]^2$ is bounded below $\chi$. 
  
  For each $\beta < \lambda$ and each $i < \chi$, let $c_{\beta, i}:\beta \rightarrow \chi$ 
  be defined by setting 
  \[
    c_{\beta, i}(\alpha) := \begin{cases}
      c(\alpha, \beta) & \text{if } c(\alpha, \beta) \geq i \\
      i & \text{if } c(\alpha, \beta) < i.
    \end{cases}
  \]
  Note that the subadditivity of $c$ implies that, for all $\alpha < \beta < \lambda$ and all 
  $i < \chi$, if $c(\alpha, \beta) \leq i$, then $c_{\beta, i} \restriction \alpha = c_{\alpha, i}$.
  We will think of functions such as $c_{\beta, i}$ as subsets of $\lambda \times \chi$ in the 
  natural way.
  
  Let $S$ be the set of $N \prec H(\lambda^+)$ such that $\vert N\vert  < \kappa$, $\chi \subseteq N$,
  $c \in N$, and $\cf(\sup(N \cap \lambda)) > \chi$. Since $\wAGP_{\mc Y}(\kappa, \kappa, H(\lambda^+))$ 
  holds, there are weakly stationarily many $M \in \mc Y$ such that $(M, \lambda \times \chi)$ is almost 
  $\kappa$-guessed by $S$. In particular, we can find such an $M \in \mc Y$ with the following additional 
  properties:
  \begin{itemize}
    \item $c, \lambda \times \omega \in M$;
    \item for all $\beta \in M \cap \lambda$ and all $i < \theta$, we have $c_{\beta, i} \in M$;
    \item for all $z \in M \cap \power_\kappa(\lambda \times \chi)$, we have 
    $\sup\{\alpha < \lambda \mid \exists i < \chi [(\alpha, i) \in z]\} \in M$.
  \end{itemize}
  Let $\gamma := \sup(M \cap \lambda)$. Since $M \in \mc Y$, we have 
  $\cf(\gamma) > \chi$. We can therefore fix an $i_0 < \chi$ and an unbounded $B \subseteq M \cap \lambda$ 
  such that, for all $\beta \in B$, we have $c(\beta, \gamma) = i_0$.
  
  \begin{claim}
    $c_{\gamma, i_0}$ is $(\kappa, M)$-approximated.
  \end{claim}
  
  \begin{proof}
    Fix a set $z \in M \cap \power_\kappa(\lambda \times \chi)$. We must find $e \in M$ such that
    $c_{\gamma, i_0} \cap z = e \cap z$. We have $\sup\{\alpha < \lambda \mid \exists 
    i < \chi [(\alpha, i) \in z]\} \in M$, so we can find $\beta \in B$ such that 
    $z \subseteq \beta \times \chi$. Since $c(\beta, \gamma) = i_0$, we have 
    $c_{\gamma, i_0} \restriction \beta = c_{\beta, i_0}$, and hence $c_{\gamma, i_0} \cap z = 
    c_{\beta, i_0} \cap z$. Moreover, we have $c_{\beta, i_0} \in M$, so $c_{\beta, i_0}$ is as desired.
  \end{proof}
  
  Since $(M, \lambda \times \chi)$ is almost $\kappa$-guessed by $S$, we can find $N \in S$ 
  such that $N \subseteq M$ and $c_{\gamma, i_0}$ is $N$-guessed, 
  i.e., there is $e \in N$ such that $c_{\gamma, i_0} \cap N = e \cap N$. Note that 
  $c_{\gamma, i_0} \cap N$ is a function from $N \cap \lambda$ to $\chi$. By elementarity and 
  the fact that $\chi + 1 \subseteq N$, it follows that $e$ is a function from $\lambda$ to $\chi$.
  Let $\delta := \sup(N \cap \lambda)$. Since $N \in S$, we have $\cf(\delta) > \chi$. 
  We can therefore find $i_1 \in [i_0, \chi)$ such that $A_0 := \{\alpha \in N \cap \lambda \mid e(\alpha) 
  \leq i_1\}$ is unbounded in $\delta$. Let $A := \{\alpha < \lambda \mid e(\alpha) \leq i_1\}$. All 
  of the parameters needed to define $A$ are in $N$, so $A \in N$. Moreover, $N \models$ ``$A \text{ is 
  unbounded in } \lambda$'', so, by elementarity, $A$ is in fact unbounded in $\lambda$. 
  We will therefore be finished if we show that $c(\alpha, \beta) \leq i_1$ for all $\alpha < \beta$ 
  in $A$. By elementarity, it suffices to show that $c(\alpha, \beta) \leq i_1$ for all 
  $\alpha < \beta$ in $N \cap A$.
  
  To this end, fix $\alpha < \beta$ in $N \cap A$. By definition of $A$, we know that $e(\alpha), 
  e(\beta) \leq i_1$. Since $N \subseteq M$, $e \cap N = c_{\gamma, i_0} \cap N$, and 
  $i_1 \geq i_0$, it follows that $\max\{c(\alpha, \gamma), c(\beta, \gamma)\} \leq i_1$. By the 
  subadditivity of $c$, we can conclude that $c(\alpha, \beta) \leq i_1$, finishing the proof.
\end{proof}

\begin{corollary} \label{square_failure_cor}
  Suppose that $\omega_2 \leq \kappa \leq \lambda$ are regular cardinals, 
  \[
  \mc Y := \{M \in 
  \power_\kappa H(\lambda^+) \mid \cf(\sup(M \cap \lambda)) > \omega\},
  \] and 
  $\wAGP_{\mc Y}(\kappa, \kappa, \lambda^+)$ holds. 
  Then $\square(\lambda)$ fails.
\end{corollary}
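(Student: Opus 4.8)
The plan is to derive this directly from Theorem~\ref{subadditive_function_theorem} with $\chi := \omega$, together with the standard fact that $\square(\lambda)$ produces a subadditive, strongly unbounded function $c : [\lambda]^2 \to \omega$.

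First I would check that the hypotheses (1)--(3) of Theorem~\ref{subadditive_function_theorem} hold with $\chi := \omega$ and with $\kappa,\lambda$ as in the corollary. For (1) we need $\chi < \chi^+ < \kappa \le \lambda$ with $\kappa$ regular and $\cf(\lambda) \ge \kappa$: since $\kappa \ge \omega_2$ we have $\chi^+ = \omega_1 < \kappa$, and since $\lambda$ is regular $\cf(\lambda) = \lambda \ge \kappa$. For (2), with $\chi = \omega$ the set $\{M \in \power_\kappa H(\lambda^+) \mid \cf(\sup(M \cap \lambda)) > \chi\}$ is exactly the set $\mc Y$ appearing in the corollary. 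Finally (3) is precisely the hypothesis $\wAGP_{\mc Y}(\kappa, \kappa, \lambda^+)$. Hence Theorem~\ref{subadditive_function_theorem} applies and tells us that there is no subadditive, strongly unbounded function $c : [\lambda]^2 \to \omega$.

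It remains to recall that $\square(\lambda)$ does yield such a function: from a $\square(\lambda)$-sequence one obtains, via walks on ordinals, a coloring $c : [\lambda]^2 \to \omega$ whose subadditivity reflects the coherence of the sequence and whose strong unboundedness amounts to the absence of a thread through the sequence (see, e.g., \cite[Proposition 6.1]{narrow_systems}). Since no such $c$ can exist, $\square(\lambda)$ must fail. The argument is thus almost entirely a matter of quoting: the genuine content lies in Theorem~\ref{subadditive_function_theorem} and in the cited $\square(\lambda)$-to-coloring construction, and the only point that requires a little care is confirming in the first step that $\chi = \omega$ is legitimate, which is exactly where one uses $\kappa \ge \omega_2$ rather than merely $\kappa$ uncountable.
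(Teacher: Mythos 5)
Your proof is correct and follows exactly the route the paper takes: verify the hypotheses of Theorem~\ref{subadditive_function_theorem} with $\chi=\omega$ (which is where $\kappa\ge\omega_2$ is used) and combine with the standard fact that $\square(\lambda)$ yields a subadditive, strongly unbounded coloring $c:[\lambda]^2\to\omega$. The only difference is bibliographic detail in the last step (the paper cites a combination of results rather than a single proposition), which does not affect the argument.
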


\begin{proof}
  By the combination of \cite[Proposition 6.5]{narrow_systems} and \cite[Theorem 3.4]{lh_lucke}, 
  $\square(\lambda)$ implies the existence of a subadditive, strongly unbounded function 
  $c:[\lambda]^2 \rightarrow \omega$. The corollary now follows from Theorem 
  \ref{subadditive_function_theorem}.
\end{proof}

\begin{remark}
  Note that, by monotonicity, the hypothesis of Corollary \ref{square_failure_cor} 
  for a particular regular $\lambda \geq \kappa$ implies the hypothesis for 
  all regular $\lambda'$ in the interval $[\kappa,\lambda]$.
  In Theorem \ref{strongly_compact_thm_2} (cf.\ Remark \ref{wsp_remark}), 
  starting from a model of $\ZFC$ with a strongly compact cardinal, we 
  will obtain a model in which the hypothesis of Corollary \ref{square_failure_cor} 
  holds for $\kappa = \omega_2$, simultaneously for all regular $\lambda \geq 
  \omega_2$ (in fact, we will obtain the stronger $\wAGP_{\mc Y}(\omega_1, 
  \omega_2, \lambda)$ for all regular $\lambda \geq \omega_2$, where $\mc Y$ is as 
  above). Corollary \ref{square_failure_cor} shows that the consistency of 
  significant large cardinals is indeed necessary for obtaining the consistency of 
  such guessing principles. For example, by \cite[Theorem 0.1]{stacking_mice}, if 
  $\mu \geq \omega_3$ is a regular cardinal such that $\gamma^\omega < \mu$ for 
  all $\gamma < \mu$ and both $\square(\mu)$ and $\square(\mu^+)$ fail, then 
  there is a sharp for a proper class model with a proper class of strong 
  cardinals and a proper class of Woodin cardinals. The same conclusion therefore 
  follows from, for instance, $\wAGP_{\mc Y}(\kappa, \kappa, (\kappa^\omega)^{+3})$, 
  where $\kappa \geq \omega_2$ is regular and $\mc Y$ is as above.
\end{remark}

\section{Preservation lemmas} \label{preservation_section}

In this section, we prove a variety of preservation lemmas that will be used in our 
consistency results in the remaining sections of the paper.

\begin{lemma}
  Suppose that $\mu < \kappa$ are regular uncountable cardinals, $\Lambda$ is a 
  $\kappa$-directed poset, $T$ is a $\Lambda$-tree, $\P$ is a $\mu$-c.c.\ forcing notion, 
  and 
  \[
    \Vdash_{\P}\mbox{``}\text{there is a cofinal branch through } T\mbox{''}.
  \] 
  Then there is a cofinal branch through $T$ in $V$.
\end{lemma}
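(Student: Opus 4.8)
The plan is to use the $\mu$-c.c.\ of $\P$ to pin the generic branch down to a small sub-$\Lambda$-tree of $T$ lying in $V$, which will automatically be \emph{very thin} because $\Lambda$ is $\kappa$-directed and $\mu < \kappa$; Lemma~\ref{very_thin_branch_lemma} will then produce the desired cofinal branch.

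First I would fix a $\P$-name $\dot b$ with $\Vdash_{\P} ``\dot b \text{ is a cofinal branch through } T"$, and for each $u \in \Lambda$ set
\[
A_u := \{ s \in T_u \mid \exists p \in \P \ (p \Vdash_{\P} \dot b(u) = s) \}.
\]
Since $\P$ forces $\dot b(u)$ to be an element of $T_u$, a maximal antichain of conditions deciding the value of $\dot b(u)$ witnesses, via the $\mu$-c.c., that $|A_u| < \mu$; and $A_u \neq \emptyset$ since every $p \in \P$ has an extension deciding $\dot b(u)$.

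Next I would verify that $T' := (\langle A_u \mid u \in \Lambda \rangle, {<_T} \restriction \bigcup_{u \in \Lambda} A_u)$ is a subtree of $T$ in the sense of Section~\ref{generalized_tree_section}. Its levels are nonempty and pairwise disjoint, and its ordering is a tree-like transitive partial order, being a restriction of $<_T$; the one point that needs an argument is the coherence condition (v), i.e.\ that $t \restriction u \in A_u$ whenever $u \leq_\Lambda v$ in $\Lambda$ and $t \in A_v$. The case $u = v$ is trivial, so suppose $u <_\Lambda v$ and fix $p$ with $p \Vdash_{\P} \dot b(v) = t$. Since $\dot b$ is forced to be a branch, $p \Vdash_{\P} \dot b(u) <_T t$, and the tree-likeness of $T$ together with the uniqueness in clause (v) of the definition of a $\Lambda$-tree forces $\dot b(u) = t \restriction u$; hence $t \restriction u \in A_u$. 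Uniqueness of the element of $A_u$ below $t$ is inherited from $T$, so $T'$ is indeed a $\Lambda$-tree, and a subtree of $T$.

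Finally, $\width(T') \leq \mu$, and since $\mu < \kappa$ with both regular we have $\width(T')^+ \leq \mu^+ \leq \kappa$; as $\Lambda$ is $\kappa$-directed it is in particular $\width(T')^+$-directed, so $T'$ is very thin. By Lemma~\ref{very_thin_branch_lemma}, $T'$ has a cofinal branch $b \in \prod_{u \in \Lambda} A_u$. Since $A_u \subseteq T_u$ for every $u$ and $<_{T'}$ is the restriction of $<_T$, the function $b$ is a cofinal branch through $T$ in $V$, as required. The argument is largely routine once one observes that the $\mu$-c.c.\ confines the branch to a sub-$\Lambda$-tree of width $< \mu < \kappa$; the only mildly delicate step is the verification that this sub-$\Lambda$-tree is genuinely closed under restrictions, i.e.\ satisfies condition (v).
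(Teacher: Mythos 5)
Your proof is correct and is essentially identical to the paper's: both define the sub-$\Lambda$-tree $T'_u = \{s \in T_u \mid \exists p\, (p \Vdash \dot b(u) = s)\}$, use the $\mu$-c.c.\ to bound $|T'_u| < \mu$, observe that $T'$ is a very thin subtree since $\Lambda$ is $\kappa$-directed, and invoke Lemma~\ref{very_thin_branch_lemma}. Your verification of closure under restrictions (condition (v)) and of nonemptiness of levels just spells out a step the paper states more tersely.
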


\begin{proof}
  Let $\dot{b}$ be a $\P$-name for a cofinal branch through $T$. For each $u \in \Lambda$, 
  let 
  \[
    T'_u := \{s \in T_u \mid \exists p \in \P ~ [p \Vdash_{\P} \mbox{``}\dot{b}(u) = s\mbox{''}]\}.
  \]
  Note that, if $u <_\Lambda v$, $t \in T_v$, $s = t \restriction u$, $p \in \P$, and 
  $p \Vdash_{\P}$``$\dot{b}(v) = t$'', then we also have $p \Vdash_{\P}$``$\dot{b}(u) = s$''. 
  It follows that $T' := (\langle T'_u \mid u \in \Lambda \rangle, <_{T'})$ is a subtree 
  of $T$, where $<_{T'}$ is the restriction of $<_T$ to $\bigcup_{u \in \Lambda} T'_u$. 
  Also, since $\P$ has the $\mu$-c.c., we know that $\vert T'_u\vert  < \mu$ for all $u \in \Lambda$. 
  In particular, since $\mu < \kappa$ and $\Lambda$ is $\kappa$-directed, $T'$ is 
  very thin. By Lemma \ref{very_thin_branch_lemma}, $T'$ has a cofinal branch, which is 
  plainly a cofinal branch through $T$, as well.
\end{proof}

\begin{lemma} \label{closed_preservation_lemma}
Let $\xi$ be a cardinal and $\mu < \kappa$ be regular cardinals such that $2^\mu\ge \xi$ and $2^{<\mu}<\kappa$. Let $\Q$ be a $\mu^+$-closed forcing and $\P$ be $\mu^+$-cc. Assume that $\Lambda$ is a $\kappa$-directed poset in $V$. If $T$ is a $\Lambda$-tree with width at most $\xi$ in $V^{\P}$, then forcing with $\Q$ over $V^{\P}$ does not add a cofinal branch through $T$.
\end{lemma}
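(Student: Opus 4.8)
The plan is to run the standard ``mutually generic filters'' argument for showing that forcings with nice chain conditions cannot add branches to sufficiently narrow trees, adapted to the $\Lambda$-tree setting using Lemma \ref{very_thin_branch_lemma}. Work in $V^{\P}$ and suppose toward a contradiction that $\dot{b}$ is a $\Q$-name forced by some $q_0 \in \Q$ to be a cofinal branch through $T$. Since $2^{<\mu} < \kappa$ and $\Lambda$ is $\kappa$-directed, the idea is to build a ``splitting tree'' of conditions in $\Q$ of height $<\mu$: a family $\langle q_s \mid s \in 2^{<\mu}\rangle$ of conditions below $q_0$, decreasing along branches, such that for each $s$ there is a level $u_s \in \Lambda$ at which $q_{s^\frown 0}$ and $q_{s^\frown 1}$ force incompatible values of $\dot{b}(u_s)$. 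The $\mu^+$-closure of $\Q$ is exactly what lets us keep descending through limit stages of length $<\mu$ and take lower bounds, so the construction goes through as long as at each node we can still find a pair of conditions forcing $\dot b$ to split somewhere. If at some node $s$ this becomes impossible, then below $q_s$ the name $\dot b$ is essentially decided by a single branch already in $V^{\P}$, contradicting the assumption that $\Q$ adds a branch (and a branch already present needs no adding) — more carefully, we should first reduce to the case where $q_0$ forces $\dot b \notin V^{\P}$, so that the splitting construction cannot terminate early.

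Once the full splitting tree of height $\mu$ is constructed, use $\kappa$-directedness of $\Lambda$ to find a single $v^* \in \Lambda$ above all the $u_s$ for $s \in 2^{<\mu}$; there are only $2^{<\mu} < \kappa$ many such $s$, so this is legitimate. Now, for each $f \in 2^\mu$, the conditions $\langle q_{f\restriction \eta} \mid \eta < \mu\rangle$ have a lower bound $q_f$ by $\mu^+$-closure, and $q_f$ decides $\dot b(v^*)$ to be some $t_f \in T_{v^*}$. The key combinatorial point is that distinct $f, f' \in 2^\mu$ give distinct $t_f, t_{f'}$: if $\eta$ is least with $f(\eta)\neq f'(\eta)$, then $q_f \leq q_{(f\restriction\eta)^\frown i}$ and $q_{f'}\leq q_{(f\restriction\eta)^\frown (1-i)}$ force incompatible values of $\dot b(u_{f\restriction\eta})$, and since $u_{f\restriction\eta} \leq_\Lambda v^*$ and $\dot b$ is forced to be a branch, $\dot b(v^*)\restriction u_{f\restriction\eta} = \dot b(u_{f\restriction\eta})$, so $t_f\restriction u_{f\restriction\eta}\neq t_{f'}\restriction u_{f\restriction\eta}$, whence $t_f\neq t_{f'}$. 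Therefore $|T_{v^*}| \geq 2^\mu \geq \xi$, contradicting $\width(T)\leq \xi$.

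The main obstacle, and the step deserving the most care, is the bookkeeping at the start: I must legitimately reduce to the situation in which $q_0$ forces that $\dot b$ is \emph{not} in $V^{\P}$, so that the splitting-tree construction genuinely cannot stall. If no such $q_0$ exists, then densely many conditions force $\dot b$ to equal some ground-model branch, and then $T$ already has a cofinal branch in $V^{\P}$, so $\Q$ ``did not add'' one — one should state the lemma's conclusion as ``every cofinal branch of $T$ in $V^{\P[H]}$ already lies in $V^{\P}$'' and note this case is vacuous. A secondary subtlety is that one must argue that the splitting construction can always continue: below any condition $r$ that forces $\dot b\notin V^{\P}$, there must exist two extensions forcing $\dot b$ to differ at some coordinate of $\Lambda$ (otherwise a fusion-type argument recovers $\dot b$ in $V^{\P}$, since the value at each $u\in\Lambda$ would be decided by $r$). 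Both of these are routine once set up correctly, and the arithmetic hypotheses $2^\mu\geq\xi$, $2^{<\mu}<\kappa$ are used precisely where indicated above.
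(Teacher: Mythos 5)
The proof has a genuine gap. You write ``Work in $V^{\P}$'' and then, repeatedly, invoke ``$\mu^+$-closure of $\Q$'' to take lower bounds of the descending sequences of $\Q$-conditions you are constructing. But the hypothesis is that $\Q$ is $\mu^+$-closed \emph{in $V$}, and there is no reason in general for $\Q$ to remain $\mu^+$-closed in $V^{\P}$: forcing with a $\mu^+$-c.c.\ poset can add new descending $({<}\mu^+)$-sequences of $\Q$-conditions, and these new sequences need not have lower bounds. (The Easton lemma does give that $\Q$ stays $\mu^+$-distributive over $V^{\P}$, but distributivity does not provide the lower bounds your construction needs.) Notice also that your argument never uses the $\mu^+$-c.c.\ of $\P$, which is a telltale sign: that hypothesis is not decorative, it is exactly what compensates for working over $V^{\P}$ rather than $V$.

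The paper's proof is built around this obstacle. It performs the entire splitting-tree construction \emph{in $V$}, indexing the tree $\mathscr{T}$ by $2^{<\mu}$ with genuine $V$-conditions $q_s \in \Q$, so that $\mu^+$-closure of $\Q$ can legitimately be applied at limit stages and to the $\mu$-length paths. The price paid for moving the construction to $V$ is that $\dot T$ and $\dot b$ are $\P$- and $\P\times\Q$-names; one cannot simply ``decide $\dot b$'' with a single $\Q$-condition. The key technical device (Claim~\ref{Cl:ccc-closed}) produces, for each splitting node $s$, a \emph{maximal antichain} $Y_s \subseteq \P$ below $p$ together with $q_{s^\frown 0}, q_{s^\frown 1}$ such that, for every $p' \in Y_s$, the conditions $(p', q_{s^\frown 0})$ and $(p', q_{s^\frown 1})$ force contradictory values of $\dot b$ at some level; the $\mu^+$-c.c.\ of $\P$ is used to keep the lengths of these inductions below $\mu^+$, so that $\mu^+$-closure of $\Q$ can be used to extract $q_{s^\frown 0}, q_{s^\frown 1}$ at the end. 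The endgame you sketch is then essentially right, but it too must route through these antichains: after passing to $V[G]$, one uses that for $f \neq g$ in $(2^\mu)^V$ with $s = f \cap g$, some $p' \in Y_s \cap G$ witnesses that $q_f$ and $q_g$ force different nodes at the common level $x$. Without organizing the construction in $V$ via such antichains, the appeal to $\mu^+$-closure in the middle of the argument is unjustified, and the proof does not go through as written.
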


\begin{proof}

Let $p\in \P$ be a condition which forces that $\dot T$ is a $\Lambda$-tree of width at most $\xi$, and assume further that for some  $q\in \Q$, $(p,q)$ forces that $\dot{b}$ is a cofinal branch through $\dot T$ which is not in $V^{\P}$ (we view $\dot{b}$ as a $\P \x \Q$-name).

First we prove the following auxiliary claim.

\begin{claim}\label{Cl:ccc-closed}
Assume that $q'_0$ and $q'_1$ are conditions in $\Q$ which extend $q$, and let $x'\in \Lambda$. Then there are a maximal antichain $Y$ in $\P$ below $p$, conditions $q_0\le_{\Q} q'_0$, $q_1\le_{\Q} q'_1$, and $x>_\Lambda x'$ such that whenever $p'\in Y$, then $(p',q_0)$ and $(p',q_1)$ force contradictory information about $\dot{b}$ on level $x$; i.e.\ there are $\P$-names $\dot{t}_0$ and $\dot{t}_1$ such that $p'\Vdash_{\P} \dot{t}_0\neq\dot{t}_1\in \dot{T}_{x}$ and 
$$(p',q_0)\Vdash_{\P \times \Q} \dot{b}(x) = \dot{t}_0 \mbox{ and }(p',q_1)\Vdash_{\P \times \Q} \dot{b}(x) = \dot{t}_1.$$
\end{claim}

\begin{proof}
Let $q'_0, q'_1\le_{\Q} q$ and $x'\in \Lambda$ be given. We will construct by induction a maximal antichain $Y=\set{p'_i\in \P}{i<\gamma}$ below $p$, decreasing sequences $\seq{q^i_0}{i<\gamma}$ and $\seq{q^i_1}{i<\gamma}$ of conditions in $\Q$ and a $<_\Lambda$-increasing sequence $\seq{x_i}{i<\gamma}$, for some $\gamma<\mu^+$.

Assume  $\delta<\mu^+$ and we already constructed $\set{p'_i\in \P}{i<\delta}$, $\seq{q^i_0}{i<\delta}$, $\seq{q^i_1}{i<\delta}$ and $\seq{x_i}{i<\delta}$. 
Suppose that $\set{p'_i\in \P}{i<\delta}$ is not a maximal antichain in $\P$ below $p$. We distinguish two cases: (A) $\delta$ is a successor ordinal, and (B) $\delta$ is a limit ordinal.

Case (A).  Suppose $\delta=\delta'+1$. Since $\set{p'_i}{i<\delta}$ is not a maximal antichain in $\P$ below $p$, there is $p^*\in \P$ such that $p^* \le_{\P} p$ and $p^*$ is incompatible with $p'_i$ for all $i<\delta$. Now, consider the condition $(p^*, q^0_{\delta'})$ and $x_{\delta'}$. Since $\dot{b}$ is forced by $(p,q)$ to be a cofinal branch through $\dot{T}$ which is not in $V^{\P}$, there are $x'>_\Lambda x_{\delta'}$, $p'\le_{\P} p^*$,  and $r_0$ and $r_1$ both extending $q^0_{\delta'}$ in $\Q$, such that $(p',r_ 0)\Vdash_{\P \times \Q} \dot{b}(x')= \dot {t}_0$ and $(p',r_1)\Vdash_{\P \times\Q} \dot{b}(x')= \dot{t}_1$, where $\dot{t}_0$ and $\dot{t}_1$ are forced by $p'$ to be distinct elements of $\dot{T}_{x'}$. Moreover, let $(p'',r_2)\le (p',q^1_{\delta'})$ be such that it decides $\dot{b}$ on level $x'$; i.e\ there is a $P$-name $\dot{t}$ such that   $(p'',r_2) \Vdash_{\P \times \Q} \dot{b}(x')= \dot t$. Since $p''\le p'$ and $p'\Vdash_{\P} \dot{t}_0\neq\dot {t}_1$, $p''$ has to force that $\dot t \neq  \dot {t}_0$ or $\dot t\neq \dot{t}_1$. If $p''$ forces that $\dot t \neq  \dot {t}_0$ then define $q^0_\delta=r_0$, if not then define $q^0_\delta=r_1$. Let $p'_\delta=p''$, $q^1_\delta=r_2$ and $x_\delta=x'$.

Case (B). If $\delta$ is a limit ordinal, then we first take $q^*_0$ and $q^*_1$ to be some lower bounds of $\seq{q^i_0}{i<\delta}$ and $\seq{q_1^i}{i<\delta}$ respectively and $x^*$ be an upper bound of $\seq{x_i}{i<\delta}$. Then we proceed as in the successor step, using $q^*_0$, $q^*_1$ and $x^*$ instead of $q^0_{\delta'}$, $q^1_{\delta'}$ and $x_{\delta'}$, respectively.

By the $\kappa^+$-cc of $\P$, the inductive construction must stop at some $\delta < \kappa^+$ in the sense that $\set{p'_i}{i<\delta}$ is a maximal antichain in $\P$ below $p$. Let $Y=\set{p'_i}{i<\delta}$, let $q_ 0$ and $q_1$ be some lower bounds of $\seq{q^0_i}{i<\delta}$ and $\seq{q^1_i}{i<\delta}$, respectively, and finally let $x$ be an upper bound of $\set{x_i}{i<\delta}$.

It is easy to verify that for every $p'\in Y$, $(p',q_0)$ and $(p',q_1)$ force contradictory information about $\dot{b}$ on level $x$. This follows immediately from the construction of $Y$, $q_0$ and $q_1$.
\end{proof}

Iteratively using Claim \ref{Cl:ccc-closed}, one can build a labeled full binary tree $\mathscr{T}$ in $2^{<\mu}$ such that for every $s\in 2^{<\mu}$ there are $q_s\in \Q$, $x_s\in \Lambda$ and $Y_s \subseteq \P$ such that the following hold:

\bce
\item For all $s \in 2^{<\mu}$, $q_s \leq q$ and $Y_s$ is a maximal antichain in $\P$ below $p$.
\item For all $s\in 2^{<\mu}$ and $p'\in Y_s$, $(p',q_{s^\frown 0})$ and $(p',q_{s^\frown 1})$  force contradictory information about $\dot{b}$ on level $x_s$. 
\item For all $f\in 2^\mu$, the conditions $\seq{q_{f\rest\alpha}}{\alpha<\mu}$ are decreasing in $\le_Q$.

\item For all $f\in 2^\mu$, the sets $\seq{x_{f\rest\alpha}}{\alpha<\mu}$ are increasing in $<_\Lambda$.
\ece
Suppose for the moment that $\mathscr{T}$ has been constructed; then we can conclude the proof as follows. Since $2^{<\mu} < \kappa$ and $\Lambda$ is $\kappa$-directed, we can fix an $x \in \Lambda$ such that $x_s \leq_{\Lambda} x$ for all $s \in 2^{<\mu}$. Let $G$ be a $\P$-generic filter over $V$ such that $p\in \P$. Next, still in $V$, let us fix lower bounds $q_f$ of $\seq{q_{f\rest\alpha}}{\alpha<\mu}$ for all $f\in 2^\mu$ (they exist since $\Q$ is $\mu^+$-closed in $V$). In $V[G$],  for each $f$, we find $q'_f\le q_f$ such that $q'_f$ decides $\dot{b}(x)$. We claim that, for each $f\neq g\in (2^\mu)^V$, $q'_f$ and $q'_g$ force contradictory information about $\dot{b}$ on level $x$. To see this, assume that $f\neq g\in (2^\mu)^V$. Let $s=f\cap g$, and without loss of generality let $f$ extend $s^\frown 0$ and $g$ extend $s^\frown 1$. By the properties of the tree $\mathscr{T}$, there is $p'\in Y_s$ such that $p'\in G$ and $(p',q_{s^\frown 0})$ and $(p',q_{s^\frown 1})$ force contradictory information about $\dot{b}$ on level $x_s $. Since $q'_f\le q_{s^\frown 0}$ and $q'_g\le q_{s^\frown 1}$, $(p',q'_f)$ and $(p',q'_g)$ force contradictory information about $\dot{b}$ on level $x_s$, and hence also on level $x>_\Lambda x_s$. However, this is a contradiction since $2^\mu\ge\xi$ and there are only $<\xi$ many nodes on level $x$ in $T$; i.e., $<\xi$ many possible values of $\dot{b}(x)$.

To finish the argument it is enough to construct the tree $\mathscr{T}$. The construction will proceed by induction on the length of $s$; during the construction we will also construct an auxiliary increasing sequence $\seq{x_\alpha}{\alpha<\mu}$ of elements of $\Lambda$.

For $\alpha=0$, let $q_\emptyset=q$ and let $x_0 = x_\emptyset$ be an arbitrary element of $\Lambda$.

Assume now that we have constructed $\mathscr{T}\rest\alpha$. If $\alpha$ is limit, then for $s\in 2^\alpha$, let $q_s$ be a lower bound of $\seq{q_{s\rest\beta}}{\beta<\alpha}$ and  $x_\alpha$  an upper bound of $\set{x_t\in\Lambda}{t\in 2^{<\alpha}}$.  Note that $x_\alpha$ exists since we assume that $\Lambda$ is $\kappa$-directed and $2^{<\mu}<\kappa$.

If $\alpha$ is equal to $\beta+1$, and $s\in 2^\beta$, then the existence of $q_{s^\frown 0}$, $q_{s^\frown 1}\in \Q$, $x_s\in \Lambda$ and $Y_s$ follows from Claim \ref{Cl:ccc-closed} starting with $q'_0=q'_1=q_s$ and $x'=x_\alpha$, and defining $Y_s=Y$,  $q_{s^\frown 0}=q_0$, $q_{s^\frown 1}=q_1$ and $x_s=x$.
The properties  (1)--(4) of the labeled tree $\mathscr{T}$ now follow immediately from the construction. 
\end{proof}

\begin{corollary}\label{Cor:ccc-closed}
Let $\xi$ be a cardinal and $\mu < \kappa\le\lambda$ be regular cardinals such that $2^\mu\ge \xi$ and $2^{<\mu}<\kappa$. Let $\Q$ be $\mu^+$-closed forcing and $\P$ be $\mu^+$-cc. If $T$ is a $\power_\kappa\lambda$-tree with width at most $\xi$ in $V^{\P}$, then forcing with $\Q$ over $V^{\P}$ does not add a cofinal branch through $T$.
\end{corollary}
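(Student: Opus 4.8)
The plan is to obtain this as an immediate instantiation of Lemma~\ref{closed_preservation_lemma} with the $\kappa$-directed poset taken to be $\Lambda = (\power_\kappa \lambda, \sub)$. The one point that genuinely requires verification is that this partial order is $\kappa$-directed in $V$. For this, fix any family $\{x_i \mid i < \gamma\} \sub \power_\kappa \lambda$ with $\gamma < \kappa$. Since $\kappa$ is regular and $|x_i| < \kappa$ for each $i < \gamma$, the union $\bigcup_{i < \gamma} x_i$ is a subset of $\lambda$ of size ${<}\kappa$, hence lies in $\power_\kappa \lambda$; it is clearly a $\sub$-upper bound for all the $x_i$. Thus $\power_\kappa \lambda$ is $\kappa$-directed, and the assumption $\kappa \le \lambda$ is present only to guarantee that $\power_\kappa \lambda$ is a well-behaved (nontrivial, cofinally rich) index set; the regularity of $\lambda$ plays no role in the deduction.

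With $\kappa$-directedness in hand, I would simply check that the remaining hypotheses of Lemma~\ref{closed_preservation_lemma} are met verbatim: $\mu < \kappa$ are regular cardinals with $2^\mu \ge \xi$ and $2^{<\mu} < \kappa$; $\Q$ is $\mu^+$-closed and $\P$ is $\mu^+$-c.c.; and, in $V^{\P}$, $T$ is a $\power_\kappa \lambda$-tree of width at most $\xi$. Since a $\power_\kappa \lambda$-tree is precisely a $\Lambda$-tree for $\Lambda = \power_\kappa \lambda$ in the sense of Section~\ref{generalized_tree_section}, Lemma~\ref{closed_preservation_lemma} applies directly and yields that forcing with $\Q$ over $V^{\P}$ does not add a cofinal branch through $T$, which is exactly the desired conclusion.

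I do not expect any real obstacle here: the corollary is purely the specialization of the general preservation lemma to the case $\Lambda = \power_\kappa \lambda$, and the only nonformal ingredient — the $\kappa$-directedness of $(\power_\kappa \lambda, \sub)$ — is a standard consequence of the regularity of $\kappa$. Accordingly, the "proof" will consist of little more than invoking Lemma~\ref{closed_preservation_lemma} after this one-line observation.
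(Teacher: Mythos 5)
There is a genuine gap here, and it is precisely the one nontrivial point that the corollary is designed to handle. Lemma~\ref{closed_preservation_lemma} requires $\Lambda$ to be a $\kappa$-directed poset \emph{in $V$}, whereas a ``$\power_\kappa\lambda$-tree in $V^{\P}$'' is a tree indexed by $(\power_\kappa\lambda)^{V^{\P}}$, which need not coincide with $(\power_\kappa\lambda)^V$ --- forcing with $\P$ can add new subsets of $\lambda$ of size $<\kappa$. So setting $\Lambda = (\power_\kappa\lambda,\subseteq)$ and invoking the lemma ``directly'' does not actually apply to the given $T$: the tree $T$ is not literally a $\Lambda$-tree for a $\Lambda$ that lives in $V$. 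Your verification of $\kappa$-directedness in $V$ is fine, but irrelevant as stated, since the index set of $T$ is computed in $V^{\P}$.

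The missing step, which is exactly what the paper's proof supplies, is a reduction: because $\P$ is $\mu^+$-c.c.\ (and $\mu^+ \leq \kappa$), $(\power_\kappa\lambda)^V$ remains $\subseteq$-cofinal in $(\power_\kappa\lambda)^{V^{\P}}$ (a standard covering argument). One then passes from $T$ to the restricted tree $T' := \langle T_x \mid x \in (\power_\kappa\lambda)^V \rangle$, which \emph{is} a $\Lambda$-tree for the ground-model poset $\Lambda = (\power_\kappa\lambda)^V$, still of width at most $\xi$; and a new cofinal branch of $T$ in $V^{\P\times\Q}$ restricts, by cofinality, to a new cofinal branch of $T'$. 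Only at that point does Lemma~\ref{closed_preservation_lemma} apply, yielding the contradiction. Without this reduction the proposal does not go through.
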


\begin{proof}
Note that, since $\P$ is $\mu^+$-cc, $(\power_\kappa\lambda)^V$ is $\subseteq$-cofinal in $(\power_\kappa\lambda)^{V^{\P}}$. Hence the proof follows immediately from the previous theorem. If $T$ is a $(\power_\kappa\lambda)^{V^{\P}}$-tree in $V^{\P}$ with a new cofinal branch in $V^{\P \times\Q}$, then $T'=\set{T_x}{x\in (\power_\kappa\lambda)^V}$ also has a new cofinal branch in $V^{\P \times \Q}$, which is a contradiction to the previous theorem.
\end{proof}

We next recall the \emph{covering} and \emph{approximation} properties, introduced by 
Hamkins (cf.\ \cite{hamkins_approximation}).

\begin{definition}
  Suppose that $V \subseteq W$ are transitive models of $\ZFC$ and $\mu$ is a regular uncountable cardinal.
  \begin{enumerate}
    \item $(V,W)$ satisfies the \emph{$\mu$-covering property} if, for every $x \in W$ such that 
    $x \subseteq V$ and $\vert x\vert ^W < \mu$, there is $y \in V$ such that $\vert y\vert ^V < \mu$ and $x \subseteq y$.
    \item $(V,W)$ satisfies the \emph{$\mu$-approximation property} if, for all $x \in W$ such that 
    $x \subseteq V$ and $x \cap z \in V$ for all $z \in V$ with $\vert z\vert  < \mu$, we in fact have $x \in V$.
  \end{enumerate}
  A poset $\P$ has the \emph{$\mu$-covering property} (resp.\ \emph{$\mu$-approximation 
  property}) if, for every $V$-generic filter $G \subseteq \P$, the pair $(V,V[G])$ has the 
  $\mu$-covering property (resp.\ $\mu$-approximation property).
\end{definition}

\begin{lemma} \label{approximation_branch_lemma}
  Suppose that $V \subseteq W$ are transitive models of $\ZFC$, $\kappa$ is a regular uncountable cardinal, 
  and $(V,W)$ has the $\kappa$-approximation property. Suppose also that, in $V$, $\Lambda$ is a 
  $\kappa$-directed partial order and $T$ is a $\Lambda$-tree. Then every cofinal branch through $T$ 
  in $W$ is already in $V$.
\end{lemma}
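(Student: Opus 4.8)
The plan is to argue by contradiction, extracting from a putative new cofinal branch $b \in W$ a cofinal branch $b' \in V$ obtained by ``$\kappa$-approximation'' of $b$, and then showing $b' = b$, so that in fact $b \in V$. First I would fix in $V$ the $\kappa$-directed poset $\Lambda$ and the $\Lambda$-tree $T$, and suppose $b \in W$ is a cofinal branch through $T$. The key observation is that $b$, viewed as a subset of the underlying set $\bigcup_{u \in \Lambda} T_u$, is $\kappa$-approximated in $V$: given any $z \in V$ with $|z| < \kappa$, I would use $\kappa$-directedness of $\Lambda$ in $V$ to find $v \in \Lambda$ above $\lev_T(t)$ for every $t \in z \cap T$ (there are fewer than $\kappa$ many relevant levels), and then $b \cap z$ is determined inside $V$ by the single value $b(v) \in T_v$, since for each such $t$ we have $t \in b$ iff $t <_T b(v)$ or $t = b(v)\restriction \lev_T(t)$ — more precisely, using the tree-like property and clause (v) of the definition of $\Lambda$-tree, $b \cap z = \{t \in z \cap T \mid t = b(v) \restriction \lev_T(t)\}$, and since there are only $|T_v| < \mathrm{width}(T)$ (in particular set-many) possible values for $b(v)$, and for each candidate value the corresponding set lies in $V$, we get $b \cap z \in V$. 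Here I should be slightly careful: a priori I do not know $b(v) \in V$, but I do know $b(v)$ is \emph{one of} the finitely-or-set-many elements of $T_v \in V$, and the map sending a candidate $s \in T_v$ to $\{t \in z \cap T \mid t = s \restriction \lev_T(t)\}$ is in $V$; since $b \cap z$ is the image of the actual value $b(v)$ under a function lying in $V$ with domain a set in $V$, its range is a set in $V$, and $b \cap z$ — being an element of $W$ that happens to lie in that $V$-set — is itself in $V$. Thus by the $\kappa$-approximation property of $(V,W)$, $b \in V$, which is exactly the conclusion.

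Writing this up, I would phrase the core lemma as: for every $z \in V$ with $|z|^V < \kappa$, $b \cap z \in V$. To prove it, enumerate $z \cap T$ (a set of size $<\kappa$ in $V$), let $L := \{\lev_T(t) \mid t \in z \cap T\} \in V$, fix $v \in \Lambda$ with $u <_\Lambda v$ for all $u \in L$ (possible by $\kappa$-directedness in $V$), and in $V$ define $g : T_v \to \power(z)$ by $g(s) := \{t \in z \cap T \mid \lev_T(t) \in L,\ s \restriction \lev_T(t) = t\}$. Then I claim $b \cap z = g(b(v))$: if $t \in b \cap z$ then $t, b(v) \in b$ and $\lev_T(t) \leq_\Lambda v$, so by clause (v) (uniqueness of restrictions along a branch), $b(v) \restriction \lev_T(t) = t$, giving $t \in g(b(v))$; conversely if $t \in g(b(v))$ then $t = b(v)\restriction \lev_T(t) \leq_T b(v)$ and $b(v) \in b$, so $t \in b$ as $b$ is closed downward under $<_T$ along itself — here one uses that $b$ is a branch, so $b(\lev_T(t)) <_T b(v)$ or equal, and by uniqueness $b(\lev_T(t)) = t$. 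Since $g \in V$ and $\rng(g) \in V$ and $b \cap z \in \rng(g)$, we conclude $b \cap z \in V$.

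The main obstacle — really the only subtle point — is the one flagged above: we must guess $b \cap z$ inside $V$ \emph{without} knowing the value $b(v)$ in $V$. The resolution is that we don't need the value; we only need that it ranges over a $V$-set (namely $T_v$) and that the dependence of $b \cap z$ on it is given by a function in $V$. A secondary small point is ensuring that ``closed downward along a branch'' is used correctly: a cofinal branch $b \in \prod_{u \in \Lambda} T_u$ with $b(u) <_T b(v)$ for $u <_\Lambda v$ need not, a priori, satisfy $b(u) = b(v)\restriction u$ — but it does, because clause (v) guarantees $b(v)\restriction u$ is the \emph{unique} element of $T_u$ below $b(v)$, and $b(u) <_T b(v)$ with $b(u) \in T_u$ forces $b(u) = b(v)\restriction u$. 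Once this identity is in hand, the rest is bookkeeping with the $\kappa$-approximation property. No appeal to covering is needed, and $\kappa$-directedness is used exactly once, to bound fewer-than-$\kappa$ many levels by a single $v \in \Lambda$.
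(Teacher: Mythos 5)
Your proof is correct and takes essentially the same approach as the paper: bound the fewer than $\kappa$ many relevant levels of $z$ by a single $v \in \Lambda$ using $\kappa$-directedness, and then observe that $b \cap z$ is computable in $V$ from the single value $b(v) \in T_v$. The ``function trick'' you introduce to sidestep the worry that $b(v)$ might not lie in $V$ is unnecessary, though: since $V$ is a \emph{transitive} model and $T_v \in V$, every element of $T_v$ --- in particular $b(v)$ --- already lies in $V$, which is what the paper's proof uses implicitly when it says all parameters in the expression for $b \cap z$ are in $V$.
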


\begin{proof}
  Suppose that $b \in W$ is a cofinal branch through $T$. We will show that $b \in V$. Towards an 
  application of the $\kappa$-approximation property, fix a set $z \in V$ with $\vert z\vert  < \kappa$. 
  We want to show that $b \cap z \in V$, so we may as well assume that 
  $z \subseteq \bigcup \{\{u\} \times T_u \mid u \in \Lambda\}$, since $b$ is also a subset of this set.
  Let 
  \[
    W := \{u \in \Lambda \mid \exists t \in T_u [(u,t) \in z]\}.
  \]
  Then $\vert W\vert  < \kappa$ and $\Lambda$ is $\kappa$-directed, we can find $v \in \Lambda$ such that 
  $u <_{\Lambda} v$ for all $u \in W$. Let $s := b(v)$. Then $b \cap z = \{(u,t) \in z \mid t <_T s\}$. 
  All of the parameters in the right-hand side of this equation are in $V$, so $b \cap z \in V$. 
  Since $z$ was arbitrary, the $\kappa$-approximation property implies that $b \in V$.
\end{proof}

\begin{lemma} \label{suslin_preservation_lemma}
  Suppose that $\kappa$ is a regular uncountable cardinal, 
  $\Lambda$ is a $\kappa$-directed partial order, and $T$ is a $\Lambda$-tree. Suppose 
  also that $\bb{P}$ is a $\kappa$-c.c.\ forcing that adds a 
  new cofinal branch to $T$. Then there is a $\kappa$-Suslin tree $S$ such that 
  $\bb{P}$ adds a cofinal branch to $S$.
\end{lemma}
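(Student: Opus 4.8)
The plan is to work in $W$, fix a condition $p_0 \in \P$ and a $\P$-name $\dot b$ with $p_0 \Vdash_{\P}$ ``$\dot b$ is a cofinal branch through $T$ and $\dot b \notin W$'', and then extract a $\kappa$-Suslin tree by restricting $T$ to a carefully chosen $<_{\Lambda}$-increasing $\kappa$-sequence in $\Lambda$. A preliminary observation: since $(V,W)$ has the $\kappa$-covering property and $\Lambda$ is $\kappa$-directed in $V$, every subset of $\Lambda$ of size $<\kappa$ lying in $W$ is covered by one in $V$ and hence has an upper bound in $\Lambda$, so $\Lambda$ remains $\kappa$-directed in $W$; this is what will carry the recursion below through its limit stages. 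The first real step is to prove a \emph{splitting property} for $\dot b$: for every $p \leq_{\P} p_0$ and every $u \in \Lambda$ there are $v >_{\Lambda} u$, conditions $q_0, q_1 \leq_{\P} p$, and distinct $t_0, t_1 \in T_v$ with $q_i \Vdash_{\P} \dot b(v) = t_i$ for $i<2$. Indeed, if this failed for some $p$ and $u$, then $p$ would decide $\dot b(v)$ for every $v >_{\Lambda} u$, hence, using directedness of $\Lambda$, for every $v \in \Lambda$, and then $p$ would force $\dot b$ to equal a function lying in $W$, contradicting $p \leq_{\P} p_0$.

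Next, pass to the subtree $T' = (\langle T'_u \mid u \in \Lambda \rangle, <_{T'})$ of $T$ defined by $T'_u := \{ t \in T_u \mid \exists p \leq_{\P} p_0\ (p \Vdash_{\P} \dot b(u) = t)\}$; this is a genuine sub-$\Lambda$-tree (closed under restriction, with nonempty levels), and $|T'_u| < \kappa$ for every $u$, since conditions witnessing distinct elements of $T'_u$ force contradictory values of $\dot b(u)$ and hence form an antichain in $\P$, so $\kappa$-c.c.\ applies. Now recursively construct a strictly $<_{\Lambda}$-increasing sequence $\langle u_\alpha \mid \alpha < \kappa\rangle$: at limit stages take any upper bound of $\{u_\beta \mid \beta < \alpha\}$ (available by $\kappa$-directedness in $W$); at a successor stage, apply the splitting property to a witness of each of the $<\kappa$-many elements of $T'_{u_\alpha}$ and amalgamate the resulting targets (again by $\kappa$-directedness) into a single $u_{\alpha+1} >_{\Lambda} u_\alpha$, arranging that every $t \in T'_{u_\alpha}$ has at least two extensions in $T'_{u_{\alpha+1}}$. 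Finally let $S$ be the classical tree whose $\alpha^{\mathrm{th}}$ level is $T'_{u_\alpha}$, with $s <_S t$ iff $\lev_S(s) < \lev_S(t)$ and $t \restriction u_{\lev_S(s)} = s$.

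It remains to verify that $S$ is $\kappa$-Suslin and that $\P$ adds a cofinal branch to it. By construction $S$ has height $\kappa$ and all levels of size $<\kappa$. If $B \subseteq S$ were an antichain of size $\kappa$, then choosing a witness $p_x$ for each $x \in B$ and using that distinct $x, y \in B$ are $<_S$-incomparable, one checks that $p_x$ and $p_y$ force contradictory values of $\dot b$ at the lower of their two $S$-levels, so $\{p_x \mid x \in B\}$ is an antichain in $\P$ of size $\kappa$, contradicting $\kappa$-c.c. If $c$ were a cofinal branch of $S$ lying in $W$, then for each $\alpha$ the node $c(\alpha)$ has (by the branching built into the construction) an extension $s_\alpha \in T'_{u_{\alpha+1}}$ with $s_\alpha \neq c(\alpha+1)$, and one checks directly that $\{s_\alpha \mid \alpha < \kappa\}$ is an antichain of size $\kappa$ in $S$ — impossible by the previous sentence. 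Hence $S$ is a $\kappa$-Suslin tree. Lastly, if $G \subseteq \P$ is generic with $p_0 \in G$, then $\langle \dot b^G(u_\alpha) \mid \alpha < \kappa\rangle$ is a cofinal branch through $S$ in $W[G]$, so $\P$ adds a cofinal branch to $S$.

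The point requiring the most care is the successor stage of the construction and its interaction with the $\kappa$-c.c.: one must check that finitely much splitting above each of the $<\kappa$-many nodes of $T'_{u_\alpha}$ can be merged into a single $u_{\alpha+1}$ while still yielding genuinely \emph{distinct} elements of $T'_{u_{\alpha+1}}$ above each node, because it is exactly this branching, together with the $\kappa$-c.c.-driven bound on antichains of $S$, that rules out a cofinal branch of $S$ in $W$ (and hence upgrades ``no $\kappa$-antichain'' to ``$\kappa$-Suslin''). The $\kappa$-covering hypothesis enters only to keep $\Lambda$ $\kappa$-directed in $W$, which is precisely what makes the limit stages and the successor-stage amalgamations possible; no assumption on $\width(T)$ is needed, since the $\kappa$-c.c.\ alone makes the levels of $T'$ small.
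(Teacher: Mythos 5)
Your proof is correct and follows essentially the same strategy as the paper's: restrict $T$ to the sub-$\Lambda$-tree of possible values of the new branch name (small levels by $\kappa$-c.c.), recursively build a $<_\Lambda$-increasing $\kappa$-sequence forcing splitting at each step (using $\kappa$-covering plus $\kappa$-directedness in $V$ to keep $\Lambda$ $\kappa$-directed in $W$), take $S$ to be the resulting $\kappa$-tree, rule out $\kappa$-antichains via $\kappa$-c.c., and observe that $\dot b$ yields a new branch of $S$. The only cosmetic differences are that you isolate the splitting property as a preliminary lemma and spell out the ``splitting plus no $\kappa$-antichain implies no $\kappa$-chain'' step, both of which the paper treats more tersely; the mathematical content is the same.
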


\begin{proof}
  Let $\dot{b}$ be a $\bb{P}$-name for a new cofinal branch through $T$.
  For all $u \in \Lambda$, let $B_u := \{t \in T_u \mid \exists p \in \P ~ (p \Vdash_{\P}
  \mbox{``}\dot{b}(u) = t\mbox{''})\}$. Since $\P$ has the $\kappa$-c.c., each $B_u$ has cardinality 
  less than $\kappa$.
  
  We now recursively define a $<_\Lambda$-increasing sequence $\langle u_\eta \mid \eta 
  < \kappa \rangle$ as follows. First, since $\Vdash_{\P}$``$\dot{b} \notin V$'', we can 
  find $u_0 \in \Lambda$ such that $\vert B_{u_0}\vert  > 1$. Next, suppose that $\eta < \kappa$ and 
  we have defined $u_\eta$. For each $t \in B_{u_\eta}$, again using the fact that 
  $\Vdash_{\P}$``$\dot{b} \notin V$'', we can find $u_{\eta,t} \in \Lambda$ such 
  that $u_\eta <_\Lambda u_{\eta,t}$ and $\vert \{t' \in B_{u_{\eta,t}} \mid t' \restriction 
  u_\eta = t\}\vert  > 1$. Since $\vert B_{u_\eta}\vert  < \kappa$ 
  and $\Lambda$ is $\kappa$-directed, it follows that 
  we can find $u_{\eta+1} \in \Lambda$ such that $u_{\eta,t} \leq_\Lambda u_{\eta+1}$ 
  for all $t \in B_{u_\eta}$. Finally, if $\xi < \kappa$ is a limit ordinal and 
  we have defined $\langle u_\eta \mid \eta < \xi \rangle$, again use the 
  $\kappa$-directedness of $\Lambda$ to find $u_\xi \in 
  \Lambda$ such that $u_\eta \leq_\Lambda u_\xi$ for all $\eta < \xi$.
  
  Now define a tree $S$ by letting the underlying set of $S$ be $\bigcup 
  \{B_{u_\eta} \mid \eta < \kappa\}$ and letting the ordering $<_S$ be the restriction of 
  $<_T$ to $S$. It is immediate that $S$ is a tree of height $\kappa$ and, for 
  all $\eta < \kappa$, the $\eta^{\mathrm{th}}$ level of $S$ is precisely $B_{u_\eta}$. 
  Since $\vert B_{u_\eta}\vert  < \kappa$, it follows that $S$ is a $\kappa$-tree. Moreover, 
  it follows from our construction that, for all $\eta < \xi < \kappa$ and all 
  $t \in B_{u_\eta}$, there are distinct $t_0, t_1 \in B_{u_\xi}$ such that 
  $t_0 \restriction u_\eta = t_1 \restriction u_\eta = t$. Therefore, $S$ is splitting, 
  so to show that $S$ is $\kappa$-Suslin, it suffices to show that it has no antichain 
  of cardinality $\kappa$. To this end, suppose that $\langle t_\alpha \mid \alpha < \kappa 
  \rangle$ is a sequence of elements of $S$. For each $\alpha < \kappa$, fix 
  $\eta_\alpha < \kappa$ such that $t_\alpha \in B_{u_{\eta_\alpha}}$.
  For each $\alpha < \kappa$, fix $p_\alpha \in \P$ such 
  that $p_\alpha \Vdash_{\P}$``$\dot{b}(u_{\eta_\alpha}) = t_\alpha$''. Since 
  $\P$ has the $\kappa$-c.c., we can find $\alpha < \beta < \kappa$ such that $p_\alpha$ 
  and $p_\beta$ are compatible in $\P$. Let $q \in \P$ be a common extension of $p_\alpha$ 
  and $p_\beta$. Then $q\Vdash_{\P}$``$(\dot{b}(u_{\eta_\alpha}) = t_\alpha) \wedge 
  (\dot{b}(u_{\eta_\beta}) = t_\beta)$''. In particular, it follows that 
  $t_\alpha$ and $t_\beta$ are comparable in $S$, so $\langle t_\alpha \mid \alpha < 
  \kappa \rangle$ is not an antichain in $S$.
  
  Finally, the interpretation of $\dot{b}$ in any forcing extension by $\P$ defines a 
  cofinal branch through $S$, namely $\{\dot{b}(u_\eta) \mid \eta < \kappa\}$. 
  Therefore, $\P$ necessarily adds a cofinal branch to $S$.
\end{proof}

\section{Mitchell forcing} \label{mitchell_section}

A variant of Mitchell forcing will be one of our primary tools for proving consistency results. We begin this 
section by briefly reviewing its definition and some of its important properties. Throughout this 
section, let $\mu$ be an infinite cardinal such that $\mu^{<\mu} = \mu$, and let $\delta$ be an ordinal 
with $\cf(\delta) > \mu$. In practice, $\delta$ will typically be (at least) a Mahlo cardinal, but 
will sometimes need to consider $\delta$ of the form $j(\kappa)$, where $j:V \rightarrow M$ is an 
elementary embedding with critical point $\kappa$, in which case $\delta$ is inaccessible in $M$ but 
may not even be a cardinal in $V$.

For this section, let $\P$ denote the forcing $\Add(\mu, \delta)$ for adding $\delta$-many Cohen subsets 
to $\mu$. More precisely, $\P$ consists of all partial functions of size ${<}\mu$ from $\delta$ to 
${^{<\mu}}2$, where, for $q,p \in \P$, we have $q \leq_{\P} p$ if and only if $\dom{q} \supseteq 
\dom{p}$ and, for all $\alpha \in \dom{p}$, $q(\alpha)$ end-extends $p(\alpha)$. 
For $\beta < \delta$, let $\P_\beta$ denote the suborder $\Add(\mu, \beta)$. We can now 
define the variant of the Mitchell forcing that we will use, $\M = \M(\mu, \delta)$, as follows. First, let $A$ be some unbounded subset of $\delta$ with $\min(A) > \mu$. 
Recall that $\acc(A)$, the set of \emph{accumulation points} 
of $A$, is defined to be $\{\alpha \in A \mid \sup(A \cap \alpha) = \alpha\}$, and 
$\nacc(A) := A \setminus \acc(A)$. Also, $\acc^+(A):= \{\alpha < \sup(A) \mid \sup(A \cap \alpha) 
= \alpha\}$. Though our definition of $\M$ will depend on our choice of $A$, we suppress mention of 
$A$ in the notation; if we need to make the choice of $A$ explicit, we will speak of ``the Mitchell 
forcing $\M(\mu, \delta)$ defined using the set $A$". 
Unless otherwise specified, we will always let $A$ be the set of
all inaccessible cardinals in the interval $(\mu, \delta)$, but the definition of $\M$ and the properties 
listed below work for any choice of $A$.  Conditions in $\M$ are all pairs 
$(p^0, p^1)$ such that 
\begin{itemize}
  \item $p^0 \in \bb{P}$;
  \item $p^1$ is a function and $\dom{p^1} \in [\nacc(A)]^{\leq \mu}$;
  \item for all $\alpha \in \dom{p^1}$, $p^0 \restriction \alpha \Vdash_{\P_\alpha}$ 
  ``$p^1(\alpha) \in \Add(\mu^+,1)^{V^{\P_\alpha}}$''.
\end{itemize}
If $(p^0,p^1), (q^0,q^1) \in \M$, then $(q^0,q^1) \leq_\M (p^0,p^1)$ if and only if
\begin{itemize}
  \item $q^0 \leq_{\P} p^0$;
  \item $\dom{q^1} \supseteq \dom{p^1}$;
  \item for all $\alpha \in \dom{p^1}$, $q^0 \restriction \alpha \Vdash_{\P_\alpha}$ 
  ``$q^1(\alpha) \leq p^1(\alpha)$''. 
\end{itemize} 
For $\beta < \kappa$, we let $\M_\beta$ denote the suborder of $\M$ consisting of all conditions 
$(p^0,p^1)$ such that the domains of both $p^0$ and $p^1$ are contained in $\beta$. We will sometimes 
denote $\M$ by $\M_\delta$. 

\begin{remark} \label{mitchell_remark}
The following are some of the key properties of $\M$ (cf.\ 
\cite{abraham}, \cite{mitchell_approximation} for further details and proofs):
\bce
  \item $\M$ is $\mu$-closed and, if $\delta$ is inaccessible, it is $\delta$-Knaster.
  \item $\M$ has the $\mu^+$-covering and $\mu^+$-approximation properties. Together with the previous 
  item, this implies that, if $\delta$ is inaccessible, then forcing with $\M$ preserves all cardinals less 
  than or equal to $\mu^+$ and all cardinals greater than or equal to $\kappa$.
  \item If $\delta$ is inaccessible, then $\Vdash_{\M} $``$2^\mu = \delta = \mu^{++}$''.
  \item There is a projection onto $\M$ from a forcing of the form $\Add(\mu, \delta) \times 
  \Q$, where $\Q$ is $\mu^+$-closed.
  \item \label{quotient_approx_prop} 
  For all inaccessible cardinals $\alpha \in \acc^+(A)$,
  there is a projection from $\M$ to $\M_\alpha$ and, in $V^{\M_\alpha}$, the quotient 
  forcing $\M/\M_\alpha$ has the $\mu^+$-approximation property.
  \item \label{quotient_product_prop} 
  For all inaccessible cardinals $\alpha \in \acc^+(A)$, let $\alpha^\dagger$ denote 
  $\min(A \setminus \alpha+1)$. Then, in $V^{\M_\alpha}$, the quotient 
  forcing $\M/\M_\alpha$ is of the form $\Add(\mu, \alpha^\dagger - \alpha) \ast \dot{\M}^\alpha$, where, 
  in $V^{\M \ast \Add(\mu, \alpha^\dagger - \alpha)}$, there is a projection onto $\M^\alpha$ from a forcing 
  of the form $\Add(\mu, \delta - \alpha^\dagger) \times \Q_\alpha$, where $\Q_\alpha$ is $\mu^+$-closed.
\ece
\end{remark}

In \cite{weiss}, Wei\ss\ presents a proof of the fact that, if $\kappa$ is supercompact 
and $\M = \M(\omega, \kappa)$, then, in $V^{\M}$, $\ISP_{\omega_2}$ holds. 
In \cite{holy_lucke_njegomir}, Holy, L\"{u}cke, and
Njegomir point out a mistake in Wei\ss's proof but give an alternate proof of the theorem.
In \cite{viale_weiss}, Viale and Wei\ss\ prove the following theorem.\footnote{The theorem 
is not stated in this form in \cite{viale_weiss}, but it follows immediately from the two 
cited results.}

\begin{theorem}[Viale--Wei\ss\ {\cite[Proposition 6.8 and Theorem 6.9]{viale_weiss}}]
  \label{consistency_strength_thm}
  Suppose that $V \subseteq W$ are transitive models of $\ZFC$ such that
  \begin{itemize}
    \item $\kappa$ is an inaccessible cardinal in $V$;
    \item $(V,W)$ satisfies the $\kappa$-covering and $\kappa$-approximation properties;
    \item for every $\gamma < \kappa$ and $S \subseteq (S^\gamma_\omega)^V$ in $V$, 
    if $S$ is stationary in $V$, then $S$ remains stationary in $W$;
    \item in $W$, $\ITP(\kappa, \geq \kappa)$ holds. 
  \end{itemize}
  Then $\kappa$ is supercompact in $V$.
\end{theorem}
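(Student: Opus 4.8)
The plan is to verify Magidor's combinatorial characterization of supercompactness \cite{magidor_combinatorial_characterization} in $V$: it suffices to show that, in $V$, for every cardinal $\lambda \geq \kappa$ every $(\kappa,\lambda)$-list has an ineffable branch. As it is enough to treat cofinally many $\lambda$, we may assume $\lambda^{<\kappa} = \lambda$ holds in $V$, and we fix a $(\kappa,\lambda)$-list $D = \langle d_x \mid x \in (\power_\kappa\lambda)^V\rangle$ in $V$. Observe first that, since $\kappa$ is inaccessible in $V$, we have $2^{|x|} < \kappa$ whenever $|x| < \kappa$, so $D$ is automatically $\kappa$-thin \emph{in $V$}; in particular the generated $\power_\kappa\lambda$-tree $T_D$ is a genuine $\kappa$-$\power_\kappa\lambda$-tree of $V$, and (by the discussion in Section \ref{strong_tree_section}) cofinal branches of $D$ correspond exactly to cofinal branches of $T_D$.

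The first main step is to transfer the search for an ineffable branch up to $W$ and apply $\ITP(\kappa,\geq\kappa)$ there. By the $\kappa$-covering property, $(\power_\kappa\lambda)^V$ is $\subseteq$-cofinal in $(\power_\kappa\lambda)^W$, and one builds in $W$ a thin $(\kappa, X)$-list $D^{\ast}$ (for a suitable set $X$ with $\lambda \leq |X|^W$) that extends/codes $D$ in such a way that any ineffable branch of $D^{\ast}$ decodes to an ineffable branch of $D$. The crucial point is that the $\kappa$-covering property, combined with the inaccessibility of $\kappa$ in $V$, lets one control the levels of the relevant tree: the ``new'' part of $\power_\kappa$ is dominated by the ``old'' part, on whose levels there are already fewer than $\kappa$ possibilities. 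Applying $\ITP(\kappa,\geq\kappa)$ in $W$ then produces an ineffable branch of $D^{\ast}$ in $W$, hence a set $d \subseteq \lambda$ in $W$ that is a cofinal branch of $D$ (equivalently of $T_D$) and whose agreement set $S_d := \{x \mid d \cap x = d_x\}$ is stationary in the relevant $\power_\kappa$.

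The second main step is to descend back to $V$. Since $d$ is a cofinal branch through the $\kappa$-directed $\power_\kappa\lambda$-tree $T_D \in V$ and $(V,W)$ has the $\kappa$-approximation property, Lemma \ref{approximation_branch_lemma} gives $d \in V$. It remains to check that $d$ is an \emph{ineffable} branch of $D$ as computed in $V$, i.e., that $S_d \cap (\power_\kappa\lambda)^V$ is stationary in $(\power_\kappa\lambda)^V$. This is where the third hypothesis enters: via the standard correspondence between stationary subsets of $\power_\kappa\lambda$ and stationary subsets of $S^\gamma_\omega$ for $\gamma < \kappa$, the ineffability of $d$ can be arranged to be witnessed on a set whose stationarity is governed by stationarity of subsets of $(S^\gamma_\omega)^V$; the assumed preservation of stationarity of such sets from $V$ to $W$, together with Menas's characterization of the club filter (Proposition \ref{menas_prop}, which lets every $V$-club be traced by a $W$-club), then forces the agreement set to remain stationary back in $V$. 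This verifies Magidor's criterion, so $\kappa$ is supercompact in $V$.

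The main obstacle is the first step: converting an arbitrary $(\kappa,\lambda)$-list of $V$ into a \emph{thin} list in $W$ to which $\ITP$ applies, while retaining enough of its combinatorial content to recover both a cofinal branch and the stationary agreement needed for ineffability. The difficulty is that $(\power_\kappa\lambda)^V$ is only cofinal in $(\power_\kappa\lambda)^W$ — not a club, and possibly even co-stationary — so the interaction between the two models' versions of $\power_\kappa$ must be mediated with care; this is precisely the role of the $\kappa$-covering property and the inaccessibility of $\kappa$ in $V$ (for keeping levels below $\kappa$), of the $\kappa$-approximation property (for pinning branches to $V$), and of the $S^\gamma_\omega$-preservation hypothesis (for carrying stationarity across). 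The argument follows the template of Viale--Wei\ss\ \cite[\S 6]{viale_weiss}.
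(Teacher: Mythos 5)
The paper does not prove this theorem; it cites it as a combination of \cite[Proposition 6.8 and Theorem 6.9]{viale_weiss}, and the footnote merely records that the statement follows from those two results. So there is no proof of record in this paper to compare against line by line, but the cited Viale--Wei\ss\ argument goes via the machinery developed in that same paper: extracting a normal, fine, $\kappa$-complete filter from the $V$-trace of a $\kappa$-guessing model in $W$, using $\kappa$-approximation to show the trace lands in $V$ and the $S^\gamma_\omega$-preservation hypothesis to arrange internal unboundedness of that trace. That is a genuinely different route from your ``transfer the thin list to $W$, apply $\ITP$, pull the branch back'' outline.

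More importantly, your outline as written has gaps that I do not see how to close without essentially switching to the guessing-model argument.

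First, the transfer step. You say one ``builds in $W$ a thin $(\kappa,X)$-list $D^\ast$ that extends/codes $D$ in such a way that any ineffable branch of $D^\ast$ decodes to an ineffable branch of $D$'' and wave at $\kappa$-covering plus inaccessibility. The natural construction --- fix $f\colon (\power_\kappa\lambda)^W\to(\power_\kappa\lambda)^V$ with $x\subseteq f(x)$ by covering and set $d^\ast_x:=d_{f(x)}\cap x$ --- does make $D^\ast$ thin in $W$, but it breaks precisely where it matters. If $d$ is an ineffable branch of $D^\ast$ in $W$, stationarily many $y\in(\power_\kappa\lambda)^W$ satisfy $d\cap y=d_{f(y)}\cap y$, but this gives no control over $d\cap f(y)$ versus $d_{f(y)}$; agreement on $y$ does not propagate to agreement on $f(y)$. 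And $(\power_\kappa\lambda)^V$ is in general only cofinal, not club (and is typically even nonstationary) in $(\power_\kappa\lambda)^W$, so you cannot restrict attention to $V$-indices. So you get a cofinal branch in $V$ via Lemma \ref{approximation_branch_lemma}, but not an ineffable one.

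Second, the stationarity descent is stated backwards. The hypothesis preserves $V$-stationarity of subsets of $(S^\gamma_\omega)^V$ \emph{upward} from $V$ to $W$; your argument needs to move the stationarity of the agreement set \emph{downward} from $W$ to $V$. Menas's characterization lets a $V$-club trace to a $W$-club, but combined with a $W$-stationary agreement set this only produces some $y\in(\power_\kappa\lambda)^W$ in both, which as noted above need not be in $V$. Moreover, there is no ``standard correspondence between stationary subsets of $\power_\kappa\lambda$ and stationary subsets of $S^\gamma_\omega$ for $\gamma<\kappa$'' to invoke: $S^\gamma_\omega$ for $\gamma<\kappa$ lives below $\kappa$, and the correct role of that hypothesis in Viale--Wei\ss\ is to guarantee that a $\kappa$-guessing model $M$ of $W$ can be chosen so that $M\cap V$ is internally unbounded (one needs $\omega$-cofinal suprema computed in $V$ to remain of uncountable cofinality in $W$ along a chain witnessing internal approachability), which is what makes the trace filter normal and the resulting ultrapower wellfounded. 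It is not a device for transferring stationarity of subsets of $\power_\kappa\lambda$ between the two models.

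So: the Magidor-characterization target is the right one, and the use of Lemma \ref{approximation_branch_lemma} for cofinal branches is fine, but the ineffability half of the argument is not established, and the description of how the $S^\gamma_\omega$-preservation hypothesis is used does not match either a correct argument or the Viale--Wei\ss\ proof. To repair this you would need to re-center the proof on $\kappa$-guessing models in $W$ (via the $\ITP\Rightarrow\GMP$-type implication) and show that the $V$-trace of such a model, made internally unbounded via the $S^\gamma_\omega$-preservation hypothesis and pinned into $V$ via $\kappa$-approximation, generates a normal fine $\kappa$-complete $V$-ultrafilter on $(\power_\kappa\lambda)^V$.
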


Since for inaccessible $\kappa$ the Mitchell forcing $\M(\omega, \kappa)$ is $\kappa$-Knaster, it follows that 
it satisfies the $\kappa$-covering and $\kappa$-approximation properties. Moreover, 
since there is a projection onto $\M(\omega,\kappa)$ from a forcing of the form 
$\Add(\omega, \kappa) \times \bb{Q}$, where $\bb{Q}$ is countably closed, it follows 
that $\M(\omega, \kappa)$ preserves the stationarity of all stationary subsets 
of $S^\gamma_\omega$ for every ordinal $\gamma$ of uncountable cofinality. Therefore, 
$V \subseteq V^{\M(\omega, \kappa)}$ satisfy the hypothesis of Theorem 
\ref{consistency_strength_thm}. In particular, if $\kappa$ is a strongly compact but 
not supercompact cardinal, then in $V^{\M(\omega, \kappa)}$, there is $\lambda \geq 
\omega_2$ for which $\ITP(\omega_2, \lambda)$ fails.

In light of this, and as part of a broader set-theoretic project to understand the relationship 
between strongly compact and supercompact cardinals, it is natural to ask precisely which 
strong tree properties necessarily 
hold in the strongly compact Mitchell model.
In \cite{weiss}, Wei\ss\ claims without providing a proof that if $\kappa$ is strongly 
compact, then, in $V^{\M(\omega, \kappa)}$, $\SP(\omega_1, \omega_2, \geq \omega_2)$ holds. 
We have been unable to verify the truth of this 
claim; nontrivial problems arise when attempting to use standard techniques to prove it. 
The next two results record the strongest results in this direction we were able to obtain. 
The first is a result about generalized tree properties, indicating that $\TP_{\omega_2} 
(\Lambda)$ holds for every $\omega_2$-directed partial order $\Lambda$. The second 
indicates that a strong version of $\wSP(\omega_1, \omega_2, \geq \omega_2)$ holds, and yields 
Theorem B from the Introduction. 
We present our results in more generality, though, 
in terms of $\M(\mu, \kappa)$ for arbitrary $\mu$ (recall our assumption for this section 
that $\mu^{<\mu} = \mu$).

\begin{theorem}\label{Th:Mitchell_TP}
  Suppose that $\kappa$ is a strongly compact cardinal and $\M = \M(\mu, \kappa)$. 
  Then, in $V^\M$, $\TP_{\mu^{++}}(\Lambda)$ 
  holds for every $\mu^{++}$-directed partial order $\Lambda$.
\end{theorem}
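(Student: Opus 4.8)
The plan is to mimic the proof of the earlier fact that $\TP_\kappa(\Lambda)$ holds whenever $\kappa$ is strongly compact and $\Lambda$ is $\kappa$-directed, but to carry it out in $V^\M$, exploiting that $\M = \M(\mu,\kappa)$ preserves enough of the strong compactness of $\kappa$ and that, by Remark~\ref{mitchell_remark}, $\kappa = (\mu^{++})^{V^\M}$. Fix an $\M$-generic filter $G$ over $V$; since $\kappa = (\mu^{++})^{V[G]}$ is regular and uncountable in $V[G]$, it suffices to fix a $\kappa$-$\Lambda$-tree $T \in V[G]$ with $\Lambda$ a $\kappa$-directed poset in $V[G]$ and to produce a cofinal branch through $T$ in $V[G]$. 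We may assume the underlying set of $\Lambda$ is a cardinal $\lambda \geq \kappa$. Back in $V$, fix a sufficiently large regular $\lambda' \geq \lambda$ and a fine, $\kappa$-complete ultrafilter $U$ on $\power_\kappa \lambda'$, with associated ultrapower embedding $j : V \to N$; thus $\mathrm{crit}(j) = \kappa$, $j(\kappa) > \lambda'$, $j``\lambda' \in N$, and $|j``\lambda'|^N < j(\kappa)$.

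Next I would lift $j$ to an embedding on $V[G]$, after a mild further forcing. First note that every condition of $\M$ lies in $H(\kappa)^V = H(\kappa)^N$ and is fixed pointwise by $j$ (the relevant domains are bounded below $\kappa = \mathrm{crit}(j)$ and the relevant values are hereditarily of size $<\kappa$); hence $j \restriction \M$ is the identity and $j``G = G$. Moreover $\M$ is computed identically in $V$ and in $N$, and $\kappa$ is inaccessible in $N$ and an accumulation point of the set of $N$-inaccessibles below $j(\kappa)$ used to define $j(\M) = \M(\mu, j(\kappa))^N$; so Remark~\ref{mitchell_remark}(\ref{quotient_approx_prop})--(\ref{quotient_product_prop}), applied inside $N$ with $\alpha = \kappa$, yields a projection of $j(\M)$ onto $\M$ whose quotient $\mathbb{R}$ (a forcing in $N[G]$) has the $\mu^+$-approximation property over $N[G]$ and is a projection of a product of an $\Add(\mu,\cdot)$-forcing with a $\mu^+$-closed forcing. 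Now let $H$ be $\mathbb{R}$-generic over $V[G]$; since $\mathbb R \in N[G]$, $H$ is in particular $N[G]$-generic, so $\tilde G := G \ast H$ is $j(\M)$-generic over $N$ with $j``G = G \subseteq \tilde G$, and $j$ lifts to an elementary $j^+ : V[G] \to N[\tilde G]$ definable in $W := V[G][H]$.

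Working in $W$, I would then extract the branch exactly as in the strongly compact case. Here $j^+(T)$ is a $j(\kappa)$-$j^+(\Lambda)$-tree in $N[\tilde G]$ and $j^+(\Lambda)$ is $j(\kappa)$-directed there. Since $j``\lambda' \in N$, one gets $j^+``\Lambda \in N[\tilde G]$, and $|j^+``\Lambda| = \lambda < j(\kappa)$, so by directedness there is $w \in j^+(\Lambda)$ with $j^+(u) \leq_{j^+(\Lambda)} w$ for all $u \in \Lambda$. Fix any $t$ on level $w$ of $j^+(T)$. For $u \in \Lambda$, since $|T_u| < \kappa = \mathrm{crit}(j^+)$ the $j^+(u)$-th level of $j^+(T)$ equals $j^+``T_u$, so $t \restriction j^+(u) = j^+(b(u))$ for a unique $b(u) \in T_u$. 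If $u_0 <_\Lambda u_1$, then $j^+(b(u_0)) = t \restriction j^+(u_0) = (t \restriction j^+(u_1)) \restriction j^+(u_0)$, whence $j^+(b(u_0)) <_{j^+(T)} j^+(b(u_1))$ and, by elementarity, $b(u_0) <_T b(u_1)$; thus $b$ is a cofinal branch through $T$ lying in $W$. Finally, $\mathbb R$ also has the $\mu^+$-approximation property over $V[G]$ — this should follow from the proof of Remark~\ref{mitchell_remark}(\ref{quotient_approx_prop}) together with the facts that $\M$ is $\mu$-closed and $\mu^{<\mu} = \mu$ holds in $V[G]$, so that the chain condition of the Cohen factor and the $\mu^+$-closure of the closed factor of $\mathbb R$ persist over $V[G]$ — and since $\mu^+ < \kappa$, $\mathbb R$ then has the $\kappa$-approximation property over $V[G]$ as well. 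By Lemma~\ref{approximation_branch_lemma}, applied to the $\kappa$-directed poset $\Lambda$ and the $\Lambda$-tree $T$ (both lying in $V[G]$), the branch $b$ already belongs to $V[G] = V^\M$, which finishes the proof.

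I expect the main obstacle to be precisely that last point: verifying that the $\mu^+$-approximation property of the quotient $j(\M)/\M$, established over $N[G]$ via Remark~\ref{mitchell_remark}(\ref{quotient_approx_prop}), persists when the quotient is viewed as a forcing over the larger model $V[G]$ — i.e., that the combinatorial argument behind that approximation property is insensitive to the choice of ground model beyond the poset itself and the persistence of the closure and chain-condition properties of its Cohen and $\mu^+$-closed factors. A secondary technical point requiring care is the precise sense in which $\M = \M_\kappa^N$ is an initial segment of $j(\M)$, needed to factor $j(\M)$ and carry out the lift above.
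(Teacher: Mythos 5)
Your proposal follows essentially the same route as the paper's proof: lift a strong compactness embedding $j$ through $\M$ using the projection $j(\M)\to\M$ and the $\mu^+$-approximation property of the quotient, extract a branch from $j^+(T)$ above a level covering $j^+``\Lambda$, and pull it back into $V[G]$ via Lemma~\ref{approximation_branch_lemma}.

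There is one genuine error, though. You assert that ``since $j``\lambda'\in N$, one gets $j^+``\Lambda\in N[\tilde G]$.'' For a merely \emph{fine} $\kappa$-complete ultrafilter $U$ on $\power_\kappa\lambda'$, the set $j``\lambda'$ need \emph{not} be an element of the ultrapower $N$; that would require normality, i.e., $\lambda'$-supercompactness, whereas here $\kappa$ is only assumed strongly compact, which is exactly what the theorem is designed to exploit. What fineness does give you is a set $x\in(\power_{j(\kappa)}j(\lambda'))^N$ (take $x=[\mathrm{id}]_U$) with $j``\lambda'\subseteq x$ and $|x|^N<j(\kappa)$, and this covering set is what the paper uses. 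The fix is cosmetic: apply $j(\kappa)$-directedness of $j^+(\Lambda)$ in $N[\tilde G]$ to find $w$ above every element of $x$ (noting $j(\kappa)$ remains a cardinal in $N[\tilde G]$ since $j(\M)$ is $j(\kappa)$-Knaster there), and since $j^+``\Lambda\subseteq x$ the rest of your branch extraction goes through unchanged. As for the concern you flag about the $\mu^+$-approximation property of $j(\M)/\M$ persisting over $V[G]$ rather than just $N[G]$: this is a legitimate technical point that the paper's proof also passes over silently, and your sketch of why it holds (the product/term-forcing analysis and the persistence of the relevant chain-condition and closure properties in $V[G]$) is the right reason; it is not a gap in your argument relative to the paper's.
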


\begin{proof}
  Recall that $\Vdash_{\M}$``$\kappa = \mu^{++}$''. Let $\dot{\Lambda}$ be an $\M$-name for a 
  $\kappa$-directed partial order, and let $\dot{T}$ be an $\M$-name for a 
  $\kappa$-$\dot{\Lambda}$-tree. Also fix an arbitrary condition $(p^0, p^1) \in \M$. Without loss of 
  generality, we can assume that the underlying set of $\dot{\Lambda}$ is forced to be a cardinal, 
  and, by strengthening 
  $(p^0, p^1)$ if necessary, we can assume that there is a cardinal $\lambda$ that is forced by 
  $(p^0, p^1)$ to be the underlying set of $\dot{\Lambda}$.
  
  Let $j:V \rightarrow M$ be an elementary embedding witnessing that $\kappa$ is 
  $\lambda$-strongly compact. In particular, $\mathrm{crit}(j) = \kappa$, $j(\kappa) > \lambda$, 
  and there is a set $x \in (\power_{j(\kappa)}(j(\lambda)))^M$ such that 
  $j''\lambda \subseteq x$.
  Note that $j(\M) = \M(\mu, j(\kappa))^M$ (defined using the set $j(A)$) since $j$ is identity below $\kappa$. Additionally, $\M(\mu, j(\kappa))^M=\M(\mu,j(\kappa))$ since $M$ is closed under sequences of length $\kappa$ and the conditions in $\M(\mu,j(\kappa))$ are functions with domain in $M$ of size $<\kappa$ ranging over $V_\kappa\sub M$.
Moreover, $j \restriction \M$ is the identity function, since $j$ is identity below $\kappa$ and the conditions in $\M$ are bounded in $\kappa$, and, by clause (\ref{quotient_approx_prop}) 
  of Remark \ref{mitchell_remark} applied to $\M(\mu, j(\kappa))$, there is a 
  projection from $j(\M)$ to $\M$ such that the quotient forcing is forced to have the 
  $\mu^+$-approximation property. Let $G$ be $\M$-generic over $V$, and let $g$ be 
  $j(\M)/\M$ generic over $V[G]$. Then, in $V[G][g]$, we can lift $j$ to 
  $j:V[G] \rightarrow M[G][g]$.
  
  In $V[G]$, let $\Lambda$ and $T$ be the realizations of $\dot{\Lambda}$ and $\dot{T}$, respectively.
  In $M[G][g]$, $j(\Lambda)$ is a $j(\kappa)$-directed partial order and $j(T)$ is a 
  $j(\kappa)$-$j(\Lambda)$-tree. Moreover, for all $u \in \Lambda$, since 
  $\vert T_u\vert  < \kappa$ and $\mathrm{crit}(j) = \kappa$, we have $j(T)_{j(u)} = j''T_u$.
  Since $\vert x\vert ^{M[G][g]} < j(\kappa)$, we can therefore find a $w \in j(\Lambda)$ such that 
  $v <_{j(\Lambda)} w$ for all $v \in x$. Fix an arbitrary $s \in j(T)_w$, and define a function 
  $b \in \prod_{u \in \Lambda} T_u$ by letting $b(u)$ be the unique $t \in T_u$ such that 
  $j(t) = s \restriction j(u)$. 
  
  Note that, if $u <_\Lambda u'$, then 
  $j(b(u)), j(b(u')) <_{j(T)} s$, so $j(b(u)) <_{j(T)} j(b(u'))$ and hence, by elementarity, 
  $b(u) <_T b(u')$. It follows that $b$ is a cofinal branch through $T$. Moreover, $b \in V[G][g]$, 
  but $g$ is generic over $V[G]$ for $j(\M)/\M$, which has the $\mu^+$-approximation property and 
  hence, \emph{a fortiori}, the $\kappa$-approximation property. Therefore, by Lemma 
  \ref{approximation_branch_lemma}¸ we have $b \in V[G]$, and hence $\TP_\kappa(\Lambda)$ holds 
  in $V[G]$.
\end{proof}

\begin{theorem} \label{strongly_compact_thm_2}
  Suppose that $\kappa$ is a strongly compact cardinal, $\lambda \geq \kappa$ 
  is regular, and $\Sigma \subseteq [\kappa, \lambda]$ is a set of regular cardinals 
  such that $\vert \Sigma\vert  < \kappa$. Let $\M = \M(\mu, \kappa)$. Then, in $V^\M$, 
  $\wSP_{\mc Y}(\mu^+, \kappa, \lambda)$ holds, where $\mc Y$ denotes the set of 
  $x \in \power_\kappa \lambda$ such that $\cf(\sup(x \cap \chi)) = \mu^+$ for all 
  $\chi \in \Sigma$.
\end{theorem}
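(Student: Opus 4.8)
The plan is to run a lifted‑embedding argument, engineered so that the ``seed'' of the embedding falls into the lifted version of $\mc Y$. Write $\M = \M(\mu,\kappa)$, fix a $V$-generic $G\subseteq\M$, and recall that in $V[G]$ we have $\kappa=\mu^{++}$ and $2^\mu=\kappa$. Let $D=\langle d_x\mid x\in\power_\kappa\lambda\rangle\in V[G]$ be strongly $\mu^+$-$\mc Y$-slender, witnessed by a strong club $C^{\mathrm{str}}\subseteq\power_\kappa H(\theta)^{V[G]}$ for some large regular $\theta$. Working in $V$, where $\kappa$ is strongly compact and hence inaccessible, use the characterization of strong compactness by extension of $\kappa$-complete filters: let $U$ be a $\kappa$-complete (automatically fine) ultrafilter on $(\power_\kappa\lambda)^V$ extending the proper filter generated by the fineness sets together with the cofinal set $T=\{x\in\power_\kappa\lambda\mid \mu^+\subseteq x \text{ and } x \text{ is closed under ordinal successor and under suprema of its }\le\mu\text{-sequences}\}$; cofinality of $T$ and the fact that this closure keeps $|x|<\kappa$ both use the inaccessibility of $\kappa$ in $V$. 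Let $j=j_U\colon V\to M\cong\Ult(V,U)$, and put $A:=[\mathrm{id}]_U$, so that $j``\lambda\subseteq A\subseteq j(\lambda)$ and $M\models|A|<j(\kappa)$. Since $j(\M)=\M(\mu,j(\kappa))$ projects onto $\M$ with a quotient forced to have the $\mu^+$-approximation and $\mu^+$-covering properties (Remark~\ref{mitchell_remark}), fix a $j(\M)/\M$-generic $g$ over $V[G]$ and lift $j$ to $j\colon V[G]\to M[G\ast g]$.

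Next I would verify that $A\in j(\mc Y)$ in $M[G\ast g]$. For $\chi\in\Sigma$ (note $j(\Sigma)=j``\Sigma$, as $|\Sigma|<\kappa$) we have $A\cap j(\chi)=[x\mapsto x\cap\chi]_U$, so $\sup(A\cap j(\chi))=[x\mapsto\sup(x\cap\chi)]_U=:\gamma_\chi$ and $\cf^{M}(\gamma_\chi)=[x\mapsto\cf^{V}(\sup(x\cap\chi))]_U$. Because $T\in U$ and every $\chi\in\Sigma$ has $\chi\ge\kappa>\mu^+$, for $U$-almost every $x$ the set $x\cap\chi$ is nonempty, bounded in $\chi$, without a last element, and closed under suprema of $\le\mu$-sequences, which forces $\cf^V(\sup(x\cap\chi))\ge\mu^+$; hence $\mu^+\le\cf^{M}(\gamma_\chi)<j(\kappa)$. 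Now $j(\M)=\M(\mu,j(\kappa))$ collapses every cardinal in $(\mu^+,j(\kappa))$ to cardinality $\mu^+$ while, via the projection from $\Add(\mu,-)\times(\mu^+\text{-closed})$, adding no cofinal sequence of length $\le\mu$ to an ordinal of cofinality $\ge\mu^+$ and preserving $\mu^+$; therefore $\cf^{M[G\ast g]}(\gamma_\chi)=\mu^+$ for each $\chi\in\Sigma$. Since also $|A|^{M[G\ast g]}\le\mu^+<j(\kappa)$, we get $A\in(\power_{j(\kappa)}j(\lambda))^{M[G\ast g]}\cap j(\mc Y)$.

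Set $d^*:=j(D)_A\subseteq A$ and define $b:=\{\alpha<\lambda\mid j(\alpha)\in d^*\}\subseteq\lambda$. Granting for the moment that $b\in V[G]$, the cofinality of $b$ is an elementarity argument: for $x\in(\power_\kappa\lambda)^{V[G]}$ we have $j(x)=j``x\subseteq j``\lambda\subseteq A$ and $j(b)\cap j(x)=(j``b)\cap j``x=(d^*\cap j``\lambda)\cap j``x=d^*\cap j(x)=j(D)_A\cap j(x)$, so $M[G\ast g]\models\exists w\in\power_{j(\kappa)}j(\lambda)\,[\,j(x)\subseteq w\wedge j(b)\cap j(x)=j(D)_w\cap j(x)\,]$, witnessed by $w=A$; pulling this back along $j$ (legitimate once $b\in V[G]$) yields $z\supseteq x$ in $(\power_\kappa\lambda)^{V[G]}$ with $b\cap x=d_z\cap x$, so $b$ is a cofinal branch of $D$.

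The main obstacle is showing $b\in V[G]$. By the $\mu^+$-approximation property of $j(\M)/\M$ over $V[G]$ it suffices to prove $b\cap z\in V[G]$ for every $z\in V[G]$ with $z\subseteq\lambda$ and $|z|\le\mu$. For this one exploits the strong $\mu^+$-$j(\mc Y)$-slenderness of $j(D)$ in $M[G\ast g]$: the goal is to locate a model $N$ in the witnessing strong club with $N\cap j(\lambda)=A$ and $j``z\in N\cap\power_{\mu^+}j(\lambda)$, so that, since $A=N\cap j(\lambda)\in j(\mc Y)$, there is $e\in N$ with $d^*\cap j``z=e\cap j``z$, and one wants this value to descend along $j$, giving $b\cap z=j^{-1}``(d^*\cap j``z)\in V[G]$. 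Securing such an $N$ is the delicate point: one should arrange the slenderness witness for $D$ to be closed under a function $h$ for which the closure of $A\cup\{j``z,j(D)\}$ under $j(h)$ has trace exactly $A$ on $j(\lambda)$ and produces $e$ inside $\mathrm{range}(j)$ — this is where the extra room furnished by the cofinality restriction defining $\mc Y$ and by the weakening of ``club/stationary'' to ``strong club/weakly stationary'' in $\wSP_{\mc Y}$ is spent, since the analogous step for ordinary slenderness would need a supercompactness embedding (with $j``\lambda\in M$ and $M$ suitably closed), unavailable from strong compactness alone. The argument is uniform in $\lambda$ and indifferent to $V^\M$-cofinalities below $\kappa$, so this establishes $\wSP_{\mc Y}(\mu^+,\kappa,\lambda)$ in $V^\M$.
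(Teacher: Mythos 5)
You have the right high-level shape — lift a strongly compact embedding through $j(\M)/\M$, define $b$ by pulling back $j(D)_{\text{seed}}$, verify the seed lands in $j(\mc Y)$, and then use the $\mu^+$-approximation property to get $b\in V[G]$ — and your verification that the seed lies in $j(\mc Y)$ is essentially sound. But the heart of the argument, showing $b\cap z\in V[G]$ for each small $z$, is missing: you explicitly flag the construction of the required model $N$ as ``the delicate point'' and offer only a gesture (``close under a function $h$ \ldots producing $e$ inside $\mathrm{range}(j)$''). This does not constitute a proof, and the specific seed you chose makes the problem harder than it needs to be: there is no reason whatsoever that $A=[\mathrm{id}]_U$ should equal $N\cap j(\lambda)$ for \emph{any} $N$ in the $M[G\ast g]$-strong club witnessing the slenderness of $j(D)$, so the model you want may simply not exist.

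The paper's proof resolves exactly this point, and the mechanism is quite different from what you suggest. First, one does not keep the slenderness witness in $V[G]$: one restricts the strong club $C\subseteq\power_\kappa H(\theta)^{V[G]}$ to a fixed $\bar N\prec H(\theta)$ of size $\sigma:=\lambda^{<\kappa}$ in $V$, and then uses the $\kappa$-c.c.\ of $\M$ to find a strong club $E\subseteq\power_\kappa\bar N$ \emph{in $V$} with $\Vdash_\M ``E\subseteq\dot C\restriction\bar N"$. One then takes $j$ to witness $\sigma$-strong compactness (not just a fine ultrafilter on $\power_\kappa\lambda$), covers $j``\bar N$ by some $y\in\power_{j(\kappa)}j(\bar N)^M$, and sets
$Z:=\{z\subseteq y\mid z\in j(E)\text{ and }\forall\chi\in\Sigma\ (z\cap j(\chi)\subseteq\sup j``\chi)\}$.
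Because $Z\in M$ has size $<j(\kappa)$ in $M$ and $j(E)$ is \emph{strongly} closed, $\bigcup Z\in j(E)$; the seed is then $w:=j(\lambda)\cap\bigcup Z$. This is precisely where strong closure (as opposed to mere closure along chains) is spent — one takes a union over a non-chain — and it simultaneously guarantees that there is $N\in j(C)$ with $N\cap j(\lambda)=w$, that $j``\lambda\subseteq w$, and that the cofinality condition defining $\mc Y$ transfers. Finally, the step that gets $b\cap u\in V[G]$ is \emph{not} that $e$ descends along $j$. Rather, the slenderness club $C$ is arranged so that for $N\in C$ with $\kappa_N:=\kappa\cap N$ inaccessible, the approximations $d_{N\cap\lambda}\cap u$ already lie in $V[G_{\kappa_N}]$; applying this at $j(C)$ to a model $N$ with $N\cap j(\kappa)=\kappa$ gives $d'_w\cap j(u')\in V[j(G)_\kappa]=V[G]$. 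Combined with $\mu^+$-covering of $\M$ (to replace an arbitrary $u\in(\power_{\mu^+}\lambda)^{V[G]}$ by $u'\in(\power_{\mu^+}\lambda)^V\subseteq\bar N$ so that $j(u')\in\bigcup Z\subseteq N$), one gets $b\cap u\in V[G]$ for every small $u$, and then $\mu^+$-approximation finishes. None of this machinery is present in your proposal, so as written it has a genuine gap at the decisive step.
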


\begin{proof}
  By enlarging $\Sigma$ if necessary, we can assume that $\kappa \in \Sigma$.
  Let $\dot{\mc Y}$ be an $\M$-name for $\mc Y$, and let 
  $\dot{D} = \langle \dot{d}_{\dot{x}} \mid 
  \dot{x} \in \power_\kappa \lambda \rangle$ be an $\M$-name for a strongly 
  $\mu^+$-$\dot{\mc Y}$-slender $(\kappa, \lambda)$-list.\footnote{This is 
  a slight abuse of notation for the sake of readability. Formally, $\dot{D}$ is an 
  $\M$-name for a strongly $\mu^+$-$\dot{\mc Y}$-slender $(\kappa, \lambda)$-list 
  and, for every nice $\M$-name $\dot{x}$ for an element of $(\power_\kappa 
  \lambda)^{V^{\M}}$, $\dot{d}_{\dot{x}}$ is an $\M$-name for the 
  $\dot{x}$-th entry in $\dot{D}$. Similar notation will be used throughout 
  the remainder of the paper.} Let $G$ be $\M$-generic over $V$ 
  and, for each inaccessible $\alpha < \kappa$, let $G_\alpha$ be the $\M_\alpha$-generic 
  filter induced by $G$. Let $\mc Y$ and $D$ be the realizations of $\dot{\mc Y}$ and 
  $\dot{D}$, respectively, in $V[G]$.
  
  The first part of our proof largely follows the beginning of the proof of \cite[Theorem 5.4]{weiss}. 
  Work for now in $V[G]$. By the $\kappa$-c.c.\ of $\M$, we know that, for all 
  $x \in \power_\kappa \lambda$, there is an inaccessible $\alpha_x < \kappa$ such that 
  $d_x \in V[G_{\alpha_x}]$. Let $\theta > \lambda$ be a sufficiently large regular cardinal, 
  and let $C \subseteq \power_\kappa H(\theta)$ be a strong club witnessing that 
  $D$ is strongly $\mu^+$-$\mc Y$-slender. We can assume that, for all $N \in C$, we have 
  $\kappa_N := \kappa \cap N \in \kappa$ and, for all $x \in \power_\kappa \lambda \cap N$, 
  we have $\alpha_x \in N$. It follows that, 
  whenever $N \in C$, $\kappa_N$ is inaccessible in $V$, and 
  $x \in \power_\kappa \lambda \cap N$, we have $x \in V[G_{\kappa_N}]$.

  In $V$, let $\sigma := \lambda^{<\kappa}$ and find $\bar{N} \prec H(\theta)$ such that 
  $\power_\kappa \lambda \subseteq \bar{N}$ and $\vert \bar{N}\vert  = \sigma$. In $V[G]$, let 
  $C \restriction \bar{N} := \{N \cap \bar{N} \mid N \in C\}$. Then $C \restriction 
  \bar{N}$ is a strong club in $\power_\kappa \bar{N}$. Now move back to $V$, and 
  let $\dot{C}$ be an $\M$-name for $C$. Since $\M$ has the $\kappa$-c.c., we can find 
  a strong club $E \subseteq \power_\kappa \bar{N}$ such that $\Vdash_{\M}$``$E \subseteq 
  \dot{C} \restriction \bar{N}$''. 
  
  Let $j:V \rightarrow M$ be an elementary embedding witnessing that $\kappa$ is 
  $\sigma$-strongly compact. Therefore, in $M$, we can find a set $y \in 
  \power_{j(\kappa)}j(\bar{N})$ such that $j''\bar{N} \subseteq y$. 
  For each $\chi \in \Sigma$, let $\gamma_\chi := \sup j''\chi$. Note that, since 
  $\vert \Sigma\vert  < \kappa$, we know that both $\{\gamma_\chi \mid \chi \in 
  \Sigma\}$ and $j \restriction \Sigma$ are in $M$. Let 
  \[
  Z := \{z \subseteq y \mid z \in j(E) \text{ and, for all } \chi \in \Sigma, ~ 
  z\cap j(\chi) \subseteq \gamma_\chi\}.
  \] 
  Then $Z \in M$ and, in $M$, it is a 
  subset of $j(E)$ of cardinality less than $j(\kappa)$. Therefore, 
  $\bigcup Z \in j(E)$. Let $w := j(\lambda) \cap \bigcup Z$.
  The definition of $Z$ implies that 
  $j''E \subseteq Z$ and therefore $j''\lambda \subseteq w$. It follows that, 
  for all $\chi \in \Sigma$, we have $\sup(w \cap j(\chi)) = \gamma_\chi$.
  
  Note that $j(\M) = \M(\mu, j(\kappa))$, defined using the set $j(A)$. Let $g$ be $j(\M)/\M$-generic over 
  $V[G]$, and lift $j$ to $j:V[G] \rightarrow M[G][g]$. In $M[G][g]$, for every 
  $V$-regular cardinal $\chi \in [\kappa, j(\kappa))$, we have $\cf(\chi) = \mu^+$. 
  In particular, for every $\chi \in \Sigma$, we have $\cf(\gamma_\chi) = \cf(j''\chi) = 
  \cf(\chi) = \mu^+$, and hence $w \in j(\mc Y)$.
  
  Let $j(D) = \langle d'_x \mid x \in \power_{j(\kappa)} j(\lambda) \rangle$. 
  Since $\bigcup Z \in j(E)$, we can find $N \in j(C)$ such that 
  $N \cap j(\bar{N}) = \bigcup Z$. In particular, $N \cap j(\lambda) = w$. 
  Therefore, since $j(D)$ is a $\mu^+$-$j(\mc Y)$-slender $(j(\kappa), j(\lambda))$-list, 
  as witnessed by the strong club $j(C)$, it follows that, for every  
  $u \in N \cap \power_{\mu^+} j(\lambda)$, we have $d'_w \cap u 
  \in N$. Note also that $j(\kappa) \cap N = \kappa$ is inaccessible in $V$, and hence 
  we have $d'_w \cap u \in V[j(G)_\kappa] = V[G]$ for all $u \in N \cap 
  \power_{\mu^+} j(\lambda)$.
  
  \begin{claim} \label{u_claim}
    For all $u \in (\power_{\mu^+} \lambda)^V$, we have $j(u) \in N$.
  \end{claim}  
  
  \begin{proof}
    Fix $u \in (\power_{\mu^+} \lambda)^V$. Since $E$ is cofinal in $\power_\kappa \bar{N}$ 
    and $(\power_{\mu^+} \lambda)^V \subseteq \bar{N}$, we can find 
    $z \in E$ such that $u \in z$. Then $j(z) \in Z$, and hence $j(u) \in 
    \bigcup Z \subseteq N$.
  \end{proof}
  
  Let $b := \{\alpha < \lambda \mid j(\alpha) \in d'_w\}$.
  
  \begin{claim} \label{u_approximation_claim}
    For all $u \in (\power_{\mu^+} \lambda)^{V[G]}$, we have $b \cap u \in V[G]$.
  \end{claim}
  
  \begin{proof}
    Fix $u \in (\power_{\mu^+} \lambda)^{V[G]}$. Since $\M$ satisfies the $\mu^+$-covering 
    property, we can find $u' \in (\power_{\mu^+} \lambda)^V$ such that $u \subseteq u'$.
    By Claim \ref{u_claim}, we have $j(u') \in N$. Then, by the sentence preceding 
    Claim \ref{u_claim}, we have $d'_w \cap j(u') \in V[G]$. But then 
    $b \cap u = \{\alpha \in u \mid j(\alpha) \in d'_w \cap j(u')\}$ is definable in 
    $V[G]$, and hence is in $V[G]$.
  \end{proof}
  
  In $V[G]$, the quotient forcing $j(\M)/\M$ has the $\mu^+$-approximation property. 
  Therefore, Claim \ref{u_approximation_claim} implies that $b \in V[G]$. We will therefore 
  be done if we can show that, in $V[G]$, $b$ is a cofinal branch through $D$. To this end, 
  fix $x \in (\power_\kappa \lambda)^{V[G]}$, and note that $j(x) = j''x$ and 
  $j(b \cap x) = j''(b \cap x)$. Now 
  \[
    M[G][g] \models \exists z \in \power_{j(\kappa)} j(\lambda) ~ [j(x) \subseteq z 
    \text{ and } d'_z \cap j(x) = j(b \cap x)],
  \]
  as witnessed by $w$. Therefore, by elementarity of $j$, we have
  \[
    V[G] \models \exists z \in \power_\kappa \lambda ~ [x \subseteq z \text{ and } 
    d_z \cap x = b \cap x].
  \]
  Since $x$ was arbitrary, this implies that $b$ is in fact a cofinal branch of $D$, 
  thus completing the proof.
\end{proof}

\begin{remark} \label{wsp_remark}
  Suppose that $\theta \geq \kappa$ is a regular uncountable cardinal and $f:\vert H(\theta)\vert  
  \rightarrow H(\theta)$ is a bijection. Then 
  \[
    C := \{x \in \power_\kappa \vert H(\theta)\vert  \mid f[x] \cap \theta = x \cap \theta\}
  \]
  is a strong club in $\power_\kappa \vert H(\theta)\vert $ (and hence $\{f[X] \mid X \in C\}$ is a strong 
  club in $\power_\kappa H(\theta)$). Therefore, by the discussion following Remark 
  \ref{tp_convention_remark}, we see that, in the forcing extension of the previous theorem, 
  for every regular $\theta \geq \kappa$ and every collection $\Sigma \subseteq [\kappa, \theta]$ of 
  regular cardinals with $\vert \Sigma\vert  < \kappa$, we have $\wSP_{\mc Y}(\mu^+, \kappa, H(\theta))$, 
  where $\mc Y$ is the set of $M \in \power_\kappa H(\theta)$ such that 
  $\cf(\sup(M \cap \chi)) = \mu^+$ for all $\chi \in \Sigma$. In particular, 
  by the analogue of Theorem \ref{slender_guessing_thm_1} for $\wSP$ and 
  $\wAGP$, Theorem \ref{subadditive_function_theorem}, and Corollary \ref{square_failure_cor},
  the version of $\wSP$ obtained in the preceding theorem is enough to obtain the 
  nonexistence of subadditive, strongly unbounded functions $c:[\lambda]^2 \rightarrow \chi$ 
  for every regular $\lambda \geq \kappa$ and every $\chi$ with $\chi^+ < \kappa$, and 
  hence the failure of $\square(\lambda)$ for every regular $\lambda \geq \kappa$. 
\end{remark}

We now present a result indicating that, if $\kappa$ is strongly compact, then, in the 
extension by $\M(\omega, \kappa)$, an instance of $\wSP(\ldots)$ is indestructible under 
certain iterations of small c.c.c.\ forcings. This is a variation on a result of Todor\v{c}evi\'{c} 
from \cite{T:wKH}.

\begin{theorem} \label{ccc_iteration_thm}
  Suppose that $\kappa$ is strongly compact, $\M = \M(\omega, \kappa)$, and, in $V^{\M}$, 
  $\R := \langle \R_\alpha, 
  \dot{\bb{S}}_\beta \mid \alpha \leq \kappa, ~ \beta < \kappa \rangle$ is a finite-support 
  forcing iteration such that, for all $\beta < \kappa$, $\dot{\bb{S}}_\beta$ is 
  forced to be a c.c.c.\ forcing of size at most $\aleph_1$ that does not add cofinal 
  branches to any Suslin tree. Suppose also that $\lambda \geq \kappa$ is regular 
  and $\Sigma \subseteq [\kappa, \lambda]$ is a set of regular cardinals such that 
  $\vert \Sigma\vert  < \kappa$. Then, in $V^{\M \ast \dot{\R}}$, $\wSP_{\mc Y}(\omega_1, \omega_2, 
  \lambda)$ holds, where $\mc Y$ denotes the set of $x \in \power_{\omega_2} \lambda$ 
  such that $\cf(\sup(x \cap \chi)) = \omega_1$ for all $\chi \in \Sigma$.
\end{theorem}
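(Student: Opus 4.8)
The plan is to treat the theorem as a preservation statement sitting on top of Theorem~\ref{strongly_compact_thm_2} (in its $\mu=\omega$ instance): I would re-run the strong compactness argument of that theorem, but with the embedding lifted through the whole iteration $\M\ast\dot\R$, and then use the hypothesis on the iterands of $\R$ to argue that the branch produced in the outer model already lives in $V^{\M\ast\dot\R}$. Concretely, let $G\ast H$ be $(\M\ast\dot\R)$-generic over $V$, work in $V[G][H]$, let $D=\langle d_x\mid x\in\power_{\omega_2}\lambda\rangle$ be a strongly $\omega_1$-$\mc Y$-slender $(\omega_2,\lambda)$-list there, and let $T$ be the $\power_{\omega_2}\lambda$-tree it generates as in Remark~\ref{list_to_tree_remark}, with index poset $\Lambda:=(\power_{\omega_2}\lambda)^{V[G][H]}$ (which is $\omega_2$-directed, $\M\ast\dot\R$ preserving $\omega_1$ and forcing $\kappa=\omega_2$; and since $\R$ is c.c.c., $(\power_{\omega_2}\lambda)^{V^\M}$ is $\subseteq$-cofinal in $\Lambda$). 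Take $j:V\to M$ witnessing $\sigma$-strong compactness of $\kappa$ for sufficiently large $\sigma$, and lift $j$ in two stages, exactly as the groundwork of Theorem~\ref{strongly_compact_thm_2} and Remark~\ref{mitchell_remark} allow: first through $\M$, using $j\restriction\M=\mathrm{id}$ and clause~(5) of Remark~\ref{mitchell_remark} applied to $j(\M)=\M(\omega,j(\kappa))$ at $\alpha=\kappa$ (so that $j(\M)/\M$ has the $\omega_1$-approximation property over $V^\M$), to obtain $j:V[G]\to M[G\ast g]$; then through $\dot\R$, using $j\restriction\R=\mathrm{id}$ and the factorisation $j(\R)=\R\ast\dot\R^{\mathrm{tail}}$, with the generics chosen so that $g$ and $H$ are mutually generic over $V[G]$, to obtain a lift $j:V[G][H]\to M[G\ast g][H\ast h]$.

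With the lift in hand I would construct the branch exactly as in the proof of Theorem~\ref{strongly_compact_thm_2}: choose $\bar N\prec H(\theta)$ in $V$ with $\power_{\omega_2}\lambda\subseteq\bar N$, use that $\M\ast\dot\R$ is $\omega_2$-c.c.\ to pull back a strong club $E\in V$ on $\power_{\omega_2}\bar N$ forced to refine the restriction of the strong club witnessing the strong slenderness of $D$, form the master set $\bigcup Z\in j(E)$ (using the cardinals $\gamma_\chi=\sup j``\chi$ for $\chi\in\Sigma$), and put $w:=j(\lambda)\cap\bigcup Z$. One checks $w\in j(\mc Y)$ as before: $j(\M)$ gives every $V$-regular cardinal in $(\kappa,j(\kappa))$ cofinality $\omega_1$, and the c.c.c.\ iteration $\R$ preserves this, so $\cf(\gamma_\chi)=\omega_1$ for all $\chi\in\Sigma$. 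Setting $b:=\{\alpha<\lambda\mid j(\alpha)\in j(D)_w\}$, the elementarity argument of Theorem~\ref{strongly_compact_thm_2} shows verbatim that $b$ is a cofinal branch of $D$, hence of the $\Lambda$-tree $T$.

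The remaining and main task is to show $b\in V[G][H]$. First I would record a generic-independent fact: for every $u\in(\power_{\omega_1}\lambda)^{V[G][H]}$ we have $b\cap u\in V[G][H]$, since cofinality of the branch yields $z\in\Lambda$ with $b\cap u=d_z\cap u$, and $D,z\in V[G][H]$. Then I would analyse the quotient $j(\M\ast\dot\R)/(\M\ast\dot\R)$, factored as $(j(\M)/\M)\ast\dot\R^{\mathrm{tail}}$, in two steps. (i)~By the fact just recorded together with the $\omega_1$-approximation property of $j(\M)/\M$ (which, by a standard preservation lemma, survives the c.c.c.\ forcing $\R$), forcing with $j(\M)/\M$ over $V[G][H]$ adds no new cofinal branch to $T$. (ii)~By elementarity $\dot\R^{\mathrm{tail}}$ is again a finite-support c.c.c.\ iteration of forcings of size $\le\aleph_1$ that add no cofinal branch to any Suslin tree; using the preservation of this property along such iterations (where the size-$\le\aleph_1$ hypothesis is what controls limit stages of cofinality $\omega_1$, following Todor\v{c}evi\'{c}~\cite{T:wKH}) together with the fact that a c.c.c.\ forcing adds no cofinal branch to an $\omega_2$-Suslin tree --- equivalently, exploiting that $\TP_{\omega_2}(\Lambda')$ holds in $V^\M$ for every $\omega_2$-directed $\Lambda'$ and is propagated far enough by the preservation lemmas of Section~\ref{preservation_section} to rule out such a tree --- Lemma~\ref{suslin_preservation_lemma}, applied via the $\omega_2$-covering property of the c.c.c.\ forcing $\dot\R^{\mathrm{tail}}$, shows that $\dot\R^{\mathrm{tail}}$ adds no new cofinal branch to $T$ either. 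Chaining (i) and (ii): $b$ is a cofinal branch of $T$ lying in $V[G][H][g][h]$, so it already lies in $V[G][H]$; since $\Lambda\in V[G][H]$, Remark~\ref{list_to_tree_remark} then produces a cofinal branch of $D$ in $V[G][H]$, as required.

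I expect step~(ii) to be the main obstacle, the delicate point being the interaction with the cardinal collapse: the Mitchell quotient $j(\M)/\M$ collapses $\kappa$ to $\omega_1$, so $\Lambda$ ceases to be $\omega_2$-directed over $V[G][H][g]$, and Lemma~\ref{suslin_preservation_lemma} cannot be applied there naively. The remedy is to reorganise the descent so that the c.c.c.\ tail is analysed over a model in which $\Lambda$ (reindexed, if need be, to $(\power_{\omega_2}\lambda)^{V^{\M}}$) is still $\omega_2$-directed, and to carry out the preservation of ``adds no cofinal branch to a Suslin tree'' along the iteration $\dot\R^{\mathrm{tail}}$; this, together with the reduction to Suslin trees supplied by Lemma~\ref{suslin_preservation_lemma} and the tree property for $\omega_2$-directed trees in $V^\M$, is the real technical work and is modelled on Todor\v{c}evi\'{c}'s argument in \cite{T:wKH}.
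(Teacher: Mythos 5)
Your overall architecture is the right one: lift $j$ through $\M\ast\dot\R$, pull back $w$ and $b$ from the image list exactly as in Theorem~\ref{strongly_compact_thm_2}, and then argue that $b$ already lies in $V[G][H]$ by controlling the quotient. However, the central step is handled with an unsupported claim, and the part you correctly flag as ``the real technical work'' is precisely what is missing.

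Step~(i) rests on the assertion that the $\omega_1$-approximation property of $j(\M)/\M$ (over $V[G]$) ``survives the c.c.c.\ forcing $\R$'', so that it can be invoked over $V[G][H]$. There is no such standard preservation lemma, and in fact this is exactly the obstacle the paper's proof is designed to circumvent. The $\omega_1$-approximation property of $j(\M)/\M$ over $V[G]$ comes from its decomposition as a projection of $\Add(\omega,\cdot)\times\Q$ with $\Q$ $\omega_1$-closed; after forcing with the c.c.c.\ iteration $\R$ the factor $\Q$ is no longer $\omega_1$-closed (only $\omega_1$-distributive), and the usual projection argument for the approximation property breaks down. Your step~(ii) is also misdirected: the ambient $\kappa$ for Lemma~\ref{suslin_preservation_lemma} applied to the c.c.c.\ tail should be $\omega_1$, not $\omega_2$, so that one produces an $\omega_1$-Suslin tree (ruled out by the hypothesis that the iterands add no branches to Suslin trees, via Devlin's Lemma~3.7); the digression through ``a c.c.c.\ forcing adds no branch to an $\omega_2$-Suslin tree'' and $\TP_{\omega_2}(\Lambda')$ is not needed and is not clearly correct.

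The paper resolves the problem not by preserving the approximation property but by \emph{decomposing and reordering}. Writing $j(\M)/\M$ as $\Add(\omega,\kappa^\dagger-\kappa)\ast\dot\M^\kappa$, and further presenting $\M^\kappa$ as a projection of $\Add(\omega,j(\kappa)-\kappa^\dagger)\times\Q_\kappa$ with $\Q_\kappa$ $\omega_1$-closed in $V[G][g_0]$, one rearranges so that $\R$ (still c.c.c.) is forced \emph{after} $g_0$ but \emph{before} $g_1,g_2$. Over $V[G][g_0]$ one has $2^\omega=\kappa^\dagger$, strictly larger than the width of the relevant $\Lambda$-tree, so Lemma~\ref{closed_preservation_lemma} (the cardinal-arithmetic branch lemma) applies with $\P=\R\ast\Add(\omega,j(\kappa)-\kappa^\dagger)$ and $\Q=\Q_\kappa$ to show $g_2$ adds no branch. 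The remaining Cohen part $g_0\ast g_1\cong\Add(\omega,j(\kappa)-\kappa)$, and the tail $h$ of $j(\R)$, are each handled by Lemma~\ref{suslin_preservation_lemma} together with Devlin's lemma, producing $\omega_1$-Suslin trees that these forcings cannot branch. One also needs a preliminary reduction: first establish $b\cap v\in V[G][H]$ for all $v\in(\power_\kappa\lambda)^{V[G][H]}$ by running this analysis on an auxiliary $(\power_{\omega_1}v)^{V[G]}$-tree whose levels are $\power(u)^{V[G][H]}$, and only then apply the same three-step descent to the $\power_\kappa\lambda$-tree generated by $D$ to get $b\in V[G][H]$. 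None of this is a preservation of the approximation property; it is a bespoke branch-preservation argument exploiting the product structure of Mitchell forcing and the fact that $2^\omega$ can be made huge at an intermediate stage. Without carrying this out, the proof is not complete.
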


\begin{proof}
  The proof repeats many of the steps of the proof of Theorem \ref{strongly_compact_thm_2}, 
  so we will leave out some details. We may again assume that $\kappa$ is in $\Sigma$. Let 
  $\dot{\mc{Y}}$ be an $\M \ast \dot{\R}$-name for $\mc Y$, and let $\dot{D} = 
  \langle d_{\dot{x}} \mid \dot{x} \in \power_\kappa \lambda \rangle$ be an 
  $\M \ast \dot{\R}$-name for a strongly $\omega_1$-$\dot{\mc Y}$-slender 
  $(\omega_2, \lambda)$-list. Let $G*H$ be $\M \ast \dot{\R}$-generic over $V$ and, 
  for each inaccessible $\alpha < \kappa$, let $G_\alpha$ and $H_\alpha$ be the 
  $\M_\alpha$ and $\R_\alpha$-generic filters induced by $G$ and $H$, respectively.
  We can assume that the underlying set of each $\dot{\bb{S}}_\beta$ is forced to 
  be $\omega_1$ and $\dot{\bb{S}}_\beta$ is a nice name for a subset of $\omega_1^2$. 
  By the $\kappa$-c.c.\ of $\M$, there is a club $B \subseteq \kappa$ 
  such that, for every $\alpha \in B$, $\dot{\R}_\alpha$ is an $\M_\alpha$-name.
  Therefore, for each $\alpha \in B$, it makes sense to speak about the forcing extension 
  $V[G_\alpha][H_\alpha]$.
  
  Let $\theta > \lambda$ be a sufficiently large regular cardinal. As in the proof 
  of Theorem \ref{strongly_compact_thm_2}, working in $V[G][H]$, we can 
  find a strong club $C \subseteq \power_\kappa H(\theta)$ such that, for every 
  $N \in C$, we have $\kappa_N := \kappa \cap N \in \kappa$ and, whenever 
  $N \in C$, $\kappa_N$ is inaccessible in $V$, and $x \in \power_\kappa \lambda \cap N$, 
  we have $x \in V[G_{\kappa_N}][H_{\kappa_N}]$.

  Work now in $V$, and let $\sigma$, $\bar{N}$, $C \restriction \bar{N}$, $E$, 
  $j:V \rightarrow M$, and $w$ be defined as in the proof of Theorem 
  \ref{strongly_compact_thm_2}. Note that $j(\M) = \M(\omega, j(\kappa))$ and as in the proof of Theorem \ref{Th:Mitchell_TP}, $j(\M)$ is forcing equivalent to $\M * j(\M)/\M$. The forcing $j(\dot{\R})$ is of the form $\dot{\R} \ast \dot{\bb{U}}$, where $\dot{\bb{U}}$ is 
  forced to be a finite-support iteration of length $j(\kappa)$ such that each 
  iterand is forced to be a c.c.c.\ forcing of size at most $\aleph_1$ that does not 
  add cofinal branches to any Suslin tree. Note that $j(\M\ast\dot{\R)}=j(\M)\ast j(\dot{\R})$, and therefore $j(\M*\dot{\R})$ is forcing eqivalent to $\M \ast j(\M)/\M\ast\dot{\R}\ast\dot{\bb{U}}$. Both $j(\M)/\M$ and $\R$ are defined in $V[G]$ and therefore $\M \ast j(\M)/\M\ast\dot{\R}\ast\dot{\bb{U}}$ is forcing equivalent to $\M \ast \dot{\R}\ast j(\M)/\M\ast\dot{\bb{U}}$. Let $g * h$ be $ j(\M)/\M\ast\dot{\bb{U}}$-generic over $V[G][H]$, and lift $j$ to $j:V[G][H] \rightarrow M[G][H][g][h]$.
  In $V[G][H][g][h]$, let $j(D) = \langle d'_x \mid x \in \power_{j(\kappa)} j(\lambda) 
  \rangle$, and define $b := \{\alpha < \lambda \mid j(\alpha) \in d'_w\}$. 
  
  As in the proof of Theorem \ref{strongly_compact_thm_2}, one can show that, 
  for all $u \in (\power_{\omega_1} \lambda)^{V[G][H]}$, we have $b \cap u \in V[G][H]$ 
  (this uses the fact that $\M \ast \dot{\R}$ has the $\omega_1$-covering property, 
  which follows from the fact that $\M$ has the $\omega_1$-covering property and 
  $\dot{\R}$ is forced to have the c.c.c.).
  
  \begin{claim} \label{approx_claim}
    For all $v \in (\power_\kappa \lambda)^{V[G][H]}$, we have $b \cap v \in V[G][H]$.
  \end{claim}
  
  \begin{proof}
    Since $\R$ has the $\kappa$-covering property in $V[G]$, it suffices to prove the 
    claim for all $v \in (\power_\kappa \lambda)^{V[G]}$. Let $\kappa^{\dagger}$ be the least 
    inaccessible cardinal greater than $\kappa$. In $V[G]$, by Clause \ref{quotient_product_prop} 
    of Remark \ref{mitchell_remark} applied to $j(\M)$, the quotient forcing 
    $j(\M)/\M$ is of the form $\Add(\omega, \kappa^{\dagger} - \kappa) \ast \dot{\M}^\kappa$. Let 
    $g_0$ be the $\Add(\omega, \kappa^{\dagger} - \kappa)$-generic filter induced by $g$. In $V[G][g_0]$, 
    there is a projection onto $\M^\kappa$ from a forcing of the form $\Add(\omega, j(\kappa) - \kappa^\dagger) 
    \times \Q_\kappa$, where $\Q_\kappa$ is $\omega_1$-closed. Let $g_1 \times g_2$ be 
    $(\Add(\omega, j(\kappa) - \kappa^\dagger) \times \Q_\kappa)$-generic over $V[G][g_0]$ such that $V[G][g_0][g_1][g_2]$ is an 
    extension of $V[G][g]$ by $(\Add(\omega, j(\kappa) - \kappa^\dagger) \times \Q_\kappa)/\M^\kappa$.

    Fix $v \in (\power_\kappa \lambda)^{V[G]}$, and let $\Lambda := 
    (\power_{\omega_1} v)^{V[G]}$. Since all forcing notions used in this proof have the 
    $\omega_1$-covering property, $\Lambda$ is $\omega_1$-directed in all of the forcing 
    extensions of $V[G]$ appearing here. Now consider the $\Lambda$-tree $T := \langle \langle 
    T_u \mid u \in \Lambda \rangle, \subseteq \rangle$, where $T_u := (\power(u))^{V[G][H]}$ 
    for every $u \in \Lambda$. Note that $\vert T_u\vert  = (2^{\omega_1})^{V[G][H]} = \kappa < \kappa^{\dagger}$.
    
    By the paragraph immediately preceding the claim, $b \cap v$ is a 
    branch through $T$, and we know that $b \cap v \in V[G][H][g][h]$.\footnote{More precisely, 
    the function $u \mapsto b \cap u$ defined on $\Lambda$ is a branch through $T$, but it is clear 
    that this function and $b$ are definable from one another in all models of interest.} 
    First note that, in $V[G][H][g]$, $j(\R)/H$ is equivalent to a tail of the iteration 
    $j(\R)$, which is a finite-support iteration of c.c.c.\ posets that do not add branches 
    to any Suslin tree. By \cite[Lemma 3.7]{D:trees}, it follows that the entire iteration $j(\R)/H$ 
    cannot add branches to any Suslin tree. Therefore, by Lemma 
    \ref{suslin_preservation_lemma}, $b \cap v$ could not have been added by $h$, so we have 
    $b \cap v \in V[G][H][g]$, and, \emph{a fortiori}, $b \cap v \in V[G][H][g_0][g_1][g_2]$.
   
    We now work in the model $V[G][g_0]$. Since $g_0$ is generic for an $\omega_1$-Knaster poset, 
    we know that $\R$ remains c.c.c.\ in $V[G][g_0]$. Also, $\Q_\kappa$ is $\omega_1$-closed. In $V[G][g_0]$, $2^\omega = \kappa^\dagger > \kappa$, 
    and, in $V[G][g_0][H][g_1]$, $T$ is a $\Lambda$-tree of width $\kappa^+$. We can therefore apply 
    Lemma \ref{closed_preservation_lemma} to conclude that $b \cap v$ cannot have been added by 
    $g_2$ and therefore lies in $V[G][g_0][H][g_1]$. Finally, $g_0 \ast g_1$ is generic over 
    $V[G][H]$ for $\Add(\omega, \kappa^\dagger - \kappa) \ast \Add(\omega, j(\kappa) - \kappa^\dagger) \cong \Add(\omega, 
    j(\kappa)-\kappa)$, so, again by Lemma \ref{suslin_preservation_lemma}, $b \cap v$ cannot have been 
    added by $g_0 \ast g_1$ and therefore lies in $V[G][H]$, as desired.
  \end{proof}  
  
  \begin{claim}
    $b$ is a cofinal branch through $D$.
  \end{claim}
  
  \begin{proof}
    Fix $v \in (\power_\kappa \lambda)^{V[G][H]}$. We must find $x \in 
    (\power_\kappa \lambda)^{V[G][H]}$ such that $v \subseteq x$ and $d_x \cap v = b \cap v$. 
    By Claim \ref{approx_claim}, we have $b \cap v \in V[G][H]$. Moreover, by the definition of 
    $b$, we have 
    \[
      j(b \cap v) = j''(b \cap v) = d'_w \cap j(v).
    \]
    Therefore, 
    \[
      M[G][H][g][h] \models \exists x \in \power_{j(\kappa)} j(\lambda) ~ [d'_x \cap j(v) = j(b \cap v)],
    \]
    as witnessed by $x = w$. Therefore, by elementarity, 
    \[
      V[G][H] \models \exists x \in \power_\kappa \lambda ~ [d_x \cap v = b \cap v],
    \]
    as desired.
  \end{proof}
  
  Now the verification that $b$ is in $V[G][H]$ follows reasoning as in the proof of Claim 
  \ref{approx_claim}. Let $\Lambda'$ be the partial order $((\power_\kappa \lambda)^{V[G][H]}, \subseteq)$, 
  and let $T'$ be the $\Lambda'$-tree generated from $D$ as in Remark \ref{list_to_tree_remark}. 
  Then $\Lambda'$ is $\kappa$-directed in all extensions of $V[G][H]$ appearing in this proof, 
  and, in $V[G][H]$, $T'$ is a $\Lambda'$-tree of width $((\kappa^{<\kappa})^{+})^{V[G][H]} = (\kappa^+)^{V[G][H]} < \kappa^\dagger$. 
  
  Just as in the proof of Claim \ref{approx_claim}, Lemma \ref{suslin_preservation_lemma} implies that 
  $b$ cannot have been added by $h$, so we have $b \in V[G][H][g]$, and hence also 
  $b \in V[G][H][g_0][g_1][g_2]$. An application of Lemma \ref{closed_preservation_lemma} in 
  $V[G][g_0]$ shows that $b$ must be in $V[G][H][g_0][g_1]$, and the fact that $g_0 \ast g_1$ is 
  generic for $\Add(\omega, j(\kappa) - \kappa)$ over $V[G][H]$ implies that $b$ must be in $V[G][H]$. 
  Therefore, $D$ has a cofinal branch in $V[G][H]$, so we have verified this instance of 
  $\wSP_{\mc Y}(\omega_1, \omega_2, \lambda)$ in $V[G][H]$.
\end{proof}

As a corollary, we show that Martin's Axiom together with strong tree properties at $\omega_2$ can 
be obtained starting only from a strongly compact cardinal (both are consequences of $\PFA$, which 
can be forced from a supercompact cardinal).

\begin{corollary}
  Suppose that $\kappa$ is a strongly compact cardinal. Then there is a forcing extension in which 
  $\kappa = \omega_2 = 2^\omega$ and $\MA + \wSP(\omega_1, \omega_2, \geq \omega_2)$ (and hence 
  $\MA + \TP(\omega_2, \geq \omega_2)$) holds.
\end{corollary}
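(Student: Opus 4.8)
The plan is to combine Theorem~\ref{ccc_iteration_thm} with a Solovay--Tennenbaum-style iteration forcing $\MA$, performed over the strongly compact Mitchell model. We may assume $\GCH$ holds in $V$ (arranged, if necessary, by standard forcing preserving strong compactness), set $\M := \M(\omega, \kappa)$, and work in $W := V^\M$. By Remark~\ref{mitchell_remark}, in $W$ we have $\kappa = \omega_2 = 2^\omega$; moreover, since $\M$ is $\kappa$-Knaster of size $\kappa$ and $\GCH$ holds in $V$, we also get $2^{\omega_1} = \omega_2$ in $W$, and hence $\kappa^{<\kappa} = \kappa$ there. (This last equality is what makes the $\MA$ iteration below have length $\kappa$ and size $\kappa$, so that it leaves $2^\omega = \omega_2$ unchanged.)

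Over $W$, I would carry out a finite-support iteration $\R = \langle \R_\alpha, \dot{\bb{S}}_\beta \mid \alpha \le \kappa,\ \beta < \kappa \rangle$ of c.c.c.\ forcings of size at most $\aleph_1$, with a bookkeeping function ensuring that in $W^{\R}$ every c.c.c.\ poset meets every prescribed family of $\aleph_1$ dense sets; since the iteration is c.c.c.\ of size $\kappa$ over a model of $2^\omega = \kappa^{<\kappa} = \kappa$, it preserves cardinals and leaves $2^\omega = \omega_2$, so $\MA$ holds in $W^{\R}$. The delicate point --- and the main obstacle --- is that Theorem~\ref{ccc_iteration_thm} requires each iterand $\dot{\bb{S}}_\beta$ to be forced not to add a cofinal branch to any Suslin tree, whereas the naive $\MA$ iteration would, at some stages, force with a Suslin tree $T$ (which adds a cofinal branch to itself). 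This is exactly the device of Todor\v{c}evi\'{c} \cite{T:wKH}: whenever the bookkeeping targets a Suslin tree $T$ of the current model, one instead forces with the specializing poset $S(T)$, which is c.c.c.\ for every Aronszajn tree, adds no cofinal branch to any Suslin tree, and makes $T$ special --- hence no longer a c.c.c.\ poset, so that $T$ need no longer be considered for $\MA$. I would follow \cite{T:wKH} essentially verbatim in designing the iteration so that every iterand avoids adding branches to Suslin trees while still yielding $\MA$ in the limit; the passage to the Mitchell ground model $W$ is routine since $\kappa^{<\kappa} = \kappa$ in $W$.

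With such an iteration fixed, let $\lambda \geq \kappa$ be an arbitrary regular cardinal and take $\Sigma := \{\kappa\}$. Applying Theorem~\ref{ccc_iteration_thm} with $\mu = \omega$, the iteration $\R$, and these $\lambda$ and $\Sigma$, we obtain $\wSP_{\mc Y}(\omega_1, \omega_2, \lambda)$ in $W^{\R} = V^{\M \ast \dot{\R}}$, where $\mc Y := \{x \in \power_{\omega_2} \lambda \mid \cf(\sup(x \cap \kappa)) = \omega_1\}$. Now every strongly $\omega_1$-slender $(\omega_2,\lambda)$-list is trivially strongly $\omega_1$-$\mc Y$-slender (the conclusion in the definition of strong slenderness is merely required for a possibly smaller family of models $M$), so $\wSP_{\mc Y}(\omega_1,\omega_2,\lambda)$ implies the unrestricted $\wSP(\omega_1,\omega_2,\lambda)$. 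Since this holds for every regular $\lambda \ge \omega_2$, the last-coordinate monotonicity of $\wSP$ (the $\mc Y = \power_\kappa\lambda$ case of the monotonicity noted just after Remark~\ref{tp_convention_remark}) yields $\wSP(\omega_1,\omega_2,\lambda)$ for \emph{all} $\lambda \ge \omega_2$; that is, $\wSP(\omega_1,\omega_2,\ge\omega_2)$ holds in $W^{\R}$.

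Thus $W^{\R}$ is a model of $\kappa = \omega_2 = 2^\omega$ together with $\MA + \wSP(\omega_1, \omega_2, \ge \omega_2)$. The parenthetical assertion is then immediate: $\wSP(\omega_1,\omega_2,\lambda)$ implies $\wSP(\omega_2,\omega_2,\lambda)$ (being strongly $\omega_2$-slender is a weaker hypothesis on a list than being strongly $\omega_1$-slender), and $\wSP(\omega_2,\omega_2,\lambda)$ implies $\TP(\omega_2,\lambda)$ by Remark~\ref{wsp_tp_remark}, so $\MA + \TP(\omega_2, \ge \omega_2)$ holds as well. The heart of the argument is the Todor\v{c}evi\'{c}-style arrangement of the $\MA$ iteration so that no iterand adds a cofinal branch to a Suslin tree; everything else is bookkeeping together with the invocations of Theorem~\ref{ccc_iteration_thm} and Remark~\ref{wsp_tp_remark}.
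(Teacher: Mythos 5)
Your proof is correct and takes the same approach as the paper: force over the Mitchell model $V^{\M(\omega,\kappa)}$ with the Devlin/Todor\v{c}evi\'{c}-style finite-support iteration of ccc posets of size at most $\aleph_1$ that never add cofinal branches to Suslin trees (specializing a targeted Suslin tree instead of forcing with it), obtain $\MA$ in the limit, and invoke Theorem~\ref{ccc_iteration_thm}; you additionally spell out the easy reduction from $\wSP_{\mc Y}$ to the unrestricted $\wSP$ and the passage to singular $\lambda$ via last-coordinate monotonicity, which the paper leaves implicit. One minor wording slip: being strongly $\omega_2$-slender is in fact a \emph{stronger} (more restrictive) condition on a list than being strongly $\omega_1$-slender, which is why fewer lists qualify and hence $\wSP(\omega_1,\omega_2,\lambda)\Rightarrow\wSP(\omega_2,\omega_2,\lambda)$ --- the implication you correctly draw.
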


\begin{proof}
  In $V$, let $\M := \M(\omega, \kappa)$. Now work in $V^{\M}$. In \cite{D:trees}, Devlin 
  constructed a finite-support iteration $\R$ of ccc forcings of size at most $\aleph_1$ that 
  do not add cofinal branches to any Suslin tree and such that $\Vdash_{\R} \MA$. Intuitively speaking, 
  $\R$ is the standard iteration to force $\MA$, except that, when one encounters a ccc forcing of size 
  $\aleph_1$ that adds a branch to a Suslin tree, one instead forces to specialize the Suslin tree; we 
  refer the reader to \cite[Section 3]{D:trees} (and also \cite{T:wKH}) for details. Then $\MA$ holds 
  in $V^{\M \ast \dot{\R}}$. By Theorem \ref{ccc_iteration_thm}, $\wSP(\omega_1, \omega_2, \geq 
  \omega_2)$, and hence $\TP(\omega_2, \geq \omega_2)$, holds in $V^{\M \ast \dot{\R}}$ as well.
\end{proof}

\begin{remark}
  As in Remark \ref{wsp_remark}, the strengthening of $\wSP(\omega_1, \omega_2, 
  \geq \omega_2)$ obtained in Theorem \ref{ccc_iteration_thm} 
  is enough to yield, for instance, the failure of 
  $\square(\lambda)$ for all regular $\lambda \geq \omega_2$.
\end{remark}

\section{The slender tree property and Kurepa trees} \label{kurepa_section}

In this section, we investigate the influence of various strong tree properties on the existence of 
(weak) Kurepa trees and prove Theorem D, in the process showing that, for instance, $\ISP(\omega_2, \omega_2, \geq \omega_2)$ 
does not imply $\SP(\omega_1, \omega_2, \vert H(\omega_2)\vert )$, i.e., the monotonicity in the first 
coordinate of these principles is in general strict. We first recall the notion of a \emph{(weak) 
$\mu$-Kurepa tree}.

\begin{definition}
  Let $\mu$ be a regular uncountable cardinal.
  \bce[(i)]
    \item A \emph{$\mu$-Kurepa tree} is a $\mu$-tree (i.e., a tree of height $\mu$, all of whose 
    levels have size less than $\mu$) with at least $\mu^+$-many cofinal branches.
    \item A \emph{weak $\mu$-Kurepa tree} is a tree of height and cardinality $\mu$ with at 
    least $\mu^+$-many cofinal branches. 
  \ece
\end{definition}

The following proposition is easily verified, so we leave its proof to the reader.

\begin{proposition} \label{cofinal_strong_club_prop}
  Let $\kappa \leq \theta$ be regular uncountable cardinals, and suppose that $S \subseteq 
  \power_\kappa H(\theta)$ is cofinal. Then the set of elements of $\power_\kappa H(\theta)$ that 
  can be written as unions of elements of $S$ is a strong club in $\power_\kappa H(\theta)$.
\end{proposition}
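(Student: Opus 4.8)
The plan is to verify directly that the set $U$ of all $M \in \power_\kappa H(\theta)$ that can be written as $\bigcup \mc A$ for some $\mc A \subseteq S$ is strongly closed and cofinal in $\power_\kappa H(\theta)$; both checks are immediate from the definitions, so the work consists only in laying out the bookkeeping.

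For cofinality, I would fix an arbitrary $x \in \power_\kappa H(\theta)$, use the cofinality of $S$ to choose $y \in S$ with $x \subseteq y$, and observe that $y = \bigcup \{y\}$ lies in $U$ and contains $x$; thus $U$ is cofinal.

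For strong closure, I would fix $D \subseteq U$ with $|D| < \kappa$, choose for each $M \in D$ a family $\mc A_M \subseteq S$ with $M = \bigcup \mc A_M$, and set $\mc A := \bigcup_{M \in D} \mc A_M$. Then $\mc A \subseteq S$ and $\bigcup D = \bigcup \mc A$, so $\bigcup D$ is a union of elements of $S$. Since $\kappa$ is regular, $|D| < \kappa$, and every element of $D$ has cardinality less than $\kappa$, it follows that $|\bigcup D| < \kappa$; as $\bigcup D \subseteq H(\theta)$, we conclude $\bigcup D \in \power_\kappa H(\theta)$, and hence $\bigcup D \in U$.

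The argument is entirely routine, so there is no genuine obstacle; the only point requiring care is the appeal to the regularity of $\kappa$ in the strong-closure step, which is exactly what guarantees that a union of fewer than $\kappa$-many sets, each of size less than $\kappa$, again has size less than $\kappa$. Note in particular that, unlike the situation for ordinary clubs, no assumption that $S$ (or its elements) be closed under intersections is needed.
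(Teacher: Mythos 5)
Your proof is correct, and since the paper leaves this proposition to the reader with no proof given, there is nothing to compare against other than the evident argument, which is exactly what you provide. Both the cofinality check (a single element of $S$ already witnesses membership in the set) and the strong-closure check (merge the witnessing families and invoke regularity of $\kappa$ to keep the union small) are complete and correctly justified, including the implicit $D = \emptyset$ case, where $\bigcup\emptyset = \emptyset = \bigcup\emptyset$ with $\emptyset \subseteq S$.
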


The following theorem is clause (1) of Theorem C.

\begin{theorem}\label{th:SP}
  Suppose that $\mu$ is a regular uncountable cardinal. If the principle $\wAGP(\mu, \mu^+, \mu^+)$ holds, 
  then there are no weak $\mu$-Kurepa trees.
\end{theorem}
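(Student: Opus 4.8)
Suppose for contradiction that $T$ is a weak $\mu$-Kurepa tree; by reorganizing we may assume the underlying set of $T$ is $\mu$ itself, that $T$ is a subtree of ${}^{<\mu}2$, and that $B$, the set of cofinal branches of $T$, has cardinality at least $\mu^+$. The idea is to use the branches of $T$ as a family of ``hard to guess'' subsets of $\mu$, so that a suitable guessing model would be forced to contain $\mu^+$-many of them, a contradiction with $|M| \le \mu$. Concretely, fix a regular $\theta$ large enough that $T, B \in H(\mu^+) \subseteq H(\theta)$ and work with $x := \mu$ (or $x := T$) as the set to be guessed.

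**Setting up $S$ and applying $\wAGP$.** Let $S$ be the set of $N \prec H(\theta)$ with $|N| < \mu^+$, $N \cap \mu^+ \in \mu^+$, and $T, B \in N$; this is cofinal (indeed contains a strong club by Proposition \ref{cofinal_strong_club_prop}), so $\wAGP(\mu, \mu^+, \mu^+)$ applies. It yields weakly stationarily many $M \in \power_{\mu^+} H(\theta)$ with $x \in M$ such that $(M,x)$ is almost $\mu$-guessed by $S$; pick one such $M$ that is also an elementary submodel of $H(\theta)$ containing $T$ and $B$, with $M \cap \mu^+ \in \mu^+$ (the set of such $M$ is a club intersected with the weakly stationary set). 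Let $\delta := M \cap \mu < \mu$. Since $|B| \ge \mu^+ > |M|$ and $M \prec H(\theta)$, there is a branch $c \in B \setminus M$. The key point is that $c$ (viewed as a subset of $\mu$, via $c \subseteq \mu$ using the identification of $T$ with a subtree of ${}^{<\mu}2$, or just as the branch itself coded as a subset of $T$) is $(\mu, M)$-approximated: for any $z \in M \cap \power_\mu(x)$, the set $z$ is bounded below some $\beta < \delta$ in the relevant sense (because $|z| < \mu$ and $M \cap \mu = \delta$, elementarity gives that the ``$\mu$-part'' of $z$ lies below $\delta$), and $c \cap z$ is determined by the single node $c\restriction\beta$ — but $T_\beta \subseteq M$ since $|T_\beta| < \mu \le \delta = M \cap \mu$ forces $T_\beta \in M$ and hence $T_\beta \subseteq M$ — so $c \cap z = e \cap z$ for a suitable $e \in M$ (namely $e$ is the branch/initial-segment data extending $c\restriction\beta$, or just $\{t \in T : t \le_T c\restriction\beta\} \in M$). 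Thus $c$ is $(\mu,M)$-approximated.

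**Deriving the contradiction.** By the almost-guessing property, there is $N \in S$ with $N \subseteq M$ and $c$ $N$-guessed: some $e \in N$ with $e \cap N = c \cap N$. Since $N \prec H(\theta)$ with $N \cap \mu^+ \in \mu^+$ and $T \in N$, elementarity forces $e$ to code a cofinal branch of $T$ — indeed $e \cap N$ determines $c$ on a cofinal subset $N \cap \mu$ of $\mu$ if $N \cap \mu$ is unbounded; but more carefully, since $N \cap \mu^+ \in \mu^+$ and $\mu \in N$, we get $N \cap \mu \in \mu$, say $N \cap \mu = \gamma < \mu$, and $c\restriction\gamma \in T_\gamma \subseteq N$. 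Then $e$ and the branch determined by $c\restriction\gamma$ agree on $N$, so by elementarity $e \in N$ is forced to be a branch extending $c\restriction\gamma$; running this argument with $N' \in S$, $N \subseteq N' \subseteq M$ larger shows $e$ must equal $c\restriction\gamma'$-data for arbitrarily large $\gamma' < \mu$, hence $e$ codes $c$ itself, giving $c \in N \subseteq M$ — contradicting $c \notin M$. The cleanest phrasing: since $(M, x)$ is almost $\mu$-guessed by $S$ and $c$ is $(\mu,M)$-approximated, for \emph{every} $N \in S$ with $N \subseteq M$ the restriction $c \cap N$ is in $N$; taking an increasing union of such $N$'s with union $M$ (possible as $S$ is cofinal and $M$ has size $\le \mu$) yields $c = c \cap M \in M$ by the elementarity/coherence of the guesses, the contradiction.

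**The main obstacle.** The delicate step is the last one: extracting $c \in M$ from the guesses on the submodels $N \subseteq M$. A guess $e \in N$ with $e \cap N = c \cap N$ need not itself be a branch, and different $N$'s could a priori give incoherent $e$'s, so one cannot naively take a union. The fix is to use that $T$ is a \emph{tree}: each $c \cap N$, together with $N \prec H(\theta)$ and $T \in N$, pins down a specific node $c\restriction(\mathrm{ht \ of \ } N)$ of $T$ lying in $N \subseteq M$, and these nodes cohere along an increasing chain of $N$'s exhausting $M$, so their union recovers $c\restriction(M \cap \mu)$, and then — since $M \cap \mu^+ \in \mu^+$ gives $M \cap \mu \in \mu$, so this initial segment is itself a node of $T$ in $M$ — elementarity of $M$ and the fact that $c$ is the unique branch through that node above height $M \cap \mu$... wait, that is not unique; rather, one observes $c\restriction(M\cap\mu) \in M$ already determines, together with any one further guess, enough to force $c \in M$ by elementarity, or alternatively one chooses $M$ from the start with $M \cap \mu$ cofinal-in-$\mu$ impossible since $|M \cap \mu| \le \mu$... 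The robust route is: genuinely take an $\subseteq$-increasing continuous chain $\langle N_i \mid i < \mu\rangle$ of elements of $S$ with $M = \bigcup_i N_i$ (using $|M| = \mu$ and cofinality of $S$), note each $c \cap N_i$ is computed inside $N_i$ from the node $c \restriction (\sup(N_i \cap \mu))$, these nodes increase, and their union is a branch $b \in M$ (as $M$ is closed under the relevant $\mu$-length union since $M \prec H(\theta)$ and the chain is in $M$), with $b = c$; hence $c \in M$, contradiction. Verifying this coherence carefully is where the real work lies.
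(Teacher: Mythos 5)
Your high-level strategy matches the paper's: use $\wAGP(\mu,\mu^+,\mu^+)$ with $x=\mu$, observe that any cofinal branch $c$ of $T$ (viewed as a subset of $\mu$) is $(\mu,M)$-approximated, conclude that every branch lies in the chosen model $M$, and contradict $|M|\le\mu$. The approximation argument (a small $z$ is captured by a single node of $T$ below $\mu$) is essentially the paper's argument as well.

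However, the proof as written contains a genuine error that derails the final step. You repeatedly assert things like ``$\delta:=M\cap\mu<\mu$'' and ``$N\cap\mu^+\in\mu^+$ and $\mu\in N$ give $N\cap\mu\in\mu$, say $N\cap\mu=\gamma<\mu$.'' This is false. Since $T\in N$ and the underlying set of $T$ is $\mu$, elementarity gives $\mu\in N$, and together with $N\cap\mu^+\in\mu^+$ this forces $\mu<N\cap\mu^+$, hence $\mu\subseteq N$. So $N\cap\mu=\mu$, not a smaller ordinal, and similarly $M\cap\mu=\mu$. (In particular, there is no ``$\delta<\mu$'' to speak of, and the later claim $|T_\beta|<\mu$, which is only true for a genuine $\mu$-Kurepa tree and not for a weak one, is irrelevant since $T\subseteq\mu\subseteq M$ anyway.)

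Because of this confusion, the contradiction step is flawed in several ways. Your ``cleanest phrasing'' asserts that $c\cap N\in N$ for \emph{every} $N\in S$ with $N\subseteq M$; the almost-guessing property only provides \emph{one} such $N$. The ``coherence of the guesses'' argument is vague and does not obviously work. The final ``robust route'' needs the $\mu$-length chain $\langle N_i\rangle$ to live inside $M$, which is not given ($M$ has cardinality $\mu$, so a $\mu$-length increasing union exhausting $M$ cannot be a member of $M$); it also becomes circular once one notices that $\sup(N_i\cap\mu)=\mu$, so the node ``$c\restriction\sup(N_i\cap\mu)$'' is just $c$ itself.

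Once you realize $\mu\subseteq N$, the paper's final step is immediate and you should replace your last paragraph with it: you have $N\in S$ with $N\subseteq M$ and $e\in N$ with $e\cap N=c\cap N$. Since $c\subseteq\mu\subseteq N$, we get $c\cap N=c$. Intersecting $e$ with $\mu$ (both in $N$) we may assume $e\subseteq\mu$, whence $e=e\cap\mu=e\cap N=c$. So $c=e\in N\subseteq M$, giving the desired contradiction. No coherence argument, no chain of models, and no discussion of heights of models is needed.
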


\begin{proof}
  Let $T$ be a tree of height and size $\mu$. We will show that $T$ has at most $\mu$-many cofinal 
  branches. Without loss of generality, assume that the underlying set of $T$ is 
  $\mu$. Let $S$ be the set of $N \in \power_{\mu^+} H(\mu^+)$ such that $N \prec H(\mu^+)$, 
  $T \in N$, and $\mu \subseteq N$. By $\wAGP(\mu, \mu^+, \mu^+)$, and recalling 
  Proposition \ref{cofinal_strong_club_prop}, we can find $M \in \power_{\mu^+} 
  H(\mu^+)$ such that $T \in M$, $\mu \subseteq M$, $M$ can be written as a union of elements of 
  $S$, and $(M, \mu)$ is almost $\mu$-guessed by $S$.
  
  \begin{claim} \label{branch_approximation_claim}
    Let $b \subseteq \mu$ be a cofinal branch in $T$. Then $b$ is $(\mu, M)$-approximated.
  \end{claim}
  
  \begin{proof}
    Fix a set $z \in M \cap \power_\mu \mu$, and consider the set $b \cap z$. 
    Since $M$ can be written as the union of elements of $S$, we can find $N \in S$ such that 
    $N \subseteq M$ and $z \in N$. Since $\vert z\vert  < \mu$, we can find $\gamma \in b$ such that 
    $\alpha <_T \gamma$ for all $\alpha \in b \cap z$. Then $b \cap z$ is definable in $N$ as 
    $\{\alpha \in z \mid \alpha <_T \gamma\}$. Therefore, $b \cap z \in N$, and, since 
    $N \subseteq M$, we have $b \cap z \in M$, as well. Therefore, $b$ is $(\mu, M)$-approximated.
  \end{proof}
  
  Fix a cofinal branch $b$ in $T$. By Claim \ref{branch_approximation_claim}, $b$ is 
  $(\mu, M)$-approximated. Therefore, since $(M, \mu)$ is almost $\mu$-guessed by $S$, we can 
  find $N \in S$ such that $N \subseteq M$ and $b$ is $N$-guessed, i.e., there is $d \in N$ 
  such that $d \cap N = b \cap N$. Since $b \subseteq \mu$, we can assume that $d \subseteq \mu$. 
  But then, since $\mu \subseteq N$, we must have $d = b$. Therefore, $b \in N$ and, since 
  $N \subseteq M$, we also have $b \in M$. Therefore, every cofinal branch of $T$ is in $M$, so, 
  since $\vert M\vert  \leq \mu$, there are at most $\mu$-many cofinal branches of $T$.
\end{proof}

We record the following immediate corollary to provide some additional context for Theorem 
\ref{th:SPconverse}.

\begin{corollary}
  If $\SP(\omega_1, \omega_2, \vert H(\omega_2)\vert )$ holds, then there is no weak $\omega_1$-Kurepa tree.
\end{corollary}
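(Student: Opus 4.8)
The plan is to reduce the corollary to Theorem~\ref{th:SP}, applied with $\mu = \omega_1$ (so that $\mu^+ = \omega_2$): it suffices to show that $\SP(\omega_1,\omega_2,|H(\omega_2)|)$ implies the single instance $\wAGP(\omega_1,\omega_2,\omega_2)$, after which Theorem~\ref{th:SP} gives the nonexistence of weak $\omega_1$-Kurepa trees directly.

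To obtain that implication I would argue in three short steps. First, relabel the index set: fixing a bijection between the cardinal $|H(\omega_2)|$ and the set $H(\omega_2)$ and invoking the invariance of the slender tree properties under such bijections (the discussion following Remark~\ref{tp_convention_remark}, which transports clubs, stationary sets, and $(\kappa,X)$-lists along the bijection), we get that $\SP(\omega_1,\omega_2,|H(\omega_2)|)$ is equivalent to $\SP_{\power_{\omega_2}H(\omega_2)}(\omega_1,\omega_2,H(\omega_2))$. Second, apply Theorem~\ref{slender_guessing_thm_1} with $\mu = \omega_1$, $\kappa = \theta = \omega_2$, and $\mc Y = \power_{\omega_2}H(\omega_2)$; its hypotheses hold, since $\omega_2$ is regular, $\mc Y$ is trivially stationary, and $\power_{\omega_1}H(\omega_2) \subseteq H(\omega_2)$, so that $H(\omega_2)$ is large enough to witness $\omega_1$-slenderness. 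This yields $\AGP(\omega_1,\omega_2,\omega_2)$. Third, since every strong club in $\power_{\omega_2}H(\omega_2)$ is in particular a club, every stationary subset of $\power_{\omega_2}H(\omega_2)$ is weakly stationary, so $\AGP(\omega_1,\omega_2,\omega_2)$ immediately upgrades to $\wAGP(\omega_1,\omega_2,\omega_2)$, completing the reduction.

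Each link in this chain is either one of the cited results or an elementary monotonicity remark, so I do not expect a genuine obstacle. The only point deserving a second look is the first step: one should check that the bijection-invariance principle for $\SP$ really does apply when one of the two index sets is an ordinal and the other is the transitive set $H(\omega_2)$, which it does, since that principle uses nothing about the index set beyond its being a set of cardinality at least $\kappa$.
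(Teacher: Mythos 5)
Your proposal is correct and takes essentially the same route as the paper's own (one-line) proof, which simply cites Theorem~\ref{slender_guessing_thm_1}, Theorem~\ref{th:SP}, and the definitions; you have merely made explicit the intermediate steps (the bijection-invariance of $\SP$, the application of Theorem~\ref{slender_guessing_thm_1} to get $\AGP(\omega_1,\omega_2,\omega_2)$, and the elementary observation that stationary implies weakly stationary so $\AGP$ implies $\wAGP$).
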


\begin{proof}
  This is immediate from the definitions, from Theorem \ref{slender_guessing_thm_1}, and from 
  Theorem \ref{th:SP}.
\end{proof}

In preparation for the proof of Theorem \ref{th:SPconverse}, we review the forcing for adding an $\omega_1$-Kurepa tree, which we denote $\K(\omega_1,\delta)$. We use the definition from \cite{C:trees}.

\begin{definition}
Let $\delta \geq \omega_2$ be an ordinal.
A condition $p$ in $\K(\omega_1,\delta)$ is a pair $(t_p,f_p)$ where the following hold:
\bce[(i)]
\item $t_p$ is a countable normal tree of successor height $\alpha_p+1$;
\item $f_p$ is a countable partial function from $\delta$ to $(t_p)_{\alpha_p}$.
\ece

 A condition $q$ is stronger than $p$ if the following hold:
\bce[(i)]
\item $\alpha_q\ge \alpha_p$ and $t_q\rest (\alpha_p +1)=t_p$;
\item $\dom {f_p}\sub\dom {f_q}$ and $f_p(\xi)\le_{t_q} f_q(\xi)$ for all $\xi \in \dom {f_p}$.
\ece
\end{definition}

Proofs of the following facts can be found in \cite{C:trees}:

\begin{fact}
The forcing $\K(\omega_1, \delta)$ is $\omega_1$-closed and it adds an $\omega_1$-Kurepa tree with $\vert \delta \vert$-many branches.
\end{fact}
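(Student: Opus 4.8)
The plan is to check the two assertions by the standard density and amalgamation arguments for tree-adding posets.

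\emph{$\omega_1$-closure.} Let $\langle p_n \mid n < \delta \rangle$ be $\leq_{\K}$-descending with $\delta < \omega_1$; the case where the heights $\alpha_{p_n}$ are eventually constant is trivial (take the union of the $f$-parts over a tail, which is again a countable partial function into the common top level), so I assume the $\alpha_{p_n}$ are cofinal in a countable limit ordinal $\alpha^*$. Put $t^* := \bigcup_{n < \delta} t_{p_n}$; by the coherence requirement $t_{p_n} = t_{p_m} \rest (\alpha_{p_n} + 1)$ for $n \leq m$, $t^*$ is a countable normal tree of height $\alpha^*$. I then build a lower bound $p = (t_p, f_p)$ of height $\alpha^* + 1$ by attaching a new top level: using normality of the $t_{p_n}$, for each node $s \in t^*$ fix a cofinal branch of $t^*$ through $s$, and for each $\xi \in \bigcup_{n < \delta} \dom{f_{p_n}}$ also fix the branch whose elements are the $f_{p_n}(\xi)$; let $t_p$ be $t^*$ together with one new node at level $\alpha^*$ above each of these (countably many) branches, and let $f_p(\xi)$ be the node sitting above the branch $\langle f_{p_n}(\xi) \rangle_n$. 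Then $t_p$ is again countable and normal, and $p \leq_{\K} p_n$ for every $n$: the first $\alpha_{p_n} + 1$ levels of $t_p$ are exactly $t_{p_n}$, and $f_{p_n}(\xi) \leq_{t_p} f_p(\xi)$ by construction.

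\emph{The generic Kurepa tree.} Let $G$ be $\K$-generic and set $T := \bigcup_{p \in G} t_p$ with its induced tree order. For each $\alpha < \omega_1$ the conditions of height greater than $\alpha$ are dense (a countable normal tree can always be end-extended by one level), so $T$ has height $\omega_1$; for $q \in G$ with $\alpha_q \geq \alpha$ the $\alpha$-th level of $T$ coincides with that of $t_q$ and so is countable; and $\omega_1$ is preserved since $\K$ is $\omega_1$-closed. Thus $T$ is an $\omega_1$-tree. For $\xi < \kappa$ set $b_\xi := \{ s \in T \mid \exists p \in G ~ (\xi \in \dom{f_p} \text{ and } s \leq_T f_p(\xi)) \}$. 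The set of conditions $p$ with $\xi \in \dom{f_p}$ is dense (given $p$, end-extend $t_p$ by one level and put $\xi$ into $\dom{f_p}$ with an arbitrary new top node as value), and the markers $f_p(\xi)$ for $p \in G$ are pairwise $\leq_T$-comparable, so $b_\xi$ is a cofinal branch of $T$. Finally, for distinct $\xi, \eta < \kappa$ the set of $p$ with $\xi, \eta \in \dom{f_p}$ and $f_p(\xi) \neq f_p(\eta)$ is dense: force both ordinals into the domain, and if $f_p(\xi) = f_p(\eta)$ then end-extend $t_p$ by a level in which that node has two successors and redirect $f_p(\xi)$ and $f_p(\eta)$ to distinct ones. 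Hence $b_\xi \neq b_\eta$, and $\langle b_\xi \mid \xi < \kappa \rangle$ witnesses $\kappa$-many cofinal branches through $T$. To conclude that $T$ is genuinely an $\omega_1$-Kurepa tree one also wants $\kappa$ to remain a cardinal $\geq \omega_2$ in $V[G]$; this is routine under the cardinal-arithmetic hypotheses present in the applications (e.g.\ $\kappa$ inaccessible, or $2^\omega < \kappa = \kappa^\omega$): a $\Delta$-system argument applied to conditions with isomorphic tree parts — there being only $2^\omega$ tree parts of any fixed countable height once the underlying set is normalized to a fixed countable set — gives the $\kappa^+$-c.c.\ for $\K$, while $\omega_1$-closure ensures no new reals are added.

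I expect the only genuinely delicate point to be the $\omega_1$-closure argument at limit stages: one must add a top node above \emph{every} node of $t^*$ (not merely above the branches tracked by the $f_{p_n}$) so that normality is retained, while simultaneously arranging that the top node placed above each branch $\langle f_{p_n}(\xi) \rangle_n$ serves as the common value $f_p(\xi)$. Once those choices are organized, verifying that the resulting single condition lies below all of the $p_n$ is immediate.
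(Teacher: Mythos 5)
Since the paper itself does not prove this Fact but refers the reader to Cummings for it, the relevant check is simply whether your argument is correct; it is, and it is the standard argument one finds in the literature. A few small points worth registering. In the closure step, your dichotomy ``heights eventually constant'' versus ``heights cofinal in some limit $\alpha^*$'' is the right one, but it is worth noting why the first case really is trivial: if $\alpha_{p_n}$ stabilizes at $\alpha$ then the tree parts $t_{p_n}$ stabilize as well (by coherence), and since any two markers $f_{p_n}(\xi) \leq_{t_{p_m}} f_{p_m}(\xi)$ are both at the fixed top level $\alpha$, they must be equal; so the marker parts stabilize pointwise and the union is a condition. At the limit stage, your tree $t^*$ is indeed a normal tree of limit height $\alpha^*$ (normality of each non-top level survives because each level $\alpha_{p_n}$, the top of $t_{p_n}$, is interior in $t_{p_m}$ for $m>n$ and so satisfies the splitting and extension clauses there), and your choice to add a top node above each branch $B_s$ (for $s \in t^*$) and each branch $\langle f_{p_n}(\xi)\rangle_n$, and only those, is exactly what is needed: extension to level $\alpha^*$ holds, distinct top nodes sit above distinct branches, and nothing extraneous is added. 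One should also notice that the branch $\langle f_{p_n}(\xi)\rangle_n$ is genuinely cofinal in $t^*$ because the domains $\dom{f_{p_n}}$ increase, so once $\xi$ enters the domain the markers are defined on a tail of the $n$'s, whose heights $\alpha_{p_n}$ are cofinal in $\alpha^*$. Your density arguments for height $\omega_1$, countable levels, the existence of the branches $b_\xi$, and their pairwise distinctness are all correct, and your closing remark about the chain condition — which is what keeps $\kappa \geq \omega_2$ in the extension, and hence is what makes $T$ honestly Kurepa — is exactly the content of the next Fact in the paper (the Knaster property of $\K$), so it is fine to treat it as separate.
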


\begin{fact}
Let $\mu>\omega$ be a regular cardinal such that $\gamma^\omega<\mu$ for all $\gamma<\mu$. Then the forcing $\K(\omega_1, \delta)$ is $\mu$-Knaster.
\end{fact}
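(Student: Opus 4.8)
The plan is to run a $\Delta$-system argument on the domains of the partial-function coordinates of conditions, supplemented by pigeonhole arguments on the tree coordinates, which live in $H(\omega_1)$. So suppose $\{p_i \mid i < \mu\} \subseteq \K$ is given; I want to extract $I \in [\mu]^\mu$ such that $\{p_i \mid i \in I\}$ is pairwise compatible. Note first that the hypothesis $\gamma^\omega < \mu$ for all $\gamma < \mu$ applied with $\gamma = 2$ gives $2^{\aleph_0} < \mu$, so $\mu > \aleph_1$.

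The first step is the $\Delta$-system lemma: since $\mu$ is regular uncountable and $\gamma^{\omega} < \mu$ for every $\gamma < \mu$ (equivalently $\gamma^{<\aleph_1} < \mu$), every family of $\mu$-many countable sets contains a $\Delta$-subsystem of size $\mu$. Applying this to $\{\dom(f_{p_i}) \mid i < \mu\}$ produces $I_0 \in [\mu]^\mu$ and a fixed countable set $r \subseteq \kappa$ with $\dom(f_{p_i}) \cap \dom(f_{p_j}) = r$ for all distinct $i, j \in I_0$. Next, since each height $\alpha_{p_i} + 1$ is a countable successor ordinal and $\omega_1 < \mu$, regularity of $\mu$ gives $I_1 \in [I_0]^\mu$ and a single $\alpha < \omega_1$ with $\alpha_{p_i} = \alpha$ for all $i \in I_1$. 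There are at most $2^{\aleph_0} < \mu$ countable normal trees of height $\alpha + 1$ (the trees appearing in conditions are built on a fixed underlying set of size $\aleph_1$, e.g.\ a countable ordinal), so a further refinement yields $I_2 \in [I_1]^\mu$ and a single tree $t$ with $t_{p_i} = t$ for all $i \in I_2$. Finally, each $f_{p_i} \restriction r$ is a function from the fixed countable set $r$ into the fixed countable set $t_\alpha$, so there are at most $2^{\aleph_0} < \mu$ possibilities, and a last refinement produces $I \in [I_2]^\mu$ and a fixed $f^\ast$ with $f_{p_i} \restriction r = f^\ast$ for all $i \in I$.

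It then remains to check that $\{p_i \mid i \in I\}$ is pairwise compatible, which is the routine part. Given distinct $i, j \in I$, set $t_q := t$ and $f_q := f_{p_i} \cup f_{p_j}$. Since $\dom(f_{p_i}) \cap \dom(f_{p_j}) = r$ and both $f_{p_i}, f_{p_j}$ agree with $f^\ast$ on $r$, the object $f_q$ is a well-defined countable partial function from $\kappa$ into $t_\alpha = (t_q)_{\alpha_q}$, so $q := (t_q, f_q) \in \K$. As $\alpha_q = \alpha = \alpha_{p_i}$, $t_q \restriction (\alpha_{p_i}+1) = t = t_{p_i}$, $\dom(f_{p_i}) \subseteq \dom(f_q)$, and $f_q$ extends $f_{p_i}$ (so trivially $f_{p_i}(\xi) \leq_{t_q} f_q(\xi)$ for every $\xi \in \dom(f_{p_i})$), we get that $q$ is stronger than $p_i$; symmetrically $q$ is stronger than $p_j$. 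Hence $p_i$ and $p_j$ are compatible.

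The only genuinely delicate point is the $\Delta$-system step, which is exactly where the arithmetic hypothesis $\gamma^\omega < \mu$ is used; all other refinements are elementary pigeonhole arguments exploiting $2^{\aleph_0} < \mu$ together with the countability of the tree levels $t_\alpha$. One should also double-check against Cummings' precise definition of $\K$ in \cite{C:trees} that the underlying sets of the trees $t_p$ range over a set of size $\leq \aleph_1$, which is what licenses fixing $t_{p_i} = t$; this is standard, but it is the one place where the argument leans on the specific presentation of the forcing.
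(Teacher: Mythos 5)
Your proof is correct and is the standard $\Delta$-system argument; the paper itself defers to Cummings \cite{C:trees} for the proof, and the argument there is essentially the one you give: extract a $\Delta$-system of size $\mu$ from the domains of the $f_p$-coordinates using the arithmetic hypothesis $\gamma^\omega<\mu$, then pigeonhole the tree coordinate (a hereditarily countable object, so at most $2^{\aleph_0}<\mu$ possibilities once the underlying sets are normalized to subsets of a fixed set of size $\leq\aleph_1$) and the restriction $f_p\restriction r$ to the root, and finally observe that any two surviving conditions amalgamate via $(t, f_{p_i}\cup f_{p_j})$. One small stylistic point: the phrase ``a fixed underlying set of size $\aleph_1$, e.g.\ a countable ordinal'' reads a bit awkwardly, since a countable ordinal has size $\aleph_0$; what you presumably mean, and what the counting needs, is that the underlying sets of the trees $t_p$ are all countable subsets of one fixed set of size $\leq\aleph_1$ (say $\omega_1$), which indeed gives at most $\omega_1^{\aleph_0}\cdot 2^{\aleph_0}=2^{\aleph_0}<\mu$ trees per fixed height. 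Aside from that phrasing, the verification of the $\Delta$-system hypothesis ($\gamma^{<\omega_1}=\gamma^{\aleph_0}$ for infinite $\gamma$) and the compatibility check are both carried out correctly.
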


If $\delta<\delta'$, we can define a projection $\pi:\K(\omega_1,\delta')\then\K(\omega_1,\delta)$ by $\pi(t,f)=(t,f\rest\delta)$. If $H$ is $\K(\omega_1,\delta)$-generic, $\pi$ determines the quotient forcing $\K(\omega_1,\delta')/H=\set{p\in \K(\omega_1,\delta')}{\pi(p)\in H}$. Note that the tree $T=\bigcup\set{t}{(t,f)\in H}$ is already added by $H$: the quotient $\K(\omega_1,\delta')/H$ just adds new cofinal branches to $T$. Working in $V[H]$, it is easy to verify that the quotient $\K(\omega_1,\delta')/H$ is forcing equivalent to the following forcing which we denote $\K(\omega_1,\delta'\setminus\delta)$. A condition in $\K(\omega_1,\delta'\setminus\delta)$ is a pair $p=(\alpha_p,f_p)$, where $\alpha_p<\omega_1$ and $f_p$ is a countable partial function from $\delta'\setminus\delta$ to $T_{\alpha_p}$. We say that $q$ is stronger than $p$ if $\alpha_p\le \alpha_q$, $\dom{f_p}\sub\dom {f_q}$ and $f_p(\xi)\le_T f_q(\xi)$ for all $\xi\in\dom{ f_p}$.

\begin{theorem}\label{th:SPconverse}
Let $\kappa$ be a supercompact cardinal, and assume that $\GCH$ holds above $\kappa$. Then there is a generic extension where the following hold:
\bce[(i)]
\item $2^\omega=\kappa=\omega_2$;
\item there is an $\omega_1$-Kurepa tree;
\item $\ISP(\omega_2,\omega_2,\geq \omega_2)$ holds.

\ece
\end{theorem}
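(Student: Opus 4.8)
The plan is to build on Cummings' construction from \cite{C:trees}, which shows that the tree property at $\omega_2$ is consistent with an $\omega_1$-Kurepa tree, and to upgrade the tree property to $\ISP(\omega_2,\omega_2,\geq\omega_2)$ by feeding the given $\omega_2$-slender lists into the same supercompactness lift. Concretely, we force with the variant $\P$ of Mitchell forcing $\M(\omega,\kappa)$ from \cite{C:trees}, in which a generic $\omega_1$-Kurepa tree $\dot T$ is added alongside the collapse of $\kappa$ to $\omega_2$ by threading $\K(\omega_1,\cdot)$-style side conditions through a club of coordinates in the style of the ``$\Add(\omega_1,1)^{V^{\P_\alpha}}$'' coordinates in the definition of $\M$. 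The first task is to record, for $\P$, the analogues of the properties in Remark \ref{mitchell_remark}: $\P$ is $\omega$-closed and $\kappa$-Knaster (so preserves $\kappa$ and all cardinals ${\geq}\kappa$), has the $\omega_1$-covering property (so preserves $\omega_1$), projects onto $\M(\omega,\kappa)$ (so $2^\omega=2^{\omega_1}=\kappa$, which with $\GCH$ above $\kappa$ gives conclusion (i)), and, for every inaccessible $\alpha\in\acc^+(A)$, projects onto $\P_\alpha$ with a quotient that has the $\omega_1$-covering property and decomposes as in items (\ref{quotient_approx_prop}) and (\ref{quotient_product_prop}) of Remark \ref{mitchell_remark} — with the $\K(\omega_1,\cdot)$-continuation, being $\omega_1$-closed, absorbed into the ``closed part''. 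Crucially, the quotients have the \emph{$\omega_2$-approximation property} but \emph{not} the $\omega_1$-approximation property; the failure of the latter (a cofinal branch of $\dot T$ is $\omega_1$-approximated by $V$ but not in $V$) is forced on us by clause (1) of Theorem C, and is precisely what will cause the argument below to land at $\omega_2$ rather than $\omega_1$.

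Conclusion (ii) is then immediate: the Kurepa component of the generic is a normal tree $T$ of height $\omega_1$ each of whose initial segments $T\restriction\alpha$ is decided by a single condition, hence lies in $V$ and is countable, and which carries $\kappa$-many distinct cofinal branches; since $\omega_1$ is preserved and $\kappa=\omega_2$ in $V^{\P}$, $T$ is an $\omega_1$-Kurepa tree. By the corollary to Theorem \ref{th:SP} (a weak $\omega_1$-Kurepa tree would contradict $\SP(\omega_1,\omega_2,|H(\omega_2)|)$), this already shows that $\SP(\omega_1,\omega_2,|H(\omega_2)|)$ will fail in $V^{\P}$, so that the monotonicity in the first coordinate is strict.

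For conclusion (iii), fix a regular $\lambda\geq\kappa$ (the general case reducing to this one), let $G$ be $\P$-generic, and let $D=\langle d_x\mid x\in\power_\kappa\lambda\rangle$ be an $\omega_2$-slender $(\kappa,\lambda)$-list in $V[G]$; we must produce a $\power_\kappa\lambda$-ineffable branch, and the argument is parallel to the proof of Theorem \ref{strongly_compact_thm_2} with the parameter $\mu^+$ there replaced by $\kappa=\omega_2$. Using $\kappa$-Knasterness of $\P$ we arrange, on a club of models, that $d_x\in V[G_{\kappa_M}]$ for $x\in M\cap\power_\kappa\lambda$. Let $j:V\to N$ be a $\lambda$-supercompactness embedding with critical point $\kappa$ and $j``\lambda\in N$; then $\P$ is a complete suborder of $j(\P)$ via the Step-1 projection, so we take $g$ to be $j(\P)/G$-generic over $V[G]$ and lift to $j:V[G]\to N[G][g]$. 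Choosing, by a Menas-function argument as in Theorem \ref{strongly_compact_thm_2}, a set $w\in\power_{j(\kappa)}j(\lambda)$ with $j``\lambda\subseteq w$ and $w=\bar N\cap j(\lambda)$ for some $\bar N$ in the image of the $\omega_2$-slenderness club, and writing $j(D)=\langle d'_z\mid z\in\power_{j(\kappa)}j(\lambda)\rangle$, set $b:=\{\alpha<\lambda\mid j(\alpha)\in d'_w\}$. Exactly as in Theorem \ref{strongly_compact_thm_2}, the $\omega_2$-slenderness of $j(D)$ witnessed at $\bar N$, together with $\kappa$-covering of $\P$ and the fact that $j(\kappa)\cap\bar N=\kappa$ is inaccessible in $V$, shows that $b\cap u\in V[G]$ for every $u\in(\power_\kappa\lambda)^{V[G]}$, i.e., $b$ is $\kappa$-approximated by $V[G]$; since $j(\P)/G$ has the $\omega_2$-approximation property over $V[G]$, it follows that $b\in V[G]$. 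A now-routine elementarity computation (as in Theorem \ref{strongly_compact_thm_2} and Proposition \ref{guessing_isp_prop_2}), using that $w$ witnesses the relevant existentials, shows that $b$ is a cofinal branch of $D$ and indeed a $\power_\kappa\lambda$-ineffable branch: if a club $E\in V[G]$ were disjoint from $\{x:b\cap x=d_x\}$, then $w\in j(E)\cap j(\{x:b\cap x=d_x\})$, a contradiction.

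The main obstacle is the structural input of Step 1, specifically the verification that $\P$ and its quotients (over initial segments, and $j(\P)/G$) enjoy the $\omega_2$-approximation property while adding the Kurepa tree; this is the content of Cummings' analysis in \cite{C:trees} and is carried out by decomposing the quotients, via the projections of the $\M$-analogue of items (\ref{quotient_approx_prop})–(\ref{quotient_product_prop}) of Remark \ref{mitchell_remark}, into a Cohen factor $\Add(\omega,\cdot)$ — which adds no cofinal branch to any $\omega_2$-Suslin tree, so that Lemma \ref{suslin_preservation_lemma} rules out its adding a branch over any $\kappa$-directed poset — and an $\omega_1$-closed factor absorbing the $\K(\omega_1,\cdot)$-continuation, handled by Lemma \ref{closed_preservation_lemma} with $\mu=\omega$ (the Cohen factor being used, as in the proof of Theorem \ref{ccc_iteration_thm}, to push $2^\omega$ above the width of the relevant trees). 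Granting this, the three conclusions follow; and since in $V^{\P}$ we have $\GMP(\omega_2,\omega_2,\theta)$ for all regular $\theta$ (by the $\ISP$–$\GMP$ equivalence) while $\GMP(\omega_1,\omega_2,\theta)$ fails (by clause (1) of Theorem C and the Kurepa tree), a pressing-down argument yields stationarily many $\omega_2$-guessing models in $\power_{\omega_2}H(\theta)$ that are not $\omega_1$-guessing models, answering Viale's question.
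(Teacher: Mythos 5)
Your high-level strategy — lift a supercompactness embedding through a Mitchell--Kurepa forcing and pull back the branch via an approximation argument — is the same as the paper's, but there are two genuine gaps, and also a structural difference worth flagging.

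The structural difference first: the paper does not build an interleaved Mitchell--Kurepa hybrid. It simply forces with the \emph{product} $\M(\omega,\kappa)\times\K(\omega_1,\kappa)$. This matters, because the lift is then done in two clean stages through $Q_\M := j(\M)/\M$ and $Q_\K := j(\K)/\K$, and each of those factors can be analyzed with the tools already in hand. Your ``threading $\K$-style side conditions through the coordinates of $\M$'' is a different poset, and you would owe the reader the analogues of all of Remark~\ref{mitchell_remark} for it; none of that is automatic.

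The first genuine gap is in the ineffability step. You import the Menas-function argument from Theorem~\ref{strongly_compact_thm_2} and choose $w\in\power_{j(\kappa)}j(\lambda)$ with $j``\lambda\subseteq w$, possibly strictly, and then define $b:=\{\alpha<\lambda\mid j(\alpha)\in d'_w\}$. For a \emph{cofinal} branch this is fine, which is why it works in Theorem~\ref{strongly_compact_thm_2}, which only proves a $\wSP$ statement. But your ineffability argument needs $w\in j(\{x\mid b\cap x=d_x\})$, i.e.\ $j(b)\cap w=d'_w$. The definition of $b$ only gives $j(b)\cap j``\lambda=j``b=d'_w\cap j``\lambda$, and you have no control over $j(b)$ (or over $d'_w$) on $w\setminus j``\lambda$. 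The Menas-function trick is precisely the device one uses when $j``\lambda$ is \emph{not} available; here $j$ is a supercompactness embedding, $j``\lambda\in N$, and the paper just takes $w=j``\lambda$ (equivalently $j``H(\lambda)$, which lies in $j(C)$ for every club $C$ by normality), from which $j``b=d'_{j``\lambda}$ and hence $j``\lambda\in j(S)$ follows immediately. With a strictly larger $w$ the ineffability conclusion does not follow.

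The second gap is in the quotient analysis. You propose to treat the Kurepa continuation $\K(\omega_1, j(\kappa)\setminus\kappa)$ as part of the ``$\omega_1$-closed factor'' and feed it to Lemma~\ref{closed_preservation_lemma}. But that lemma needs its $\Q$ to actually be $\mu^+$-closed in the relevant model, and after the Mitchell forcing has added Cohen reals, $Q_\K$ is only $\omega_1$-\emph{distributive} in $M[G][H]$, not $\omega_1$-closed — there are new $\omega$-sequences of conditions with no lower bound in sight. The paper handles $Q_\K$ by a completely different mechanism: it shows $Q_\K$ is $\omega_2$-\emph{Knaster} in $M[G][H]$ (Knaster in $M[H]$ by a $\Delta$-system argument, and Knasterness is preserved by the $\kappa$-Knaster $\M$), hence has the $\omega_2$-approximation property, hence cannot add $b$. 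Lemma~\ref{closed_preservation_lemma} is then applied only to the genuinely $\omega_1$-closed factor $\Q_\kappa$ coming from the product analysis of the Mitchell quotient $\M^\kappa$, after first absorbing the Cohen factors $\Add(\omega,\kappa^\dagger)$ and $\Add(\omega,j(\kappa)-\kappa^\dagger)$ (which are $\omega_1$-Knaster, so cannot add the branch by Lemma~\ref{suslin_preservation_lemma}) and using them to push $2^\omega$ above the width of the relevant $\Lambda$-tree. Relatedly, your summary claim that ``$j(\P)/G$ has the $\omega_2$-approximation property over $V[G]$'' is not something the paper establishes, nor does the argument need it; the paper proves a targeted branch-preservation result for $b$ by passing through the decomposition factor by factor, which is both all that is required and all that the machinery delivers.
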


\begin{proof}
The generic extension is obtained by forcing with a product of the Mitchell forcing and the forcing for adding an $\omega_1$-Kurepa tree: $\M(\omega,\kappa)\times\K(\omega_1,\kappa)$, which we denote by $\M\times\K$ for simplicity. Let $G\times H$ be $\M\times \K$-generic over $V$. The Mitchell forcing preserves $\omega_1$ and all cardinal greater or equal to $\kappa$ and forces $2^\omega=\kappa=\omega_2$. Since $\K$ is $\kappa$-Knaster in $V$, $\K$ is still $\kappa$-Knaster in $V[G]$ and therefore it preserves all cardinals greater or equal to $\kappa$. It also preserves $\omega_1$ over $V[G]$, since it is $\omega_1$-closed in $V$ and therefore it is $\omega_1$-distributive in $V[G]$ by the standard product analysis of the Mitchell forcing $\M$ (for more details see \cite[Lemma 1.10]{HS:tp}). This finishes the proof of (i).

There is an $\omega_1$-Kurepa tree $T$ in $V[H]$ with $\kappa$-many cofinal branches by definition of $\K$. Since $\omega_1$ is preserved by $\K\times\M$, $T$ is still $\omega_1$-tree with $\kappa=\omega_2$-many cofinal branches in $V[H][G]=V[G][H]$.

Now fix $\lambda > \kappa$; by our hypotheses, we can assume that $\lambda^{<\lambda} = \lambda$. To show that  $\ISP(\kappa,\kappa,\lambda)$ holds in $V[G][H]$ we use the lifting argument and analysis of the quotient as in \cite{C:trees}. Let \begin{equation}\label{eq:j} j: V\then M \end{equation} be a supercompact elementary embedding with critical point $\kappa$ given by a normal ultrafilter $U$ on $\power_\kappa(\lambda)$; i.e. $M\iso \Ult(V,U)$. 
We lift the elementary embedding $j$ in two steps. Note that $j(\M(\omega,\kappa))=\M(\omega,j(\kappa))$ and recall that there is a projection from $\M(\omega,j(\kappa))$ to $\M(\omega,\kappa)$. Let us denote the quotient $\M(\omega,j(\kappa))/G$ by $Q_\M$ and let $g$ be $Q_\M$-generic over $V[G][H]$. Then we can lift in $V[G][g]$ the embedding to $j:V[G]\then\M[G][g]$. Now $j(\K(\omega_1,\kappa))=\K(\omega_1,j(\kappa))$ and there is a projection from $\K(\omega_1,j(\kappa))$ to $\K(\omega_1,\kappa)$.  Let us denote the quotient $\K(\omega_1,j(\kappa))/G$ by $Q_\K$ and let $h$ be $Q_\K$-generic over $V[G][H][g]$. We can lift the embedding in $V[G][H][g][h]$ further to \begin{equation}\label{eq:lift} j:V[G][H]\then M[G][H][g][h].\end{equation} 

Let $D=\seq{d_x}{x\in\power_\kappa(\lambda)^{V[G][H]}}$ be a $\kappa$-slender list in $V[G][H]$. We want to show that there is an ineffable branch $b$ through $D$ in $V[G][H]$.

Let us consider the image of $D$ under $j$: $$j(\seq{d_x}{x\in\power_\kappa(\lambda)^{V[G][H]}})=\seq{d'_y}{y\in\power_{j(\kappa)}(j(\lambda))^{M[G][H][g][h]}}.$$ The set $j''\lambda$ is a subset of $j(\lambda)$ of size $<\! j(\kappa)$. Noting that $j''\lambda \in M$, we define $b \subseteq \lambda \in 
M[G][H][g][h]$ by letting $b = \{\alpha < \lambda \mid j(\alpha) 
\in d'_{j''\lambda}\}$. It remains to show that $b \in V[G][H]$ and is an ineffable 
branch through $D$.

Before we prove the following claim, let us state some simple properties of the lifted embedding $j$. Since we assume $\lambda^{<\lambda}=\lambda$, $\vert H(\lambda)\vert =\lambda$, and this still holds in $V[G][H]$, i.e.\ $\vert H(\lambda)^{V[G][H]}\vert =\lambda$. In (\ref{eq:lift}) we lifted an embedding generated by a normal ultrafilter $U$ on $\power_\kappa(\lambda)$ and therefore $j''\lambda$ is an element of $j(C)$ for every club $C$ in $\power_\kappa(\lambda)^{V[G][H]}$; since there is a bijection between $\lambda$ and $H(\lambda)^{V[G][H]}$, there is a correspondence between $\power_\kappa\lambda^{V[G][H]}$ and $\power_\kappa H(\lambda)^{V[G][H]}$, and in particular it holds for every club $C$ in $V[G][H]$ in $\power_\kappa H(\lambda)^{V[G][H]}$ that \begin{equation}\label{eq:H} j''H(\lambda)^{V[G][H]}\in j(C).\end{equation}

\begin{claim}\label{claim:rest}
For each $x\in\power_\kappa\lambda^{V[G][H]}$, $b \cap x\in M[G][H]$.
\end{claim}

\begin{proof}
By slenderness of $D$, we can fix a club $C$ in $\power_\kappa H(\lambda)^{V[G][H]}$ such that for every $N \in C$, $N \el H(\lambda)^{V[G][H]}$, and for every $x \in N$ of size $\omega_1$, we have $d_{N \cap \lambda} \cap x \in N$.

Let $x\in\power_\kappa\lambda^{V[G][H]}$ be arbitrary. By the $\kappa$-cc of the whole forcing, it holds that $$x\in H(\lambda)^{V[G][H]}=H(\lambda)^{M[G][H]}$$ and therefore $j''x=j(x)\in j'' H(\lambda)^{M[G][H]}$. Let us denote $H(\lambda)^{M[G][H]}$ by $N$. Notice that $j''N$ is an elementary submodel of $j(H(\lambda)^{V[G][H]})$ by a general model-theoretic argument, or by invoking (\ref{eq:H}). Since $j'' H(\lambda)^{M[G][H]}\in j(C)$, we have 
\[
  d'_{(j''N)\cap j(\lambda)} \cap j''x=d'_{j''\lambda} \cap j''x\in j''N.
\] 
It follows that there is $y \subseteq x$ in $N \sub M[G][H]$ such that 
$j(y) = j''y = d'_{j''\lambda} \cap j''x$.
By the definition of $b$, we have $y = b \cap x$, and the proof is finished.
\end{proof}

We can now argue that $b$ is a \emph{cofinal} branch through $D$. To this 
end, fix $x\in\power_\kappa\lambda^{V[G][H]}$. Then, in 
$M[G][H][g][h]$, there is $z \in \power_{j(\kappa)}(j(\lambda))^{M[G][H][g][h]}$ 
such that $z \supseteq j(x)$ and $j(b \cap x) = d'_z \cap j(x)$, namely $z = 
j''\lambda$. By elementarity, there is $z \in \power_\kappa\lambda^{V[G][H]}$ 
such that $z \supseteq x$ and $b \cap x = d_z \cap x$, so $b$ is indeed a cofinal 
branch.

\begin{claim}
$b$ is in $M[G][H]$.
\end{claim}

\begin{proof}
We show that $b$ -- which is approximated in $M[G][H]$ on sets in $\power_\kappa\lambda^{V[G][H]} = \power_\kappa\lambda^{M[G][H]}$ by Claim \ref{claim:rest} -- cannot be added by $Q_\K\times Q_\M$ over $M[G][H]$ and therefore it is already in $M[G][H]$. 

We first argue that $Q_\K$ is $\omega_1$-distributive and $\omega_2$-Knaster over $M[G][H]$. In $M$, $j(\K)\simeq \K*Q_\K$ is $\omega_1$-closed and therefore by the product analysis of $\M$, it is $\omega_1$-distributive in $M[G]$ and therefore $Q_\K$ is $\omega_1$-distributive in $M[G][H]$. Regarding the $\omega_2$-Knaster property, first note that $Q_\K$ is $\kappa$-Knaster in $M[H]$ since $Q_\K$ is forcing equivalent to $\K(\omega_1, j(\kappa)\setminus\kappa)$ and this forcing is $\kappa$-Knaster by a standard $\Delta$-system argument (for more details see \cite{C:trees}). Since $\M$ is $\kappa$-Knaster in $M$ it is still $\kappa$-Knaster in $M[H]$ and therefore $Q_\K$ is $\kappa$-Knaster in $M[H][G]=M[G][H]$. Since $Q_\K$ is $\kappa$-Knaster in $M[G][H]$ it has the $\kappa=\omega_2$-approximation property and cannot add $b$ over $M[G][H]$.

Now, we will show that over $M[G][H][h]$, $Q_\M$ also cannot add $b$. In $M[G][H][h]$, $\kappa=\aleph_2=2^\omega$ and $Q_\M$ is forcing equivalent to $\Add(\omega,\kappa^\dagger)*\M^\kappa$, where $\kappa^\dagger$ is the first inaccessible above $\kappa$. The branch $b$ cannot be added by $\Add(\omega,\kappa^\dagger)$ since this forcing is $\omega_1$-Knaster. 

In $M[G][H][h][\Add(\omega,\kappa^\dagger)]$, $2^\omega = \kappa^\dagger > (\omega_2)^{V[G][H]}$. Moreover, in $V[G][H]$, for all $x \in \power_\kappa\lambda$, we 
have $\vert\{d_z \cap x \mid z \in \power_\kappa\lambda\}\vert 
\leq \vert \power(x) \vert \leq 2^{\omega_1}$, and hence $D$ is a list with width 
at most $((2^{\omega_1})^+)^{V[G][H]}) = (\omega_3)^{V[G][H]}$. By standard product analysis (see Section \ref{mitchell_section}) $\M^\kappa$ is a projection of a product of the form $\Add(\omega,j(\kappa))\times \Q_\kappa$, where $\Q_\kappa$ is $\omega_1$-closed. Again the branch $b$ cannot be added by $\Add(\omega,j(\kappa))$ since this forcing is $\omega_1$-Knaster. Since $D$ has width $(\omega_3)^{V[G][H]}<2^\omega$ in $M[G][H][h][\Add(\omega,\kappa^\dagger)]$, we can apply Lemma \ref{closed_preservation_lemma} to $\Add(\omega,j(\kappa))$ as $\P$ and $\Q_\kappa$ as $\Q$ over the model $M[G][H][h][\Add(\omega,\kappa^\dagger)]$. Therefore the forcing $\Q_\kappa$ cannot add $b$ over the model $M[G][H][h][\Add(\omega,\kappa^\dagger)][\Add(\omega,j(\kappa))]$, hence the cofinal branch $b$ cannot be added by $\M^\kappa$ over $M[G][H][h][\Add(\omega,\kappa^\dagger)]$.
\end{proof}

\begin{claim}
The function $b$ is an ineffable branch in $D$.
\end{claim}

\begin{proof}
We need to show that the set $S=\set{x\in\power_\kappa(\lambda)}{b\cap x= d_x}$ is stationary, hence it is enough to show that $j''\lambda$ is in $j(S)=\set{y\in\power_{j(\kappa)}(j(\lambda))}{j(b) \cap y= d'_y}$. However this follows from the definition of $b$: $j(b)\cap j''\lambda=j''b=d'_{j''\lambda}$.
\end{proof}
This finishes the proof of Theorem \ref{th:SPconverse}.
\end{proof}

In \cite[Question 2]{viale_guessing_models}, Viale asks whether it is consistent 
that there exist $\mu$-guessing models that are not $\omega_1$-guessing for some 
$\mu > \omega_1$. A positive answer is given by Hachtman and Sinapova in 
\cite[Corollary 4.6]{hachtman_sinapova_super_tree_property}; they prove that, 
if $\mu$ is a singular limit of $\omega$-many supercompact cardinals, then, for 
every sufficiently large regular cardinal $\theta$, there are stationarily 
many $\mu^+$-guessing models in $\power_{\mu^{+}}H(\theta)$ that are not 
$\delta$-guessing for any $\delta \leq \mu$. Theorem \ref{th:SPconverse} 
gives a different path to a positive answer, at a smaller cardinal and from 
weaker large cardinal assumptions.

\begin{corollary}
  Let $\kappa$ be a supercompact cardinal. There is a generic extension in which, 
  for all regular $\theta \geq \omega_2$, there are stationarily many 
  $\omega_2$-guessing models $M \in \power_{\omega_2} H(\theta)$ that are not 
  $\omega_1$-guessing.
\end{corollary}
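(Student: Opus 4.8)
The plan is to read the corollary off from Theorem~\ref{th:SPconverse} together with Corollary~\ref{isp_guessing_cor} and a direct argument showing that an $\omega_1$-Kurepa tree prevents a model of size $\omega_1$ from being $\omega_1$-guessing. First I would work in the generic extension $V[G][H]$ furnished by Theorem~\ref{th:SPconverse}: there $2^\omega = \kappa = \omega_2$, there is an $\omega_1$-Kurepa tree $T$, and $\ISP(\omega_2,\omega_2,\geq\omega_2)$ holds. By Corollary~\ref{isp_guessing_cor}, $\GMP(\omega_2,\omega_2,\geq\omega_2)$ holds as well, so for every regular $\theta\geq\omega_2$ the set $\mc G_{\omega_2,\omega_2}H(\theta)$ of $\omega_2$-guessing models is stationary in $\power_{\omega_2}H(\theta)$. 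Fixing such a $\theta$ and relabelling so that the underlying set of $T$ is an ordinal $\leq\omega_1$, I would intersect with the club of $M\in\power_{\omega_2}H(\theta)$ such that $T\in M$ and $\omega_1\subseteq M$ (so in particular $T\subseteq M$), obtaining stationarily many $\omega_2$-guessing models $M$ with $T\in M$, $T\subseteq M$, and $|M|\leq\omega_1$.

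The main content is then the claim that no such $M$ is an $\omega_1$-guessing model. Since $|M|\leq\omega_1$ and $T$ has at least $\omega_2$ cofinal branches, I would fix a cofinal branch $b$ of $T$ with $b\notin M$; note $b\subseteq T\subseteq M$, so $b\cap M = b$. I would then check that $b$, viewed as a subset of $T\in M$, is $(\omega_1,M)$-approximated but not $M$-guessed. For approximation: given a countable $z\in M$ with $z\subseteq T$, set $\alpha := \sup\{\lev_T(s)+1 \mid s\in z\} < \omega_1$, so $\alpha\in M$; the level $T_\alpha$ of $T$ lies in $M$ and is contained in $M$, hence the unique node $t$ of $b$ in $T_\alpha$ lies in $M$, and $b\cap z = \{s\in z\mid s <_T t\}$ is definable from parameters in $M$, so $b\cap z\in M$ and we may take $e := b\cap z$. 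For non-guessing: if some $e\in M$ had $e\cap M = b\cap M = b$, then $e\cap T\in M$ by elementarity, and since $e\cap T\subseteq T\subseteq M$ we get $e\cap T = (e\cap T)\cap M = (e\cap M)\cap T = b\cap T = b$, so $b\in M$, a contradiction. Thus $M$ is not an $\omega_1$-guessing model for $T$, hence not an $\omega_1$-guessing model, and the corollary follows.

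The step I expect to need the most care is the verification that $b$ is $(\omega_1,M)$-approximated, and more precisely the observation that the relevant initial node $t$ of the branch really is an element of $M$; this is exactly where one uses $\omega_1\subseteq M$ together with the fact that $T$ (equivalently, each of its levels) is small enough to be contained in $M$. The non-guessing half and the stationarity bookkeeping are routine. I would close by remarking that the argument is quite general: it shows that in any model of $\GMP(\omega_2,\omega_2,\geq\omega_2)$ in which a weak $\omega_1$-Kurepa tree exists, there are stationarily many $\omega_2$-guessing models that fail to be $\omega_1$-guessing, which is the promised alternative answer to Viale's question, obtained at $\omega_2$ and from only a supercompact cardinal.
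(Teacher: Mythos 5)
Your proof is correct and takes essentially the same two-step approach as the paper: use $\ISP(\omega_2,\omega_2,\geq\omega_2)$ together with Corollary~\ref{isp_guessing_cor} to get stationarily many $\omega_2$-guessing models, and use the $\omega_1$-Kurepa tree to rule out $\omega_1$-guessing. The one place you diverge is in the second step: the paper deduces nonstationarity of the $\omega_1$-guessing models by citing Theorem~\ref{th:SP} (whose contrapositive says the Kurepa tree kills $\wAGP(\omega_1,\omega_2,\omega_2)$), whereas you give a short self-contained argument, fixing a club of $M$ with $T\in M$ and $\omega_1\subseteq M$ and showing directly that any cofinal branch $b\notin M$ is $(\omega_1,M)$-approximated but not $M$-guessed. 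Your direct route is a legitimate and arguably cleaner alternative: it avoids routing through the $\wAGP$-machinery entirely, makes explicit which club witnesses the nonstationarity (for every $\theta$ simultaneously), and isolates exactly where the hypotheses $T\in M$, $\omega_1\subseteq M$ are used (to put each level $T_\alpha$ and hence each initial node of $b$ inside $M$). In exchange you give up the modularity of the paper's argument, which reuses the already-proved Theorem~\ref{th:SP} and thereby also covers the weak Kurepa case without further work, but your closing remark (that the argument works whenever $\GMP(\omega_2,\omega_2,\geq\omega_2)$ coexists with a weak $\omega_1$-Kurepa tree) recovers the same generality.
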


\begin{proof}
  Let $W$ be the generic extension witnessing the conclusion of Theorem 
  \ref{th:SPconverse}.
  Since $\ISP(\omega_2, \omega_2, \lambda)$ holds for all $\lambda \geq 
  \omega_2$, Corollary \ref{isp_guessing_cor} implies that there are stationarily 
  many $\omega_2$-guessing models in $\power_{\omega_2} H(\theta)$ for all 
  regular $\theta \geq \omega_2$. On the other 
  hand, there is an $\omega_1$-Kurepa tree in $W$, so Theorem \ref{th:SP} 
  implies that, for all regular $\theta \geq \omega_2$, the set of 
  $\omega_1$-guessing models is nonstationary in $\power_{\omega_2} H(\theta)$.
  The conclusion follows.
\end{proof}

\bibliographystyle{plain}
\bibliography{bib}

\end{document}